\definecolor{blue}{rgb}{0,0,1}
\newcommand*{\e}[1]{\text{e}^{#1}}
\numberwithin{equation}{section}
\numberwithin{figure}{section}
\theoremstyle{plain}
\newtheorem{thm}{\protect\theoremname}[section]
\theoremstyle{definition}
\newtheorem{rem}[thm]{\protect\remarkname}
\newtheorem{remark}[thm]{Remark}
\newtheorem{defn}[thm]{\protect\definitionname}
\theoremstyle{plain}
\newtheorem{prop}[thm]{\protect\propositionname}
\theoremstyle{plain}
\newtheorem{lem}[thm]{\protect\lemmaname}
\theoremstyle{plain}
\newtheorem{conjecture}{\protect\conjecturename}
\theoremstyle{plain}
\newtheorem{cor}[thm]{\protect\corollaryname}
\theoremstyle{definition}
\newtheorem{lemma}[thm]{Lemma}
\newtheorem{definition}[thm]{Definition}
\newenvironment{example}
{\pushQED{\qed}\examplex}
{\popQED\endexamplex}
\DeclareMathOperator{\diam}{diam}
\DeclareMathOperator{\supp}{supp}
\newcommand{\ind}{\mathds{1}}
\newcommand{\R}{\mathbb R}
\newcommand{\Z}{\mathbb Z}
\newcommand{\N}{\mathbb N}
\newcommand{\eps}{\varepsilon}
\newcommand{\mF}{\mathcal{F}}
\newcommand{\tc}{\tilde{c}}
\newcommand{\mE}{\mathcal{E}}
\providecommand{\conjecturename}{Conjecture}
\providecommand{\corollaryname}{Corollary}
\providecommand{\definitionname}{Definition}
\providecommand{\examplename}{Example}
\providecommand{\lemmaname}{Lemma}
\providecommand{\problemname}{Problem}
\providecommand{\propositionname}{Proposition}
\providecommand{\remarkname}{Remark}
\providecommand{\theoremname}{Theorem}
\providecommand{\taskname}{Task}
\def\diam{{\rm diam}}
\def\supp{{\rm supp}}
\def\Mk{{\mathcal M}}
\def\Dh{\dim_{\rm H}}
\def\half{\frac{1}{2}}
\newcommand{\lam}{\lambda}
\def\Lam{\Lambda}
\newcommand{\gam}{\gamma}
\newcommand{\om}{\omega}
\newcommand{\Sig}{\Sigma}
\newcommand{\bi}{{\bf i}}
\newcommand{\bj}{{\bf j}}
\def\N{{\mathbb N}}
\def\Ak{{\mathcal A}}
\def\Sk{{\mathcal S}}
\def\Ik{{\mathcal I}}
\def\Jk{{\mathcal J}}
\def\Lk{{\mathcal L}}
\def\Vk{{\mathcal V}}
\def\be{\begin{equation}}
	\def\ee{\end{equation}}
\newcommand{\Ek}{{\mathcal E}}
\newcommand{\Fk}{{\mathcal F}}
\def\Gk{{\mathcal G}}
\def\ov{\overline}
\newcommand{\const}{{\rm const}}
\def\what{\widehat}
\def\wt{\widetilde}
\def\spt{{\rm spt}}
\def\shalf{{\textstyle{\half}}}
\definecolor{brick}{HTML}{FF0800}
\definecolor{myred1}{RGB}{255, 0, 0}
\definecolor{myyellow1}{RGB}{255, 255, 219}
\definecolor{mygreen1}{RGB}{0, 255, 0}
\definecolor{mygreen2}{RGB}{0, 126, 0}
\definecolor{myblue1}{RGB}{0, 0, 255}
\begin{document}	
	%\layout
	%\pagecolor{SpringGreen}
	%\pagecolor{YellowGreen}

	\title{Typical dimension and absolute continuity for classes of dynamically defined measures, Part II : exposition and extensions}
	
	\author{Bal\'azs B\'ar\'any$^1$}
	\address{$^1$Department of Stochastics,	Institute of Mathematics, Budapest University of Technology and Economics, M\H{u}egyetem rkp. 3, H-1111 Budapest, Hungary}
	\email{balubsheep@gmail.com}
	
	\author{K\'aroly Simon$^{1,2}$}
	\address{$^2$Budapest University of Technology and Economics, Department of Stochastics, MTA-BME Stochastics Research Group, P.O.Box 91, 1521 Budapest, Hungary}
	%\address[K\'aroly Simon]{Budapest University of Technology and Economics, Department of Stochastics, MTA-BME Stochastics Research Group, P.O.Box 91, 1521 Budapest, Hungary}
	\email{simonk@math.bme.hu}
	
	\author{Boris Solomyak$^3$}
	\address{$^3$Bar-Ilan University, Department of Mathematics, Ramat Gan, 5290002 Israel}
	\email{bsolom3@gmail.com}

	\author{Adam \'Spiewak$^4$}
	\address{$^4$Institute of Mathematics, Polish Academy of Sciences, ul.~\'Sniadeckich 8, 00-656 Warsaw, Poland}
	\email{ad.spiewak@gmail.com}
	
	\date{\today}

	\thanks{BS was supported in part by the Israel Science Foundation grant \#1647/23. A\'S was partially supported by the National Science Centre (Poland) grant 2020/39/B/ST1/02329.}
	
	\begin{abstract}
		This paper is partly an exposition, and partly an extension of our work \cite{BSSS} to the multiparameter case. 
		We consider certain classes of parametrized dynamically defined measures. 
		These are push-forwards, under the natural projection, of ergodic measures for parametrized families of smooth iterated function systems (IFS) on the line. Under some assumptions, most crucially,
		a transversality condition, we obtain formulas for the Hausdorff dimension of the measure and absolute continuity for almost every parameter in the appropriate  parameter 
		region. The main novelty of \cite{BSSS} and the present paper is that not only the IFS, but also the ergodic measure in the symbolic space, whose push-forward we consider, depends on the parameter. 
		This includes many interesting families of measures, in particular, invariant measures for IFS's with place-dependent probabilities and natural (equilibrium) measures for smooth IFS's. One of the goals of this paper is to present an exposition of \cite{BSSS} in a more reader-friendly way, emphasizing the ideas and proof strategies, but omitting the more technical parts. This exposition/survey is based in part on the series of lectures by K\'aroly Simon at the Summer School ``Dynamics and Fractals'' in 2023 at the Banach Center, Warsaw. The main new feature, compared to \cite{BSSS}, is that we consider multi-parameter families; in other words, the set of parameters is allowed to be multi-dimensional. This broadens the scope of applications. A new 
		application considered here is to a class of Furstenberg-like measures, see Section~\ref{Furst-like}.
	\end{abstract}
	
	\keywords{iterated function systems, transversality, absolute continuity, place-dependent measures, Sobolev dimension}
	
	\subjclass[2000]{37E05 (Dynamical systems involving maps of the interval (piecewise continuous, continuous, smooth)), 28A80 (Fractals), 60G30 (Continuity and singularity of induced measures)}
	
	\maketitle
	\tableofcontents

	%{TO DO
		%	\begin{itemize}
			%	\end{itemize}
		%}
	
%	\newpage
	\section{Introduction}
	\subsection{Hyperbolic iterated function systems on the line}
	For the entire note we fix
	an $m \geq 2$ and a compact non-degenerate interval $X\subset \mathbb{R}$. We write
	$\mathcal{A}:=\left\{1,\dots  ,m\right\}$.
	\begin{wrapfigure}[10]{l}{0.5\textwidth}
		%\vspace{-1cm}
		\begin{center}
			\includegraphics[width=0.5\textwidth]{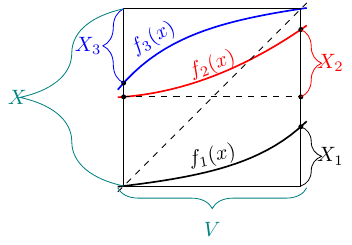}
		\end{center}
		\vspace{-10pt}
		%\caption{$\mathcal{F}$ is a $\mathcal{C}^r$-smooth hyperbolic IFS on $X$ }
		\label{z95}
		\vspace{-10pt}
	\end{wrapfigure}
	
	\begin{definition}\label{R99}
		For an $r>1$
		we say that \texttt{
			$\mathcal{F}=\left\{f_i\right\}_{i\in\mathcal{A}}$ is
			a $\mathcal{C}^r$-smooth hyperbolic IFS on $X$} (see Figure \ref{z95}) if the following two assumptions hold:
		
		\begin{enumerate}
			%[label=\emph{(\alph*)}]
			\item There exists an open set $V\supset X$ such that for every $i\in \mathcal{A}$,
			$f_i:V\to f_i(V)\subset V$ is a $\mathcal{C}^r$ diffeomorphism ,
			
			\item  $f_i(X)\subset  X$ and $0<|f'_i(x)|<1$  for all $i\in\mathcal{A}$ and $x\in X$.
		\end{enumerate}
	\end{definition}
	Throughout this paper we only consider $\mathcal{C}^r$-smooth hyperbolic IFS for some $r>1$ and for the main part consider $r>2$.
	The attractor $\Lambda=\Lambda ^{\mathcal{F}} $  is the unique non-empty compact set satisfying the self-conformality equation
	\begin{equation}
		\label{R98}
		\Lambda =\bigcup _{i\in\mathcal{A}} f_i(\Lambda ).
	\end{equation}
	The level-$1$ cylinders are $X_i:=f_i(X)$ and the level-$n$ cylinders are
	$X_{i_1\dots   i_n}:=f_{i_1\dots   i_n}(X)$ for all $(i_1,\dots  ,i_n)\in\mathcal{A}^n$, where we used the shorthand notation
	$f_{i_1\dots   i_n}:=f_{i_1}\circ\cdots\circ f_{i_n}$.
	It is easy to see that
	\begin{equation}
		\label{R97}
		\Lambda =\bigcap _{n=1}^{\infty   }\bigcup\limits_{(i_1,\dots  ,i_n)\in \mathcal{A}^n} X_{i_1\dots  i_n}.
	\end{equation}
	Thus, it follows from the second part of Definition \ref{R99} that
	the sequence $\Bigl\{\bigcup\limits_{(i_1,\dots  ,i_n)\in \mathcal{A}^n} X_{i_1\dots  i_n}\Bigr\}_{n=1}^{\infty   }$ is a nested sequence of compact sets and for every $n$,  $\left\{X_{i_1\dots  i_n}\right\}_{(i_1,\dots  ,i_n)\in\mathcal{A}^n}$ is a cover of $\Lambda $. So,
	\begin{equation}
		\label{R96}
	\Bigl\{\bigcup\limits_{\mathbf{i}\in \mathcal{A}^n} X_{\mathbf{i}}\Bigr\}_{n=1}^{\infty   }
	\end{equation}
	can be considered as the \texttt{natural covering system}
	of the attractor. If the level-$1$ cylinders are disjoint:
	\begin{equation}
		\label{R93}
		X_i\cap X_j=\emptyset, \quad \text{for all } i\ne j,
	\end{equation}
	then the dimension theory of $\Lam$ is well understood.
	Observe that in this case $\Lambda $ is the repeller of an expanding map $\phi :\bigcup _{i=1}^{m }X_i\to X$ and $\phi (x):=f^{-1}_i(x)$ if $x\in X_i$.
	\begin{definition}\label{R61}
		Let $\Theta$ be the set of all $C^{r }$-smooth  hyperbolic IFS on $X$ consisting of $m$ functions:  $\mathcal{F}=(f_1,\dots  ,f_m)$.
		For $\Fk \in \Theta$, let
		$
		L(\Fk) = \sup\limits_{i\in\mathcal{A}} \|f''_i\|_{\infty  }
		$.
		Let $0<\gamma _1< \gamma _2 <1$. We introduce
		\begin{equation}
			\label{P95}
			\Theta _{\gamma _1,\gamma _2}
			=
			\left\{
			\mathcal{F}\in \Theta:
			\gamma _1 \leq  |f'_i(x) | \leq \gamma _2,\quad
			\forall i\in\mathcal{A},\quad x\in X
			\right\}.
		\end{equation}
		For a  $1<q \leq  r$
		and  $h\in\mathcal{C}^r(X)$ we define
		\begin{equation}
			\label{R58}
			\|h\|_q:=
			\left\{
			\begin{array}{ll}
				\sum_{k=0}^{ q} \|h^{(k)}\|_\infty
				,&
				\hbox{if $q\in\mathbb{N}$ ;}
				\\
				\sum_{k=0}^{ \lfloor q\rfloor} \|h^{(k)}\|_\infty
				+
				\sup\limits_{x\ne y\in X}
				\frac{|h^{(\lfloor q\rfloor)}(x)-h^{(\lfloor q\rfloor)}(y)  |}
				{|x-y|^{q-\lfloor q\rfloor}}
				,&
				\hbox{if $q\not\in \mathbb{N}$ .}
			\end{array}
			\right.
		\end{equation}
		Since we will consider families of IFS's it is useful to
		define the distance between IFS's: let $1<q \leq  r$  and let $\mathcal{F}=\left\{f_1,\dots   f_m\right\},\mathcal{G}=\left\{g_1,\dots   g_m\right\}\in \Theta $.
		Then their $q$-distance is
		\begin{equation}
			\label{rho metric}
			\varrho_q(\mathcal{F},\mathcal{G}):=
			\max\limits_{i\in \mathcal{A}}
			\|f_i-g_i\|_q.
		\end{equation}

	\end{definition}

	\begin{lem}[Bounded Distortion Property]\label{R60}
		Let $r = 1 + \delta > 1$ with $\delta \in (0,1)$ and consider $\mathcal{F}\in\Theta_{\gamma_1, \gamma_2} $.
		\begin{enumerate}
			[label=\emph{(\alph*)}]
			\item There exist constants $c_1,c_2>0$ such that for all $n$ and $\pmb{\omega}\in\mathcal{A}^n$ and for all $x,y\in X$,
			\begin{equation}
				\label{R59}
				c_1<\frac{|f'_{\pmb{\omega}}(x)|}{|f'_{\pmb{\omega}}(y)|}<c_2.
			\end{equation}
			\item There exists a constant $c_3>0$ such that
			for all $\mathcal{G}\in \Theta_{\gamma_1, \gamma_2}$ with $\varrho_{1 + \delta}(\Fk, \Gk) \leq 1$ and $n\in \mathbb{N}$, $\pmb{\omega}\in \mathcal{A}^n$,
			\begin{equation}
				\label{R57}
				\exp \left[-n c_3 \varrho_{1 + \delta}(\mathcal{F},\mathcal{G})^{\delta} \right]
				<
				\frac{|f'_{\pmb{\omega}}(0)|}{|g'_{\pmb{\omega}}(0)|}
				<
				\exp \left[-n c_3 \varrho_{1 + \delta}(\mathcal{F},\mathcal{G})^\delta \right]
			\end{equation}
		\end{enumerate}
		\end{lem}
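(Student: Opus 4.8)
The plan is to reduce both parts to the chain rule $\log|f'_{\pmb{\omega}}(x)|=\sum_{k=1}^{n}\log|f'_{\omega_k}(f_{\omega_{k+1}\cdots\omega_n}(x))|$ together with two elementary facts. First, for each $i$ the function $\log|f_i'|$ is $\delta$-Hölder on $X$: since $X$ is a non-degenerate interval and $\gamma_1\le|f_i'|\le\gamma_2$, the derivative $f_i'$ does not change sign, so $|f_i'|$ inherits the $\delta$-Hölder continuity of $f_i'$ (recall $r=1+\delta$), and $|\log s-\log t|\le\gamma_1^{-1}|s-t|$ for $s,t\in[\gamma_1,\gamma_2]$; let $H$ be the largest of the resulting Hölder constants of $\log|f_1'|,\dots,\log|f_m'|$. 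Second, iterated maps contract geometrically: $|f_{\pmb{\tau}}(x)-f_{\pmb{\tau}}(y)|\le\gamma_2^{|\pmb{\tau}|}|x-y|$ for every finite word $\pmb{\tau}$ and $x,y\in X$.

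For part (a) I would apply the chain rule to numerator and denominator of $|f'_{\pmb{\omega}}(x)|/|f'_{\pmb{\omega}}(y)|$, take logarithms and subtract, obtaining $\bigl|\log(|f'_{\pmb{\omega}}(x)|/|f'_{\pmb{\omega}}(y)|)\bigr|\le\sum_{k=1}^{n}\bigl|\log|f'_{\omega_k}(x_k)|-\log|f'_{\omega_k}(y_k)|\bigr|$ with $x_k=f_{\omega_{k+1}\cdots\omega_n}(x)$ and $y_k=f_{\omega_{k+1}\cdots\omega_n}(y)$. Each summand is at most $H|x_k-y_k|^\delta\le H(\gamma_2^{\,n-k}\diam X)^\delta$, and summing the geometric series $\sum_{k\ge1}\gamma_2^{\,\delta k}=(1-\gamma_2^\delta)^{-1}$ bounds the whole expression by a constant independent of $n,\pmb{\omega},x,y$; exponentiating gives $c_1,c_2$.

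For part (b) the additional ingredient is a telescoping comparison of $f_{\pmb{\tau}}$ with $g_{\pmb{\tau}}$: writing $\Phi_j=g_{\tau_1\cdots\tau_{j-1}}\circ f_{\tau_j\cdots\tau_\ell}$ (so $\Phi_1=f_{\pmb{\tau}}$ and $\Phi_{\ell+1}=g_{\pmb{\tau}}$) and using that $g_{\tau_1\cdots\tau_{j-1}}$ is $\gamma_2^{\,j-1}$-Lipschitz, one gets $\|f_{\pmb{\tau}}-g_{\pmb{\tau}}\|_\infty\le\sum_{j\ge1}\gamma_2^{\,j-1}\|f_{\tau_j}-g_{\tau_j}\|_\infty\le(1-\gamma_2)^{-1}\varrho_{1+\delta}(\mathcal{F},\mathcal{G})$. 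Then I would split each of the $n$ terms in $\log|f'_{\pmb{\omega}}(0)|-\log|g'_{\pmb{\omega}}(0)|$ as $\bigl[\log|f'_{\omega_k}(a_k)|-\log|g'_{\omega_k}(a_k)|\bigr]+\bigl[\log|g'_{\omega_k}(a_k)|-\log|g'_{\omega_k}(b_k)|\bigr]$ with $a_k=f_{\omega_{k+1}\cdots\omega_n}(0)$, $b_k=g_{\omega_{k+1}\cdots\omega_n}(0)$. The first bracket is $\le\gamma_1^{-1}\|f'_{\omega_k}-g'_{\omega_k}\|_\infty\le\gamma_1^{-1}\varrho_{1+\delta}(\mathcal{F},\mathcal{G})$; the second is $\le\widetilde H|a_k-b_k|^\delta\le\widetilde H(1-\gamma_2)^{-\delta}\varrho_{1+\delta}(\mathcal{F},\mathcal{G})^\delta$, where $\widetilde H$ is a $\delta$-Hölder bound for $\log|g_i'|$ valid for all $\mathcal{G}\in\Theta_{\gamma_1,\gamma_2}$ with $\varrho_{1+\delta}(\mathcal{F},\mathcal{G})\le1$ — such a uniform bound exists because this constraint forces $\|g_i\|_{1+\delta}\le\|f_i\|_{1+\delta}+1$, so the $\delta$-Hölder constants of the $g_i'$, hence of $\log|g_i'|$ (again via $|g_i'|\ge\gamma_1$), are controlled. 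Summing over $k$ and invoking the hypothesis $\varrho_{1+\delta}(\mathcal{F},\mathcal{G})\le1$ (so that $\varrho_{1+\delta}\le\varrho_{1+\delta}^{\,\delta}$, as $\delta<1$) absorbs the first contribution into a multiple of $n\varrho_{1+\delta}^{\,\delta}$ as well; this yields $c_3$ and the two-sided estimate (with the upper bound in \eqref{R57} being $\exp[+nc_3\varrho_{1+\delta}(\mathcal{F},\mathcal{G})^\delta]$).

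I expect the only genuinely delicate point to be the bookkeeping in part (b): one must secure the Hölder constant $\widetilde H$ uniformly over the whole $\varrho_{1+\delta}$-ball of radius $1$ about $\mathcal{F}$, not merely at $\mathcal{G}=\mathcal{F}$, and one must be careful not to lose the product structure when replacing $\varrho_{1+\delta}$ by $\varrho_{1+\delta}^{\,\delta}$ — which is precisely the role of the normalization $\varrho_{1+\delta}(\mathcal{F},\mathcal{G})\le1$. Part (a), the Hölder property of $\log|f_i'|$, and the telescoping lemma are routine.
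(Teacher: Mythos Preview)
Your argument is correct and is the standard bounded distortion proof; the paper itself does not give a proof but only refers to \cite[Section 14]{ourbook}, and your sketch is essentially what one finds there. You also correctly spotted the typo in \eqref{R57}: the right-hand side should be $\exp\bigl[+n c_3\,\varrho_{1+\delta}(\mathcal{F},\mathcal{G})^{\delta}\bigr]$.
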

		The proof is available in \cite[Section 14]{ourbook}.

	\subsection{Dimension of the attractor of a hyperbolic IFS. The non-overlapping case}
	Let $\mathcal{F}\in \Theta $.
	Our objective in this paper is to get a better understanding of what happens in the overlapping case, like in Figure \ref{z95} 
	where cylinder intervals $X_2$ and $X_3$ overlap. However, first we give a brief summary of the dimension theory in the non-overlapping case.
	Recall the definition of the  Hausdorff dimension: 
	\begin{equation}
		\label{R90}
		\dim_{\rm H}  \Lambda =\inf\left\{t \geq 0:
		\forall \varepsilon >0,\
		\exists
		\left\{A_i\right\}_{i=1}^{\infty},\ A_i\subset \mathbb{R} \text{ such that }
		\colorbox{lightgray}
		{$\sum\limits_{i}^{\infty  }|A_i|^t \leq \varepsilon$},\
		\Lambda \subset \bigcup_{i=1}^{\infty } A_i
		\right\},
	\end{equation}
	where $|\cdot|$ denotes the diameter. If \eqref{R93} holds, then
	the system of covers $\Bigl\{\bigcup\limits_{\mathbf{i}\in \mathcal{A}^n} X_{\mathbf{i}}\Bigr\}_{n=1}^{\infty   }$
	can serve as the system of most optimal covers $\left\{A_i\right\}_i$
	in \eqref{R90}.
	Hence, the  Hausdorff dimension  of $\Lambda $ is given by
	\begin{equation}
		\label{R95}
		\dim_{\rm H}  \Lambda =s^{\mathcal{F}},
	\end{equation}
	where
	\begin{equation}
		\label{R94}
		s^{\mathcal{F}}:=\lim\limits_{n\to\infty}
		s_n,\quad \text{ and $s_n$ is the solution of   }\quad
		\sum\limits_{\mathbf{i}\in\mathcal{A} }|X_{\mathbf{i}}|^{s_n}=1.
	\end{equation}
	(See \cite[Chapter 5]{falcbook3} and
	\cite[Theorem 14.2.2]{ourbook}.)
	We call $s^{\mathcal{F}}$ \texttt{the conformal similarity dimension of $\mathcal{F}$}, and we can characterize it
	as the root of the pressure function, which is
	\begin{equation}
		\label{R92}
		P_{\mathcal{F}}(t):=\lim\limits_{n\to\infty}
		\frac{1}{n}\log
		\sum_{\mathbf{ i}\in\mathcal{A}^n} |X_{\mathbf{i}}|^t=
		\lim\limits_{n\to\infty}
		\frac{1}{n}\log
		\sum_{\mathbf{ i}\in\mathcal{A}^n}
		\|f'_{\mathbf{i}}\|^t,
	\end{equation}
	where $\|\cdot\|$  is the supremum norm on $X$. The second equality follows from the Bounded Distortion Property \eqref{R59}. It is immediate from the definition that the pressure function $P_{\mathcal{F}}(\cdot)$ is  continuous, strictly decreasing, $P_{\mathcal{F}}(0)=\log m>0$ and
	$\lim_{t\to\infty}P_{\mathcal{F}}(t)=-\infty$, so it has a unique zero.   It is easy to see (see \cite[Chapter 14]{ourbook}) that
	\begin{equation}
		\label{R89}
		s^{\mathcal{F}}=P_{\mathcal{F}}^{-1}(0).
	\end{equation}
	Formula \eqref{R92} means that in a very loose sense, $\sum_{\mathbf{ i}\in\mathcal{A}^n} |X_{i_1\dots  i_n}|^t\approx \exp{(n P_{\mathcal{F}}(t))}$. Using this, if we accept that
	\eqref{R93} implies that
	$\Bigl\{\bigcup\limits_{\mathbf{i}\in \mathcal{A}^n} X_{\mathbf{i}}\Bigr\}_n$
	is "the most optimal covering system of $\Lambda $" in the sense specified above, then we get from the definition of the Hausdorff dimension that if the first cylinders do not intersect (i.e.  \eqref{R93} holds), then
	\begin{equation}
		\label{R91}
		\dim_{\rm H}  \Lambda
		=
		P_{\mathcal{F}}^{-1}(0)=s^{\mathcal{F}}.
	\end{equation}
	If we drop the assumption \eqref{R93}, then the { last formula does not necessarily} remain valid, but we always have the inequality
	\[ \Dh\Lam \leq s^\Fk. \]
	For example, if
	\begin{equation}
		\label{R88}
		\mathcal{F}=\left\{f_1(x)=x/3,f_2(x)=(x+1)/3,f_3(x)=x/3+1\right\},
	\end{equation}
	then
	\begin{equation}
		\label{R87}
		\dim_{\rm H}  \Lambda^{\mathcal{F}} \approx 0.876036<1=s^{\mathcal{F}},
	\end{equation}
	see \cite[Section 4.3]{ourbook}.
	This is due to the fact that there is an exact overlap:
	$f_1\circ f_3=f_2\circ f_1$. In general, we say there is an exact overlap for the IFS $\mathcal{F}\in \Theta $ if
	\begin{equation}
		\label{R49}
		\exists\, \mathbf{i}, \mathbf{j}\in \bigcup_n  \mathcal{A}^n,\
		\mathbf{i}\ne\mathbf{j},
		\quad
		\text{ such that }\quad
		f_{\mathbf{i}}|_{\Lambda }\equiv f_{\mathbf{j}}|_{\Lambda }.
	\end{equation}

	The so-called \texttt{Exact Overlap Conjecture} (see
	\cite[Conjecture 14.3.7]{ourbook} ) states:
	\begin{conjecture}[Exact Overlap Conjecture]\label{R86}
		{If $\dim_{\rm H}  \Lambda < \min\left\{1,s^{\mathcal{F}}\right\}$, then $\mathcal{F}$ has an exact overlap.}%Either $\mathcal{F}$ has an exact overlap  or $\dim_{\rm H}  \Lambda =\min\left\{1,s^{\mathcal{F}}\right\}$.
	\end{conjecture}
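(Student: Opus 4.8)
The statement in question is the \textbf{Exact Overlap Conjecture}, which is one of the central open problems in the dimension theory of self-similar and self-conformal sets; it is known only under additional hypotheses, and I describe the strategy behind the strongest partial results (essentially due to M.~Hochman) together with the arithmetic input that would be needed to close it. The first step is to pass from sets to measures. For a probability vector $\mathbf{p}=(p_1,\dots,p_m)$ with all $p_i>0$, let $\nu_{\mathbf{p}}$ be the push-forward under the natural projection $\pi:\mathcal{A}^{\mathbb{N}}\to\Lambda$ of the Bernoulli measure $\mathbf{p}^{\mathbb{N}}$, and let $\dim_{\rm sim}\nu_{\mathbf{p}}:=h(\mathbf{p})/\chi_{\mathcal{F}}(\mathbf{p})$ be its similarity dimension, where $h(\mathbf{p})=-\sum p_i\log p_i$ and $\chi_{\mathcal{F}}(\mathbf{p})$ is the Lyapunov exponent. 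One has $\dim_{\rm H}\Lambda=\sup_{\mathbf{p}}\dim_{\rm H}\nu_{\mathbf{p}}$ and $s^{\mathcal{F}}=\sup_{\mathbf{p}}\dim_{\rm sim}\nu_{\mathbf{p}}$; hence if $\dim_{\rm H}\Lambda<\min\{1,s^{\mathcal{F}}\}$ one can choose $\mathbf{p}$ with $\dim_{\rm sim}\nu_{\mathbf{p}}$ close enough to $s^{\mathcal{F}}$ that $\dim_{\rm H}\nu_{\mathbf{p}}\le\dim_{\rm H}\Lambda<\min\{1,\dim_{\rm sim}\nu_{\mathbf{p}}\}$. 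So it suffices to prove: if some such $\nu_{\mathbf{p}}$ satisfies $\dim_{\rm H}\nu_{\mathbf{p}}<\min\{1,\dim_{\rm sim}\nu_{\mathbf{p}}\}$, then $\mathcal{F}$ has an exact overlap in the sense of \eqref{R49}.

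The heart of the argument is Hochman's inverse theorem for the entropy growth of self-similar (and, in the nonlinear setting, self-conformal) measures. Introduce the separation quantities
\[
\Delta_n:=\min\bigl\{\,\bigl|f_{\mathbf{i}}(x_0)-f_{\mathbf{j}}(x_0)\bigr|\cdot\|f'_{\mathbf{i}}\|^{-1}\ :\ \mathbf{i},\mathbf{j}\in\mathcal{A}^n,\ f_{\mathbf{i}}\ne f_{\mathbf{j}}\,\bigr\},
\]
for a fixed $x_0\in X$ (the normalization by the common contraction scale is needed so that $\Delta_n$ measures genuine overlap). The dichotomy one aims for is: either $-\tfrac1n\log\Delta_n\to 0$ along a subsequence --- i.e. consecutive generations of cylinders do \emph{not} concentrate super-exponentially --- in which case an entropy-porosity argument combined with the additive-combinatorial (Bourgain-type) inverse theorem forces $\dim_{\rm H}\nu_{\mathbf{p}}=\min\{1,\dim_{\rm sim}\nu_{\mathbf{p}}\}$, contradicting the hypothesis; or else $\Delta_n\to 0$ faster than any exponential. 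Thus the dimension drop forces super-exponential concentration of the level-$n$ maps. Carrying this out for a genuinely nonlinear $\mathcal{C}^{1+\delta}$ IFS (rather than the affine case) is where the Bounded Distortion Property of Lemma~\ref{R60} enters: it lets one compare $f_{\mathbf{i}}$ with its linearization on the scale of the cylinder $X_{\mathbf{i}}$ with multiplicative error uniform in $\mathbf{i}$, so that the affine inverse theorem can be applied blockwise.

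The final step --- upgrading ``$\Delta_n$ decays super-exponentially'' to ``$\Delta_n=0$ for some $n$'', i.e. to an exact overlap --- is the genuine obstacle and the reason the conjecture remains open. When the defining data of $\mathcal{F}$ are algebraic (in the self-similar case: algebraic contraction ratios and translation parameters), the difference $f_{\mathbf{i}}(x_0)-f_{\mathbf{j}}(x_0)$ is, after clearing denominators, an algebraic number whose height grows at most exponentially in $n$; by the Liouville/Mahler lower bound it is therefore either zero or bounded below by $C^{-n}$, which is incompatible with super-exponential smallness --- so it must vanish for some $n$, producing the exact overlap. For transcendental parameters there is no such arithmetic rigidity, and at present there is no mechanism to rule out cylinders that approach each other super-exponentially without ever coinciding; in the nonlinear (non-self-similar) setting one does not even have a serviceable notion of ``height'', so one would additionally have to isolate the correct finite-dimensional parametrization and prove an effective transcendence statement along it. Consequently a realistic proof along these lines establishes the conjecture only in the algebraic-data case, and the general statement is best regarded as a hard open problem --- precisely the reason the present paper, like \cite{BSSS}, settles dimension and absolute continuity only for \emph{almost every} parameter in a transversal family rather than for every individual IFS.
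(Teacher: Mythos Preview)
You correctly recognize that this is a \emph{conjecture}, not a theorem, and the paper does not attempt to prove it either: it is stated as Conjecture~\ref{R86} and left open, with the subsequent text only recording partial progress (Hochman's theorem under the Exponential Separation Condition and Rapaport's result for algebraic contraction ratios). Your write-up is therefore not a failed proof but an accurate, and in fact more detailed, account of the known strategy and the obstruction than what the paper itself gives. There is nothing to correct; if anything, note that your separation quantity $\Delta_n$ is normalized slightly differently from the paper's (which in the self-similar subsection measures differences of translations when the linear parts agree), but the two formulations are equivalent for the purpose of the super-exponential concentration dichotomy you describe.
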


	\subsection{Dimension of  invariant measures for a hyperbolic IFS. The non-overlapping case}
	The definition of the  Fourier transform, and definitions of various kinds of dimensions of measures and  connections between them can be found in the Appendix on page \pageref{R79}.
	%%%%%%%%%%%%
	Let $X$ be a compact interval
	and
	$\mathcal{F}=\left\{ f_i \right\}_{i=1}^{m}\in \Theta$.  Recall that we write $\mathcal{A}:=\left\{1,\dots  ,m\right\}$, and let $\mathcal{A}^*:=\bigcup \limits_{n=1}^{\infty   }\mathcal{A}^n$ be the set of finite words above the alphabet $\mathcal{A}$.
	We define a convenient metric (adapted to the IFS $\mF$) on the symbolic space $\Sigma: =\mathcal{A}^{\mathbb{N}}$ as follows: the distance between $\mathbf{i},\mathbf{j}\in\Sigma $ with $\mathbf{i} \neq \mathbf{j}$ is
	\begin{equation}
		\label{R78}
		{ d (\mathbf{i},\mathbf{j}) = d_{\Fk} (\mathbf{i},\mathbf{j}) }:=
		|f_{\mathbf{i}\wedge\mathbf{j}}(X)|,\ \ \ \text{ where }
		\mathbf{i}\wedge\mathbf{j} \text{ is the common prefix of }
		\mathbf{i} \text{ and } \mathbf{j}.
	\end{equation}
	The natural projection $\Pi :\Sigma \to\mathbb{R}$ is
	\begin{equation}
		\label{R64}
		\Pi (\mathbf{i}) = \Pi^\mF (\mathbf{i}) :=\lim\limits_{n\to\infty}f_{\mathbf{i}|_n}(x),
	\end{equation}
	where $x\in X$ is arbitrary and
	$\mathbf{i}|_n:=(i_1,\dots  ,i_n)$ for $\mathbf{i}=(i_1,i_2,\dots  )\in \Sigma $.
	We write $\sigma $  for the left shift on $\Sigma$, defined as  $\sigma (\mathbf{i}):=(i_2,i_3,i_4,\dots  )$.
	A Borel probability measure $\mu $ on $\Sigma $ is \texttt{invariant}
	if for every Borel set $H\subset \Sigma $ we have
	$
	\mu (H)=\mu (\sigma ^{-1}H).
	$
	An invariant probability measure $\mu $ on $\Sigma $  is \texttt{ergodic}  if
	$$
	\sigma ^{-1}(H)=H \Longrightarrow \text{ either }
	\mu (H)=0 \text{ or } \mu (H)=1.
	$$
	We write $\mathcal{E}_{\sigma }(\Sigma ) $ for the collection of ergodic shift invariant measures on $\Sigma $.
	Let $\mu\in\mathcal{E}_\sigma (\Sigma ) $. Then the entropy $  h_\mu =h_\mu (\sigma )$ is (roughly speaking) the   exponential growth rate of the measure of a typical $n$-cylinder. That is, let $\mathbf{i}|_n:=(i_1,\dots  ,i_n)$ for an $\mathbf{i}\in\Sigma $ and let the corresponding $n$-cylinder be
		\begin{equation}
			\label{R69}
			[\mathbf{i}|_n]:=\left\{ \mathbf{j}\in\Sigma :
			i_k=j_k,\ \forall k\leq n
			\right\}.
		\end{equation}
	Then for $\mu $-a.e. $\mathbf{i}\in\Sigma $, the following limit exists and equals to a constant, which is called \texttt{entropy} (see \cite[Corollary 9.5.4]{ourbook}):
	\begin{equation}
		\label{R11}
		-\lim\limits_{n\to\infty} \frac{1}{n}\log \mu ([\mathbf{i}|_n])=:h_\mu.
	\end{equation}
The Lyapunov exponent
of $\mu $ with respect to  (w.r.t.) the IFS $\mathcal{F}=\left\{ f_i \right\}_{i=1}^{ m}$ is the number $\chi _\mu  $ such that
	\begin{equation}
	\label{R53}
	\chi  _\mu(\Fk) :=-\int
	\log|f'_{i_1}(\Pi (\sigma \mathbf{i}))|d\mu (\mathbf{i})
	=
	-\lim\limits_{n\to\infty}
	\frac{1}{n}\log |f_{j_1\dots  j_n}(X)|\quad
	\text{ for $\mu $-a.e. } \mathbf{j}\in\Sigma
	.
\end{equation}
This follows by applying Birkhoff's Ergodic Theorem to $\mathbf{i} \mapsto \log |f'_{i_1}(\Pi(\sigma \mathbf{i}))|$, together with the Chain Rule and the Bounded Distortion Property (see \cite[Section 14.2.3]{ourbook}).

Roughly speaking,
\begin{equation}\label{eq:E}
	{\mu ([\mathbf{i}|_n])\approx \e{-n\cdot h_\mu   }}, \text{ for a $\mu $-typical $\mathbf{i}\in\Sigma $},
\end{equation}
and
\begin{equation}\label{eq:LE}
	|f_{i_1\dots  i_n}(X)|\approx \e{-n \chi_{\mu } } \quad
	\text{ for a $\mu $-typical $\mathbf{i}\in\Sigma $.}
\end{equation}
	
 We want to study push-forward measures
	$$\nu: = \Pi_* \mu, \text{ i.e. } \nu(H) = \mu(\Pi^{-1}(H)) \text{ for Borel } H \subset \R.$$
	Clearly, $\nu$ is supported on the attractor $\Lambda$; it often exhibits a complicated fractal structure. Let us begin with the study of the Hausdorff dimension of $\nu$.\\
	
	\textbf{Heuristics when \eqref{R93} holds}: Assume first that $  X_k \cap X_{\ell } = \emptyset$ for $k \neq \ell $.  Then for a   $\mu$-typical $\mathbf{i} \in \Sigma $
	and large $n$, using \eqref{eq:E}, \eqref{eq:LE}, and the fact that \eqref{R93} implies $\nu(f_{i_1\ldots i_n}(X)) = \mu([i_1, \ldots, i_n])$, we have
	\[  \Dh\nu  \approx  \frac{\log \nu(f_{i_1\ldots i_n}(X))}{\log |f_{i_1\ldots i_n}(X)|} =  \frac{\log \mu([i_1, \ldots, i_n])}{\log
		|f_{i_1\ldots i_n}(X)|}\approx \frac{\log e^{-nh_\mu}}{ \log e^{-n\chi_\mu}} =\frac{h_\mu}{\chi_\mu(\Fk)},\]
	where $\dim_{\rm H}  \nu $ is defined in \eqref{R83}. Note that if \eqref{R93} fails, then we only have $\nu(f_{i_1\ldots i_n}(X)) \geq \mu([i_1, \ldots, i_n])$, leading to a bound
	\begin{equation}\label{eq:hdim upper bound}\Dh\nu  \leq \frac{h_\mu}{\chi_\mu(\Fk)},
	\end{equation}
	valid for arbitrary $C^{1+\delta}$-smooth systems.
	\subsection{Gibbs measures}\label{sec:gibbs}
	Let $\phi :\Sigma \to \mathbb{R}$ be a continuous function (for the metric introduced in \eqref{R78}). Such functions will be called \texttt{potentials}. We define
	\begin{equation}
		\label{R71}
		\text{var}_k\phi :=
		\sup\left\{
		|\phi (\mathbf{i})-\phi (\mathbf{j})|:
		|\mathbf{i}\wedge\mathbf{j}| \geq k
		\right\}.
	\end{equation}
	We say that $\phi $ is  a \texttt{H\" older continuous potential}
	if there exist $\alpha \in(0,1)$
	and $b>0$ such that
	\begin{equation}
		\label{R72}
		\text{var}_k\phi \leq b\alpha ^k,\quad \text{for all }k >0.
	\end{equation}
	The set of H\" older continuous potentials   is denoted by  $\mathscr{H}$.
	For $\phi \in\mathscr{H}$,  $\mathbf{i }\in\Sigma$, and
	$\pmb{\omega}=(\omega _1,\dots  ,\omega _n)$ we write
	\begin{equation}
		\label{R70}
		S_n\phi (\mathbf{i}):=
		\sum_{\ell =0}^{n-1 }\phi (\sigma ^{\ell }\mathbf{i}),\qquad
		S_n\phi (\pmb{\omega}):=
		\sup\left\{
		S_n\phi (\mathbf{i}):
		\mathbf{i}\in[\pmb{\omega}]
		\right\}.
	\end{equation}
	Fix an arbitrary $\mathcal{F}\in\Theta $.
	The \texttt{pressure of the potential $\phi \in\mathscr{H}$}
	is defined by
	\begin{equation}
		\label{R68}
		P(\phi ):=\lim\limits_{n\to\infty}
		\frac{1}{n}\log \left(
		\sum_{\pmb{\omega}\in \mathcal{A}^n}
		\e{S_n\phi (\pmb{\omega})}
		\right).
	\end{equation}
	An invariant measure $\mu $ is called an \texttt{equilibrium state} for $\phi$ if
	\begin{equation}
		\label{R54}
		P(\phi )=
		h_\mu +\int\phi d\mu.
	\end{equation}
	For a H\" older potential $\phi \in\mathscr{H}$ the unique equilibrium state is the Gibbs measure (see \cite[Theorem 1.22]{Bow} ), whose existence is guaranteed by  the following theorem:
	\begin{thm}\label{R67}
		Let $\phi \in\mathscr{H}$. Then there exists a unique $\mu \in \mathcal{E}_\sigma (\Sigma )$, the \texttt{Gibbs measure for the potential $\phi $}, for which there exist constants 	
		$c_1,c_2>0$ such that for all $n $ and $\mathbf{i}\in\Sigma $  we have
		\begin{equation}
			\label{R66}
			c_1 \leq
			\frac{\mu ([\mathbf{i}|_{n }])}
			{\exp(
				-n P(\phi )+S_{n }\phi (\mathbf{i})
				)}
			\leq c_2.
		\end{equation}
	\end{thm}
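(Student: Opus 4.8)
The plan is to follow the Ruelle--Perron--Frobenius (transfer operator) route, which yields existence, the Gibbs bounds \eqref{R66}, and uniqueness in one package. Fix $\mathcal{F}\in\Theta$ (it plays no role beyond providing the metric on $\Sigma$) and define the transfer operator $\mathcal{L}=\mathcal{L}_\phi$ on $C(\Sigma)$ by
\[
(\mathcal{L}g)(\mathbf{i})=\sum_{j\in\mathcal{A}}e^{\phi(j\mathbf{i})}\,g(j\mathbf{i}),
\]
where $j\mathbf{i}$ is the concatenation. Iterating, $(\mathcal{L}^n g)(\mathbf{i})=\sum_{\pmb{\omega}\in\mathcal{A}^n}e^{S_n\phi(\pmb{\omega}\mathbf{i})}g(\pmb{\omega}\mathbf{i})$. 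The first step is to record the \emph{bounded distortion of Birkhoff sums}: if $\mathbf{i},\mathbf{j}\in[\pmb{\omega}]$ with $\pmb{\omega}\in\mathcal{A}^n$, then \eqref{R72} gives
\[
|S_n\phi(\mathbf{i})-S_n\phi(\mathbf{j})|\le\sum_{k=1}^{n}\mathrm{var}_k\phi\le\frac{b}{1-\alpha}=:D,
\]
a bound uniform in $n$. Combined with an almost-submultiplicativity estimate for the partition functions $Z_n:=\sum_{\pmb{\omega}\in\mathcal{A}^n}e^{S_n\phi(\pmb{\omega})}$, this gives $Z_n\asymp e^{nP(\phi)}$ and $\mathcal{L}^n\mathbf{1}(\mathbf{i})\asymp e^{nP(\phi)}$, with constants depending only on $D$. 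This estimate is the combinatorial heart of the theorem.

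Next I would produce the eigendata of $\mathcal{L}$. Applying the Schauder--Tychonoff fixed point theorem to $\nu\mapsto\mathcal{L}^*\nu/(\mathcal{L}^*\nu)(\mathbf{1})$ on the weak-$*$ compact convex set of Borel probability measures on $\Sigma$ produces a probability measure $\nu$ and a number $\lambda>0$ with $\mathcal{L}^*\nu=\lambda\nu$; testing against $\mathbf{1}$ and against cylinder indicators, and using $D$, forces $\lambda=e^{P(\phi)}$ and
\[
e^{-D}\le\frac{\nu([\pmb{\omega}])}{\exp\bigl(-nP(\phi)+S_n\phi(\pmb{\omega})\bigr)}\le e^{D}\qquad(\pmb{\omega}\in\mathcal{A}^n).
\]
For the eigenfunction, set $h_n:=\lambda^{-n}\mathcal{L}^n\mathbf{1}$; the distortion estimate makes $\{h_n\}$ uniformly bounded above and below and equicontinuous (modulus controlled by $\mathrm{var}_k\phi$), and since $\mathcal{L}h_n=\lambda h_{n+1}$, any Cesàro limit point $h$ of the $h_n$ is continuous, satisfies $e^{-D}\le h\le e^{D}$ and $\mathcal{L}h=\lambda h$. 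Put $\mu:=(\int h\,d\nu)^{-1}\,h\,\nu$. Shift-invariance $\mu\circ\sigma^{-1}=\mu$ follows from $\mathcal{L}^*\nu=\lambda\nu$ together with $\mathcal{L}h=\lambda h$ by a direct computation on cylinders, and combining the displayed bounds on $\nu([\pmb{\omega}])$ with the two-sided bound on $h$ yields \eqref{R66}.

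Finally, uniqueness and ergodicity. The cleanest route is a cone-contraction argument: show $\mathcal{L}$ maps a suitable cone of ``log-Hölder'' positive functions strictly into itself and contracts the associated Hilbert projective metric, whence $\lambda^{-n}\mathcal{L}^n g\to h\cdot(\int g\,d\nu)$ uniformly, at an exponential rate, for every $g\in C(\Sigma)$. This limit gives exponential mixing of $\mu$ (hence ergodicity), and it also shows that any $\mathcal{L}^*$-eigenmeasure, and indeed any equilibrium state for $\phi$ in the sense of \eqref{R54}, must coincide with the measure just constructed; this is the uniqueness assertion. Alternatively one can avoid the cone machinery: any two invariant measures satisfying bounds of the form \eqref{R66} are mutually equivalent with Radon--Nikodym derivative bounded above and below, and two equivalent ergodic invariant measures are equal — but this still requires proving ergodicity of $\mu$, for which the Gibbs property supplies the quasi-Bernoulli estimate $\mu([\pmb{\omega}\pmb{\tau}])\asymp\mu([\pmb{\omega}])\,\mu([\pmb{\tau}])$ and hence the $K$-property. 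I expect the main obstacle to be precisely this last step — upgrading the Gibbs bounds to a genuine mixing/ergodicity statement — since the existence half reduces to the distortion estimate plus compactness, while uniqueness genuinely needs either the spectral-gap input or a separate mixing argument.
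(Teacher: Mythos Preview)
Your outline is the standard Ruelle--Perron--Frobenius argument (transfer operator, bounded distortion of Birkhoff sums, eigenmeasure via Schauder--Tychonoff, eigenfunction via equicontinuity/Ces\`aro, then spectral gap or quasi-Bernoulli for ergodicity and uniqueness), and it is correct. The paper does not give its own proof of this theorem: it simply refers the reader to \cite[Chapter~1]{Bow}, which carries out exactly the approach you sketch, so your proposal matches the cited source.
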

	For the proof of this theorem see \cite[Chapter 1]{Bow}.
	Among all potentials the so-called geometric potential  will be the most important one for us. To define it first we introduce the potential  $\phi_{\mathcal{F}} ^s$ for all $s \geq  0$:
	\begin{equation}
		\label{R65}
		\phi_{\mathcal{F}} ^s(\mathbf{i}):=\log |f'_{i_1}(\Pi ( \sigma \mathbf{i}))|^s.
	\end{equation}
	Observe that by the Chain Rule and the Bounded Distortion Property,
	\begin{equation}
		\label{R63}
		\exp \left(S_n\phi_{\mathcal{F}}^s (\mathbf{i})\right)=|f'_{i_1\dots   i_n}(\Pi ( \sigma ^n\mathbf{i}))|^s\sim
		|X_{i_1\dots  i_n}|^s,
	\end{equation}
	where $a_n\sim b_n$ means that there exists a constant $C \in (0,\infty)$  such that $C^{-1} \leq \frac{a_n}{b_n} \leq C$ for all $n$. This implies that
	\begin{equation}
		\label{R26}
		P(\phi _{\mathcal{ F}}^{s })=P_{\mathcal{F}}(s),
	\end{equation}
	where the pressure function  $P_{\mathcal{F}}(\cdot)$ was defined in \eqref{R92}.
	Now we define the \texttt{geometric potential} as $\phi _{\mathcal{F}}:=\phi _{\mathcal{ F}}^{s_{\mathcal{F}} }$, where $s_{\mathcal{F}}$ was defined in	\eqref{R89} as the zero of $P_{\mathcal{F}}$, so that
	\begin{equation}
		\label{R56}
		P(\phi _{\mathcal{F}})=0,
	\end{equation}
	Let \texttt{$\mu _{\mathcal{F}}$ be the Gibbs measure for the geometric potential} $\phi _{\mathcal{F}}$.
	Putting together \eqref{R11}, \eqref{R53}, Theorem \ref{R67}, \eqref{R63}, and \eqref{R56}, we obtain:
	\begin{cor}\label{R55}
		For every  $\mathcal{F}\in \Theta (X)$
		there exist constants $c_4,c_5>0$ such that for all $\pmb{\omega}\in \mathcal{A}^*$ we have
		\begin{equation}
			\label{eq:geom pot s}
			s_{\mathcal{F}}=\frac{h_{\mu_{\mathcal{F}} }}{\chi_{\mu_{\mathcal{F}}}(\Fk) },
			\qquad\text{ and }\qquad
			c_4<
			\frac{\mu _{\mathcal{F}}([\pmb{\omega}])}{|X_{\pmb{\omega}}|^{s_{\mathcal{F}}}}
			<c_5.
		\end{equation}  	
	\end{cor}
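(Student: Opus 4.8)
The plan is to obtain both assertions directly from the Gibbs property of $\mu_{\mathcal{F}}$ together with the Bounded Distortion Property. First I would record that, by definition, $\mu_{\mathcal{F}}$ is the Gibbs measure for the geometric potential $\phi_{\mathcal{F}}=\phi_{\mathcal{F}}^{s_{\mathcal{F}}}$, so Theorem \ref{R67} supplies constants $c_1,c_2>0$ with
\[
c_1\le\frac{\mu_{\mathcal{F}}([\mathbf{i}|_n])}{\exp\bigl(-nP(\phi_{\mathcal{F}})+S_n\phi_{\mathcal{F}}(\mathbf{i})\bigr)}\le c_2
\]
for every $n$ and every $\mathbf{i}\in\Sigma$. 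By \eqref{R56} the pressure term is $0$, so the denominator is simply $\exp(S_n\phi_{\mathcal{F}}(\mathbf{i}))$, and by \eqref{R63} — which is where the Chain Rule and \eqref{R59} enter — there is a constant $C\in(0,\infty)$, independent of $n$ and $\mathbf{i}$, such that $C^{-1}\le\exp(S_n\phi_{\mathcal{F}}(\mathbf{i}))/|X_{i_1\dots i_n}|^{s_{\mathcal{F}}}\le C$. Since $\mu_{\mathcal{F}}([\pmb{\omega}])$ and $|X_{\pmb{\omega}}|$ depend only on the word $\pmb{\omega}$ (take any $\mathbf{i}\in[\pmb{\omega}]$), multiplying the two estimates gives the claimed two-sided bound; choosing $c_4=c_1/(2C)$ and $c_5=2c_2C$ makes the inequalities strict.

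For the identity $s_{\mathcal{F}}=h_{\mu_{\mathcal{F}}}/\chi_{\mu_{\mathcal{F}}}(\Fk)$ I would take logarithms in the bound just established, divide by $n$, and let $n\to\infty$ along a $\mu_{\mathcal{F}}$-typical point $\mathbf{i}$. The constants $c_4,c_5$ contribute $O(1/n)\to0$, and $|X_{i_1\dots i_n}|=|f_{i_1\dots i_n}(X)|$, so \eqref{R11} and \eqref{R53} yield $h_{\mu_{\mathcal{F}}}=s_{\mathcal{F}}\,\chi_{\mu_{\mathcal{F}}}(\Fk)$. As $|f_i'|<1$ on the compact interval $X$ we have $\chi_{\mu_{\mathcal{F}}}(\Fk)>0$, and dividing gives the formula. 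Equivalently, one could use the equilibrium-state relation \eqref{R54}: $0=P(\phi_{\mathcal{F}}^{s_{\mathcal{F}}})=h_{\mu_{\mathcal{F}}}+\int\phi_{\mathcal{F}}^{s_{\mathcal{F}}}\,d\mu_{\mathcal{F}}=h_{\mu_{\mathcal{F}}}-s_{\mathcal{F}}\chi_{\mu_{\mathcal{F}}}(\Fk)$, using the explicit form \eqref{R65} of the potential.

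I do not expect a genuine obstacle here: the statement is essentially a repackaging of Theorem \ref{R67}. The one point requiring care is the \emph{uniformity} of the constant $C$ in the comparison $\exp(S_n\phi_{\mathcal{F}}(\mathbf{i}))\sim|X_{i_1\dots i_n}|^{s_{\mathcal{F}}}$ — that is, that the dependence of $S_n\phi_{\mathcal{F}}(\mathbf{i})$ on the tail $\sigma^n\mathbf{i}$ is controlled uniformly in $n$, which is exactly the content of the Bounded Distortion Property — together with the (elementary) positivity of the Lyapunov exponent needed to solve for $s_{\mathcal{F}}$.
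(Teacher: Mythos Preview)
Your proof is correct and follows exactly the route the paper indicates: the paper's ``proof'' is the one-line remark preceding the corollary, which says the result follows by putting together \eqref{R11}, \eqref{R53}, Theorem~\ref{R67}, \eqref{R63}, and \eqref{R56} --- precisely the ingredients you combine, and in the same way. Your discussion of uniformity via bounded distortion and the alternative derivation via the equilibrium identity \eqref{R54} are both apt elaborations of what the paper leaves implicit.
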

	
	\begin{definition}\label{R25}
		The \texttt{natural measure} of the IFS $\mathcal{F}$ is defined by $\nu _{\mathcal{F}}:=\Pi _*(\mu _{\mathcal{F}}).$
	\end{definition}
	The upper bound \eqref{eq:hdim upper bound} gives
	\[	\dim_{\rm H} \nu _ {\mathcal{F}} \leq  s_{\mathcal{F}},\]
	with equality if \eqref{R93} holds. Therefore, we have the following consequence of \eqref{eq:geom pot s} (and inequality $\Dh \nu_{\mF} \leq \Dh \Lam_{\mF}$), which explains the significance of natural measures:
	\begin{equation}
		\label{R47}
		\dim_{\rm H}  \nu _{\mathcal{F}}=
		\min\left\{1,
		\frac{h_{\mu_\Fk}}{\chi_{\mu_\Fk}(\Fk)}
		\right\}
		\Longrightarrow
		\dim_{\rm H} \Lambda  _{\mathcal{F}}=\min\left\{
		1,s_{\mathcal{F}}
		\right\}.
	\end{equation}
	In order words, if there is no dimension drop for the natural measure, then there is no dimension drop for the attractor.
	\bigskip

	\subsection{The self-similar case}
	%%%%%%%%%%%%%%%%%%%%
	%%%%%%%%%%%%%%%%%%%%%
	In this subsection we always assume that the IFS $\mathcal{F}$
	is self-similar, that is,
	\begin{equation}
		\label{R52}
		\mathcal{F}=\left\{ f_i(x)=r_ix+d_i \right\}_{i=1}^{m},
		\qquad
		r_i\in\left(-1,1\right)\setminus \left\{0\right\},\quad d_i\in\mathbb{R}.
	\end{equation}
	In this case $\phi _{\mathcal{ F}}^{s }(\mathbf{i})=\log |r_{i_1}|^s$ for any
	$\mathbf{i}\in\Sigma $. So,
	$P_{\mathcal{F}}(\phi _{\mathcal{ F}}^{s })=\log (|r_1|^s+\cdots + |r_m|^s)$. Thus, by \eqref{R94} we get that $s_{F}$
	is the solution of the so-called self-similarity equation:
	\begin{equation}
		\label{R51}
		\large{
			\underbrace{|r_{1}|^{s_{\mathcal{F}} }}_{p_1 }+\cdots+\underbrace{|r_{m}|^{s_{\mathcal{F}} }}_{p_m  }=1}.
	\end{equation}
	In this case the conformal similarity dimension $s_{\mathcal{F}}$ is simply called the \texttt{similarity dimension}.
	If we define the probability vector $\mathbf{p}=(p_1,\dots  ,p_m)$ by
	$p_i:=|r_i|^{s_{\mathcal{F}}}$,
	then
	$\mu _{\mathcal{F}}:=(p_1,\dots  ,p_m)^{\mathbb{N}}$ is the
	Gibbs measure for the geometric potential $\phi _{\mathcal{ F}}^{s_{\mathcal{F}} }$
	on $\Sigma =\left\{ 1,\dots  ,m \right\}^{\mathbb{N}}$. That is, for $\pmb{\omega }=(\omega _1,\dots  ,\omega _n)\in \Sigma _n$ the
	$\mu _{\mathcal{F}}$-measure of the cylinder
	$
	[\pmb{\omega }]:=\left\{ \mathbf{i}\in\Sigma :i_1=\omega _1,\dots  i_n=\omega _n \right\}
	$ is
	$
	\mu_{\mathcal{F}} ([\pmb{\omega }])=p_{\omega _1}\cdots p_{\omega _n}
	$. Then the natural measure for $\mathcal{F}$  is
	$\nu_{\mathcal{F}} =\Pi _*\mu_{\mathcal{F}} $. In this case
	\begin{equation}
		\label{R50}
		h_{\mu _{\mathcal{F}}}=
		-\sum_{k=1}^{m }p_i\log p_i=-s_{\mathcal{F}}
		\sum_{k=1}^{m }|r_i|^{s_{\mathcal{F}}}\log|r_i|,
		\quad
		\chi_{\mu _{\mathcal{F}}}=-\sum_{k=1}^{m }|r_i|^{s_{\mathcal{F}}}\log|r_i|.
	\end{equation}
	Hence,
	\begin{equation}
		\label{R48}
		s_{\mathcal{F}}=\frac{h_{\mu _{\mathcal{F}}}}{\chi_{\mu _{\mathcal{F}}}}.
	\end{equation}
	
	Self-similar measures are the measures on $\mathbb{R}$, which can be represented in the form
	$\nu_{\mathcal{F},\mathbf{p}} =(\Pi_{\mathcal{F}}) _*\mu_{\mathbf{p}} $ for a measure $\mu_{\mathbf{p}} =\mathbf{p}^{\mathbb{N}}$, where $\mathbf{p}=(p_1,\dots  ,p_m)$ is a probability vector ($p_i>0$, and $\sum_{k=1}^{m }p_i=1$). The similarity dimension of a self-similar measure  $\nu $ is $\dim_{\rm Sim}\nu_{\mathcal{F},\mathbf{p}} :=\frac{h_{\mu_{\mathbf{p}} }}{\chi_{{\mu_{\mathbf{p}} }}}  $.
	
	\subsubsection{The Exact Overlap Conjecture in the self-similar case}
	The Exact Overlap Conjecture is open even in the self-similar case. However, some breakthrough results have been obtained
	in the last decade, among which we mention two here. Suppose we are given a self-similar IFS of the form \eqref{R52}.  Let { $\Delta_n(\mathcal{F})$ be the minimum of
	$\Delta(\pmb{\omega},\pmb{\tau})$ for distinct
	$\pmb{\omega},\pmb{\tau}\in\Sigma_n$, where }
	\[
	\Delta(\pmb{\omega},\pmb{\tau})=\left\{\begin{array}{cc}
		\infty & \mbox{ if }  f_{\pmb{\omega}}'(0)\neq f_{\pmb{\tau}}'(0) \\
		\left|f_{\pmb{\omega}}(0)-f_{\pmb{\tau}}(0)\right| & \mbox{ if } f_{\pmb{\omega}}'(0)=f_{\pmb{\tau}}'(0).
	\end{array}\right.
	\]
	%%%%%%%%%%%%%%%%%%%%
	We say that the self-similar IFS $\mathcal{S}$ satisfies the
	\underline{\texttt{Exponential Separation Condition}} (ESC) if there exist $\varepsilon>0$ and a sequence  $n_k\uparrow \infty $ such that
	\begin{equation}\label{k78}
		\Delta_{n_k}>\varepsilon^{n_k}.
	\end{equation}
	Hochman \cite{Hoch}
	proved that the ESC holds for ``most''  self-similar IFS's.
	More precisely,
	a self-similar IFS of the form \eqref{R52} is determined by $2m$ parameters $(r_1, \ldots, r_m, d_1, \ldots, d_m)$. The set of those parameters for which the ESC does not hold form a subset of $\mathbb{R}^{2m}$ of packing dimension at most $2m-1$ (in particular: of Lebesgue measure zero).  Moreover,
	M. Hochman \cite{Hoch}
	proved the following { breakthrough} result:
	\begin{thm}[Hochman]\label{R44}
		Assume that the self-similar IFS $\mathcal{F}$ satisfies the ESC. Let $\mathbf{p}$ be a probability vector. Then
		\begin{equation}
			\label{R43}
			\dim_{\rm H}  \nu _{\mathcal{F},\mathbf{p}}=\min\left\{
			1,\dim_{\rm Sim} \nu _{\mathcal{F},\mathbf{p}}
			\right\}.
		\end{equation}
	\end{thm}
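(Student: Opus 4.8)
The inequality $\dim_{\rm H}\nu_{\mathcal F,\mathbf p}\le\min\{1,\dim_{\rm Sim}\nu_{\mathcal F,\mathbf p}\}$ is already available: it combines the trivial bound $\dim_{\rm H}\nu\le1$ with \eqref{eq:hdim upper bound}, which in the self-similar case reads $\dim_{\rm H}\nu_{\mathcal F,\mathbf p}\le h/\chi$, where $h:=h_{\mu_{\mathbf p}}=-\sum_i p_i\log p_i$ and $\chi:=\chi_{\mu_{\mathbf p}}=-\sum_i p_i\log|r_i|$. So the entire content is the lower bound, and the plan is to run Hochman's argument. First I would trade the Hausdorff dimension for an entropy at dyadic scales: by the Feng--Hu theorem on exact dimensionality of self-similar measures, writing $\mathcal D_n$ for the partition of $\mathbb R$ into intervals $[k2^{-n},(k+1)2^{-n})$ and $H(\theta,\mathcal P):=-\sum_{P\in\mathcal P}\theta(P)\log\theta(P)$, the limit
\[
\alpha\ :=\ \lim_{n\to\infty}\frac{1}{n\log 2}\,H\bigl(\nu_{\mathcal F,\mathbf p},\mathcal D_n\bigr)
\]
exists and equals $\dim_{\rm H}\nu_{\mathcal F,\mathbf p}$. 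Thus the goal is $\alpha\ge\min\{1,h/\chi\}$; assume for contradiction $\alpha<\min\{1,h/\chi\}$.

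The next step is to expose the convolution structure of $\nu_{\mathcal F,\mathbf p}$. Iterating self-similarity, $\nu_{\mathcal F,\mathbf p}=\sum_{\omega\in\mathcal A^n}p_\omega\,(f_\omega)_*\nu_{\mathcal F,\mathbf p}$ with $p_\omega=p_{\omega_1}\cdots p_{\omega_n}$ and $f_\omega$ a similarity of ratio $|r_\omega|\le(\max_i|r_i|)^n$; hence at every dyadic scale coarser than $\approx(\max_i|r_i|)^n$ the measure $\nu_{\mathcal F,\mathbf p}$ is indistinguishable from its purely atomic model $\nu^{(n)}:=\sum_{\omega\in\mathcal A^n}p_\omega\,\delta_{f_\omega(0)}$. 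After homogenizing the contraction ratios --- recoding $\mathcal A$ by long blocks so that along a $\mu_{\mathbf p}$-typical word the cumulative ratio after the $j$-th block is $\approx e^{-j\chi}$ (Birkhoff) --- one represents $\nu^{(n)}$, up to lower-order errors, as an $n$-fold convolution $\theta_1\ast\cdots\ast\theta_n$ of independent pieces, $\theta_j$ being essentially a rescaled copy of $\theta:=\sum_i p_i\delta_{d_i}$ by the factor $e^{-(j-1)\chi}$. Under the ESC the $d_i$ must be distinct (a coincidence $d_i=d_j$ would already violate \eqref{k78}), so each factor $\theta_j$ carries Shannon entropy $\approx h$, which however is ``resolved'' only at dyadic scales finer than $e^{-(j-1)\chi}$, i.e.\ below level $(j-1)\chi/\log2$.

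The heart of the matter is Hochman's inverse theorem for the entropy of convolutions: if $\frac1R H(\mu\ast\tau,\mathcal D_R)\le\frac1R H(\mu,\mathcal D_R)+\delta$ with $\delta=\delta(\varepsilon)$ small enough, then at all scales $k\le R$ outside a set of density $\le\varepsilon$, either $\mu$ is $\varepsilon$-close to uniform on $\mathcal D_k$-cells or $\tau$ is $\varepsilon$-close to atomic on $\mathcal D_k$-cells. Exact dimensionality makes the normalized dyadic entropies of $\nu_{\mathcal F,\mathbf p}$, hence of $\nu^{(n)}\approx\theta_1\ast\cdots\ast\theta_n$, asymptotically stationary; so if $\alpha<\min\{1,h/\chi\}$ then, over long ranges of consecutive scales, the running convolution of the $\theta_j$ systematically fails to absorb the entropy $\approx h$ that each new factor should contribute over its $\chi/\log2$-level window. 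Feeding a long block of such factors into the inverse theorem yields, at a density-$(1-\varepsilon)$ set of scales, the dichotomy: ``the partial product is near-uniform'' --- impossible at most scales because $\alpha<1$ --- or ``the remaining factors are, jointly, near-atomic''. The latter is where the ESC is indispensable: by \eqref{k78} there is a subsequence $n_k\to\infty$ along which the $m^{n_k}$ base points $\{f_\omega(0):\omega\in\mathcal A^{n_k}\}$, sorted into the at most polynomially many classes of equal contraction ratio, are pairwise $\varepsilon^{n_k}$-separated within each class, whence $H(\nu^{(n_k)},\mathcal D_m)=n_kh-o(n_k)$ as soon as $2^{-m}\le\varepsilon^{n_k}$. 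This quantitative dispersion of the base points is incompatible with the ``near-atomic'' alternative holding at a positive-density set of scales, and a careful accounting of the scales involved shows that no choice of $\varepsilon>0$ reconciles the two sides --- forcing $\alpha\ge\min\{1,h/\chi\}$.

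I expect the genuine obstacle to be the inverse theorem itself: its proof is additive-combinatorial (a multiscale dissection of entropy feeding into a Freiman-type structure theorem) and forms the technical core --- about half --- of Hochman's paper \cite{Hoch}, so I would quote it rather than reprove it; of comparable difficulty in practice is the scale bookkeeping of the previous paragraph, which must be carried out with uniform constants. Secondary technical points still needing care: homogenizing the non-uniform ratios $|r_i|$ so that the $n$-fold convolution picture is literally valid, bootstrapping the two-measure inverse theorem to its $n$-fold form, and the passage from the single-scale entropy conclusion to a genuine statement about $\dim_{\rm H}$ (again via exact dimensionality). Finally, the genericity assertion made just before the theorem --- that the parameters $(r_1,\dots,r_m,d_1,\dots,d_m)$ for which the ESC fails form a set of packing dimension $\le 2m-1$ --- is logically independent and far softer: it follows by estimating, for each $n$ and $\varepsilon$, the parameter set where $\Delta_n(\mathcal F)\le\varepsilon^n$ (a union of semialgebraic slabs coming from the near-coincidences $f_{\pmb\omega}(0)\approx f_{\pmb\tau}(0)$ among equal-derivative words) together with a covering/Borel--Cantelli argument.
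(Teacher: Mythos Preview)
The paper does not contain a proof of this theorem: it is stated as a result of Hochman, attributed to \cite{Hoch}, and used as a black box (``M.\ Hochman \cite{Hoch} proved the following breakthrough result''). There is therefore nothing in the present paper to compare your proposal against.

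That said, your sketch is a fair high-level outline of Hochman's actual argument in \cite{Hoch}: the reduction via exact dimensionality to entropy growth at dyadic scales, the convolution structure of the level-$n$ approximations, the inverse theorem for entropy of convolutions as the engine, and the ESC entering to rule out the ``near-atomic'' branch of the dichotomy. You are also right that the inverse theorem is the deep ingredient and would need to be quoted rather than reproved. One point to be careful about: the line ``Under the ESC the $d_i$ must be distinct (a coincidence $d_i=d_j$ would already violate \eqref{k78})'' is not quite right as stated --- a coincidence $d_i=d_j$ with $r_i\ne r_j$ gives $\Delta(\pmb\omega,\pmb\tau)=\infty$ at level $1$ by the definition of $\Delta$, so it does not by itself violate the ESC. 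This is a minor issue for the sketch (one can always recode to remove redundancies, or work directly with the $\Delta_n$ separation at long words), but the parenthetical justification is inaccurate.
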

	Using this and \eqref{R47} we obtain that
	the set of parameters of those self-similar IFS's of the form \eqref{R52} for which
	$\dim_{\rm H}  \Lambda _{\mathcal{F}}\ne\min\left\{1,s_{\mathcal{F}}\right\}$
	has packing dimension at most $2m-1$.
	
	A recent result of Rapaport \cite{Rap} says that
	the Exact Overlap Conjecture holds when the contractions ratios
	$r_1,\dots  ,r_m$ are algebraic numbers.
	
	\subsection{Linear fractional IFS's}\label{sec:fracIFS} %{\g (``linear fractional'' is standard -- not ``fractional linear'')}

	Fix an $m \geq  2$ and a compact parameter interval $\mathfrak{I}\subset \mathbb{R}$. For every parameter $t\in \mathfrak{I}$
	and for every $i\in\mathcal{A}:=\left\{1,\dots  ,m\right\}$ we are given   $A_i^t=     \begin{bmatrix}
		a_i^t  & b_i^t  \\
		c_i^t  &  d_i^t
	\end{bmatrix}\in GL_2(\mathbb{R})$. For every $i\in \mathcal{A}$  the associated linear fractional mapping is $f_i^t:\mathbb{R} \cup \{ \infty \}\to \mathbb{R}\cup \{ \infty \}$, $f_i^t(x)=\frac{a_i^tx+b_i^t}{c_i^tx+d_i^t}$.
	We say that $(\mathcal{F}^t)_{t\in \mathfrak{I}}=(\left\{f_i^t\right\}_{i=1}^{m })_{t\in \mathfrak{I}}$ is a   family of linear fractional IFS
	if there exists an open bounded interval $V\subset \mathbb{R}$ and a $\gamma \in(0,1)$ such that $f_i^t(\overline{V})\subset V$ and $|(f _{i}^{t })'(x)|<\gamma $ for every $i\in\mathcal{A}$ and $t\in \mathfrak{I}$. Moreover we require that for every $i\in\mathcal{A}$ the function $\phi_i :\overline{V}\times \mathfrak{I}\to V$, $\phi _i(x,t):=f_i^t(x)$    is real analytic.
	
	%%%%%%%%%%%%%%%%%%%%%%%%%%%
	
	\begin{definition}[Non-degenerate family of linear fractional IFS's ]
		Let $(\mathcal{F}^t)_{t\in \mathfrak{I}}=(\left\{f_i^t\right\}_{i=1}^{m })_{t\in \mathfrak{I}}$ be a family of linear fractional IFS, let $\Lambda ^t$ be the attractor for every $t\in \mathfrak{I}$ and let $\Sigma :=\mathcal{A}^{\mathbb{N}}$ be the symbolic space. We define the natural projection $\Pi ^t:\Sigma \to \Lambda ^t$  in the usual way $\Pi _i ^t(\mathbf{i}):=\lim\limits_{n\to\infty}f^t_{i_1\dots  i_n}(x_0)$, where $x_0\in V$ is arbitrary. We say that the family  $(\mathcal{F}^t)_{t\in \mathfrak{I}}$ is a non-degenerate
		family of linear fractional IFS's
		if
		\begin{equation}
			\label{P75}
			\mathbf{i},\mathbf{j}\in\Sigma ,\ \mathbf{i}\ne \mathbf{j},
			\implies
			\exists t_0\in \mathfrak{I},\ \Pi ^{t_0}(\mathbf{i})\ne \Pi ^{t_0}(\mathbf{j}).
		\end{equation}
	\end{definition}

	\begin{thm}[Solomyak, Takahashi \cite{SolTak}  ]
		Let $(\mathcal{F}^t)_{t\in \mathfrak{I}}$ be a non-degenerate
		family of linear fractional IFS's and let $\Lambda ^t$ be the attractor of $\mathcal{F}^t$. For every $t$ let $s_{t}$ be the root of the pressure function for the IFS $\mathcal{F}^t$.   Then for all but a set of zero Hausdorff dimension of $t\in \mathfrak{I}$ we have
		\begin{equation}
			\label{P74}
			\dim_{\rm H}  \Lambda^t =\min\left\{1,s_{t}\right\}.
		\end{equation}
	\end{thm}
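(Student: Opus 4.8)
\emph{Strategy.} The upper bound is free; the content is the lower bound for almost every $t$, which I would obtain by the transversality/potential-theoretic method, reducing the statement about the attractor to a dimension estimate for the \emph{natural measure} $\nu_{\mathcal F^t}$. The extra difficulty here — and the reason the standard machinery must be adapted — is that both the IFS and the Gibbs measure $\mu_{\mathcal F^t}$ vary with $t$.

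\emph{Upper bound and reduction.} Each $\mathcal F^t$ restricts to a real-analytic hyperbolic IFS on $\overline V$, so the general inequality $\Dh\Lam\le s^{\Fk}$ recalled above, together with $\Lambda^t\subset\mathbb R$, gives $\Dh\Lambda^t\le\min\{1,s_t\}$ for \emph{every} $t\in\mathfrak I$, with empty exceptional set. For the lower bound let $\mu_{\mathcal F^t}$ be the Gibbs measure of the geometric potential of $\mathcal F^t$ and $\nu_{\mathcal F^t}=\Pi^t_*\mu_{\mathcal F^t}$ (Definition \ref{R25}); by Corollary \ref{R55}, $s_t=h_{\mu_{\mathcal F^t}}/\chi_{\mu_{\mathcal F^t}}(\mathcal F^t)$ and $\mu_{\mathcal F^t}([\pmb\omega])\asymp|f^t_{\pmb\omega}(V)|^{s_t}$ with constants uniform over the compact parameter interval. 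Since $\Dh\nu_{\mathcal F^t}\le\Dh\Lambda^t\le\min\{1,s_t\}$ always, by the implication \eqref{R47} it suffices to prove $\Dh\nu_{\mathcal F^t}\ge\min\{1,s_t\}$ for all $t$ outside a set $E$ with $\Dh E=0$. Note also that $t\mapsto s_t$ is continuous (continuity of pressure), which permits localization.

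\emph{Transversality.} For distinct $\mathbf i,\mathbf j\in\Sigma$ set $\psi_{\mathbf i,\mathbf j}(t):=\Pi^t(\mathbf i)-\Pi^t(\mathbf j)$. Real-analyticity of $(x,t)\mapsto f^t_i(x)$ and the uniform contraction $|(f^t_i)'|<\gamma$ make each $\psi_{\mathbf i,\mathbf j}$ real-analytic near $\mathfrak I$, and \eqref{P75} forces $\psi_{\mathbf i,\mathbf j}\not\equiv0$. The crux is to promote this to a \emph{transversality estimate}: a constant $C>0$ such that for all distinct $\mathbf i,\mathbf j$ with common prefix $\pmb\omega$ and all $\rho\in(0,1]$,
\[ \Leb\bigl\{t\in\mathfrak I:\ |\psi_{\mathbf i,\mathbf j}(t)|\le\rho\,|f^t_{\pmb\omega}(V)|\bigr\}\le C\rho, \]
together with the companion fact (from analyticity, uniformly bounded order of vanishing) that $\{t:|\psi_{\mathbf i,\mathbf j}(t)|\le\eta\}$ is a union of boundedly many intervals of length $\lesssim\eta^{c}$ for a fixed $c>0$. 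One proves this by using the projective (Möbius) structure: after factoring $f^t_{\pmb\omega}$ out, the "tail" difference is governed by finitely many model analytic functions with uniformly tame zero sets. Obtaining the \emph{uniform} constant $C$ — possibly after subdividing $\mathfrak I$ and deleting a closed zero-Hausdorff-dimension set on which the models degenerate (harmless, as it is absorbed into $E$) — is the step I expect to be the main obstacle.

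\emph{Energy estimate and the exceptional set.} Fix $t_*$ and $s_0<\min\{1,s_{t_*}\}$; by continuity pick a compact $J\ni t_*$ and $s_0<s_1$ with $s_t\ge s_1$ on $J$. The plan is to bound a discretized, word-level version of the averaged $s_0$-energy of $\nu_{\mathcal F^t}$: since the Gibbs property is uniform over the family, one replaces the $t$-varying measure $\mu_{\mathcal F^t}$ by the explicit weights $|f^t_{\pmb\omega}(V)|^{s_t}$, so the only remaining $t$-dependence is through analytic, uniformly controlled quantities. Splitting according to the length $n$ of the common prefix $\pmb\omega=\mathbf i\wedge\mathbf j$, the $n$-th block is bounded, via the transversality estimate applied to pairs of cylinders ($\int_J|\psi_{\mathbf i,\mathbf j}(t)|^{-s_0}dt\lesssim|f^t_{\pmb\omega}(V)|^{-s_0}$, valid since $s_0<1$) and the Gibbs bound ($|f^t_{\pmb\omega}(V)|^{2s_t}\le|f^t_{\pmb\omega}(V)|^{2s_0}$), by $\sum_{\pmb\omega\in\mathcal A^n}|f^t_{\pmb\omega}(V)|^{2s_t-s_0}\lesssim\gamma^{n(s_1-s_0)}$, using $\sum_{\pmb\omega\in\mathcal A^n}|f^t_{\pmb\omega}(V)|^{s_t}\asymp1$; this is summable, so the averaged energy is finite and hence $\nu_{\mathcal F^t}$ has finite $s_0$-energy, giving $\Dh\nu_{\mathcal F^t}\ge s_0$, for Lebesgue-a.e.\ $t\in J$. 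Letting $s_0\uparrow\min\{1,s_t\}$ and covering $\mathfrak I$ by countably many such $J$ proves the formula for Lebesgue-a.e.\ $t$. To upgrade to a zero-dimensional exceptional set, I would show that a $t$ failing the formula must fail exponential separation of the cylinders of $\Lambda^t$; the set of such $t$ is, for each $\eps>0$ and each large $n$, contained in a union of at most $m^{2n}$ intervals of length $\lesssim\eps^{cn}$ (the analyticity bound above), hence has $s$-dimensional content $\lesssim(m^2\eps^{cs})^n\to0$ once $\eps$ is small relative to $s$; so this set has Hausdorff dimension $\le s$ for every $s>0$, i.e.\ dimension $0$. Filling in the implication "exponential separation $\Rightarrow$ dimension formula for the varying natural measure" — by a variant of the above energy estimate restricted to the separated parameters, or by an entropy argument — is the remaining substantive point.
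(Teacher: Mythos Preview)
The paper does not prove this theorem itself; it quotes it from \cite{SolTak} with the remark that ``the proof is based on the adaptation of Hochman's method to the linear fractional IFS, see \cite{HS}.'' Your proposal takes a different route, and there are genuine gaps.

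The transversality/energy argument you develop in detail can at best yield a Lebesgue-null exceptional set; it cannot produce the \emph{zero Hausdorff dimensional} exceptional set claimed in the statement. You recognise this, and your final paragraph correctly identifies the actual structure of the proof: (i) the set of parameters failing an exponential separation condition has Hausdorff dimension zero (an analytic-family covering argument), and (ii) exponential separation implies the dimension formula. But step (ii) is not ``a variant of the above energy estimate restricted to the separated parameters''; it is precisely Hochman's inverse theorem for entropy, adapted to the projective/conformal setting in \cite{HS}. That adaptation --- a substantial piece of additive combinatorics, orthogonal to potential-theoretic methods --- is the real engine behind the result. Your phrase ``or by an entropy argument'' gestures at the right tool but treats as a residual technicality what is in fact the core of the proof.

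There is also a problem earlier. The first-order transversality estimate $\Leb\{t:|\psi_{\mathbf i,\mathbf j}(t)|\le\rho\,|f^t_{\pmb\omega}(V)|\}\le C\rho$ does not follow from non-degeneracy \eqref{P75}. Non-degeneracy only says that each $\psi_{\mathbf i,\mathbf j}$ is a nonzero real-analytic function; it gives no uniform bound on the order of its zeros over all pairs $(\mathbf i,\mathbf j)$. With transversality of order $k$ the energy integral $\int_J|\psi_{\mathbf i,\mathbf j}|^{-s_0}\,dt$ only converges for $s_0<1/k$, so you would deduce merely $\Dh\nu_{\mathcal F^t}\ge\min\{1/k,s_t\}$, which is too weak. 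Deleting a zero-dimensional parameter set does not cure this, since the obstruction is not localised at specific values of~$t$ but is a feature of the infinite family of functions.
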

	The proof is based on the adaptation of Hochman's method to the linear fractional IFS, see \cite{HS}.

	%%%%%%%%%%%
	\subsection{Families of $\mathcal{C}^r$-smooth hyperbolic IFS's}
	Unfortunately, apart from the linear fractional case, results similar to Hochman's theorem do not exist for general $\mathcal{C}^r$-smooth hyperbolic IFS's. We need to confine our  attention to some families
	of $\mathcal{C}^r$-smooth hyperbolic IFS's which satisfy the so-called {\em transversality condition}. We will have assertions which claim that for a Lebesgue typical parameter the conformal similarity dimension gives the dimension of the attractor. More importantly, for all parameters $\lambda $  we consider a measure
	$\mu _\lambda $ on $\Sigma $ and their push-forward measures
	$\nu _\lambda :=(\Pi _\lambda )_*\mu _\lambda$. We study the absolute continuity and the dimension  of these measures $\nu _\lambda $  for Lebesgue typical parameter $\lambda $.
	\begin{definition}\label{R21}
		We say  that $\left\{\mathcal{F}^\lambda =\left\{f^\lambda_i \right\}_{i=1}^{m }\right\}_{\lambda \in \overline{U}}  $ is a \texttt{continuous family of $\mathcal{C}^r$-smooth hyperbolic IFS's
			on the compact interval $X$} if the parameter domain  $\overline{U}$  is the closure of the open set $U\subset \mathbb{R}^d$, there exist $0 < \gamma_1 < \gamma_2 < 1$ such that $\Fk^\lam \in \Theta_{\gamma_1, \gamma_2}$ for all $\lam \in \ov{U}$, there exists a bounded open interval
		$V\supset X$ such that for all  $i\in \mathcal{A}$ and $\lambda \in
		\overline{U}$  we have that $f _{i}^{\lam} : V \to V$ is a $\mathcal{C}^r$ diffeomorphism satisfying
		$f _{i}^{\lambda  }(X)\subset X$ and moreover, $\lam \mapsto \Fk^\lam$ is continuous in the $C^r$-topology (i.e. in the metric $\varrho_r$ as defined in \eqref{rho metric}).
	\end{definition}
	
	\begin{example}\label{R41}
		Let $\mathcal{F}=\left\{f_1,\dots  ,f_m\right\}$ be a $\mathcal{C}^r$-smooth
		hyperbolic IFS on the compact interval $X$. Using the notation of Definition
		\ref{R99} we assume that
		\begin{equation}
			\label{R16}
			|f'_i(x)|<\frac{1}{2},\quad \text{ holds for  all }i\in\mathcal{A},\
			x\in V.
		\end{equation}
		Let $\varepsilon >0$ be a sufficiently small number. Set $U:=(-\varepsilon ,\varepsilon )^m$ and  for a $\lambda=(\lambda_1,\dots  ,\lambda _m) \in \overline{U}$ let
		$$\mathcal{F}^\lambda=\left\{f _{1}^{\lambda}(x),\dots  ,f _{m}^{\lambda}(x)\right\} :=\left\{f_1(x)+\lambda_1,\dots   ,f_m(x)+\lambda _m\right\}.$$
		We say that \texttt{$\mathcal{F}^\lambda$ is a vertical translate of $\mathcal{F}$}.  It is clearly a continuous family of $C^r$-smooth hyperbolic IFS's.
	\end{example}
	\subsubsection{Principal Assumptions I}\label{R35}
	In the main part of this { survey} (relating to the problem of absolute continuity) we will assume:
	\begin{enumerate}[start=1,label={(MA\arabic*)}]
		\item\label{as:MA1} For every $j\in\mathcal{A}$ and $\lambda \in U$
		the second derivative (in $x$) of the map $f_j^\lam$ exists
		and is uniformly H\" older continuous in both $x$ and $\lambda $.
		\item The maps $\lam \mapsto f^\lam_j(x)$ are $C^{1+\delta}$-smooth on $U$ (uniformly w.r.t. $x$).
		\item For every $i,j$
		the second partial derivatives $\frac{d^2}{dxd\lam_i}f^\lam_j(x), \frac{d^2}{d\lam_i dx}f^\lam_j(x)$ are $\delta$-H\"older (uniformly, both in $\lam_i$ and $x$).
		\item\label{as:MA4} The system $\{f_j^\lam\}_{j \in \Ak}$ is {\em uniformly hyperbolic and contractive}: there exist $\gamma_1,\ \gamma_2 > 0$ such that
		\[
		0 < \gam_1 \le |(\textstyle{\frac{d}{dx}}f_j^\lam)(x)| \le \gam_2 < 1,\quad
		\forall x\in X,\  j\in \mathcal{A},\ \lambda\in \overline{U}.
		\]
	\end{enumerate}
	Note that some of the results (relating to the dimension) will hold under weaker regularity assumptions - see Sections \ref{sec:main} and \ref{sec:dim}.
	\begin{remark}\label{R15}
		Clearly, all the principal assumptions (MA1)-(MA4)  hold if we simply assume
		(MA4) and
		\begin{enumerate}[label={(MA123)}]
			\item\label{as:MA123} for every $j\in\mathcal{A}$, all of the third partial derivatives with respect to the $d+1$ variables of the map
			$(\lambda ,x) \to f _{j}^{\lambda  }(x)
			$  exist and are continuous.	
		\end{enumerate}	
	\end{remark}
	For the precise formulation of the assumptions (MA1)-(MA3)
	see Appendix \ref{R20}. In the special case
	when we have only one parameter (that is, $d=1$) the precise formulation can be found in
	\cite[Section 2]{BSSS}.

	For $\pmb{\omega}=(\omega _1,\dots \omega_n )\in \mathcal{A}^*$, $\mathbf{i}=(i_1,i_2,\dots  )\in \Sigma =\mathcal{A}^{\mathbb{N}}$  and $\lambda \in \overline{U}$  we write
	$$
	f_{\pmb{\omega}}^\lambda:=f _{\omega _1}^{\lambda  }
	\circ\cdots\circ
	f _{\omega _n}^{\lambda  },\quad
	\text{ and }
	\quad
	\Pi ^\lambda (\mathbf{i}):=\lim\limits_{n\to\infty}
	f _{i_1\dots  i_n}^{\lambda  }(x_0),
	$$
	where $x_0\in X$ is arbitrary.

	{
	
	\subsubsection{Transversality Condition}\label{R34}
	We only consider families $\left\{\mathcal{F}^\lambda \right\}_{\lambda \in \overline{U}}$ which (like the one in Example \ref{R41}) satisfy the \underline{\texttt{transversality condition}}:
	
	\begin{enumerate}[label={(MT)}]
		\item\label{as:trans mp} $\exists\,\eta>0:\ \forall\, \bi,\bj\in \Sig,\ \ i_1 \ne j_1, \ \left|\Pi^\lam(\bi) - \Pi^\lam(\bj)\right| < \eta \implies \left|\nabla(\Pi^\lam(\bi) - \Pi^\lam(\bj))\right| \ge \eta$,
	\end{enumerate}
	where
	$\nabla$ stands here for the gradient with respect to the parameter variable $\lam$ in $\R^d$.
	In the case $d=1$ this condition takes the form:
	
	\begin{equation}
		\label{R40}\tag{T}
		\exists\,\eta>0:\ \forall\, \lambda \in \overline{U},\
		\forall\  \mathbf{i},\mathbf{j}\in \Sigma ,\ \ i_1 \ne j_1, \ \left|\Pi^\lam(\mathbf{i}) - \Pi^\lam(\mathbf{j})\right| < \eta \implies \left|\textstyle{\frac{d}{d\lam}}(\Pi^\lam(\mathbf{i}) - \Pi^\lam(\mathbf{j}))\right| \ge \eta.
	\end{equation}
	
	It is easy to check that
	the transversality condition (MT) is equivalent to any of the following conditions (T2)-(T3):
	
	\begin{equation}
		\label{R39}\tag{T2}
		\exists\,\eta>0:\ \forall\, \lambda \in \overline{U},\
		\forall\  \mathbf{i},\mathbf{j}\in \Sigma ,\ \ i_1 \ne j_1, \ \Pi^\lam(\mathbf{i}) = \Pi^\lam(\mathbf{j}) \implies \left|\nabla(\Pi^\lam(\mathbf{i}) - \Pi^\lam(\mathbf{j}))\right| \ge \eta,
	\end{equation}
	and
	\begin{equation}
		\label{R38}\tag{T3}
		\exists\,C_T>0:\ \forall\, r>0,\
		\forall\  \mathbf{i},\mathbf{j}\in \Sigma ,\ \ i_1 \ne j_1, \
		\mathcal{L}^d
		\left\{\lambda \in \overline{U}:
		\left|\Pi^\lam(\mathbf{i}) - \Pi^\lam(\mathbf{j})\right| < r
		\right\}
		\leq  C_T\cdot r.
	\end{equation}
}

	\begin{figure}[H]\label{R37}
		\includegraphics[width=8.2cm]{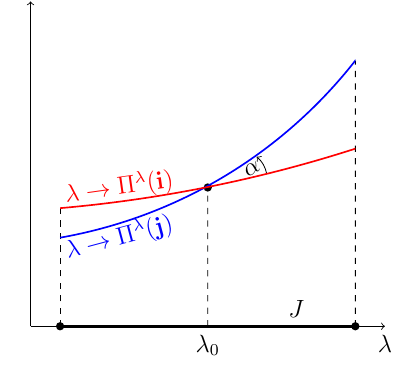}
		\includegraphics[width=8.2cm]{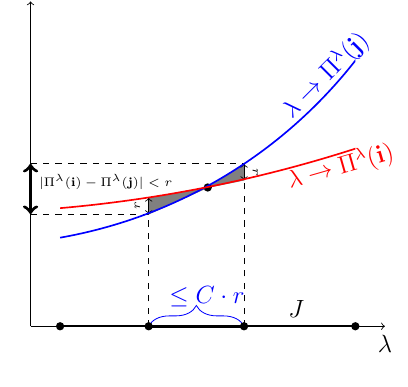}
		\caption{Conditions \eqref{R40} and \eqref{R38}. In the figure on the left, corresponding to \eqref{R40}, the angle $\alpha$ between the slopes is non-zero.}
	\end{figure}
	Transversality conditions \eqref{R40} and \eqref{R38} (for $d=1$) are visualized in Figure \ref{R37}.
	The following  was proved in \cite[Lemma 2.14]{BKRS}.
	\begin{lem}\label{R12}
		The family of vertical translates of
		a $\mathcal{C}^r$-smooth
		hyperbolic IFS on the compact interval from Example \ref{R41} is an {important} example
		where the Transversality Condition \ref{as:trans mp} holds. 	
	\end{lem}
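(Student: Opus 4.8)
The plan is to verify the transversality condition (MT) for the vertical-translate family $\{\mathcal{F}^{\lambda}\}_{\lambda\in\overline{U}}$ of Example~\ref{R41} directly, and in fact in the strong form that $\bigl|\nabla\bigl(\Pi^{\lambda}(\mathbf{i})-\Pi^{\lambda}(\mathbf{j})\bigr)\bigr|$ is bounded \emph{uniformly} away from $0$ over all $\lambda\in\overline{U}$ and all $\mathbf{i},\mathbf{j}\in\Sigma$ with $i_{1}\neq j_{1}$; then no smallness hypothesis on $|\Pi^{\lambda}(\mathbf{i})-\Pi^{\lambda}(\mathbf{j})|$ is needed and (MT) --- as well as the equivalent conditions (T2), (T3) --- holds with $\eta$ equal to this uniform bound. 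Write $\gamma_{2}:=\max_{i\in\mathcal{A}}\sup_{x\in X}|f_{i}'(x)|$; by \eqref{R16} and compactness of $X$ we have $\gamma_{2}<\tfrac12$, so, setting $\rho:=\gamma_{2}/(1-\gamma_{2})$, we get $\rho\in(0,1)$ \emph{precisely because} $\gamma_{2}<\tfrac12$. Since $f_{i}^{\lambda}(X)\subseteq X$ for every $i\in\mathcal{A}$ and $\lambda\in\overline{U}$, every $\Pi^{\lambda}(\sigma^{\ell}\mathbf{i})$ lies in $X$, hence $\bigl|f'_{i_{\ell}}(\Pi^{\lambda}(\sigma^{\ell}\mathbf{i}))\bigr|\le\gamma_{2}$.

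Next I would compute the parameter-derivative of the projection. Because $f_{i}^{\lambda}(x)=f_{i}(x)+\lambda_{i}$, the projection satisfies the fixed-point relation $\Pi^{\lambda}(\mathbf{i})=f_{i_{1}}(\Pi^{\lambda}(\sigma\mathbf{i}))+\lambda_{i_{1}}$. Differentiating in $\lambda_{k}$ (using $\partial_{\lambda_{k}}f_{i}^{\lambda}(x)=\delta_{ik}$), iterating, and passing to the limit --- legitimate since $\lambda\mapsto\Pi^{\lambda}(\mathbf{i})$ is $C^{1}$, the finite compositions $f^{\lambda}_{i_{1}\cdots i_{n}}(x_{0})$ and their $\lambda$-derivatives converging uniformly with geometric rate $\le\gamma_{2}$ by contractivity --- gives
\[
\partial_{\lambda_{k}}\Pi^{\lambda}(\mathbf{i})=\sum_{n\ge1:\ i_{n}=k}\ \prod_{\ell=1}^{n-1}f'_{i_{\ell}}\bigl(\Pi^{\lambda}(\sigma^{\ell}\mathbf{i})\bigr),
\]
with the empty product ($n=1$) equal to $1$. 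The key consequence is that the ``non-leading'' part is small: the index sets $\{n\ge1:i_{n}=k\}$, $k\in\mathcal{A}$, are pairwise disjoint and each product is $\le\gamma_{2}^{n-1}$ in absolute value, so for every fixed $\mathbf{i}$,
\[
\sum_{k\in\mathcal{A}}\bigl|\partial_{\lambda_{k}}\Pi^{\lambda}(\mathbf{i})-\delta_{k,i_{1}}\bigr|\ \le\ \sum_{n\ge2}\gamma_{2}^{\,n-1}\ =\ \rho.
\]

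Finally the estimate. Fix $\mathbf{i},\mathbf{j}$ with $i_{1}\neq j_{1}$ and abbreviate $\partial_{\lambda_{i_{1}}}\Pi^{\lambda}(\mathbf{i})=1+a$, $\partial_{\lambda_{j_{1}}}\Pi^{\lambda}(\mathbf{i})=b$, $\partial_{\lambda_{i_{1}}}\Pi^{\lambda}(\mathbf{j})=c$, $\partial_{\lambda_{j_{1}}}\Pi^{\lambda}(\mathbf{j})=1+d$; because $i_{1}\neq j_{1}$, the last display gives $|a|+|b|\le\rho$ and $|c|+|d|\le\rho$. The $i_{1}$- and $j_{1}$-components of $\nabla(\Pi^{\lambda}(\mathbf{i})-\Pi^{\lambda}(\mathbf{j}))$ are $1+a-c$ and $b-1-d$, and by the triangle inequality their absolute values sum to at least
\[
\bigl|(1+a-c)-(b-1-d)\bigr|=\bigl|2+(a+d)-(b+c)\bigr|\ \ge\ 2-\bigl(|a|+|b|+|c|+|d|\bigr)\ \ge\ 2-2\rho.
\]
Hence $\bigl|\nabla(\Pi^{\lambda}(\mathbf{i})-\Pi^{\lambda}(\mathbf{j}))\bigr|\ge\max\{|1+a-c|,|b-1-d|\}\ge1-\rho>0$, and taking $\eta:=1-\rho$ verifies (MT).

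I do not expect a real obstacle here; the two mildly delicate points are that \eqref{R16} (rather than mere hyperbolicity, which only yields $|f_i'|<1$) is what forces $\rho<1$, and that a \emph{single} partial derivative of the difference need not be bounded below --- the factors $f'_{i_{\ell}}$ may be negative and cause cancellation between $a$ and $c$ --- so one must use the two coordinates $\lambda_{i_{1}}$ and $\lambda_{j_{1}}$ together with the triangle inequality, as above.
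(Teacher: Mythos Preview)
Your proof is correct. The paper does not prove this lemma but merely cites \cite[Lemma 2.14]{BKRS}; your direct computation of $\partial_{\lambda_k}\Pi^\lambda(\mathbf{i})$ via the recursion $\Pi^\lambda(\mathbf{i})=f_{i_1}(\Pi^\lambda(\sigma\mathbf{i}))+\lambda_{i_1}$, followed by the uniform lower bound $|\nabla(\Pi^\lambda(\mathbf{i})-\Pi^\lambda(\mathbf{j}))|\ge 1-\rho$ obtained by combining the $\lambda_{i_1}$ and $\lambda_{j_1}$ components, is exactly the standard argument and almost certainly matches what is in the cited reference. Your closing remark is apt: the hypothesis $\gamma_2<\tfrac12$ from \eqref{R16} is essential (it is what makes $\rho<1$), and using two coordinates together is necessary since a single partial derivative of the difference can vanish.
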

	
	 We will also encounter transversality in another form, namely, the {\em transversality of degree (or order) $\beta$}, introduced by Peres and Schlag \cite{PS00},
	see Lemma~\ref{prop_beta trans} and Remark~\ref{rem:beta tran} below. In that condition there is no assumption $i_1\ne j_1$, which sometimes provides more flexibility.
	
	\medskip
	
	{The significance of the transversality condition lies in the fact that it allows one 
	to calculate the Hausdorff dimension for a \textit{typical} parameter $\lam$, for general overlapping non-linear systems. Below is a classical result in this direction, which considers projections of an arbitrary (but fixed!) ergodic measure on the symbolic space. See e.g. \cite[Chapter 14.4]{ourbook} for the proof.
		
		\begin{thm}\label{thm:dim ac fixed mu}
			Let $\{\mF^\lam\}_{\lam \in \ov{U}}$ be a smooth family of $C^r$-smooth hyperbolic IFS's on the compact interval with  $r>1$, satisfying the transversality condition \ref{as:trans mp}. Let $\mu$ be an ergodic shift-invariant Borel probability measure on $\Sig$ and set $\nu_\lam = (\Pi^\lam)_* \mu$. Then
			\begin{enumerate}[{\rm (1)}]
				\item\label{it: proj dim H} $\Dh \nu_\lam = \min \left\{ 1, \frac{h_{\mu}}{\chi_{\mu}(\mF^\lam)} \right\}$ for $\Lk^d$-a.e. $\lam \in U$,
				\item $\nu_\lam \ll \Lk^1$ for $\Lk^d$-a.e. $\lam \in U$ such that $\frac{h_{\mu}}{\chi_{\mu}(\mF^\lam)} > 1$.
				\end{enumerate}
Moreover, for any Borel probability measure $\mu$ on $\Sig$ the following hold (below $d_\lam$ is the metric on $\Sig$, corresponding to $\Fk^\lam$, defined in \eqref{R78}):
\begin{enumerate}[{\rm (3)}]
					\item\label{it:proj dim cor} $\dim_{cor} \nu_\lam = \min \left\{ 1, \dim_{cor} (\mu, d_\lam) \right\}$ for $\Lk^d$-a.e. $\lam \in U$,
					\item $\nu_\lam \ll \Lk^1$ with $\frac{d \nu_\lam}{d \Lk^1} \in L^2(\R)$ for $\Lk^d$-a.e. $\lam \in U$ such that $\dim_{cor} (\mu, d_\lam) > 1$.
				\end{enumerate}
		\end{thm}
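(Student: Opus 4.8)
The plan is to separate the four assertions into the easy \emph{Lipschitz} upper bounds and the substantive \emph{transversality} lower bounds and absolute‑continuity statements, and within the second group to run (1)--(2) and (3)--(4) through the same potential‑theoretic engine, only with different combinatorial weights.

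\textbf{Step 1: the upper bounds.} From the definition of the adapted metric $d_\lam$ in \eqref{R78} and of the projection, $\Pi^\lam\colon(\Sig,d_\lam)\to\R$ is $1$‑Lipschitz, so $\Dh\nu_\lam\le\Dh(\mu,d_\lam)$ and $\dim_{cor}\nu_\lam\le\dim_{cor}(\mu,d_\lam)$; since $\nu_\lam$ is supported on the compact interval, both are also $\le 1$. It remains to identify $\Dh(\mu,d_\lam)=h_\mu/\chi_\mu(\mF^\lam)$: by the Shannon--McMillan--Breiman theorem a $\mu$‑typical $\bi$ satisfies $\mu([\bi|_n])\approx e^{-nh_\mu}$, by Birkhoff's theorem (as in \eqref{R53}) $|f^\lam_{\bi|_n}(X)|\approx e^{-n\chi_\mu(\mF^\lam)}$, and by the Bounded Distortion Property a $d_\lam$‑ball of radius comparable to $|f^\lam_{\bi|_n}(X)|$ is squeezed between cylinders $[\bi|_{n\pm O(1)}]$; this pins the exact local dimension of $(\mu,d_\lam)$ to $h_\mu/\chi_\mu(\mF^\lam)$ for $\mu$‑a.e.\ $\bi$. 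This gives the ``$\le$'' halves of (1) and (3) (the latter for any $\mu$) and uses no transversality; the bound $\Dh\nu_\lam\le h_\mu/\chi_\mu(\mF^\lam)$ also follows from \eqref{eq:hdim upper bound} directly.

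\textbf{Step 2: lower bounds and absolute continuity.} First localize: $\lam\mapsto\chi_\mu(\mF^\lam)$ is continuous (by $C^1$‑continuity of $\lam\mapsto\mF^\lam$ and bounded distortion), so each of the regions $\{h_\mu/\chi_\mu(\mF^\lam)>1\}$, $\{h_\mu/\chi_\mu(\mF^\lam)>s\}$ (fixed $s<1$), $\{\dim_{cor}(\mu,d_\lam)>1\}$ is open and is exhausted by countably many small balls; fix one such ball $U_0\subset U$. The engine is a potential‑theoretic estimate: for an exponent $s<1$ bound $\int_{U_0}I_s(\nu_\lam)\,d\lam$, and for the $L^2$ statements bound $\int_{U_0}\int_\R|\widehat{\nu_\lam}(\xi)|^2\,d\xi\,d\lam$; writing $\nu_\lam=(\Pi^\lam)_*\mu$ and swapping integrals, one is reduced to the inner $\lam$‑integral $\int_{U_0}|\Pi^\lam(\bi)-\Pi^\lam(\bj)|^{-s}\,d\lam$ (resp.\ the oscillatory integral $\int_{U_0}e^{-i\xi(\Pi^\lam(\bi)-\Pi^\lam(\bj))}\,d\lam$). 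To estimate these, decompose along the common prefix $\bi=\pmb{\omega}\bi'$, $\bj=\pmb{\omega}\bj'$ with $i'_1\ne j'_1$; bounded distortion gives $|\Pi^\lam(\bi)-\Pi^\lam(\bj)|\asymp|f^\lam_{\pmb{\omega}}(X)|\cdot|\Pi^\lam(\bi')-\Pi^\lam(\bj')|$, and --- here the mixed‑derivative H\"older hypotheses (MA1)--(MA3) are needed --- the analogous comparison holds for the parameter‑gradient near a collision, so the transversality of $(\bi',\bj')$ in the form \eqref{R38} transfers to a quantitative non‑concentration (resp.\ stationary‑phase) bound for $\psi_\lam:=\Pi^\lam(\bi)-\Pi^\lam(\bj)$ with a gain proportional to a power of $|f^\lam_{\pmb{\omega}}(X)|$. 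Summing over the common‑prefix length $k=|\bi\wedge\bj|$ against the weights $S_k:=\sum_{|\pmb{\omega}|=k}\mu([\pmb{\omega}])^2=\mu\otimes\mu\{|\bi\wedge\bj|\ge k\}$, the geometric series converges precisely when $s$ lies below the relevant dimension: $\dim_{cor}(\mu,d_\lam)$ for (3)--(4), and --- after replacing the lossy worst‑case bound $|f^\lam_{\pmb{\omega}}(X)|\ge\gamma_1^{\,k}|X|$ by the Birkhoff/SMB‑typical value $|f^\lam_{\pmb{\omega}}(X)|\approx e^{-k\chi_\mu(\mF^\lam)}$ together with $S_k\approx e^{-kh_\mu}$ on a full $\mu\otimes\mu$‑measure set of pairs --- the larger threshold $h_\mu/\chi_\mu(\mF^\lam)$ for (1)--(2). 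Finiteness of the $\lam$‑averaged energy (resp.\ $L^2$ norm) forces $I_s(\nu_\lam)<\infty$ (resp.\ $\widehat{\nu_\lam}\in L^2$) for $\Lk^d$‑a.e.\ $\lam\in U_0$; letting $s$ increase to the threshold, combining with Step 1, and taking the union over the countable cover of $U$ yields all of (1)--(4).

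\textbf{Main obstacle.} I expect the real work to be twofold. First, getting the \emph{full} value $h_\mu/\chi_\mu(\mF^\lam)$ for the Hausdorff dimension in (1)--(2), rather than the smaller $\dim_{cor}(\mu,d_\lam)$ that the crude $L^2$‑energy integral gives: this forces one to restrict the double integral to the full‑measure set of pairs on which the Birkhoff and Shannon--McMillan--Breiman averages are already near their limits, with care about uniformity in $\lam$ (so that the $\lam$‑dependent exponent $\chi_\mu(\mF^\lam)$ may legitimately be used), and it is the reason ergodicity of $\mu$ is needed for (1)--(2) but not for (3)--(4). Second, the absolute‑continuity and $L^2$ conclusions (2) and (4): the endpoint exponent $s=1$ lies outside the reach of the layer‑cake energy bound ($\int_0\rho^{-2}\cdot\rho\,d\rho$ diverges, and one integration by parts yields only the non‑integrable factor $|\xi|^{-1}$), so one must squeeze faster decay of $\int_{U_0}e^{-i\xi\psi_\lam}\,d\lam$ in $|\xi|$ by iterating the transversality bound --- exactly where the $C^r$‑smoothness with $r>2$ (encoded in (MA1)--(MA3)) and the Peres--Schlag higher‑order transversality are used, and where the bookkeeping is heaviest. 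I would quote the detailed execution of both points from \cite[Chapter 14.4]{ourbook}.
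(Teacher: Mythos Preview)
The paper does not prove this theorem; it simply refers to \cite[Chapter 14.4]{ourbook}, which you also cite at the end. Your outline for (1) and (3) is the standard one and is essentially correct, including the Egorov restriction to pass from the correlation to the Hausdorff dimension.

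There is, however, a genuine misconception in your treatment of (2) and (4). You claim that the endpoint $s=1$ forces one to use Peres--Schlag higher-order transversality and $C^r$ smoothness with $r>2$, encoded in (MA1)--(MA3). This contradicts the hypothesis of the theorem, which is only $r>1$, and it is not how the classical proof goes. For a \emph{fixed} measure $\mu$ the absolute continuity (and $L^2$ density) are obtained without any Fourier or Sobolev input: one bounds
\[
\int_{U_0}\int_\R \underline{D}(\nu_\lam,x)\,d\nu_\lam(x)\,d\lam
\;\le\; \liminf_{r\to 0}\frac{1}{2r}\int_\Sig\int_\Sig \Lk^d\bigl(\{\lam\in U_0:|\Pi^\lam(\bi)-\Pi^\lam(\bj)|<r\}\bigr)\,d\mu(\bi)\,d\mu(\bj)
\]
by Fatou and Fubini, decomposes along the common prefix, and applies the elementary level-set bound \eqref{R38} to the shifted pair. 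The resulting series converges exactly when $\dim_{cor}(\mu,d_\lam)>1$ (for (4)) or, after the Egorov restriction you already described, when $h_\mu/\chi_\mu(\mF^\lam)>1$ (for (2)); the $L^2$ density in (4) is then automatic since $\int g^2=\int\underline{D}(\nu_\lam,x)\,d\nu_\lam(x)$ for the density $g$. This is precisely the route sketched in the paper at \eqref{eq:ac int}--\eqref{eq:tranvers int bound}, and it needs only $C^{1+\delta}$ and bounded distortion. The Peres--Schlag machinery and the stronger assumptions (MA1)--(MA3) enter only for the \emph{parameter-dependent} measures of Theorem~\ref{thm:main_cor_dim}, where --- as the paper explains around \eqref{eq:int reps bound} --- this Fatou shortcut breaks down. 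For the same reason, your remark that (MA1)--(MA3) are needed to transfer transversality from $(\bi',\bj')$ to the full pair is too strong here: ordinary bounded distortion in $x$ already gives $|\Pi^\lam(\bi)-\Pi^\lam(\bj)|\asymp|(f^\lam_{\bi\wedge\bj})'|\cdot|\Pi^\lam(\sigma^k\bi)-\Pi^\lam(\sigma^k\bj)|$, and the level-set estimate \eqref{R38} is applied only to the second factor.
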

		
		{Claim \ref{it:proj dim cor}  of the above theorem can be seen as a result on the correlation dimension preservation under (non-linear) projections $\Pi^\lam$. In fact, claim
		\ref{it: proj dim H} can be seen in the same way for the Hausdorff dimension, as $\frac{h_{\mu}}{\chi_{\mu}(\mF^\lam)} = \Dh(\mu, d_{\lam})$. See \cite{SolTransSurv} for a recent survey on transversality methods for IFS's, which elaborates on the connections between Theorem \ref{thm:dim ac fixed mu} and Marstrand-Mattila projection theorems for orthogonal projections.}
		
		\section{Parameter-dependent measures}
		
		Whilst powerful, Theorem \ref{thm:dim ac fixed mu} is not sufficient for all applications. This includes the scenario in which one allows the measure on the symbolic space to depend on the same parameter as the IFS, i.e. studying projections $\nu_\lam := (\Pi^\lam)_* \mu_\lam$. This requires a non-trivial extension of Theorem \ref{thm:dim ac fixed mu}, and this 
		is the main subject of this note, following \cite{BSSS}. Let us now elaborate on several situations to which this setting applies. We have already encountered the first one.
		
		\subsection{Natural measures}
		
		Recall that a natural measure for an IFS $\mF$ is $\nu_{\mF} := \Pi_* \mu_{\mF}$, where $\mu_{\mF}$ is the Gibbs measure corresponding to the geometric potential $\phi_\mF$ (see Definition \ref{R25}). Consider now a parametrized family $\mF^\lam$ and a corresponding family of natural measures $\nu_\lam := \nu_{\mF^\lam}$, which in this case are projections of Gibbs measures $\mu_\lam$ corresponding to the potentials
		\[ \phi_\lam := \phi_{\mF^\lam} = \log |(f^\lam_{i_1})'(\Pi^\lam(\sigma\mathbf{i} ))|^{s_{\mF^\lam}}. \]
		Clearly, $\phi_\lam$ depends on $\lam$ and so does $\mu_\lam$ (except for some very special cases, e.g. when every $\Fk^\lam$ is self-similar with $r_1 = \ldots = r_m$). Therefore, Theorem \ref{thm:dim ac fixed mu} cannot be directly applied in this setting (although it can be used to establish typical dimension and positive Lebesgue measure result for the attractor $\Lambda_\lam$ of $\mF^\lam$ for almost every $\lam \in U$ if transversality holds, see \cite[Theorem 14.4.1]{ourbook} and its proof).

	}

	\subsection{IFS's with place dependent probabilities}
	Let $\mathcal{F}=(f_1,\dots  ,f_m)\in\Theta $ and $\mathbf{p}=(p_1,\dots  ,p_m)$
	be a probability vector. Then there exists a unique measure
	$\nu_{\mathcal{F},\mathbf{p}} $  such that the support $\text{spt}(\nu )$ of $\nu $ satisfies  $\text{spt}(\nu )=\Lambda  $  and
	\begin{equation}
		\label{R33}
		\nu_{\mathcal{F},\mathbf{p}} =\sum_{i=1}^{m }p_i\cdot\nu_{\mathcal{F},\mathbf{p}}\circ f _{i}^{-1 }
	\end{equation}
	We call $\nu_{\mathcal{F},\mathbf{p}}$  the invariant measure corresponding to $\mathcal{F}$ and $\mathbf{p}$. We can write
	\eqref{R33} in an equivalent form:
	\begin{equation}
		\label{R32}
		\int \varphi\, d\nu_{\mathcal{F},\mathbf{p}} (x)
		=
		\sum_{i=1}^{m }
		\int
		p_i\cdot \varphi(f_i(x))\,d\nu_{\mathcal{F},\mathbf{p}} (x),\qquad \forall \varphi\in \mathcal{C}(X),
	\end{equation}
	where $\mathcal{C}(X)$ is the  set of
	continuous functions on the compact non-degenerate interval $X$.
	Place dependent invariant measures are obtained by replacing in \eqref{R32} the constant $p_i$ by  positive functions $p_i(x)$ which add up to $1$ everywhere. More precisely, for every $i\in\mathcal{A}$  let $p_i:X\to (0,1)$ be a
	H\" older continuous function which is bounded away from zero, so that
	 $\sum_{i=1}^{m }p_i(x)\equiv 1$. It was proved by Fan and Lau \cite{FL}
	that there exists a unique measure $\nu$, called the \texttt{place dependent stationary measure}, satisfying
	\begin{equation}
		\label{R31}
		\int \varphi \,d\nu (x)
		=
		\sum_{i=1}^{m }
		\int
		p_i(x)\cdot \varphi(f_i(x))\,d\nu (x),\qquad \mbox{for every}\ \varphi\in \mathcal{C}(X).
	\end{equation}
	Or equivalently,
	\begin{equation}
		\label{P81}
		\nu (B)=\sum_{i=1}^{m}\int\limits_{f _{i}^{-1 }(B)}p_i(x)d\nu (x),
		\quad \text{for every Borel set }B.
	\end{equation}
	B\'ar\'any \cite{BB15} proved that the measure $\nu $
	is actually a push-forward measure of a Gibbs measure. Namely, let
	$\varphi (\mathbf{i}):=\log p_{i_1}(\Pi (\sigma \mathbf{i}))$, where
	$\Pi :\Sigma \to \Lambda $ is the natural projection. Then $\varphi :\Sigma \to\mathbb{R}$ is a H\" older continuous potential. So, by Theorem \ref{R67} there exists a unique Gibbs measure $\mu $ on $\Sigma $ for the potential $\varphi $.     It was proved in \cite[Lemma 2.2]{BB15}
	that there exist constants $a,b>0$ such that for all $\mathbf{i}\in\Sigma$,
	\begin{equation}
		\label{R24}
		a<
		\frac{\mu \left([\mathbf{i}|_n]\right)}
		{\prod \limits_{k=1 }^{n }p_{i_k}(\Pi (\sigma ^{m+1}\mathbf{i}))}
		<b\quad
		\text{ and }\ \ 
		\nu =\Pi _*\mu .
	\end{equation}
	The equation defining the place dependent invariant measure $\nu $ can be
	described by the
	\texttt{Ruelle operator}  $T_{\mathcal{F}}:\mathcal{C}(X)\to  \mathcal{C}(X)$ defined by
	\begin{equation}
		\label{R23}
		(T_{\mathcal{F}}g)(x):=\sum_{i=1}^{m }p_i(x) g(f_i(x)).
	\end{equation}
	Then  $\nu $ is the fixed point of the adjoint operator $T_{\mathcal{F}}^*:\mathcal{C}(X)^*\to \mathcal{C}(X)^*$, that is,
	\begin{equation}
		\label{R22}
		T_{\mathcal{F}}^*\nu =\nu .
	\end{equation}
	In this case, the entropy and the Lyapunov exponent are
	\begin{equation}
		\label{R30}
		h_\nu =-\int \sum_{i=1}^{ m}p_i(x)\log p_i(x)\,d\nu (x)\quad\mbox{and}\quad
		\chi_\nu =
		-\int \sum_{i=1}^{m }p_i(x)\log|f'_i(x)|\,d\nu (x).
	\end{equation}
	%Fan and Lau \cite{FL} proved that
	%\begin{equation}
	%\label{R14}
	%\dim_{\rm H}  \nu \leq \frac{h_\nu }{\chi_\nu }.
	%\end{equation}
	%Moreover,
	%if the cylinders are disjoint (\eqref{R93} holds) then
	%\begin{equation}
	%	\label{R29}
	%	\dim_{\rm H}  \nu =\frac{h_\nu }{\chi_\nu }.
	%	\end{equation}

%To study the overlapping case B\'ar\'any and Rams  \cite{BR18} considered families which satisfy the transversality condition, like the following one.

{We will  study overlapping cases in which the transversality condition holds, like the following ones.}

\subsubsection{Application: Place dependent Bernoulli convolutions}\label{R36}

Consider an IFS on the compact interval $X=\left[-1,1\right]$:
\begin{equation}
	\label{Q99}
	\Psi_{\lambda}=\left\{
	\psi_0^{\lambda}(x)=\lambda x-(1-\lambda),\
	\psi_1^{\lambda}(x)=\lambda x+(1-\lambda)\right\}
\end{equation}
with place dependent probabilities:
$$\left\{
p_0(x)=\frac{1}{2}+\rho x,\
p_1(x)=\frac{1}{2}-\rho x  \right\},\quad
x\in X.$$
The Ruelle operator $T$  acts on a continuous function $g\in \mathcal{C}(X)$ as follows:
\begin{equation*}
	Tg(x)=\Bigl( \half+\rho x  \Bigr)g\bigl(  \lambda x-(1-\lambda)  \bigr)+\Bigl( \frac{1}{2}-\rho x \Bigr)g\bigl( \lambda x+(1-\lambda)\bigr).
\end{equation*}
The fixed point $\nu _{\lambda ,\rho }$ of the Dual operator $T^*$ is  a place dependent Bernoulli convolution measure. Using \eqref{R30} we obtain
that the Lyapunov exponent and the entropy of this measure are
\begin{equation}
	\label{R18}
	\chi_{\nu _{\lambda ,\rho }}=-\log \lambda\quad
	\text{ and }\quad
	h_{\nu _{\lambda ,\rho }}
	=-
	\sum_{\varepsilon\in\left\{-1,1\right\}}
	\int_{\mathbb{R} }
	\left(\shalf+\varepsilon  \rho x\right)
	\log \left(\shalf+\varepsilon  \rho x\right)
	d \nu _{\lambda ,\rho }(x).
\end{equation}
If $0<\lambda <0.5$, then  the attractor of the IFS is a Cantor set of dimension less than one. So, we may assume that $0.5<\lambda <1$.
Shmerkin and Solomyak \cite{SS2} proved that for the parameter interval $$U:=\left(0.5,0.6684755\right)$$
the transversality condition holds. Using this, B\'ar\'any proved
\cite[Theorem 4.1]{BB15}
\begin{thm}\label{R17}
	Let $A:=\log2-\frac{2\rho ^2(1-\lambda )^2}{1+\lambda (
		4\rho (1-\lambda )	-\lambda )}$ and $B:=\frac{\rho ^2}{3(1-4\rho ^2)}$.
	Then
	$$
	\frac{A-B}{-\log\lambda }\leq
	\dim_{\rm H} \nu _{\lambda ,\rho }\leq
	\frac{A}{-\log\lambda  },\qquad
	\text{ for Lebesgue almost all } \qquad \lambda \in U.
	$$
	Moreover, $\nu _{\lambda ,\rho }$ is absolute continuous with respect to  the Lebesgue measure   for Lebesgue almost every $\lambda \in U$
	satisfying $\frac{A-B}{-\log\lambda }>1$.
\end{thm}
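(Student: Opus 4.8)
The plan is to handle the upper bound for $\dim_{\rm H}\nu_{\lambda,\rho}$ separately from the lower bound and the absolute continuity: the upper bound is ``soft'' and holds for \emph{every} $\lambda$, while the other two rest on transversality together with the parameter-dependent transversality machinery that is the main theme of this note. In all three parts the first step is to reduce to an estimate of the entropy. By \eqref{R18} the Lyapunov exponent is $\chi_{\nu_{\lambda,\rho}}=-\log\lambda$, and by \eqref{R30} the entropy of the underlying Gibbs measure equals $h_{\nu_{\lambda,\rho}}=\int_X H\!\bigl(\tfrac12+\rho x\bigr)\,d\nu_{\lambda,\rho}(x)$, where $H(t)=-t\log t-(1-t)\log(1-t)$ is the binary entropy function. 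Since $H$ is even about $t=\tfrac12$ with $H(\tfrac12)=\log 2$, $H'(\tfrac12)=0$, $H''(t)=-\tfrac1{t(1-t)}$, and since $-\tfrac{4}{1-4\rho^2}\le H''(t)\le-4$ for $t\in[\tfrac12-\rho,\tfrac12+\rho]$, Taylor's theorem gives, for all $x\in[-1,1]$,
\[
\log 2-\frac{2\rho^2x^2}{1-4\rho^2}\ \le\ H\!\bigl(\tfrac12+\rho x\bigr)\ \le\ \log 2-2\rho^2x^2 .
\]
Everything therefore comes down to the second moment $M_2:=\int_X x^2\,d\nu_{\lambda,\rho}$. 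Plugging $\varphi(x)=x^2$ into the stationarity identity \eqref{R31} (the linear term drops out, e.g.\ by the $x\mapsto-x$ symmetry of the system) gives the self-referential equation $M_2=(\lambda^2-4\rho\lambda(1-\lambda))M_2+(1-\lambda)^2$, hence $M_2=(1-\lambda)^2\big/\bigl(1+\lambda(4\rho(1-\lambda)-\lambda)\bigr)$, which is exactly the fraction in the definition of $A$. Integrating the two displayed inequalities against $\nu_{\lambda,\rho}$ yields $h_{\nu_{\lambda,\rho}}\le A$ and $h_{\nu_{\lambda,\rho}}\ge A-B$ (the true gap is $O(\rho^4)$, comfortably below $B$ for $\rho$ in the admissible range $4\rho^2<1$).

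For the upper bound I would then simply quote \eqref{eq:hdim upper bound}, which is valid for any $C^{1+\delta}$ system without any transversality assumption: $\dim_{\rm H}\nu_{\lambda,\rho}\le h_{\nu_{\lambda,\rho}}/\chi_{\nu_{\lambda,\rho}}=h_{\nu_{\lambda,\rho}}/(-\log\lambda)\le A/(-\log\lambda)$ for every $\lambda\in U$, in particular for Lebesgue-a.e.\ $\lambda\in U$.

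For the lower bound and absolute continuity, recall that by Shmerkin--Solomyak \cite{SS2} the transversality condition holds on $U=(0.5,0.6684755)$, and that by \eqref{R24} we have $\nu_{\lambda,\rho}=(\Pi^\lambda)_*\mu_{\lambda,\rho}$ with $\mu_{\lambda,\rho}$ the Gibbs measure for the H\"older potential $\varphi_\lambda(\mathbf i)=\log p_{i_1}(\Pi^\lambda(\sigma\mathbf i))$; this $\mu_{\lambda,\rho}$ genuinely depends on the same $\lambda$ as the IFS. This is precisely the parameter-dependent setting of the present paper, so I would apply the transversality method in its $\lambda$-dependent form (the fixed-measure prototype being Theorem \ref{thm:dim ac fixed mu}, the general version following \cite{BSSS}) to the family $\{(\mathcal F^\lambda,\mu_{\lambda,\rho})\}_{\lambda\in U}$, obtaining, for Lebesgue-a.e.\ $\lambda\in U$, both $\dim_{\rm H}\nu_{\lambda,\rho}=\min\{1,h_{\nu_{\lambda,\rho}}/(-\log\lambda)\}$ and $\nu_{\lambda,\rho}\ll\mathcal L^1$ whenever $h_{\nu_{\lambda,\rho}}/(-\log\lambda)>1$. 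Combining this with the entropy lower bound $h_{\nu_{\lambda,\rho}}\ge A-B$ gives $\dim_{\rm H}\nu_{\lambda,\rho}\ge(A-B)/(-\log\lambda)$ a.e., and absolute continuity whenever $(A-B)/(-\log\lambda)>1$.

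The hard part is exactly the step that makes the transversality argument go through for the $\lambda$-dependent measure $\mu_{\lambda,\rho}$. In the classical potential-theoretic proof one bounds $\int_U\!\iint|\Pi^\lambda(\mathbf i)-\Pi^\lambda(\mathbf j)|^{-s}\,d\mu(\mathbf i)\,d\mu(\mathbf j)\,d\lambda$ by swapping the order of integration and invoking \eqref{R38}; here $\mu=\mu_{\lambda,\rho}$ cannot be pulled past the $\lambda$-integral. Overcoming this needs quantitative control of $\lambda\mapsto\mu_{\lambda,\rho}$ --- available because these are Gibbs measures for potentials depending smoothly on $\lambda$ (Theorem \ref{R67}) --- together with the ``freeze $\mu$ on a short parameter window and estimate the resulting error'' scheme that is the technical core of \cite{BSSS}. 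The error one must absorb shrinks with the speed at which $\mu_{\lambda,\rho}$ moves with $\lambda$, and that speed vanishes as $\rho\to0$: for $\rho=0$ the measure $\mu_{\lambda,\rho}$ is the fixed Bernoulli measure $(\tfrac12,\tfrac12)^{\mathbb N}$, Theorem \ref{thm:dim ac fixed mu} applies verbatim, and indeed $B=B(\rho)\to0$, which is a useful consistency check on the estimates.
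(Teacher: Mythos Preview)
The paper does not prove this theorem itself; it is stated with attribution to \cite[Theorem~4.1]{BB15}, so there is no ``paper's own proof'' to compare against. That said, your outline is correct and is precisely the kind of argument the machinery of this survey is built for. The entropy computation is right: the Taylor bounds $\log 2 - \tfrac{2\rho^2 x^2}{1-4\rho^2}\le H(\tfrac12+\rho x)\le \log 2 - 2\rho^2 x^2$ are valid on $[-1,1]$, and plugging $\varphi(x)=x^2$ into \eqref{R31} does give $M_2=(1-\lambda)^2/(1+\lambda(4\rho(1-\lambda)-\lambda))$, so $h_{\nu_{\lambda,\rho}}\le A$. The upper bound then follows for every $\lambda$ from \eqref{eq:hdim upper bound}. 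For the lower bound and absolute continuity, the potential $\varphi_\lambda(\mathbf i)=\log p_{i_1}(\Pi^\lambda(\sigma\mathbf i))$ is uniformly H\"older and H\"older in $\lambda$, so Theorem~\ref{thm:main_gibbs} applies on $U$ and gives exactly what you claim. (Historically \cite{BB15} predates the general framework of \cite{BSSS}, so B\'ar\'any's original argument had to carry out the parameter-dependent step by hand for this specific family; your route via Theorem~\ref{thm:main_gibbs} is the clean modern version.)

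One small overreach: the assertion that the entropy gap is ``comfortably below $B$ for $\rho$ in the admissible range $4\rho^2<1$'' is not quite true as stated. Your lower bound is $h\ge\log 2 - \tfrac{2\rho^2 M_2}{1-4\rho^2}$, and this implies $h\ge A-B$ iff $24\rho^2 M_2\le 1$. On $U$ one has $M_2\le (1-\lambda)/(1+\lambda)<1/3$, so the implication holds for $\rho\le 1/(2\sqrt2)\approx 0.354$, but not for all $\rho<1/2$. This is cosmetic rather than structural: your estimate is still a valid (and for small $\rho$ sharper) lower bound on $h$, and the particular form of $B$ in \cite{BB15} presumably reflects a slightly different bookkeeping of the Taylor remainder. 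The substance of the argument is unaffected.
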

Using this theorem, based on the work of B\'ar\'any \cite{BB15},
it was obtained in \cite{BSSS}
that
{$\nu _{\lambda ,\rho }$ is absolutely continuous almost everywhere in the region marked ``abs.\ cont.'' in Figure \ref{R13}. In the region marked as ``singular'' the measure is singular everywhere; this was
shown in \cite{BB15} and follows from the fact the Hausdorff dimension of the measure is less than one.}
\begin{figure}[H]
	% Requires \usepackage{graphicx}
	\includegraphics[width=80mm]{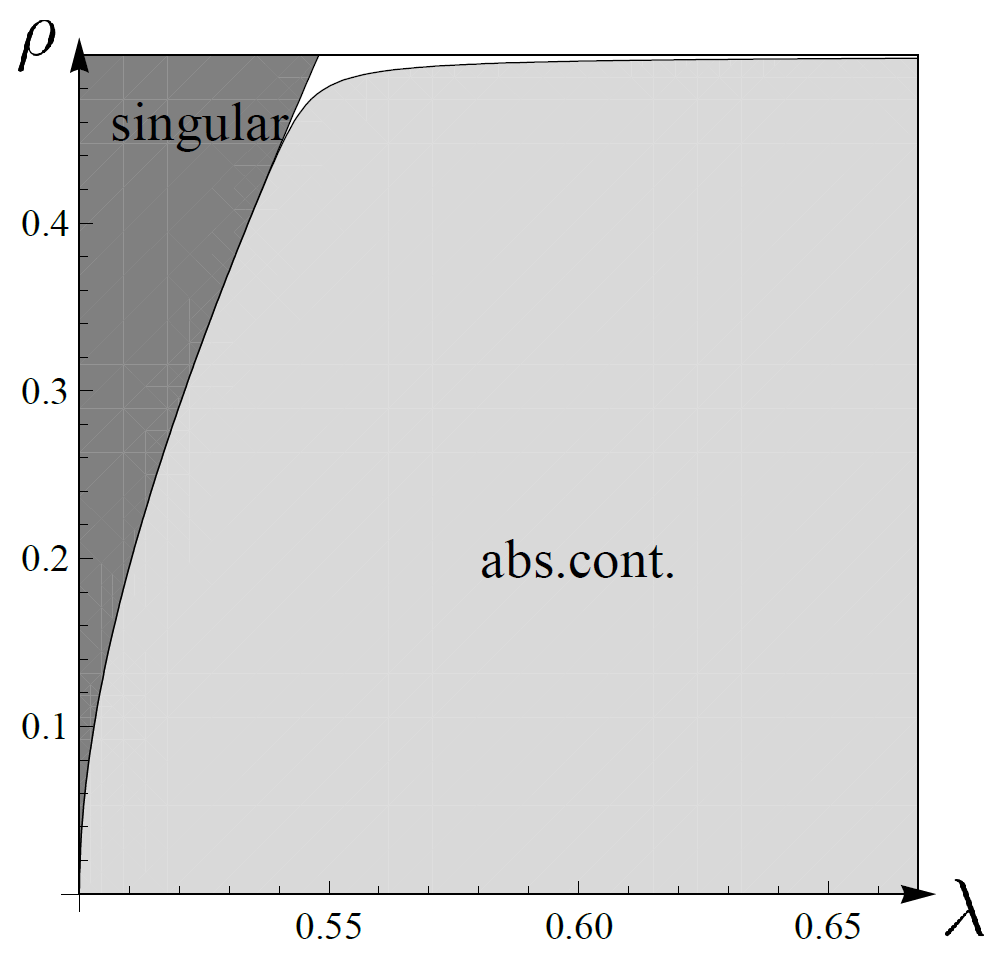}\\
	\caption{The absolute continuity and singularity regions of the measure $\nu _{\lambda ,\rho }$. }\label{R13}
\end{figure}
\subsubsection{Application: Slanted baker map}

\bigskip

Let $0<\rho <\frac{1}{2}$ and $1/2<\lambda <1$ and let us consider the following dynamical system $f_{\lambda ,\rho }:[-1,1]\times[0,1]\mapsto[-1,1]\times[0,1]$, where
\[
f_{\lambda ,\rho }(x,y)=\left\{\begin{array}{cc}
	\left(\lambda x-(1-\lambda),\frac{2y}{1+2\rho x}\right) & \text{if }0\leq y<\frac{1}{2}+\rho x \\
	\left(\lambda x+(1-\lambda),\frac{2y-2\rho x-1}{1-2\rho x}\right) & \text{if } \frac{1}{2}+\rho x\leq y\leq1.
\end{array}\right.
\]
For the action of $f_{\lambda ,\rho }$ on the rectangle $[-1,1]\times[0,1]$ see Figure~\ref{ffunction}.
\begin{figure}[H]
	% Requires \usepackage{graphicx}
	\includegraphics[width=170mm]{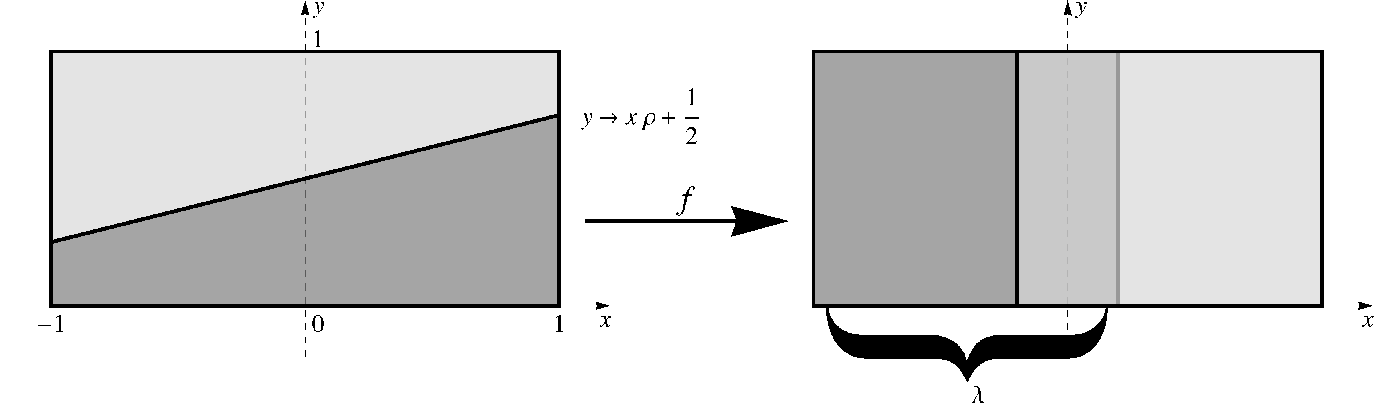}\\
	\caption{The map $f$ acting on the rectangle $[-1,1]\times[0,1]$.}\label{ffunction}
\end{figure}
It follows from \cite{pesin1992dimension} that there exists an $f_{\lambda ,\rho }$-invariant  measure $\mathfrak{m} _{\lambda ,\rho } $ called
Sinai-Bowen-Ruelle (SBR) measure  satisfying
$$
\frac{1}{n}\sum_{k=0}^{n-1 }\overline{\mathcal{L}}\circ f_{\lambda ,\rho }^{-k}\to \mathfrak{m} _{\lambda ,\rho } \quad
\text{ weakly},
$$
where $\overline{\mathcal{L}}$  is the normalized Lebesgue measure on the rectangle $[-1,1]\times [0,1]$.  Thus $\mathfrak{m} _{\lambda ,\rho }$ is absolutely continuous if and only if $\nu_{\lambda ,\rho }$ is absolutely continuous.

{\subsubsection{Furstenberg-like measures} \label{Furst-like}
	
	Another possible application is the family of Furstenberg measures, or more generally,  equilibrium measures induced by locally finite matrix cocycles. Similar measures were considered by B\'ar\'any and Rams \cite{BR18} to study the dimension of planar self-affine sets, and in particular, to provide a dimension maximizing measure. Here we consider a simple special case to demonstrate another direction of possible applications of our results.
	
	Let $\mathfrak{A}=(A_1,\ldots,A_m)$ be a tuple of $GL_2(\R)$ matrices with strictly positive entries. Similarly to Subsection~\ref{sec:fracIFS}, let us define a family of linear fractional maps induced by the matrices $A_i$. That is, let $f_i\colon[0,1]\to[0,1]$ be 
	$$
	f_i(x)=\frac{a_i x+b_i(1-x)}{(a_i+c_i)x+(b_i+d_i)(1-x)}\ \ \text{ for }\ A_i=\begin{pmatrix}
		a_i & b_i \\ c_i & d_i
	\end{pmatrix}.
	$$
	The maps $f_i$ are different from the maps defined in Subsection~\ref{sec:fracIFS}, but they have similar properties. We chose a different representation here, because it fits better with
	 the case of matrices having positive entries, since such matrices preserve the positive quadrant.
	
	It is easy to see that $\|f_i'\|=\frac{|\det(A_i)|}{\langle A_i\rangle^2}$, where $\langle A_i\rangle=\min\{a_i+c_i,b_i+d_i\}$. Let $X_\mathfrak{A}$ be the attractor and let $\Pi_\mathfrak{A}\colon\Sigma\to[0,1]$ be the natural projection of the IFS $\Phi_{\mathfrak{A}}=\{f_i\}_{i\in\mathcal{A}}$ as above. Let $\underline{v}(x)=\begin{pmatrix}
		x \\ 1-x
	\end{pmatrix}$. Then $\frac{A_i\underline{v}(x)}{\|A_i\underline{v}(x)\|_1}=\underline{v}(f_i(x))$, where $\|\cdot\|_1$ is the $1$-norm on $\R^2$.
	For every $q\in\R$, the following limit {exists:}
	$$
	P_{\mathfrak{A}}(q)=\lim_{n\to\infty}\frac{1}{n}\log\sum_{\pmb{\omega}\in\mathcal{A}^n}\|A_{\pmb{\omega}}\|^q.
	$$
	Furthermore, there exists a unique ergodic shift invariant probability measure $\mu_q$ such that for some $C>0$ and every $\pmb{\omega}\in\mathcal{A}^*$,
	$$
	C^{-1}\leq\frac{\mu_{\mathfrak{A},q}([\pmb{\omega}])}{e^{-|\pmb{\omega}|P_\mathfrak{A}(q)}\|A_{\pmb{\omega}}\|^q}\leq C,
	$$
	see Feng~\cite{Feng}. In particular, $\mu_{\mathfrak{A},q}$ is the Gibbs measure, defined in Subsection~\ref{sec:gibbs}, with respect to the potential $\bi\mapsto q\log\|A_{i_1}\underline{v}(\Pi_{\mathfrak{A}}(\sigma\bi))\|_1$.
	
	By {Oseledets'} multiplicative ergodic theorem, there exist reals $\eta_2< \eta_1$, such that
	$$
	\eta_i(\mathfrak{A})=\lim_{n\to\infty}\frac{1}{n}\log\alpha_i(A_{\bi|_n})\text{ for $\mu_{\mathfrak{A},q}$-almost every $\bi\in\Sigma$},
	$$
	which we call the Lyapunov exponents of the matrix cocycle $\mathfrak{A}$. Here $\alpha_i(A)$ denotes the $i$th singular value of $A$. Using \eqref{R54}, we get
	$$
	h_{\mu_{\mathfrak{A},q}}=P_{\mathfrak{A}}(q)-q\eta_1(\mathfrak{A})\text{ and }\chi_\mathfrak{A}=\eta_1(\mathfrak{A})-\eta_2(\mathfrak{A}),
	$$
	where $\chi_\mathfrak{A}$ denotes the Lyapunov exponent of the IFS $\Phi_{\mathfrak{A}}$.
	
	\begin{thm}
		Let $U=\left\{\mathfrak{A}=(A_1,\ldots,A_m)\in GL_2(\R_+)^m:|\det(A_i)|<\frac12\langle A_i\rangle^2\right\}$. Then for every $q\in\R$ the measures $\nu_{\mathfrak{A},q}=(\Pi_\mathfrak{A})_*\mu_{\mathfrak{A},q}$ satisfy
		\begin{enumerate}[{\rm (1) }]
			\item\label{it:app1} $\Dh \nu_{\mathfrak{A},q} = \min \left\{ \frac{P_{\mathfrak{A}}(q)-q\eta_1(\mathfrak{A})}{\eta_1(\mathfrak{A})-\eta_2(\mathfrak{A})}, 1 \right\}$ for Lebesgue-a.e. $\mathfrak{A} \in U$,
			\item\label{it:app2} $\nu_{\mathfrak{A},q} \ll \Lk^1$ for Lebesgue-a.e. $\mathfrak{A} \in U$, such that $P_{\mathfrak{A}}(q)>(q+1)\eta_1(\mathfrak{A})-\eta_2(\mathfrak{A})$.
		\end{enumerate}
	\end{thm}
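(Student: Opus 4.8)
The plan is to recognize $(\Phi_\mathfrak{A})_{\mathfrak{A}\in U}$ together with the family $\mu_{\mathfrak{A},q}$ as an instance of the setting covered by the multiparameter version of Theorem~\ref{thm:dim ac fixed mu} for parameter-dependent measures proved in this paper (following \cite{BSSS}), and then to check its hypotheses. The parameter domain is the open set $U\subset\R^{4m}$, the coordinates being the entries $(a_i,b_i,c_i,d_i)_{i\in\mathcal{A}}$; since $U$ is open and a countable union of bounded Lebesgue-null sets is null, it suffices to fix a bounded open $U_n$ with $\overline{U_n}\subset U$ compact and to establish \eqref{it:app1}--\eqref{it:app2} for a.e.\ $\mathfrak{A}\in U_n$. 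For $\mathfrak{A}$ with positive entries the denominator $(a_i+c_i)x+(b_i+d_i)(1-x)$ is $\ge\langle A_i\rangle>0$ on $[0,1]$, so $f_i^\mathfrak{A}$ is real-analytic on a fixed neighbourhood $V\supset[0,1]$, maps $[0,1]$ into its interior, and satisfies $|(f_i^\mathfrak{A})'(x)|=|\det A_i|/\|A_i\underline{v}(x)\|_1^2\le|\det A_i|/\langle A_i\rangle^2<\tfrac12$ — the last inequality being precisely the defining condition of $U$. A compactness argument over $\overline{U_n}$ then supplies $0<\gamma_1^{(n)}<\gamma_2^{(n)}<\tfrac12$ and a common $X=[0,1]$, $V$ with $\sup_i\|(f_i^\mathfrak{A})'\|_V<\tfrac12$, so that $(\Phi_\mathfrak{A})_{\mathfrak{A}\in\overline{U_n}}$ is a continuous — indeed real-analytic — family of $C^r$-smooth hyperbolic IFS's as in Definition~\ref{R21} with $\Phi_\mathfrak{A}\in\Theta_{\gamma_1^{(n)},\gamma_2^{(n)}}$. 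Thus \ref{as:MA4} holds, and real-analyticity gives \ref{as:MA123}, hence \ref{as:MA1}--(MA3) by Remark~\ref{R15}; claim~\eqref{it:app1} in fact needs only the weaker regularity used for the dimension statements, while \eqref{it:app2} uses the full set, so both are covered.

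Next I would check that the measures fit the framework. By Feng~\cite{Feng} and Theorem~\ref{R67}, $\mu_{\mathfrak{A},q}$ is the Gibbs measure for $\phi_\mathfrak{A}(\bi)=q\log\|A_{i_1}\underline{v}(\Pi_\mathfrak{A}(\sigma\bi))\|_1$; since $\Pi_\mathfrak{A}$ is $1$-Lipschitz for the adapted metric $d_\mathfrak{A}$ and $y\mapsto\log\|A_i\underline{v}(y)\|_1$ is smooth with values $\ge\log\langle A_i\rangle$, the potentials $\phi_\mathfrak{A}$ lie in $\mathscr{H}$ with Hölder data uniform over $\overline{U_n}$, and $\mathfrak{A}\mapsto\phi_\mathfrak{A}$ depends continuously on $\mathfrak{A}$ (as do $\mathfrak{A}\mapsto\Pi_\mathfrak{A}$ and $\mathfrak{A}\mapsto A_i$). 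Hence $\mathfrak{A}\mapsto\mu_{\mathfrak{A},q}$ is a parameter-dependent family of Gibbs measures of exactly the type the framework handles, just as for natural and place-dependent measures. Granting \ref{as:trans mp}, the framework then yields, for a.e.\ $\mathfrak{A}\in U_n$, both $\Dh\nu_{\mathfrak{A},q}=\min\{h_{\mu_{\mathfrak{A},q}}/\chi_\mathfrak{A},1\}$ and $\nu_{\mathfrak{A},q}\ll\Lk^1$ whenever $h_{\mu_{\mathfrak{A},q}}/\chi_\mathfrak{A}>1$. Substituting the identities $h_{\mu_{\mathfrak{A},q}}=P_\mathfrak{A}(q)-q\eta_1(\mathfrak{A})$ and $\chi_\mathfrak{A}=\eta_1(\mathfrak{A})-\eta_2(\mathfrak{A})$ recorded before the statement (and noting $\chi_\mathfrak{A}\ge-\log\gamma_2^{(n)}>0$, so there is no division by zero) turns these into \eqref{it:app1} and \eqref{it:app2}.

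The hard part is the transversality condition \ref{as:trans mp} on $U_n$, and the key observation is that the matrix parametrization contains, through every $\mathfrak{A}\in U$, an exact realization of the vertical-translate family of $\Phi_\mathfrak{A}$ of Example~\ref{R41}. For $i\in\mathcal{A}$ let $\xi_i(\mathfrak{A})\in\R^{4m}$ be the vector whose $i$-th coordinate block equals $(a_i+c_i,\ b_i+d_i,\ -(a_i+c_i),\ -(b_i+d_i))$ and whose other blocks vanish. Moving $\mathfrak{A}$ to $\mathfrak{A}+s\,\xi_i(\mathfrak{A})$ leaves $A_j$ fixed for $j\ne i$ and leaves the column sums $a_i+c_i$ and $b_i+d_i$ — hence the denominator of $f_i^\mathfrak{A}$ — unchanged, while adding $s\bigl((a_i+c_i)x+(b_i+d_i)(1-x)\bigr)$ to its numerator; therefore $f_i^{\mathfrak{A}+s\xi_i(\mathfrak{A})}=f_i^\mathfrak{A}+s$ and $f_j^{\mathfrak{A}+s\xi_i(\mathfrak{A})}=f_j^\mathfrak{A}$ $(j\ne i)$, exactly, for $|s|$ small. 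A translation does not change derivatives, so $|(f_i^\mathfrak{A})'|<\tfrac12$ persists; hence the affine $m$-plane through $\mathfrak{A}$ directed by the pairwise orthogonal (disjoint-support) vectors $\xi_1(\mathfrak{A}),\dots,\xi_m(\mathfrak{A})$ is precisely the vertical-translate family of $\Phi_\mathfrak{A}$, and Lemma~\ref{R12} applies to it, giving \ref{as:trans mp} with a constant $\eta>0$ that depends only on $\gamma_1^{(n)},\gamma_2^{(n)}$ and hence is uniform over $\overline{U_n}$. Finally, for $\bi,\bj\in\Sig$ with $i_1\ne j_1$ the derivatives of $\Pi^\mathfrak{A}(\bi)-\Pi^\mathfrak{A}(\bj)$ along $\xi_1(\mathfrak{A}),\dots,\xi_m(\mathfrak{A})$ are, up to the factors $\|\xi_i(\mathfrak{A})\|$, the coordinates of $\nabla_\lambda(\Pi^\lambda(\bi)-\Pi^\lambda(\bj))|_{\lambda=0}$ for the translate family; since these vectors are orthogonal and $\|\xi_i(\mathfrak{A})\|$ is bounded on $\overline{U_n}$,
\[
\bigl|\nabla_\mathfrak{A}\bigl(\Pi^\mathfrak{A}(\bi)-\Pi^\mathfrak{A}(\bj)\bigr)\bigr|\ \ge\ c_n\,\bigl|\nabla_\lambda\bigl(\Pi^\lambda(\bi)-\Pi^\lambda(\bj)\bigr)\bigr|_{\lambda=0}\ \ge\ c_n\,\eta
\]
whenever $|\Pi^\mathfrak{A}(\bi)-\Pi^\mathfrak{A}(\bj)|<\eta$, which is \ref{as:trans mp} on $U_n$. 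Applying the framework on each $U_n$ and taking the union over $n$ completes the proof. The only genuinely new ingredient is the choice of the directions $\xi_i(\mathfrak{A})$: they reduce transversality for the full $4m$-parameter family to Lemma~\ref{R12}, and the role of the domain $U$ is exactly to guarantee the contraction ratio $<\tfrac12$ that Lemma~\ref{R12} requires.
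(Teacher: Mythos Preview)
Your argument is correct and rests on the same key observation as the paper's proof: the direction $\xi_i(\mathfrak{A})=(a_i+c_i,\,b_i+d_i,\,-(a_i+c_i),\,-(b_i+d_i))$ in the $i$-th coordinate block realises the vertical translate $f_i\mapsto f_i+s$ exactly, so the $m$-plane through $\mathfrak{A}$ spanned by the $\xi_i$ is the translate family of Example~\ref{R41}. The paper packages this as an equivalence relation $A\sim A'$ (your $\xi_i$ is precisely the tangent direction of that relation), applies Theorem~\ref{thm:main_gibbs} \emph{separately on each equivalence class} (an $m$-parameter family), and then concludes for a.e.\ $\mathfrak{A}\in U$ by Fubini's theorem. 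You instead push the translate transversality up to the full $4m$-dimensional gradient via orthogonality of the $\xi_i$, establishing \ref{as:trans mp} globally on $U_n$ and applying Theorem~\ref{thm:main_gibbs} once. Your route is more direct and avoids the measurability bookkeeping implicit in the Fubini step, but it requires the transversality constant $\eta$ for the translate family to be \emph{uniform} over $\overline{U_n}$; you assert this depends only on $\gamma_1^{(n)},\gamma_2^{(n)}$, which is true (the standard proof of Lemma~\ref{R12} gives a bound in terms of $\gamma_2$ alone), but it is worth saying why rather than just citing Lemma~\ref{R12}, whose statement does not make the dependence explicit. The paper's leafwise-plus-Fubini argument sidesteps this uniformity issue entirely. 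One further small point: Theorem~\ref{thm:main_gibbs} asks that $\lambda\mapsto\phi^\lambda$ be H\"older in the supremum norm, not merely continuous; your potentials are in fact Lipschitz in $\mathfrak{A}$ on $\overline{U_n}$ (everything in $\phi_\mathfrak{A}(\bi)=q\log\|A_{i_1}\underline{v}(\Pi_\mathfrak{A}(\sigma\bi))\|_1$ is smooth with uniform bounds there), so this is easily upgraded.
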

	
	\begin{proof}
		The strategy of the proof is to decompose $U$ into measurable subsets on which the IFS $\Phi_\mathfrak{A}$ satisfies the transversality condition under some natural parametrization, and then to apply Theorem~\ref{thm:main_gibbs}.
		
		Let us define an equivalence relation on $GL_2(\R_+)$ as
		$$
		\begin{pmatrix}
			a & b \\ c & d
		\end{pmatrix}\sim\begin{pmatrix}
			a' & b' \\ c' & d'
		\end{pmatrix}\text{ if there exists $t\in\R$ such that }\begin{pmatrix}
			a+t(a+c) & b+t(b+d) \\ c-t(a+c) & d-t(b+d)
		\end{pmatrix}=\begin{pmatrix}
			a' & b' \\ c' & d'
		\end{pmatrix}.
		$$
		Simple algebraic manipulations show that if $A\sim A'$ then $\det(A)=\det(A')$ and $\langle A\rangle=\langle A'\rangle$. Hence, the relation can be naturally extended to $U$ by $(A_1,\ldots,A_m)\sim(A_1',\ldots,A_m')$ if $A_i\sim A_i'$ for every $i=1,\ldots,m$. Moreover, if $\mathfrak{A}\sim\mathfrak{A}'$ then if $\Phi_\mathfrak{A}=\{f_i(x)\}_{i\in\mathcal{A}}$ then there exists $(t_1,\ldots,t_m)\in\R^m$ such that $\Phi_{\mathfrak{A}'}=\{f_i(x)+t_i\}_{i\in\mathcal{A}}$. Hence, by Theorem~\ref{thm:main_gibbs}, 
		in view of the claim in Example~\ref{R41},
		for every fixed $\mathfrak{A}\in U$ and for almost every $\mathfrak{A}'\sim\mathfrak{A}$, \ref{it:app1} and \ref{it:app2} hold. The claim of the theorem then follows by Fubini's theorem.
	\end{proof}
}

%\hk{\r Continue from here}

{ \section{Main results: projections of parameter-dependent measures}\label{sec:main}

In this section we state extensions of Theorem \ref{thm:dim ac fixed mu} to the case of projections of parameter-dependent measures $\nu_\lam = (\Pi^\lam)_* \mu_\lam$, which  cover cases considered in the previous section. The next result is an extension of \cite[Theorems 3.1 and 3.3]{BSSS} to the multiparameter case.

\begin{thm}\label{thm:main_gibbs}
	Let $\{f_j^\lam\}_{j \in \Ak}$ be a parametrized IFS satisfying smoothness assumptions \ref{as:MA1} - \ref{as:MA4} and the transversality condition \ref{as:trans mp} on $U$. Let $\left\{ \mu_\lam \right\}_{\lam \in \ov{U}}$ be a family of Gibbs measures on $\Sig$ corresponding to a family of uniformly H\"older potentials $\phi^\lambda\colon\Sig\mapsto\R$ (i.e. there exists $0<\alpha<1$ and $b>0$ with $\sup_{\lambda\in \ov{U}}\mathrm{var}_k(\phi^\lambda)\leq b\alpha^k$), such that $\lam \mapsto \phi^\lambda$ is H\"older continuous in the supremum norm. Then the
	measures $\nu_\lam = (\Pi^\lam)_* \mu_\lam$ satisfy
	\begin{enumerate}[{\rm (1)}]
		\item $\Dh \nu_\lam = \min \left\{ \frac{h_{\mu_{\lam}}}{\chi_{\mu_\lam}(\mF^\lam)}, 1 \right\}$ for $\Lk^d$-a.e. $\lam \in U$,
		\item $\nu_\lam \ll \Lk^1$ for $\Lk^d$-a.e. $\lam \in U$ such that $\frac{h_{\mu_\lam}}{\chi_{\mu_\lam}(\mF^\lam)} > 1$.
	\end{enumerate}
\end{thm}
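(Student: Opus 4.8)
The plan is to establish the lower bounds, since both upper bounds are immediate: \eqref{eq:hdim upper bound} gives $\Dh\nu_\lam\le h_{\mu_\lam}/\chi_{\mu_\lam}(\mF^\lam)$ for every $\lam$, and $\Dh\nu_\lam\le1$ because $\nu_\lam$ is carried by a bounded interval. So it remains to show, for $\Lk^d$-a.e.\ $\lam\in U$, that $\Dh\nu_\lam\ge\min\{h_{\mu_\lam}/\chi_{\mu_\lam}(\mF^\lam),1\}$ and that $\nu_\lam\ll\Lk^1$ whenever this quantity exceeds $1$. I would do this by the transversality method, following the one-parameter treatment in \cite{BSSS}; the two delicate points are that $\mu_\lam$ varies with $\lam$ (already present for $d=1$) and that the parameter is multidimensional (the new feature here, which only affects the form of the transversality estimates).

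The first step is to localize. Since the potentials $\phi^\lambda$ are uniformly H\"older and $\lam\mapsto\phi^\lambda$ is H\"older in the supremum norm, the functions $\lam\mapsto P(\phi^\lambda),\ h_{\mu_\lam},\ \chi_{\mu_\lam}(\mF^\lam)$ are continuous, and it suffices to argue, for each $\eps>0$, on an arbitrarily small piece $U'\subset U$ with a reference parameter $\lam_*$, on which the uniform Gibbs bounds (Theorem~\ref{R67}) and the distortion estimate in the parameter (the analogue of Lemma~\ref{R60}) give, for every $\pmb\kappa\in\mathcal A^n$ and $\lam\in U'$, $\mu_\lam([\pmb\kappa])\asymp\mu_{\lam_*}([\pmb\kappa])$ and $|f^\lam_{\pmb\kappa}(X)|\asymp|f^{\lam_*}_{\pmb\kappa}(X)|$ up to factors $e^{\pm n\eps}$. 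The second step is to restrict $\mu_\lam$ to a uniform set: for $N$ large let $A^\lam_N\subset\Sig$ consist of those $\bi$ for which $-\tfrac1n\log\mu_\lam([\bi|_n])$ and $-\tfrac1n\log|f^\lam_{\bi|_n}(X)|$ lie within $\eps$ of $h_{\mu_\lam}$ and $\chi_{\mu_\lam}(\mF^\lam)$ for all $n\ge N$; by the Shannon--McMillan--Breiman and Birkhoff theorems applied to $\mu_\lam$, $\mu_\lam(A^\lam_N)\to1$ as $N\to\infty$. This restriction serves two purposes: on $A^\lam_N$ the measure is ``locally uniform'', which is needed because a plain correlation/energy estimate would only recover $\min\{1,\dim_{cor}(\mu_\lam,d_\lam)\}$, strictly below $h_{\mu_\lam}/\chi_{\mu_\lam}(\mF^\lam)$ for a generic Gibbs measure; and, although $A^\lam_N$ moves with $\lam$, the event $[\pmb\kappa]\cap A^\lam_N\ne\emptyset$ forces the \emph{$\lam$-independent} conditions $\mu_{\lam_*}([\pmb\kappa])\ge e^{-n(h_{\mu_{\lam_*}}+C\eps)}$ and $|f^{\lam_*}_{\pmb\kappa}(X)|\ge e^{-n(\chi_{\mu_{\lam_*}}(\mF^{\lam_*})+C\eps)}$, which yield $\lam$-independent bounds on $\mu_\lam|_{A^\lam_N}([\pmb\kappa])$ and on the number of admissible words in $\mathcal A^n$ — precisely what makes the subsequent integration in $\lam$ legitimate despite $\mu_\lam$ and $A^\lam_N$ both depending on $\lam$ (and distinct $\mu_\lam$ being mutually singular).

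Transversality enters through the equivalent form \eqref{R38} of \ref{as:trans mp}: combining it with the factorisation $\Pi^\lam(\bi)-\Pi^\lam(\bj)=f^\lam_{\bi\wedge\bj}\big(\Pi^\lam\sigma^{|\bi\wedge\bj|}\bi\big)-f^\lam_{\bi\wedge\bj}\big(\Pi^\lam\sigma^{|\bi\wedge\bj|}\bj\big)$ and bounded distortion in $x$ gives, for all $\bi\ne\bj$, $r>0$, that $\Lk^d\{\lam\in U':|\Pi^\lam(\bi)-\Pi^\lam(\bj)|<r\}\le C\min\{1,\,r/|f^{\lam_*}_{\bi\wedge\bj}(X)|\}$ up to a factor subexponential in $|\bi\wedge\bj|$ (the multiparameter version of the transversality of order $\beta$ of Lemma~\ref{prop_beta trans}, with $\beta=1$; order $\beta>1$ would be the tool for sharper $L^2$/Sobolev-type statements). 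I would then estimate the correlation integral of $\nu^{A^\lam_N}_\lam:=(\Pi^\lam)_*(\mu_\lam|_{A^\lam_N})\le\nu_\lam$ by splitting $\Sig\times\Sig$ according to $|\bi\wedge\bj|$, bounding the measure factors by the $\lam$-independent quantities above, and integrating over $\lam\in U'$ with the transversality estimate; the resulting double geometric sum collapses (after letting $\diam U'\to0$, $N\to\infty$, $\eps\to0$) to
\[
\int_{U'}\ \iint_{A^\lam_N\times A^\lam_N}\mathbf 1\{|\Pi^\lam\bi-\Pi^\lam\bj|<r\}\,d\mu_\lam(\bi)\,d\mu_\lam(\bj)\,d\lam\ \le\ C\,r^{\,\min\{1,\,h_{\mu_{\lam_*}}/\chi_{\mu_{\lam_*}}(\mF^{\lam_*})\}-C\eps},
\]
with $C$ independent of $r$. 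By Fatou's lemma, Borel--Cantelli along $r_k=2^{-k}$, and Chebyshev in $\lam$, this yields $\liminf_{r\to0}\log\nu^{A^\lam_N}_\lam(B(x,r))/\log r\ge\min\{1,h_{\mu_{\lam_*}}/\chi_{\mu_{\lam_*}}(\mF^{\lam_*})\}-C\eps$ for $\nu^{A^\lam_N}_\lam$-a.e.\ $x$ and $\Lk^d$-a.e.\ $\lam\in U'$; by the mass distribution principle $\Dh\nu^{A^\lam_N}_\lam\ge\min\{1,h_{\mu_{\lam_*}}/\chi_{\mu_{\lam_*}}(\mF^{\lam_*})\}-C\eps$, hence $\Dh\nu_\lam$ is at least as large since $\nu^{A^\lam_N}_\lam\le\nu_\lam$ has positive mass; letting $\diam U'\to0$ and $\eps\to0$ proves part~(1). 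For part~(2), working on a piece with $h_{\mu_{\lam_*}}/\chi_{\mu_{\lam_*}}(\mF^{\lam_*})\ge1+c$, the same computation with an extra $1/(2r)$ stays bounded as $r\to0$ (the gap $c$ absorbs the $C\eps$ loss), so $\liminf_{r\to0}\nu^{A^\lam_N}_\lam(B(x,r))/(2r)<\infty$ a.e., hence $\nu^{A^\lam_N}_\lam\ll\Lk^1$; letting $N\to\infty$ (so $\mu_\lam(A^\lam_N)\uparrow1$) gives $\nu_\lam\ll\Lk^1$.

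I expect the main obstacle to be exactly this interplay: getting past the correlation-dimension barrier forces the restriction to the uniform sets $A^\lam_N$, but since both $\mu_\lam$ and $A^\lam_N$ depend on the parameter — and different Gibbs measures $\mu_\lam$ are typically mutually singular — reconciling this restriction with Fubini in $\lam$ is the delicate step; the ``$\lam$-independent admissibility'' device is the mechanism, and its verification rests on the uniform H\"older control of $\{\phi^\lambda\}$ together with the uniform Gibbs bounds. The remaining, more technical, work is to establish the multiparameter transversality estimates from \ref{as:trans mp} via the gradient chain rule in $\R^d$ and the distortion bounds in $x$ and in $\lam$ furnished by \ref{as:MA1}--\ref{as:MA4}, and to carry out the geometric-series bookkeeping uniformly in $n$; in the one-parameter case all of this is done in \cite{BSSS}, and the passage to $d>1$ only requires reading those estimates in vector form.
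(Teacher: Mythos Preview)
Your treatment of part~(1) is essentially the paper's: localize, pass to a common Egorov set for a reference parameter, use \ref{as:measure_cont}-type bounds to make cylinder measures $\lam$-comparable, and run the standard transversality energy estimate. That is fine.

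The gap is in part~(2). Your key sentence is ``the gap $c$ absorbs the $C\eps$ loss'', but it does not. Trace where the $\eps$-losses enter. Some of them live in the exponents of the geometric sum over the prefix length $n$ (coming from the Egorov bounds and the parametric distortion in $|f^\lam_{\mathbf u}(X)|$); those are indeed harmless when $h_*/\chi_*\ge 1+c$. But there is another loss that is \emph{not} in those exponents: to apply Fubini and then transversality you must freeze $(\bi,\bj)$, and since $d\mu_\lam$ is $\lam$-dependent (indeed the $\mu_\lam$ are typically mutually singular) the only way to do this is to discretize $(\bi,\bj)$ to a depth $L$ large enough that the event $\{|\Pi^\lam\bi-\Pi^\lam\bj|<r\}$ becomes a union of level-$L$ cylinders, and then compare $\mu_\lam$ to $\mu_{\lam_*}$ on those cylinders. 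This forces $L\gtrsim \log(1/r)$, and the measure comparison contributes a multiplicative factor $e^{O(\eps L)}=r^{-C'\eps}$ in front of the whole sum. Your ``$\lam$-independent admissibility'' device controls $\mu_\lam([\mathbf u])$ for $|\mathbf u|=n$, but the event at scale $r$ depends on $\bi,\bj$ far beyond depth $n$; bounding the measure of the prefix alone cannot substitute for discretization at depth $\sim\log(1/r)$. The upshot is $I(r)\lesssim r^{1-C'\eps}$, so $I(r)/(2r)\to\infty$, and the differentiation argument fails. Since $\eps$ is tied to the diameter of $U'$ and cannot be sent to $0$ without shrinking $U'$ to a point, there is no room to remove this loss within your scheme. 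The paper identifies exactly this obstruction (Section~5.1, see \eqref{eq:int reps bound}) and circumvents it via the Peres--Schlag Sobolev machinery: working with $\|\nu_\lam\|_{2,\gamma}^2$ for a small $\gamma>0$, the analogous error $2^{O(\xi j)}$ at frequency scale $2^j$ is absorbed into the already-present factor $2^{2\gamma j}$, at the cost of proving only $\dim_S\nu_\lam>1$ rather than an endpoint $L^2$ bound. The additional ingredient needed is that the restriction $\mu_\lam|_A$ to a carefully built Egorov set still satisfies the strong continuity \ref{as:measure} (Proposition~\ref{prop:maingibbs}); this is where the H\"older dependence of the potentials, not merely continuity, is used.

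A smaller point: your remark on ``transversality of order $\beta$ with $\beta=1$'' is inverted. In the paper's convention (following \cite{PS00}), $\beta\in(0,1)$ is a degradation parameter and one wants $\beta$ arbitrarily \emph{small}; it enters the final estimate as the factor $(1+a_0\beta)$ in the energy exponent, and the ``basic'' transversality \eqref{R38} corresponds morally to $\beta=0$ on pairs with $i_1\ne j_1$.
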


Recall that instead of conditions \ref{as:MA1} - \ref{as:MA4}, it suffices to assume \ref{as:MA123} and \ref{as:MA4}, see Remark \ref{R15}.

Actually, the above theorem is a consequence of results valid for general measures (not necessarily Gibbs), satisfying certain regularity properties of the dependence $\lam \mapsto \mu_\lam$. Let us introduce now those conditions and state more general versions of the theorem.

\begin{defn} Let $\left\{ \mu_\lam \right\}_{\lam \in \ov{U}}$ be a collection of finite Borel measures on $\Sig$. We define the following continuity conditions for $\mu_{\lam}$:

\medskip

\begin{enumerate}[label={(M0)}]
	\item\label{as:measure_cont} for every $\lambda_0$ and every $\eps>0$ there exist $C,\xi>0$ such that
	\[
	C^{-1} e^{-\eps|\omega|} \mu_{\lambda_0}([\omega]) \leq\mu_{\lambda}([\omega])\leq C  e^{\eps|\omega|} \mu_{\lambda_0}([\omega])
	\]
	holds for every $\om \in \Sig^*,\ |\om| \geq 1$ and $\lam \in \ov{U}$ with $|\lam - \lam_0|<\xi$;
\end{enumerate}

\medskip

\begin{enumerate}[label={(M)}]
	\item\label{as:measure} there exists $c>0$ and  $\theta \in (0,1]$ such that for all $\omega\in\Sig^*,\ |\om|\geq 1$, and all $\lambda,\lambda' \in \ov{U}$,
	\[
	e^{-c|\lambda-\lambda'|^\theta|\omega|} \mu_{\lambda'}([\omega]) \leq\mu_{\lambda}([\omega])\leq e^{c|\lambda-\lambda'|^\theta|\omega|} \mu_{\lambda'}([\omega]).
	\]
\end{enumerate}
\end{defn}
Note that \ref{as:measure} implies \ref{as:measure_cont}. The following is the dimension part of the result. It is a multiparameter version of \cite[Theorem 3.1]{BSSS}. Recall that $d_\lam$ is the metric on $\Sig$, corresponding to $\Fk^\lam$, defined in \eqref{R78}.

\begin{thm}\label{thm:main_hausdorff}
	Let $\{f_j^\lam\}_{j \in \Ak}$ be a {continuous family of $C^{1 + \delta}$-smooth hyperbolic IFS's on the compact interval X}, satisfying \ref{as:MA4} and the transversality condition  \ref{as:trans mp} on $U$. Let $\left\{ \mu_\lam \right\}_{\lam \in \ov{U}}$ be a collection of finite ergodic shift-invariant Borel measures on $\Sig$ satisfying \ref{as:measure_cont}. Then
	\begin{enumerate}[{\rm (1)}]
	\item\label{it: hdim par meas} $\Dh(\nu_\lam) = \min\left\{1, \frac{h_{\mu_\lam}}{\chi_{\mu_\lam}(\Fk^\lam)}\right\}$ for $\Lk^d$-a.e. $\lam \in U$.
	\end{enumerate}
	Moreover, the following holds for any family of finite Borel measures on $\Sig$ satisfying \ref{as:measure_cont}:
	\begin{enumerate}[{\rm (2)}]%[start=2][{\rm (1)}]
	\item\label{it:dim cor par meas} $\dim_{cor}(\nu_\lam) = \min\left\{1, \dim_{cor}(\mu_\lam, d_\lam)\right\}$ for $\Lk^d$-a.e. $\lam \in U$.
	\end{enumerate}
\end{thm}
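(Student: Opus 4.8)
The plan is to prove, for every $\lam\in\ov U$, the upper bounds $\Dh(\nu_\lam)\le\min\{1,h_{\mu_\lam}/\chi_{\mu_\lam}(\Fk^\lam)\}$ and $\dim_{cor}(\nu_\lam)\le\min\{1,\dim_{cor}(\mu_\lam,d_\lam)\}$, and the matching lower bounds for $\Lk^d$-a.e.\ $\lam$; only the latter use transversality. The upper bounds are soft. The first is \eqref{eq:hdim upper bound}. For the second, observe that $\Pi^\lam\colon(\Sig,d_\lam)\to\R$ is $1$-Lipschitz, since $\Pi^\lam\bi$ and $\Pi^\lam\bj$ both lie in the cylinder $X^\lam_{\bi\wedge\bj}$, of diameter $|X^\lam_{\bi\wedge\bj}|=d_\lam(\bi,\bj)$; a $1$-Lipschitz map does not increase the correlation dimension of a measure, so $\dim_{cor}(\nu_\lam)\le\dim_{cor}(\mu_\lam,d_\lam)$, and any measure on $\R$ has correlation dimension at most $1$.

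For the lower bounds I would run the transversality (Peres--Schlag type) scheme, adapted to the fact that $\mu_\lam$ varies with $\lam$. First reduce to a local statement: cover $U$ by countably many small balls $B=B(\lam_0,\xi)$ and on each invoke \ref{as:measure_cont} with centre $\lam_0$ and a small $\eps>0$, so that $C^{-1}e^{-\eps|\om|}\mu_{\lam_0}([\om])\le\mu_\lam([\om])\le Ce^{\eps|\om|}\mu_{\lam_0}([\om])$ for all $\lam\in B$, $\om\in\Sig^*$. The single analytic input is the transversality estimate: for $\bi\ne\bj$ with common prefix $\om=\bi\wedge\bj$ and distinct following symbols $a,b$ (so $\bi=\om a\bi^*$, $\bj=\om b\bj^*$), the Bounded Distortion Property \eqref{R59} gives $|\Pi^\lam\bi-\Pi^\lam\bj|\ge c\,|X^\lam_\om|\,|\Pi^\lam(a\bi^*)-\Pi^\lam(b\bj^*)|$, while \eqref{R38} (applicable since the first symbols differ) gives $\int_B|\Pi^\lam(a\bi^*)-\Pi^\lam(b\bj^*)|^{-q}\,d\lam\le K_q<\infty$ for every $q<1$, uniformly in $a,b,\bi^*,\bj^*$.

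For claim \ref{it:dim cor par meas} I would use the potential-theoretic method: for each rational $q<1$ it suffices to show $\int_{E}I_q(\nu_\lam)\,d\lam<\infty$, where $E\subset B$ is the set where $\dim_{cor}(\mu_\lam,d_\lam)>q$ (first subdividing $E$ so that $(\mu_\lam\times\mu_\lam)\{d_\lam<r\}\le r^{q'}$ with a fixed $q'>q$ for all small $r$). Writing $I_q(\nu_\lam)=\iint_{\Sig^2}|\Pi^\lam\bi-\Pi^\lam\bj|^{-q}\,d\mu_\lam(\bi)\,d\mu_\lam(\bj)$ and splitting the double integral according to the dyadic scale $d_\lam(\bi,\bj)\asymp 2^{-j}$ — on which $|X^\lam_\om|\asymp 2^{-j}$ and hence $|\Pi^\lam\bi-\Pi^\lam\bj|\gtrsim 2^{-j}|\Pi^\lam(a\bi^*)-\Pi^\lam(b\bj^*)|$ — the transversality estimate bounds the $\lam$-integral of the $j$-th piece, modulo the decoupling discussed below, by $\lesssim 2^{jq}(\mu_{\lam_0}\times\mu_{\lam_0})\{d_{\lam_0}\lesssim 2^{-j}\}\lesssim 2^{-j(q'-q)}$, which is summable once $\eps$ is small relative to $q'-q$. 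Claim \ref{it: hdim par meas} needs more, since the energy method only yields $\Dh(\nu_\lam)\ge\min\{1,\dim_{cor}(\mu_\lam,d_\lam)\}$, possibly strictly below $h_{\mu_\lam}/\chi_{\mu_\lam}$. Here I would use ergodicity: by Egorov's theorem for $\mu_{\lam_0}$ (Shannon--McMillan--Breiman and Birkhoff), together with \ref{as:measure_cont} and $C^1$-continuity of $\lam\mapsto\Fk^\lam$, pass to a ``good'' set $\Sig_N\subset\Sig$ with $\mu_{\lam_0}(\Sig_N)$ close to $1$ on which $\mu_\lam([\bi|_n])\asymp e^{-nh_0}$ and $|X^\lam_{\bi|_n}|\asymp e^{-n\chi_0}$ for all $n\ge N$, $\lam\in B$, with $h_0,\chi_0$ being $O(\eps)$-close to $h_{\mu_\lam},\chi_{\mu_\lam}(\Fk^\lam)$; it then suffices to bound from below the dimension of the nonzero submeasure $(\Pi^\lam)_*(\mu_\lam|_{\Sig_N})$ of $\nu_\lam$. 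Estimating $\nu_\lam(B(\Pi^\lam\bi,e^{-n\chi_0}))$ by grouping contributing length-$n$ cylinders $[\om]$ by the length $k<n$ of $\om\wedge\bi|_n$, the $k$-th group contributes at most (the number of good words, $\asymp e^{(n-k)h_0}$) times (the $\mu_\lam$-mass of a ball of radius $\asymp e^{-(n-k)\chi_0}$ about the shifted point $\Pi^\lam(\sigma^k\bi)$); the transversality estimate plus a Borel--Cantelli argument over a geometric sequence of scales then gives, for $\Lk^d$-a.e.\ $\lam$ and $\mu_{\lam_0}$-a.e.\ $\bi$, a self-improving recursive inequality forcing $\nu_\lam(B(\Pi^\lam\bi,r))\lesssim r^{\min\{1,h_0/\chi_0\}-O(\eps)}$ for small $r$; letting $\eps\downarrow0$ along the countable family of balls finishes both claims.

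The step I expect to be the main obstacle is decoupling the $\lam$-integration from the $\lam$-dependent measures $\mu_\lam$: the transversality bounds above are clean only after the integrand has been frozen in $\bi,\bj$ and the $\mu_\lam$-integrations replaced by integration against the fixed $\mu_{\lam_0}$, but $d\mu_\lam(\bi)\,d\mu_\lam(\bj)\,d\lam$ is not a product measure, so Fubini does not apply directly. This is precisely what \ref{as:measure_cont} is for — on each small ball one trades $\mu_\lam$ for $\mu_{\lam_0}$ at the cost of multiplicative errors $e^{\pm\eps\ell}$ tied to the combinatorial scale $\ell$ — but these errors, and the induced $O(\eps)$-perturbations of $h_{\mu_\lam}$, $\chi_{\mu_\lam}$ and $\dim_{cor}(\mu_\lam,d_\lam)$, must be carried through the whole argument, which already in the single-parameter fixed-measure case (Theorem~\ref{thm:dim ac fixed mu}) is delicate, and here must additionally be run with $d$-dimensional gradients via \eqref{R38}. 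The remaining bookkeeping — measurability of the exceptional sets, uniformity of constants over the covering of $U$, and the passage through a countable sequence of $\eps\downarrow0$ — is routine but must be done with care.
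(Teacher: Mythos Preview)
Your overall strategy --- local reduction via \ref{as:measure_cont}, freezing $\mu_\lam$ at $\mu_{\lam_0}$ with multiplicative errors $e^{\pm\eps n}$, and using the Egorov set for item~\ref{it: hdim par meas} --- matches the paper. The upper bounds and the ``decoupling'' discussion are exactly right.

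For item~\ref{it:dim cor par meas}, the paper packages your direct energy computation as a separate continuity statement (Proposition~\ref{prop:dim cor cont}): the maps $\lam\mapsto\dim_{cor}(\mu_\lam,d_\Fk)$ and $\lam\mapsto\dim_{cor}((\Pi^\Fk)_*\mu_\lam)$ are equicontinuous over $\Fk\in\{\Fk^\lam\}$, proved via essentially the same dyadic splitting plus \ref{as:measure_cont}. It then deduces item~\ref{it:dim cor par meas} directly from the fixed-measure result Theorem~\ref{thm:dim ac fixed mu}\ref{it:proj dim cor} by a two-line $\eps$-argument. This is equivalent to your direct approach but cleaner: it separates the ``\ref{as:measure_cont} handles the $\lam$-dependence of $\mu_\lam$'' step from the transversality step.

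For item~\ref{it: hdim par meas} the paper takes a simpler route than your Borel--Cantelli/recursive scheme. It restricts to the common Egorov set $A$ (built from $\mu_{\lam_0}$, but with $\mu_\lam(A)>0$ for nearby $\lam$ via \ref{as:measure_cont}), and then runs \emph{the same energy integral} as in~\ref{it:dim cor par meas} on the restricted measures $\widetilde\mu_\lam:=\mu_\lam|_A$: one shows $\int_{U'}\iint|\Pi^\lam\om-\Pi^\lam\tau|^{-\alpha}\,d\widetilde\mu_\lam\,d\widetilde\mu_\lam\,d\lam<\infty$ for $\alpha$ up to $\min\{1,h_{\mu_{\lam_0}}/\chi_{\mu_{\lam_0}}\}-O(\eps)$, using that on $A$ the cylinders satisfy $\mu_{\lam_0}([\om|_n])\lesssim e^{-n(h-\eps)}$ and $|X^\lam_{\om|_n}|\gtrsim e^{-n(\chi+\eps)}$. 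The conclusion then comes from the chain $\Dh\nu_\lam\ge\Dh(\Pi^\lam)_*\widetilde\mu_\lam\ge\dim_{cor}(\Pi^\lam)_*\widetilde\mu_\lam$. This unifies both parts under one energy argument and sidesteps a subtlety in your sketch: your recursive ball estimate for $\nu_\lam(B(\Pi^\lam\bi,\cdot))$ invokes the mass of balls around the \emph{shifted} points $\Pi^\lam(\sigma^k\bi)$, but $\sigma^k\bi$ need not lie in the Egorov set $\Sig_N$, so the recursion does not obviously close without extra work (e.g.\ a quasi-Bernoulli property you have not assumed). The paper's energy method needs no such shift-invariance of the good set.
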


{Note that we assume weaker regularity assumptions on the IFS than in \cite[Theorem 3.1]{BSSS} ($C^{1 + \delta}$ instead of $C^{2 + \delta}$). Moreover, we do not assume continuity of $\lam \mapsto h_{\mu_\lam}$ and $\lam \mapsto \chi_\mu(\Fk^\lam)$.}

The most general version of the absolute continuity result is as follows. See Section \ref{R79} for the definitions of correlation and Sobolev dimensions, denoted $\dim_{cor}$ and $\dim_S$ respectively.

\begin{thm}\label{thm:main_cor_dim}	Let $\{f_j^\lam\}_{j \in \Ak}$ be a parametrized IFS satisfying smoothness assumptions \ref{as:MA1} - \ref{as:MA4} and the transversality condition \ref{as:trans mp} on $U$. Let $\left\{ \mu_\lam \right\}_{\lam \in \ov{U}}$ be a collection of finite Borel measures on $\Sig$ satisfying \ref{as:measure}. Then
	\begin{equation} \label{eq:main_cor_dim} \dim_S(\nu_\lam) \geq \min \left\{\dim_{cor}(\mu_\lam, d_\lam), 1 + \min\{\delta, \theta \} \right\}  \end{equation}
	for Lebesgue almost every $\lam \in U$, where $d_\lam$ is the metric on $\Sig$ defined in \eqref{R78} corresponding to $\Fk^\lam$ {and $\delta, \theta$ are from assumptions \ref{as:MA1} - \ref{as:MA4} and \ref{as:measure} respectively.}
\end{thm}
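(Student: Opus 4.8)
The plan is to reduce the statement to a Fourier‑energy estimate on small parameter balls and then run a Peres–Schlag type transversality argument, with the novelty being that the parameter‑dependence of $\mu_\lam$ must be absorbed through assumption~\ref{as:measure}. Recall (Section~\ref{R79}) that $\dim_S(\nu_\lam)\ge s$ is equivalent to $\int_\R |\widehat{\nu_\lam}(\xi)|^2(1+|\xi|)^{s-1}\,d\xi<\infty$. Since the right‑hand side of \eqref{eq:main_cor_dim} depends measurably on $\lam$, I would first cover $U$ by countably many small balls $B=B(\lam_0,\rho)\subset U$; using \ref{as:measure} together with the Bounded Distortion Property (Lemma~\ref{R60}) and the continuity of $\lam\mapsto\Fk^\lam$ one checks that $\lam\mapsto\dim_{cor}(\mu_\lam,d_\lam)$ varies on $B$ by at most a quantity $\to 0$ as $\rho\to0$. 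Hence it suffices to show: for every $\lam_0$, every $s<\min\{\dim_{cor}(\mu_{\lam_0},d_{\lam_0}),\,1+\min\{\delta,\theta\}\}$ and every small enough $\rho$,
\[
\int_{B}\int_\R |\widehat{\nu_\lam}(\xi)|^2(1+|\xi|)^{s-1}\,d\xi\,d\lam<\infty .
\]
Writing $|\widehat{\nu_\lam}(\xi)|^2=\iint e^{i\xi(\Pi^\lam(\bi)-\Pi^\lam(\bj))}\,d\mu_\lam(\bi)\,d\mu_\lam(\bj)$ and decomposing the double integral according to the length $n=|\bi\wedge\bj|$ of the common prefix $\om$ reduces everything to estimating $\int_B \iint_{[\om a]\times[\om b]} e^{i\xi(\Pi^\lam(\bi)-\Pi^\lam(\bj))}\,d\mu_\lam\,d\mu_\lam\,d\lam$ for $a\ne b$, and then summing over $\om,a,b$ and $n$.

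For $s\le 1$ the corresponding energy is, up to lower‑order terms, the $s$‑Riesz energy $\iint |x-y|^{-s}\,d\nu_\lam(x)\,d\nu_\lam(y)$, with a \emph{nonnegative} kernel. Positivity lets me use \ref{as:measure} to dominate $d\mu_\lam\otimes d\mu_\lam$ on the level‑$n$ block by $e^{2c\rho^\theta(n+1)}\,d\mu_{\lam_0}\otimes d\mu_{\lam_0}$, reducing to the fixed reference measure. The inner integral $\int_B |\Pi^\lam(\bi)-\Pi^\lam(\bj)|^{-s}\,d\lam$ is then controlled by the transversality condition in the form \eqref{R38}: since $\Pi^\lam(\bi)-\Pi^\lam(\bj)=f^\lam_\om(\Pi^\lam(\sigma^n\bi))-f^\lam_\om(\Pi^\lam(\sigma^n\bj))$ with $f^\lam_\om$ a contraction of derivative comparable to $\diam_{d_{\lam_0}}[\om]$ and $\sigma^n\bi,\sigma^n\bj$ starting with distinct symbols, applying \eqref{R38} after this rescaling gives $\int_B |\Pi^\lam(\bi)-\Pi^\lam(\bj)|^{-s}\,d\lam\lesssim (\diam_{d_{\lam_0}}[\om])^{-s}$. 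Grouping the cylinders by a stopping time at geometric scales and using the definition of $\dim_{cor}(\mu_{\lam_0},d_{\lam_0})$ to control $\sum_\om \mu_{\lam_0}([\om])^2$ at each scale, the resulting series converges whenever $s<\dim_{cor}(\mu_{\lam_0},d_{\lam_0})-O(\rho^\theta)$; letting $\rho\to0$ yields the bound for all $s<\min\{1,\dim_{cor}(\mu_\lam,d_\lam)\}$.

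For $s>1$ the kernel is no longer positive, the domination trick fails, and one must keep a genuinely oscillatory estimate — this is the core of the argument. I would fix a frequency $\xi$, partition $B$ into sub‑balls of radius tuned to $|\xi|$ (so that the factor $e^{c\rho^\theta|\om|}$ from \ref{as:measure} stays bounded for all cylinder depths that contribute at frequency $\xi$), and on each sub‑ball pin down the masses $\mu_\lam([\om])$ up to universal constants by the reference measure. The oscillatory integral $\int_B e^{i\xi(\Pi^\lam(\bi)-\Pi^\lam(\bj))}\psi(\lam)\,d\lam$ is then estimated via the quantitative $\beta$‑transversality of Lemma~\ref{prop_beta trans}: this is where the smoothness hypotheses \ref{as:MA1}--\ref{as:MA4} (in particular the $\delta$‑Hölder second derivatives) enter, giving uniform control of the second‑order behaviour of the rescaled phases $\phi_{\bi,\bj}(\lam)=\Pi^\lam(\bi)-\Pi^\lam(\bj)$ and hence a decay in $|\xi|$ which, after summation over cylinders, yields $\int_B|\widehat{\nu_\lam}(\xi)|^2\,d\lam\lesssim |\xi|^{-\min\{\dim_{cor}(\mu_{\lam_0},d_{\lam_0})-O(\rho^\theta),\,1+\min\{\delta,\theta\}-\kappa\}}$ for any $\kappa>0$. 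The two ceilings are of genuinely different origin: $1+\delta$ is imposed by the finite smoothness of the IFS (the phases are only $C^{1+\delta}$ in $\lam$), while $1+\theta$ is imposed by the $\theta$‑Hölder parameter‑dependence in \ref{as:measure}, which limits how much of the accumulating $e^{c\rho^\theta|\om|}$ factors can be absorbed. Feeding this into the energy integral gives finiteness for all $s<\min\{\dim_{cor}(\mu_\lam,d_\lam),\,1+\min\{\delta,\theta\}\}$.

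I expect the main obstacle to be exactly this oscillatory part: handling the parameter‑dependence of $\mu_\lam$ (which forbids a naive Fubini exchange of $\int_B d\lam$ with $\iint d\mu_\lam\,d\mu_\lam$, and must be routed through \ref{as:measure}) simultaneously with the van der Corput / $\beta$‑transversality estimates needed once positivity is lost, with bookkeeping clean enough that the loss stays $O(\rho^\theta)\to0$ for the correlation‑dimension term while producing the sharp ceilings $1+\delta$ and $1+\theta$ in the absolutely continuous regime. The supporting technical core is establishing the uniform $C^{1+\delta}$‑control and transversality of the rescaled phases $\phi_{\bi,\bj}$ at arbitrarily deep levels — tracking how $\partial_\lam$ and its $\delta$‑Hölder modulus interact with the $n$‑fold composition $f^\lam_\om=f^\lam_{\om_1}\circ\cdots\circ f^\lam_{\om_n}$, using that contraction beats the number of terms — which consumes the full strength of \ref{as:MA1}--\ref{as:MA4}.
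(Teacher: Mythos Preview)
Your overall architecture matches the paper's: localize to a small ball around $\lam_0$, use continuity of $\lam\mapsto\dim_{cor}(\mu_\lam,d_\lam)$ (this is Proposition~\ref{prop:dim cor cont}), and run a Peres--Schlag argument with $\beta$-transversality (Lemma~\ref{prop_beta trans}). The case $s\le 1$ is handled essentially as you describe.

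The genuine gap is in your mechanism for the oscillatory regime $s>1$. You propose to partition $B$ into sub-balls of radius tuned to $|\xi|$ and ``pin down the masses $\mu_\lam([\om])$ up to universal constants by the reference measure.'' But this runs into the very obstruction you correctly identified a sentence earlier: once the kernel is not positive, a two-sided bound $C^{-1}\mu_{\lam_k}([\om])\le\mu_\lam([\om])\le C\mu_{\lam_k}([\om])$ cannot be pulled out of the oscillatory integral $\iint e^{i\xi(\Pi^\lam(\bi)-\Pi^\lam(\bj))}\,d\mu_\lam\,d\mu_\lam$. Replacing $\mu_\lam$ by $\mu_{\lam_k}$ ``up to constants'' simply has no meaning for a signed integrand, regardless of how small the sub-ball is. Thus the reduction to a fixed-measure Peres--Schlag estimate fails, and your route to the ceiling $1+\theta$ breaks down.

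The paper's key device, which you are missing, is a \emph{discretization plus adjustment kernel}. One truncates $\bi,\bj$ at level $n=\tc j$ (where $2^j\sim|\xi|$), incurring a controllable error, and rewrites the double integral as a double sum over level-$n$ cylinders. The ratio
\[
e_j(\omega_1,\omega_2,\lambda)=\frac{\mu_\lambda([\omega_1|_n])\mu_\lambda([\omega_2|_n])}{\mu_{\lambda_0}([\omega_1|_n])\mu_{\lambda_0}([\omega_2|_n])}
\]
then enters the oscillatory integral as an explicit \emph{multiplicative factor in $\lambda$}, and the integral becomes $\int \psi\bigl(2^j(\Pi^\lam(\omega_1)-\Pi^\lam(\omega_2))\bigr)\,e_j(\lambda)\,\rho(\lambda)\,d\lambda$ against the fixed measure $\mu_{\lam_0}$. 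This is Proposition~\ref{prop:trans int}. In the van der Corput step (change of variables, subtract $F(0)$ using $\int\psi=0$, split into $B_1+B_2$), the function $e_j(\lambda)$ appears as one factor $F_3$ in the amplitude $F$, and the ceiling $1+\theta$ arises precisely from the H\"older estimate $|F_3(u)-F_3(0)|\lesssim |u|^\theta$, which uses the full strength of \ref{as:measure}. The ceiling $1+\delta$ comes from the analogous H\"older estimate on $H'(u)$, driven by the smoothness assumptions. Both ceilings are produced inside a single oscillatory estimate on a fixed small ball $J$, not by a frequency-dependent sub-partition.
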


{
\begin{cor}\label{cor:ac}
	Let $\{f_j^\lam\}_{j \in \Ak}$ be a parametrized IFS and let $\left\{ \mu_\lam \right\}_{\lam \in \ov{U}}$ be a collection of finite Borel measures on $\Sig$ satisfying the assumptions of Theorem~\ref{thm:main_cor_dim}. Then $(\Pi_\lambda)_*\nu_\lam$ is absolutely continuous with a density in $L^2$ for Lebesgue almost every $\lam$ in the set $\{ \lam \in U : \dim_{cor}(\mu_\lam, d_\lam) > 1 \}$.
\end{cor}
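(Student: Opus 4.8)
The plan is to obtain Corollary~\ref{cor:ac} as a direct consequence of Theorem~\ref{thm:main_cor_dim} together with the standard fact, recorded in Section~\ref{R79}, that a finite Borel measure $\nu$ on $\R$ with $\dim_S(\nu)>1$ is absolutely continuous with respect to $\Lk^1$ and has density in $L^2(\R)$. Thus the whole argument reduces to checking that $\dim_S(\nu_\lam)>1$ for $\Lk^d$-almost every $\lam$ in $S:=\{\lam\in U:\dim_{cor}(\mu_\lam,d_\lam)>1\}$.

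First I would apply Theorem~\ref{thm:main_cor_dim}, which furnishes a set $G\subseteq U$ with $\Lk^d(U\setminus G)=0$ such that
\[
\dim_S(\nu_\lam)\ \geq\ \min\bigl\{\dim_{cor}(\mu_\lam,d_\lam),\ 1+\min\{\delta,\theta\}\bigr\}
\qquad\text{for all }\lam\in G.
\]
The key elementary observation is that $\delta>0$ and $\theta\in(0,1]$, so the second term satisfies $1+\min\{\delta,\theta\}>1$. Hence, for every $\lam\in G\cap S$ both quantities inside the minimum exceed $1$, giving $\dim_S(\nu_\lam)>1$. Since $S\setminus G\subseteq U\setminus G$ is $\Lk^d$-null, this holds for $\Lk^d$-a.e.\ $\lam\in S$ (the set $S$ is itself Borel: assumption \ref{as:measure} forces $\lam\mapsto\mu_\lam([\om])$ to be continuous for each fixed $\om$, and $\lam\mapsto d_\lam$ is continuous by the assumed regularity of $\lam\mapsto\Fk^\lam$, so $\lam\mapsto\dim_{cor}(\mu_\lam,d_\lam)$ is Borel measurable; alternatively, write $S=\bigcup_{n\geq1}\{\lam:\dim_{cor}(\mu_\lam,d_\lam)>1+\tfrac1n\}$ and argue on each piece separately). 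Applying the Sobolev-dimension criterion from Section~\ref{R79} to each such $\nu_\lam$ then yields $\nu_\lam\ll\Lk^1$ with $\tfrac{d\nu_\lam}{d\Lk^1}\in L^2(\R)$, which is the claim.

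There is no substantial obstacle here — the corollary is essentially a repackaging of Theorem~\ref{thm:main_cor_dim}. The only point genuinely worth stressing is that the Sobolev-dimension bound delivered by that theorem beats $1$ \emph{strictly} whenever $\dim_{cor}(\mu_\lam,d_\lam)>1$, thanks to the margin $\min\{\delta,\theta\}>0$; had one only $\dim_S(\nu_\lam)\geq\min\{\dim_{cor}(\mu_\lam,d_\lam),1\}$, the borderline value $\dim_S(\nu_\lam)=1$ would not suffice to force an $L^2$ density, so it is precisely this extra regularity gain that the corollary exploits.
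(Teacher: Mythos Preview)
Your proposal is correct and follows exactly the same approach as the paper: the paper's proof is a one-line remark that the corollary follows from Theorem~\ref{thm:main_cor_dim} together with the Sobolev-dimension properties recorded in Lemma~\ref{R81}. Your write-up simply unpacks this, adding the (harmless) measurability remark and emphasizing that the strict margin $\min\{\delta,\theta\}>0$ is what pushes $\dim_S(\nu_\lam)$ strictly above $1$.
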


The corollary follows by Theorem~\ref{thm:main_cor_dim} and the properties of the Sobolev dimension, see Lemma~\ref{R81}.
}

\section{Dimension - on the proof of Theorem \ref{thm:main_hausdorff}}\label{sec:dim}

In this section we present the proof of Theorem \ref{thm:main_hausdorff}. We shall not give all the details (these can be found in \cite{BSSS}), but discuss the main ideas. {In addition, as Theorem \ref{thm:main_hausdorff} is formally stronger than the corresponding result in \cite{BSSS}, we shall give a precise description of (minor) changes which one has to make in the proof of \cite[Theorem 3.1]{BSSS} in order to obtain a full proof of Theorem \ref{thm:main_hausdorff}.} Let us begin with a discussion of the main ideas.

Proofs of transversality-based results for dimension, like Theorem \ref{thm:dim ac fixed mu} or Theorem  \ref{thm:main_hausdorff}, usually consist of combining two main ingredients - one proves the version of the result for the correlation dimension (this is where transversality is used), and then extends it to the Hausdorff dimension by restricting measure on the symbolic space to suitable Egorov sets for convergences in \eqref{R11} and \eqref{R53}. Putting technicalities aside, the main observation behind the proof of Theorem \ref{thm:main_hausdorff} is noting that the correlation dimension behaves continuously with respect to $\mu_\lam$ whenever condition \ref{as:measure_cont} holds. The following exposition does not lead to a rigorous proof of Theorem \ref{thm:main_hausdorff}, but presents this main idea.

\begin{prop}\label{prop:dim cor cont}
Under assumptions of item \ref{it:dim cor par meas} of Theorem \ref{thm:main_hausdorff}, the following families of maps are equicontinuous:
\begin{enumerate}[{\rm (1)}]
\item\label{it:sym dim cor cont} $\left\{ \ov{U} \ni \lam \mapsto \dim_{cor}(\mu_\lam, d_\Fk) : \mF \in \{\mF^\lam\}_{\lam \in \ov{U}} \right\}$,
\item\label{it2} $\ov{U} \ni \lam \mapsto \dim_{cor}(\mu_\lam, d_\lam)$ (as a family consisting of a single map),
\item\label{it:dim cor cont} $\left\{ \ov{U} \ni \lam \mapsto \dim_{cor}((\Pi^{\mF})_* \mu_{\lam}) : \mF \in \{\mF^\lam\}_{\lam \in \ov{U}} \right\}$.
\end{enumerate}
\end{prop}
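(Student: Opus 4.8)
The plan is to reduce everything to a single uniform estimate: the correlation dimension of a measure on a metric space is determined by the growth rate of the $q=2$ $L^q$-spectrum (or equivalently by the energy integrals $\int\int d(\mathbf{i},\mathbf{j})^{-s}\,d\mu(\mathbf{i})\,d\mu(\mathbf{j})$), and condition \ref{as:measure_cont}, together with the Bounded Distortion Property \eqref{R59} and the uniform hyperbolicity \ref{as:MA4}, lets one compare these quantities for nearby parameters with a multiplicative error that is subexponential in the cylinder length. Concretely, recall that for a measure $\mu$ on $\Sig$ with metric $d$ one has, up to constants, $\dim_{cor}(\mu,d) = \liminf_{n} \frac{-\log \sum_{\om \in \Ak^n} \mu([\om])^2}{-\log(\max_{\om\in\Ak^n}\operatorname{diam}_d[\om]))}$, or one works directly with the dyadic-scale partition sums $\sum \mu(Q)^2$ over cylinders of comparable $d$-diameter. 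The first step is to write down this combinatorial formula carefully for all three quantities appearing in the statement, using that $d_{\mF}$-diameters of level-$n$ cylinders are comparable (uniformly in $n$ and in $\mF$ ranging over the family, by \ref{as:MA4} and Lemma \ref{R60}) to $\gamma_2^{n}$ from above and $\gamma_1^n$ from below.

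The second step handles \eqref{it:sym dim cor cont} and \eqref{it2}. Fix $\eps>0$. By \ref{as:measure_cont}, for each $\lam_0$ there are $C,\xi>0$ so that $|\lam-\lam_0|<\xi$ forces $C^{-2}e^{-2\eps n}\mu_{\lam_0}([\om])^2 \le \mu_\lam([\om])^2 \le C^2 e^{2\eps n}\mu_{\lam_0}([\om])^2$ for $|\om|=n$; summing over $\om\in\Ak^n$ shows the partition sums for $\mu_\lam$ and $\mu_{\lam_0}$ differ by a factor $e^{\pm 2\eps n}$ (absorbing the constant into the $\liminf$). Since the denominators $-\log(\operatorname{diam}_d[\om])$ are of order $n$ uniformly (bounded between $n|\log\gamma_2|$ and $n|\log\gamma_1|$, for $d=d_\mF$ with $\mF$ in the family, or for $d=d_\lam$), taking $\liminf_n$ of the log-ratio shows $|\dim_{cor}(\mu_\lam,d_\mF) - \dim_{cor}(\mu_{\lam_0},d_\mF)| \le c\,\eps$ with $c$ depending only on $\gamma_1,\gamma_2$ — crucially independent of $\mF$. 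This gives equicontinuity at $\lam_0$ in \eqref{it:sym dim cor cont}; \eqref{it2} is the special case $\mF = \mF^\lam$ (note that there the denominator uses $d_\lam$, which varies with $\lam$ too, but its scale $n\cdot\log$-rate stays pinned between $|\log\gamma_2|$ and $|\log\gamma_1|$, and the $\lam\mapsto \mF^\lam$ continuity in $C^r$ controls the comparison of $d_\lam$ and $d_{\lam_0}$ on cylinders — this is the one genuinely extra comparison needed).

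The third step, \eqref{it:dim cor cont}, concerns the pushed-forward measures $(\Pi^\mF)_*\mu_\lam$ on $\R$. Here one uses the standard correlation-dimension-via-energy characterization: $\dim_{cor}((\Pi^\mF)_*\mu_\lam)$ is governed by $\int\int |\Pi^\mF(\mathbf{i}) - \Pi^\mF(\mathbf{j})|^{-s}\,d\mu_\lam(\mathbf{i})\,d\mu_\lam(\mathbf{j})$. The dependence on $\mu_\lam$ enters only through the measure in this double integral, and \ref{as:measure_cont} controls $\mu_\lam$ versus $\mu_{\lam_0}$ on cylinders with the same $e^{\pm\eps n}$ error; decomposing the double integral over pairs of level-$n$ cylinders (the diagonal and off-diagonal pieces, exactly as in the dyadic decomposition of energy) and applying this comparison termwise yields $|\dim_{cor}((\Pi^\mF)_*\mu_\lam) - \dim_{cor}((\Pi^\mF)_*\mu_{\lam_0})| \le c\,\eps$ with $c$ independent of $\mF$, since all the geometric quantities ($|\Pi^\mF(\mathbf{i})-\Pi^\mF(\mathbf{j})|$, cylinder diameters) are bounded uniformly over the family by \ref{as:MA4} and bounded distortion. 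Note that \emph{transversality is not needed for this proposition} — it only asserts equicontinuity in $\lam$, not the identity with $\min\{1,\dim_{cor}(\mu_\lam,d_\lam)\}$; transversality enters later when identifying the limit.

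The main obstacle I anticipate is bookkeeping rather than conceptual: one must make sure the error constant $c$ in every estimate truly does not depend on $\mF \in \{\mF^\lam\}_{\lam\in\ov U}$ (this is why \ref{as:MA4}, giving \emph{uniform} $\gamma_1,\gamma_2$, and the uniform bounded distortion constants of Lemma \ref{R60} across the compact family $\Theta_{\gamma_1,\gamma_2}$ are essential), and that the $\liminf$-of-a-ratio manipulation is legitimate, i.e. that an $e^{\pm\eps n}$ multiplicative error on the numerator-exponential translates into an additive $O(\eps)$ error on the $\liminf$ of the ratio — this requires the denominator to grow \emph{linearly} in $n$ with a rate bounded away from $0$ and $\infty$, which is exactly what uniform hyperbolicity provides. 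A secondary subtlety in \eqref{it2} is that both the measure $\mu_\lam$ and the metric $d_\lam$ move with $\lam$; one should treat the two variations separately (first compare $\dim_{cor}(\mu_\lam,d_\lam)$ with $\dim_{cor}(\mu_{\lam_0},d_\lam)$ using \ref{as:measure_cont}, then $\dim_{cor}(\mu_{\lam_0},d_\lam)$ with $\dim_{cor}(\mu_{\lam_0},d_{\lam_0})$ using $C^r$-continuity $\lam\mapsto\mF^\lam$ and bounded distortion to compare cylinder diameters), and this is cleanest if one first proves \eqref{it:sym dim cor cont} in the stated two-variable form $\lam\mapsto\dim_{cor}(\mu_\lam,d_\mF)$.
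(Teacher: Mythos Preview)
Your proposal is correct and follows essentially the same route as the paper's proof. The paper also works via energy comparison, proving for item \eqref{it:dim cor cont} the two-sided inequality
\[
C_1^{-1}\,\Ek_{\alpha-\eps}(\Pi_*\mu_{\lam_0}) - C_2 \;\le\; \Ek_\alpha(\Pi_*\mu_\lam) \;\le\; C_1\,\Ek_{\alpha+\eps}(\Pi_*\mu_{\lam_0}) + C_2
\]
on a small neighbourhood of $\lam_0$, uniformly in $\Fk$, exactly by the dyadic decomposition of the kernel you describe; items \eqref{it:sym dim cor cont} and \eqref{it2} are declared ``very similar''. One technical point you gloss over and which the paper makes explicit: the level set $\{|\Pi(\om)-\Pi(\tau)|\le e^{-n}\}$ is \emph{not} a union of cylinder products, so \ref{as:measure_cont} cannot be applied to it directly. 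The paper handles this by a truncation step --- replacing $\om,\tau$ by $\om|_{qn}1^\infty,\tau|_{qn}1^\infty$ for a fixed $q$ with $\gam_2^q<e^{-1}$, which shifts the level set by a bounded factor but makes it depend on only $qn$ coordinates --- and this is what produces the exponent shift $\alpha\mapsto\alpha+2q\eps$. Your phrase ``decomposing the double integral over pairs of level-$n$ cylinders \ldots\ and applying this comparison termwise'' hides precisely this step; once you fill it in, your argument and the paper's coincide.
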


\begin{rem} Note that $\dim_{cor}((\Pi^{\mF})_* \mu_{\lam})$ is \textit{not} jointly continuous in $(\lam, \mF)$. Actually, it is not true that for a fixed ergodic measure, the map $\lam \mapsto \dim_{cor}((\Pi^{\mF^\lam})_* \mu)$ is continuous in the overlapping case { (see e.g. \cite[Section 1.6.1]{ourbook} and further discussions)}. The crucial point of item \ref{it:dim cor cont} of Proposition \ref{prop:dim cor cont} is that $\dim_{cor}((\Pi^{\mF})_* \mu_{\lam})$ is continuous when one fixes IFS $\mF$ and varies measure $\mu_\lam$ in the symbolic space. On the other hand, this issue does not appear if we consider measures in the symbolic space, hence in item \ref{it:sym dim cor cont} we actually do have a joint continuity of $(\lam, \Fk) \mapsto \dim_{cor}(\mu_\lam, d_\Fk)$ (with $\Fk$ considered with the $C^{1+\delta}$ topology), implying also continuity in item \ref{it2}.
\end{rem}

\begin{proof}[Proof of Proposition \ref{prop:dim cor cont}]
We only consider the family from item \ref{it:dim cor cont} --- the proof for \ref{it:sym dim cor cont}  and \ref{it2} is very similar. It is enough to prove the following: for every $\lam_0$ and $\eps>0$ there exists a neighbourhood $V$ of $\lam_0$ and a constants $C_1, C_2 > 0$ such that for every $\mF \in \{\mF^\lam\}_{\lam \in \ov{U}}$, setting $\Pi = \Pi^\mF$ we have
\begin{equation}\label{eq:energy comparison} C_1^{-1} \mE_{\alpha - \eps}(\Pi_*\mu_{\lam_0}) - C_2 \leq \mE_\alpha(\Pi_*\mu_\lam) \leq C_1 \mE_{\alpha+ \eps}(\Pi_*\mu_{\lam_0}) + C_2
\end{equation}
for $\lam \in V$ and $\alpha \in (0,1)$. For simplicity we assume $\diam(X) \leq 1$. First, note that for any $\om,\tau \in \Sig$ the following holds:
\begin{equation}\label{eq:energy sum}
C_\alpha^{-1}\sum \limits_{n=0}^{\infty}e^{\alpha n} \mathds{1}_{ \left\{ \left| \Pi(\om) - \Pi(\tau) \right| \leq e^{-n} \right\}} \leq \left| \Pi(\om) - \Pi(\tau) \right|^{-\alpha} \leq \sum \limits_{n=0}^{\infty}e^{\alpha(n+1)} \mathds{1}_{ \left\{ \left| \Pi(\om) - \Pi(\tau) \right| \leq e^{-n} \right\}}
\end{equation}
for a constant $C_\alpha > 0$. Indeed, if $\Pi^\lam(\om) = \Pi^\lam(\tau)$, then both the left- and right-hand sides are divergent. Otherwise, there exists $n \geq 0$ such that $e^{-(n+1)}< \left| \Pi(\om) - \Pi(\tau) \right| \leq e^{-n}$ and the upper bound in \eqref{eq:energy sum} follows immediately. The lower bound follows by noting additionally that there exists a constant $C_\alpha$ such that $\sum \limits_{j=0}^n e^{j\alpha} \leq C_\alpha e^{\alpha n}$ holds for all $n$. As by \ref{as:MA4}, the family $\{\mF^\lam\}_{\lam \in \ov{U}}$ consists of uniformly contracting systems, there exists $q \in \N$ such that
\begin{equation}\label{eq:qn length bound}
	\left| \Pi(\om) - \Pi(\tau) \right| \leq e^{-n}/2\ \text{ if } \om|_{qn} = \tau|_{qn}.
\end{equation}
Let $V$  be a neighbourhood of $\lam_0$ such that inequalities
\begin{equation}\label{eq:measure ineq} C^{-1} e^{-\eps|\omega|} \mu_{\lambda_0}([\omega]) \leq\mu_{\lambda}([\omega])\leq C  e^{\eps|\omega|} \mu_{\lambda_0}([\omega])
\end{equation}
hold for $\om \in \Sig^*$ and $\lam \in V$ (it exists by \ref{as:measure_cont}). Then by \eqref{eq:energy sum} and \eqref{eq:qn length bound} (below $1^\infty$ denotes an infinite word constantly equal to $1 \in \Ak$):
\[
\begin{aligned} \mE_\alpha(\Pi_*\mu_\lam) & = \int\limits_{\Sig} \int\limits_{\Sig} |\Pi(\om) - \Pi(\tau)|^{-\alpha}d\mu_\lam(\om) d\mu_\lam(\tau)  \leq e^\alpha \sum \limits_{n=0}^\infty e^{\alpha n} \mu_\lam \otimes \mu_\lam(\{|\Pi(\om) - \Pi(\tau)| \leq e^{-n}\}) \\
	& \leq  e^\alpha \sum \limits_{n=0}^\infty e^{\alpha n} \mu_\lam \otimes \mu_\lam( \{|\Pi(\om|_{qn}1^\infty) - \Pi(\tau|_{qn}1^\infty)| \leq 2e^{-n}\})\\
	& \leq  C e^\alpha \sum \limits_{n=0}^\infty e^{(\alpha+2q\eps) n} \mu_{\lam_0} \otimes \mu_{\lam_0}({\{|\Pi(\om|_{qn}1^\infty) - \Pi(\tau|_{qn}1^\infty)| \leq 2e^{-n}\}}) \\
	& \leq  C e^\alpha \sum \limits_{n=0}^\infty e^{(\alpha+2q\eps) n} \mu_{\lam_0} \otimes \mu_{\lam_0}({\{|\Pi(\om) - \Pi(\tau)| \leq 3e^{-n}\}})\\
	& \leq  C e^{\alpha+2(\alpha + 2q\eps)} \sum \limits_{n=0}^\infty e^{(\alpha+2q\eps) n} \mu_{\lam_0} \otimes \mu_{\lam_0}({\{|\Pi(\om) - \Pi(\tau)| \leq e^{-n}\}}) + Ce^{\alpha}(1 + e^{\alpha + 2q\eps})\\
	& \leq C_1 \mE_{\alpha+ 2q\eps}(\Pi_*\mu_{\lam_0}) + C_2,
\end{aligned}
\]
where in the $3$rd line we have applied \eqref{eq:measure ineq} to the set $\{|\Pi(\om|_{qn}1^\infty) - \Pi(\tau|_{qn}1^\infty)| \leq 2e^{-n}\}$ (which is a union of products of cylinder sets of length $qn)$, while the $5$th line uses $3e^{-n} \leq e^{-(n-2)}$. This proves the upper bound in \eqref{eq:energy comparison}. The lower bound follows in the same manner (as we have a matching lower bound in \eqref{eq:measure ineq}).
\end{proof}

With the aid of Proposition \ref{prop:dim cor cont} it is easy to deduce item \ref{it:dim cor par meas} of Theorem \ref{thm:main_hausdorff} directly from item \ref{it:proj dim cor} of Theorem \ref{thm:dim ac fixed mu}:

\begin{proof}[Proof of item \ref{it:dim cor par meas} of Theorem  \ref{thm:main_hausdorff}]
Fix $\eps >0$ and consider a countable cover $\{V_i\}_{ i \in \N}$ of $\ov{U}$ by open sets such that if $\lam, \lam_0 \in V_i$, then
\[\left| \dim_{cor}(\mu_\lam, d_\Fk) - \dim_{cor}(\mu_{\lam_0}, d_{\Fk}) \right| < \eps  \text{ for every } \Fk \in \left\{ \Fk^\lam : \lam \in \ov{U}  \right\}\]
and
\[ \left| \dim_{cor}(\Pi_*\mu_\lam) - \dim_{cor}(\Pi_*\mu_{\lam_0}) \right| < \eps \text{ for every } \Pi \in \left\{ \Pi^{\Fk^\lam} : \lam \in \ov{U}  \right\}.\]
Fix $\lam_0 \in V_i$. The above inequalities give for every $\lam \in V_i$:
\begin{equation}\label{eq:dim cor cont ineq}
\begin{aligned} \big| \dim_{cor}((\Pi^\lam)_* \mu_{\lam})   - & \min\{ 1, \dim_{cor}(\mu_{\lam}, d_{\lam}) \} \big| \leq \\
	& \big| \dim_{cor}((\Pi^\lam)_* \mu_{\lam_0}) - \min\{ 1, \dim_{cor}(\mu_{\lam_0}, d_{\lam}) \} \big| + 2\eps.
\end{aligned}
\end{equation}
By the item \ref{it:proj dim cor} of Theorem \ref{thm:dim ac fixed mu} applied to the measure $\mu = \mu_{\lam_0}$ we have
\[ \dim_{cor}((\Pi^\lam)_* \mu_{\lam_0}) = \min\{ 1, \dim_{cor}(\mu_{\lam_0}, d_{\lam}) \} \text{ for } \Lk^d\text{-a.e. } \lam \in V_i, \]
hence by \eqref{eq:dim cor cont ineq} and as $\{V_i\}_{ i \in \N}$ is a countable cover of $\ov{U}$,
\[ \big| \dim_{cor}((\Pi^\lam)_* \mu_{\lam})   - \min\{ 1, \dim_{cor}(\mu_{\lam}, d_{\lam}) \} \big| \leq 2\eps \text{ for } \Lk^d\text{-a.e. } \lam \in \ov{U}. \]
As $\eps>0$ is arbitrary, taking a countable intersection over $\eps \searrow 0$ finishes the proof.
\end{proof}

Let us now discuss how one obtains the Hausdorff dimension part of Theorem \ref{thm:main_hausdorff}. The first main ingredient is the inequality $\Dh(\mu) \geq \dim_{cor}(\mu)$, which holds for arbitrary measures. Unfortunately, in general we have $\frac{h_{\mu_\lam}}{\chi_{\mu_\lam}(\Fk^\lam)} \geq \dim_{cor}(\mu, d_\lam)$ for shift-invariant measures, and the inequality is often strict, so it is not enough to invoke item \ref{it:dim cor par meas} of Theorem \ref{thm:main_hausdorff}. In the case of Theorem \ref{thm:dim ac fixed mu}, with a fixed measure $\mu$ in the symbolic space, one restricts $\mu$ to the set
\[
\begin{aligned} A = \Big\{ \om \in \Sig : C^{-1} e^{-n(h(\mu) + \eps)} & \leq \mu([\om|_n])  \leq C e^{-n(h(\mu) - \eps)} \text{ and } \\
& C^{-1} {e^{-n(\chi_\mu(\Fk^{\lam_0})+\eps)}} \leq |f^{\lam_0}_{\om|_n}(X)| \leq C e^{-n\chi_\mu(\Fk^{\lam_0} - \eps)} \Big\}
\end{aligned}\]
(note that by the Egorov theorem, for every $\eps > 0$ we have that $\mu(A) \to 1$ as $C \to \infty$). A simple calculation shows that $\dim_{cor}(\mu|_A, d_\lam) \geq \frac{h(\mu) - \eps}{\chi_\mu(\mF^{\lam_0}) + \eps}$ and one can apply the already established item \ref{it:proj dim cor} of Theorem \ref{thm:dim ac fixed mu} to $\mu|_A$, obtaining the Hausdorff dimension part of the result by letting $\eps \to 0$ (and using continuity of $\lam \mapsto \chi_\mu(\mF^{\lam})$). The same strategy essentially works for Theorem \ref{thm:main_hausdorff} with parameter dependent measures $\mu_\lam$. {The difficulty is that now we have to consider parameter-dependent Egorov sets $A_\lam$ and study the
measures $\mu_\lam|_{A_\lam}$. In order to apply item \ref{it:dim cor par meas} of Theorem  \ref{thm:main_hausdorff} directly, we would have to choose $A_\lam$ in a fashion which guarantees that the family $\lam \mapsto \mu_\lam|_{A_\lam}$ satisfies condition \ref{as:measure_cont}. A convenient alternative solution is to fix a small neighbourhood $V$ of parameters and consider a common Egorov set $A$ for all $\lam \in V$. Then one can combine the standard transversality argument with an adaptation of the method from the proof of Proposition \ref{prop:dim cor cont} for the family $\mu_{\lam}|_A$. This leads to the following proposition, which is an adaptation of \cite[Proposition 5.1]{BSSS} to our case.}
}

{

\begin{prop}\label{prop:hdim local}
Under the assumptions of item \ref{it: hdim par meas} of Theorem \ref{thm:main_hausdorff}, there exists a number L>0 (depending only on the family $\{\Fk^\lam : \lam \in \ov{U} \}$) with the following property. Fix $\eps>0$. For every $\lam_0 \in \ov{U}$ there exists an open neighbourhood $U'$ of $\lam_0$ such that
\begin{equation}\label{eq:hdim lower bound}
\Dh(\nu_\lam) \geq \min \left\{ 1, \frac{h_{\mu_{\lam_0}}}{\chi_{\mu_{\lam_0}(\Fk^{\lam_0})}} \right\} - L\eps
\end{equation}
holds for $\Lk^d$-a.e. $\lam \in \bigl\{\lambda\in U': |h_{\mu_\lam} - h_{\mu_{\lam_0}}| < \eps \text{ and } |\chi_{\mu_\lam}(\Fk^\lam) - \chi_{\mu_{\lam_0}}(\Fk^{\lam_0})| < \eps\bigr\}$.
\end{prop}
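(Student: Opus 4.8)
The plan is to run the classical transversality argument for the Hausdorff dimension of projected measures --- the one underlying Theorem~\ref{thm:dim ac fixed mu}, which restricts the symbolic measure to an Egorov set --- adapting it in two respects: the symbolic measure now moves with the parameter, which will be absorbed through condition \ref{as:measure_cont}, and the parameter is multidimensional, which costs nothing once one uses the transversality condition \ref{as:trans mp} in its integrated form \eqref{R38}. Write $h_0:=h_{\mu_{\lam_0}}$ and $\chi_0:=\chi_{\mu_{\lam_0}}(\Fk^{\lam_0})$. It suffices to find $L=L(\{\Fk^\lam\}_{\lam\in\ov U})>0$ such that, for every $\alpha<\min\{1,h_0/\chi_0\}-L\eps$, one has $\Dh(\nu_\lam)\ge\alpha$ for $\Lk^d$-a.e.\ $\lam$ in the parameter set of the statement; then \eqref{eq:hdim lower bound} follows by intersecting over a sequence $\alpha\nearrow\min\{1,h_0/\chi_0\}-L\eps$. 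If $\min\{1,h_0/\chi_0\}-L\eps\le 0$ there is nothing to prove, so assume the contrary; in particular $h_0>0$.

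First I would produce a \emph{single} Egorov set valid on a whole neighbourhood. By \ref{as:measure_cont} there are an open $U'\ni\lam_0$ and $C>0$ with $C^{-1}e^{-\eps|\om|}\mu_{\lam_0}([\om])\le\mu_\lam([\om])\le Ce^{\eps|\om|}\mu_{\lam_0}([\om])$ for all $\om\in\Sig^*$, $\lam\in U'$; shrinking $U'$ and combining the Bounded Distortion Property (Lemma~\ref{R60}, in particular part (b)) with the $C^{1+\delta}$-continuity of $\lam\mapsto\Fk^\lam$, I may also arrange $e^{-\eps|\om|}\le|f^\lam_\om(X)|/|f^{\lam_0}_\om(X)|\le e^{\eps|\om|}$ on $U'$. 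Applying Egorov's theorem to the $\mu_{\lam_0}$-almost everywhere convergences \eqref{R11} and \eqref{R53} yields a closed $A\subset\Sig$ and $C_1>0$ with $C_1^{-1}e^{-n(h_0+\eps)}\le\mu_{\lam_0}([\om|_n])\le C_1e^{-n(h_0-\eps)}$ and $C_1^{-1}e^{-n(\chi_0+\eps)}\le|f^{\lam_0}_{\om|_n}(X)|\le C_1e^{-n(\chi_0-\eps)}$ for all $\om\in A$ and $n\ge1$. Along the parameter set in the statement $|h_{\mu_\lam}-h_0|<\eps$ and $|\chi_{\mu_\lam}(\Fk^\lam)-\chi_0|<\eps$, so by the two comparison inequalities $\mu_\lam$-almost every $\om$ eventually satisfies bounds of the same form (with $\eps$ replaced by $3\eps$); exhausting $A$ by the closed subsets on which these hold with a fixed constant and splitting the parameter set accordingly, I may further assume $\mu_\lam(A)>0$ for every $\lam$ under consideration. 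The passage from $\eps$ to $3\eps$ only inflates the $O(\eps)$-error in the final exponent and is harmless.

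The core is the energy bound $\int_{U'}\mE_\alpha\bigl((\Pi^\lam)_*(\mu_\lam|_A)\bigr)\,d\lam<\infty$. Once it is available, $\mE_\alpha((\Pi^\lam)_*(\mu_\lam|_A))<\infty$ for $\Lk^d$-a.e.\ $\lam\in U'$; and since $(\Pi^\lam)_*(\mu_\lam|_A)(F)=\mu_\lam(A)>0$ for every set $F$ of full $\nu_\lam$-measure, a Frostman-type estimate gives $\Dh F\ge\alpha$, hence $\Dh\nu_\lam\ge\alpha$ for $\Lk^d$-a.e.\ $\lam$ in the set of the statement. To prove the bound I would expand the energy by the length $\ell$ of the common prefix $\kappa=\om\wedge\tau$; by Bounded Distortion, $|\Pi^\lam(\om)-\Pi^\lam(\tau)|\gtrsim|f^\lam_\kappa(X)|\,|\Pi^\lam(\sigma^\ell\om)-\Pi^\lam(\sigma^\ell\tau)|$ with $\sigma^\ell\om$ and $\sigma^\ell\tau$ now differing in the first symbol, and on $A$ one has $|f^\lam_\kappa(X)|^{-\alpha}\lesssim e^{\alpha\ell(\chi_0+2\eps)}$, at most $\lesssim e^{\ell(h_0+\eps)}$ words $\kappa\in\Ak^\ell$ with $[\kappa]\cap A\neq\emptyset$, and $\mu_\lam([\kappa])^2\lesssim e^{-2\ell(h_0-2\eps)}$. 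For the remaining ``tail'' energy I would use the uniform contraction \ref{as:MA4} to replace each sub-ball $\{|\Pi^\lam(\cdot)-\Pi^\lam(\cdot)|\le e^{-m}\}$ by a union of products of cylinders of length $qm$ ($q=q(\gamma_2)$ fixed), pass from $\mu_\lam$ to $\mu_{\lam_0}$ on these cylinders via \ref{as:measure_cont}, and then invoke transversality in the form \eqref{R38} --- admissible precisely because the two tails start with different symbols --- to bound $\Lk^d\{\lam\in U':|\Pi^\lam(\rho 1^\infty)-\Pi^\lam(\varsigma 1^\infty)|\le 2e^{-m}\}$ by $\lesssim e^{-m}$. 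Summing the geometric series first in $m$ and then in $\ell$ (everything being nonnegative, Tonelli applies), the integral is finite as soon as $\alpha<1$ (from the $m$-sum) and $\alpha(\chi_0+2\eps)<h_0-5\eps$ (from the $\ell$-sum). Since \ref{as:MA4} forces $\log(1/\gamma_2)\le\chi_0\le\log(1/\gamma_1)$ while $h_0\le\log m$, both inequalities hold whenever $\alpha<\min\{1,h_0/\chi_0\}-L\eps$ for $L:=\max\bigl\{2q,\ (2\log m+5\log(1/\gamma_1))/\log^2(1/\gamma_2)\bigr\}$, which depends only on the family.

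I expect the main obstacle to be precisely the bookkeeping in the last step: one must carry the subexponential factors $e^{\pm\eps|\om|}$ coming from \ref{as:measure_cont} simultaneously with the Egorov bounds on $A$ --- which is the reason for freezing one common $A$ on all of $U'$ rather than using parameter-dependent Egorov sets --- and verify that the resulting exponent $\frac{h_0-5\eps}{\chi_0+2\eps}$ still differs from $\frac{h_0}{\chi_0}$ by at most $L\eps$ with $L$ independent of $\lam_0$ and of the measures $\mu_\lam$; this is where uniform hyperbolicity \ref{as:MA4} enters, keeping $\chi_0$ bounded away from $0$. Otherwise the argument is the multiparameter transcription of \cite[Proposition~5.1]{BSSS}: the only structural changes are $|\tfrac{d}{d\lam}(\cdot)|$ and one-dimensional length in \eqref{R40}, \eqref{R38} becoming $|\nabla(\cdot)|$ and $\Lk^d$ in \ref{as:trans mp}, \eqref{R38}, and the verification that $\mu_\lam(A)>0$, which in \cite{BSSS} was automatic because a single fixed measure was being restricted.
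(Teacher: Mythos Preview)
Your proposal is correct and follows essentially the same route as the paper: fix a single Egorov set $A$ for $\mu_{\lam_0}$, choose $U'$ via \ref{as:measure_cont} and the parametric Bounded Distortion Property, verify $\mu_\lam(A)>0$ on the relevant parameter set by exhausting over the Egorov constant, and then bound the integrated $\alpha$-energy of $(\Pi^\lam)_*(\mu_\lam|_A)$ by decomposing over the common prefix, truncating to cylinders of length $q m$ to pass from $\mu_\lam$ back to $\mu_{\lam_0}$, and applying transversality in the form \eqref{R38}. The paper's sketch (the displayed computation leading to \eqref{eq:dim int goal}) is exactly this, and its formal proof simply records that the argument of \cite[Proposition~5.1]{BSSS} goes through with \ref{as:trans mp} replacing the one-parameter condition and with the weaker BDP of \cite[Lemma~14.2.4(ii)]{ourbook}; your final paragraph identifies the same two modifications.
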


We will not give a full proof of this proposition. Instead, we will present first a sketch explaining the main idea and then give a discussion of the precise changes one has to make in the proof of \cite[Proposition 5.1]{BSSS} in order to obtain a rigorous proof of Proposition \ref{prop:hdim local}.

{For the sketch of the method, fix $\lam_0 \in \ov{U}$ and a small $\eps>0$. For $D>0$ consider the set
\[
\begin{split} A_D := \Bigg\{  \om \in \Sig : \underset{n \geq 1}{\forall}\  D^{-1} e^{-n(h_{\mu_{\lam_0}} + 2\eps)} & \leq  \mu_{\lam_0}([\om|_n]) \leq D e^{-n(h_{\mu_{\lam_0}} - 2\eps)} \text{ and } \\
	D^{-1} e^{-n(\chi_{\mu_{\lam_0}}(\Fk^{\lam_0}) + 2\eps)} &\leq  \left| \left(f^{\lam_0}_{\om|_n}\right)' (\Pi^{\lam_0} (\sigma^n\om)) \right| \leq D e^{-n(\chi_{\mu_{\lam_0}}(\Fk^{\lam_0}) - 2\eps)}  \Bigg\}.
\end{split}
\]
By the Egorov theorem applied to the convergences in \eqref{R11} and \eqref{R53} for the measure $\mu_{\lam_0}$, we have $\lim \limits_{D \to \infty} \mu_{\lam_0}(A_D) = 1$.  Let $U'$ be a neighbourhood of $\lam_0$ such that for $\lam \in U'$ one has (by \ref{as:measure_cont}):
\begin{equation}\label{eq: measure ineq}
C^{-1} e^{-\eps|\omega|} \mu_{\lambda_0}([\omega]) \leq\mu_{\lambda}([\omega])\leq C  e^{\eps|\omega|} \mu_{\lambda_0}([\omega]),
\end{equation}
and (by the Bounded Distortion Property) for every $\lam \in U'$ and $\om \in \Sig,\ x,y \in X, n \geq 1$,
\begin{equation}\label{eq: bdp egorov} C^{-1} e^{-\eps n} \left| \left(f^{\lam}_{\om|_n}\right)' (y) \right| \leq  \left| \left(f^{\lam_0}_{\om|_n}\right)' (x) \right|  \leq C e^{\eps n} \left| \left(f^{\lam}_{\om|_n}\right)' (y) \right|
\end{equation}
for some constant $C$ (depending on $U'$). Applying Egorov theorem once more to \eqref{R11} and \eqref{R53} for $\mu_\lam$ and combining it with the above inequalities, we see that for every $\lam \in U'$ such that $|h_{\mu_\lam} - h_{\mu_{\lam_0}}| < \eps$ and $|\chi_{\mu_\lam}(\Fk^\lam) - h_{\mu_{\lam_0}}(\Fk^{\lam_0})| < \eps$, we have $\mu_\lam(A_D) > 0$ provided that $D$ is large enough (depending on $\lam$). As in order to obtain the statement of Proposition \ref{prop:hdim local} we are allowed to take countable intersections over $\lam$, in what follows we can fix a large $D>0$ and consider only $\lam \in U'_D := \{ \lam \in U' : \mu_\lam(A_D) > 0 \}$. Set $\tilde{\mu}_\lam := \mu_\lam |_{A_D}$ and $A_n = \{ u \in \Ak^n : [u] \cap A_D \neq \emptyset \}$. As $\Dh (\Pi^\lam)_* \mu_\lam \geq \Dh (\Pi^\lam)_* \tilde{\mu}_\lam \geq \dim_{cor} (\Pi^\lam)_* \tilde{\mu}_\lam$, it suffices to prove that
\begin{equation}\label{eq:dim int goal} \Ik = \int \limits_{U'_D} \int \limits_{\Sigma}  \int \limits_{\Sigma} |\Pi^\lam(\om) - \Pi^\lam(\tau)|^{-\alpha}d\tilde{\mu}_\lam(\om)d\tilde{\mu}_\lam(\tau)d\lam < \infty\ \text{ for } \alpha >  \min \left\{ 1, \frac{h_{\mu_{\lam_0}}}{\chi_{\mu_{\lam_0}(\Fk^{\lam_0})}} \right\} - L\eps.
\end{equation}
Fix $s > 0$. Splitting the double integral over $\Sigma \times \Sigma$ into cylinders corresponding to longest common prefixes and applying the definition of $A_D$ together with \eqref{eq: bdp egorov} one obtains
\[\begin{split} \mathcal{I} &= \int \limits_{U'_D} \sum \limits_{n=0}^\infty \sum \limits_{u \in A_n} \sum_{\substack{a,b\in \Ak\\ a \neq b}}\ \iint\limits_{[ua] \times [ub]} \left| f^\lam_u \left(\Pi^\lam(\sigma^n\om)\right) - f^\lam_u \left(\Pi^\lam(\sigma^n\tau)\right) \right|^{-\alpha}\,d\tilde{\mu}_\lam(\om)\,d\tilde{\mu}_\lam(\tau)\,d\lam \\
	& \leq C^\alpha D^\alpha \int \limits_{U_{\eps'}} \sum \limits_{n=0}^\infty e^{n\alpha(\chi_{\mu_{\lam_0}}(\Fk^{\lam_0}) + 3\eps)} \sum \limits_{u \in A_n} \sum_{\substack{a,b\in \Ak\\ a \neq b}}\ \iint\limits_{[ua] \times [ub]} \left| \Pi^\lam(\sigma^n\om) - \Pi^\lam(\sigma^n\tau) \right|^{-\alpha}\,d\tilde{\mu}_\lam(\om)\,d\tilde{\mu}_\lam(\tau)\,d\lam.
\end{split}
\]
Using \eqref{eq:energy sum} and applying the same argument as in the proof of Proposition \ref{prop:dim cor cont}, we obtain from \eqref{eq: measure ineq} (recall that $q \in \N$ is chosen so that \eqref{eq:qn length bound} holds) for $u \in A_n$:
\[\begin{split} \iint\limits_{[ua] \times [ub]} &  \left| \Pi^\lam(\sigma^n\om) - \Pi^\lam(\sigma^n\tau) \right|^{-\alpha}\,d\tilde{\mu}_\lam(\om)\,d\tilde{\mu}_\lam(\tau) \\
	&  \leq \sum \limits_{j=0}^{\infty}e^{\alpha j}   \iint\limits_{[ua] \times [ub]} \mathds{1}_{ \left\{ \left| \Pi(\sigma^n\om) - \Pi(\sigma^n\tau) \right| \leq e^{-j} \right\}} \,d\tilde{\mu}_\lam(\om)\,d\tilde{\mu}_\lam(\tau) \\
	& \leq \sum \limits_{j=0}^{\infty}e^{\alpha j}  \tilde{\mu}_\lam \otimes \tilde{\mu}_\lam \left( [ua] \times [ub] \cap \left\{ \left| \Pi((\sigma^n\om)|_{qj}1^\infty) - \Pi((\sigma^n\tau)|_{qj}1^\infty) \right| \leq 2e^{-j} \right\} \right) \\
	& \leq C \sum \limits_{j=0}^{\infty}e^{(\alpha + 2\eps q) j + \eps n}  \mu_{\lam_0} \otimes \mu_{\lam_0} \left( [ua] \times [ub] \cap \left\{ \left| \Pi((\sigma^n\om)|_{qj}1^\infty) - \Pi((\sigma^n\tau)|_{qj}1^\infty) \right| \leq 2e^{-j} \right\} \right) \\
	& \leq\iint\limits_{[ua] \times [ub]}   \left( C_1 e^{\eps n}  \left|\Pi^\lam(\sigma^n\om) - \Pi^\lam(\sigma^n\tau) \right|^{-(\alpha+ 2q\eps)} + C_2 \right) \,d\mu_{\lam_0}(\om)\,d\mu_{\lam_0}(\tau) + C_2.
\end{split}
\]
By \cite[Lemma 14.4.4]{ourbook}, the transversality condition guarantees that for every $(\om, \tau) \in [ua]\times[ub]$, we have
\[ \int \limits_{U'_D} \left| \Pi^\lam(\sigma^n\om) - \Pi^\lam(\sigma^n\tau) \right|^{-(\alpha+ 2q\eps)} d\lam \leq C_{\alpha+ 2q\eps} < \infty\ \text{ if } \alpha+ 2q\eps < 1. \]
Consequently, combing all the calculations so far and applying Fubini's theorem and the definition of $A_D$:
\[\begin{split}
	\Ik & \leq C^\alpha D^\alpha C_{\alpha + 2q\eps} \sum \limits_{n=0}^\infty e^{n\alpha(\chi_{\mu_{\lam_0}}(\Fk^{\lam_0}) + 3\eps)} \sum \limits_{u \in A_n} \sum_{\substack{a,b\in \Ak\\ a \neq b}} \mu_{\lam_0}([ua])\mu_{\lam_0}([ub]) \left(C_1 e^{\eps n} + C_2 \right) \\
	& \leq C^\alpha D^\alpha C_{\alpha + 2q\eps} \sum \limits_{n=0}^\infty e^{n\alpha(\chi_{\mu_{\lam_0}}(\Fk^{\lam_0}) + 3\eps)} \sum \limits_{u \in A_n} \mu_{\lam_0}([u])^2 \left(C_1 e^{\eps n} + C_2 \right) \\
	& \leq C^\alpha D^{(\alpha+1)} C_{\alpha + 2q\eps} \sum \limits_{n=0}^\infty e^{n(\alpha(\chi_{\mu_{\lam_0}}(\Fk^{\lam_0}) + 3\eps) - (h_{\mu_{\lam_0}} - 2\eps ))} \left(C_1 e^{\eps n} + C_2 \right).
\end{split}\]
The last sum is finite for every $\alpha > 0$ satisfying
\[ \alpha < \frac{h_{\mu_{\lam_0}} - 3\eps}{\chi_{\mu_{\lam_0}}(\Fk^{\lam_0}) + 3\eps}\ \text{ and }\ \alpha + 2q\eps < 1.\]
This establishes \eqref{eq:dim int goal} and finishes the sketch of the proof of Proposition \ref{prop:hdim local}.
}

\begin{proof}[Proof of Proposition \ref{prop:hdim local}]
We explain the changes one has to make in the proof of \cite[Proposition 5.1]{BSSS} in order to obtain the result in our case. First note the differences between the assumptions with respect to \cite[Proposition 5.1]{BSSS}: we assume weaker regularity conditions on the IFS and the multiparameter trasnversality condition \ref{as:trans mp}. Moreover, unlike in \cite{BSSS}, we do not assume continuity of the maps $\lam \mapsto h_{\mu_{\lam}}$ and $\lam \mapsto \chi_{\mu_{\lam}}(\Fk^\lam)$. An inspection of the proof in \cite{BSSS} shows the following.
\begin{itemize}
	\item The transversality condition with $d=1$ is used in the proof only via inequality \eqref{R38}. {In the case of a multiparameter family ($d>2$), the assumed condition \ref{as:trans mp} implies an analogous inequality
	\[ \Lk^d \left( \left\{ \lam \in \ov{U} : |\pi^\lam(\mathbf{i}) - \pi^\lam(\mathbf{j})| \leq r  \right\} \right) \leq C_T r\]
	for all $r>0$, $\mathbf{i}, \mathbf{j} \in \Sig$ such that $\mathbf{i}_1 \neq \mathbf{j}_1$, with a constant $C_T$ depending only on the system.} Therefore, the switch from $d=1$ to $d>1$ does not require any changes in the application of the transversality condition.
	\item Uniform hyperbolicity and contraction condition $(A4)$ in \cite{BSSS} is the same as our condition \ref{as:MA4}. The other conditions $(A1) - (A3)$ from \cite{BSSS} are used in the proof only via the parametric Bounded Distortion Property \cite[Lemma 4.2]{BSSS}. Weaker assumptions of Theorem \ref{thm:main_hausdorff} guarantee its weaker form \cite[Lemma 14.2.4.(ii)]{ourbook}, which is sufficient for the needs of the proof of \cite[Proposition 5.1]{BSSS}. Indeed, it is used only via inequalities (needed for $A_\lam \subset A$ and inequality $(5.4)$ in \cite{BSSS})
	\[ Ce^{-\eps|\mathbf{i}|} \leq \left| \frac{f^{\lam_0}_{\mathbf{i}}(x)}{f^\lam_{\mathbf{i}}(y)} \right| \leq Ce^{\eps|\mathbf{i}|}, \]
	which have to hold on an open neighbourhood $U'$ of $\lam_0$ (depending on $\eps$). The version of the Bounded Distortion Property from \cite[Lemma 14.2.4.(ii)]{ourbook} suffices for that purpose (while the stronger statement of \cite[Lemma 4.2]{BSSS} gives an explicit bound on how small $U'$ has to be, not needed here).
	\item Continuity of  $\lam \mapsto h_{\mu_\lam}$ and $\lam \mapsto \chi_\mu(\Fk^\lam)$ is used only to choose a neighbourhood $U'$ of $\lam_0$ in such a way that inequalities $|h_{\mu_\lam} - h_{\mu_{\lam_0}}| < \eps$ and $|\chi_{\mu_\lam}(\Fk^\lam) - h_{\mu_{\lam_0}}(\Fk^{\lam_0})| < \eps$ hold for $\lam \in U'$. In our case, we simply assume that in the statement of the proposition.
\end{itemize}
Applying the above changes to the proof of \cite[Proposition 5.1]{BSSS}, one can repeat the proof and conclude as on p.\ 17 of \cite{BSSS} that there exists an open neighbourhood $U'$ of $\lam_0$ such that
\[ \Dh\nu_\lam \geq \min \left\{ 1 - Q'\eps, \frac{h_{\mu_{\lam_0}} - 4\eps}{\chi_{\mu_{\lam_0}(\Fk^{\lam_0})} + 3\eps} \right\}, \]
for $\Lk^d$-a.e. $\lam \in U'$ satisfying $|h_{\mu_\lam} - h_{\mu_{\lam_0}}| < \eps$ and $|\chi_{\mu_\lam}(\Fk^\lam) - h_{\mu_{\lam_0}}(\Fk^{\lam_0})| < \eps$, where $Q'$ is an explicit constant depending on $\gamma_2$ in \ref{as:MA4}. This finishes the proof.
\end{proof}

Now we can finish the proof of Theorem \ref{thm:main_hausdorff}. It is convenient to do so in a slightly different manner than in \cite{BSSS}.

\begin{proof}[Proof of item \ref{it: hdim par meas} of Theorem \ref{thm:main_hausdorff}]
As the inequality $\Dh\nu_\lam \leq \min\left\{1, \frac{h_{\mu_\lam}}{\chi_{\mu_\lam}(\Fk^\lam)} \right\}$ holds for every $\lam$ (see e.g. \cite[Theorem 14.2.3]{ourbook}), it suffices to prove  the opposite inequality for almost every $\lam \in \ov{U}$. Fix $\eps>0$ and consider a cover $\{ V_{i,j} \}_{i,j=1}^\infty$ of $\ov{U}$ by sets of the form
\[V_{i,j}= \{ \lam \in \ov{U} :  i\eps \leq|h_{\mu_\lam}| \leq (i+1)\eps,\ j\eps \leq|\chi_{\mu_\lam}(\Fk^\lam)| \leq (j+1)\eps \}.\]
Furthermore, consider an open cover $\{U'(\lam_0) : \lam_0 \in \ov{U}\}$ of $\ov{U}$, where $U'(\lam_0)$ is the neighbourhood of $\lam_0$ from Proposition \ref{prop:hdim local} corresponding to $\eps$. Finally, choose a countable subcover $\{U_k(\lam_k)\}_{k=1}^\infty$ of $\{U'(\lam_0) : \lam_0 \in \ov{U}\}$ and set
\[\Vk = \{ V_{i,j} \cap U_k(\lam_k) : 1 \leq  i,j,k  <\infty \}.\]
Fix $V:= V_{i,j} \cap U_k(\lam_k) \in \Vk$. By Proposition \ref{prop:hdim local}, the inequality
\[ \Dh\nu_\lam \geq \min \left\{ 1, \frac{h_{\mu_{\lam_k}}}{\chi_{\mu_{\lam_k}(\Fk^{\lam_k})}} \right\} - L\eps \]
holds for $\Lk^d$-a.e. $\lam \in V$. As $\lam, \lam_k \in V_{i,j}$ and $\Vk$ is a countable cover of $\ov{U}$, this implies also
\[ \Dh\nu_\lam \geq \min \left\{ 1, \frac{h_{\mu_{\lam}} - \eps}{\chi_{\mu_{\lam}(\Fk^{\lam})} + \eps} \right\} - L\eps \]
for $\Lk^d$-a.e. $\lam \in \ov{U}$. As $\eps>0$ is arbitrary, this finishes the proof.
\end{proof}

}

\section{Absolute continuity - on the proof of Theorem \ref{thm:main_cor_dim}}

{At first we consider the 1-parameter case, as in \cite{BSSS}, and let $U\subset \R$ be a bounded open interval.
The multiparameter case is then deduced  by ``slicing'' the $d$-dimensional set of parameters; this is done at the end of the section.}

\subsection{First approach}
Let us explain now why the approach from the proof of Theorem \ref{thm:main_hausdorff} does not work for absolute continuity and hence the proof of Theorem \ref{thm:main_cor_dim} requires stronger assumptions (most notably: condition \ref{as:measure} instead of \ref{as:measure_cont}). The standard approach to proving typical absolute continuity of $(\Pi^\lam)_* \mu_\lam$ would be to use the following characterization of absolute continuity for a finite measure $\nu$ on $\R$ (see \cite[Theorem 2.12]{Mattila}):
\[ \nu \ll \Lk^1\ \text{ if and only if }\ \underline{D}(\nu, x) := \liminf \limits_{r \to 0} \frac{\nu(B(x, r))}{2r} < \infty \text{ for } \nu\text{-a.e. } x \in \R. \]
Therefore, in order to prove that $(\Pi^\lam)_* \mu_\lam \ll \Lk^1$ for almost every $\lam \in U$, it suffices to show
\begin{equation}\label{eq:ac int} \int \limits_{U} \int \limits_{\R} \underline{D}(\Pi^\lam_*\mu_\lam, x)\,d\Pi^\lam_*\mu_\lam(x)\, d\lam < \infty.
\end{equation}
By Fatou's lemma one has
\begin{equation}\label{eq:fatou} \int \limits_{U} \int \limits_{\R} \underline{D}(\Pi^\lam_*\mu, x)\,d\Pi^\lam_*\mu_\lam(x)\, d\lam \leq \liminf \limits_{ r \to 0} \frac{1}{2r} \int \limits_{U} \int \limits_{\R} \Pi^\lam_*\mu_\lam(B(x,r))\,d\Pi^\lam_*\mu_\lam(x)\,d\lam.
\end{equation}
If $\mu_\lam \equiv \mu$ for some fixed measure $\mu$, the classical approach is to use the transversality condition \ref{as:trans mp} and Fubini's theorem in order to show that if $\dim_{cor}(\mu, d_\lam) > 1$ on $U$, then  (see e.g. \cite[Theorem 6.6.2.(iv)]{ourbook} and its proof)
\begin{equation}\label{eq:tranvers int bound}\int \limits_{U} \int \limits_{\R} \Pi^\lam_*\mu(B(x,r))\,d\Pi^\lam_*\mu(x)\,d\lam \leq Cr,
\end{equation}
obtaining \eqref{eq:ac int}. Condition $\dim_{cor}(\mu, d_\lam) > 1$ is then improved to $\frac{h_{\mu}}{\chi_{\mu}(\mF^\lam)} > 1$ with the use of the Egorov theorem, similarly to the previous section.

In the case of the parameter dependent measure $\mu_\lam$ in the symbolic space, combining the above approach with the strategy from proof of Theorem \ref{thm:main_hausdorff} does not seem to work anymore. In particular, one could repeat the calculation from the proof of Proposition \ref{prop:dim cor cont} in order to bound integral $\int_{\R} \Pi^\lam_*\mu_\lam(B(x,r))\,d\Pi^\lam_*\mu_\lam(x)$ with the integral $\int_{\R} \Pi^\lam_*\mu_{\lam_0}(B(x,r))\,d\Pi^\lam_*\mu_{\lam_0}(x)$ for $\lam$ in a small neighbourhood $U'$ of $\lam_0$ (so that \eqref{eq:ac int} can be invoked for a fixed measure $\mu = \mu_{\lam_0}$). This, however, leads to a bound
\begin{equation}\label{eq:int reps bound} \int_{\R} \Pi^\lam_*\mu_\lam(B(x,r))\,d\Pi^\lam_*\mu_\lam(x) \leq r^{-\eps} \int_{\R} \Pi^\lam_*\mu_{\lam_0}(B(x,r))\,d\Pi^\lam_*\mu_{\lam_0}(x) + \mathrm{const}
\end{equation}
on a neighbourhood $U'$ of $\lam_0$ (depending on $\eps$). The error $r^{-\eps}$ is too large in order to combine \eqref{eq:fatou} with \eqref{eq:int reps bound} and \eqref{eq:tranvers int bound} for $\mu = \mu_{\lam_0}$ in order to obtain \eqref{eq:ac int}.

\subsection{Sobolev dimension}
A more refined approach, which leads to the proof of Theorem \ref{thm:main_cor_dim} and is the main part of \cite{BSSS}, is adapting the technique of Peres and Schlag \cite{PS00}, who worked with the Sobolev dimension $\dim_S$ rather than the correlation dimension. This is a notion of
 dimension extending the correlation dimension to values greater than $1$ (for finite measures on $\R$) with the crucial property that $\nu \ll \Lk^1$ whenever $\dim_S \nu > 1$. See Section \ref{R79} for the definition and more details. Peres and Schlag were able to prove that under the assumptions of Theorem \ref{thm:main_cor_dim}, if one considers the fixed measure $\mu_\lam \equiv \mu$ case, then
\[ \dim_S\nu_\lam \geq \min\{ \dim_{cor}(\mu_\lam, d_\lam), 1 + \delta \} \text{ for Lebesgue a.e. } \lam \in U, \]
{see \cite[Theorem 4.9 (4.22)]{PS00}}.
If one could prove an analog of Proposition \ref{prop:dim cor cont} for the Sobolev dimension, then we could repeat the proof of item \ref{it:dim cor par meas} of Theorem  \ref{thm:main_hausdorff} in order to conclude Theorem \ref{thm:main_cor_dim}. Unfortunately, here we face another complication: the map $\lam \mapsto \dim_S((\Pi^\Fk)_*\mu_\lam)$, in general, is not continuous, even under the stronger regularity condition \ref{as:measure}.

\begin{example}
Let $\Ak=\{1,2\}$ and consider an IFS $\Fk = \{f_1, f_2\}$ on $[0,1]$ where $f_1(x) = x/2$ and $f_2(x) = x/2 + 1/2$. Let $\Pi = \Pi^\Fk : \Sigma \to [0,1]$ be the corresponding natural projection map on $\Sigma = \{1,2 \}^{\N}$. For $\lam \in (0,1)$, let $\mu_\lam = (\lam, 1 - \lam)^{\N}$ be the corresponding Bernoulli measure on $\Sigma$. Let $\ov{U} \subset (0,1)$ be a compact interval containing $1/2$. It is straightforward to see that the family $\{ \mu_\lam\}_{\lam \in \ov{U}}$ satisfies \ref{as:measure} with $\theta = 1$. As $\Pi_* \mu_{1/2} = \Lk^1|_{[0,1]}$, we have $\dim_S(\Pi_*\mu_{1/2}) = 2$ (this follows directly from the definition of the Sobolev dimension and the formula $|\widehat{\Lk^1_{[0,1]}}(\xi)| = \frac{|e^{i\xi} - 1|}{|\xi|}$). On the other hand, $\Dh(\Pi_* \mu_{\lam}) \leq \frac{h_{\mu_\lam}}{\chi_{\mu_\lam}(\Fk)} = \frac{H(\lam)}{\log 2}$, where $H(\lam) = -\lam \log \lam - (1-\lam) \log (1 - \lam)$. Therefore $\dim_{cor}(\Pi_* \mu_{\lam}) \leq \Dh (\Pi_* \mu_{\lam}) < 1$ for $\lam \neq 1/2$, hence by Lemma \ref{R81} also $\dim_S \Pi_* \mu_{\lam} < 1$. This shows that $\lam \mapsto \dim_S((\Pi^\Fk)_*\mu_\lam)$ is not continuous in this case.
\end{example}
% CONTINUE FROM HERE: HOW WE DEAL WITH THE LACK OF CONTINUITY = EXTENDING PERES SCHLAG TO MORE GENERAL KERNELS

This makes it necessary for us to ``dive'' into the Peres-Schlag proof in \cite{PS00} and modify it in a way that suits our needs.
First, note that \cite{PS00} contains results in two versions: the $C^\infty$ case and the limited regularity case. It is the latter one that concerns us here.
It is treated in \cite{PS00} with less detail, often referring to a list of modifications needed, compared with the $C^\infty$ case. It is worth mentioning that
\cite{PS00} also contains results on the Hausdorff dimension of exceptional parameters for absolute continuity, which we do not address here.

\subsection{The 1-parameter case}

Theorem~\ref{thm:main_cor_dim} {in the 1-parameter case} is deduced from the following result, modelled after  \cite[Theorem 4.9]{PS00}.

\begin{thm}\label{thm:sobolev integral bound}
Let $\{f_j^\lam\}_{j \in \Ak}$ be a parametrized IFS satisfying smoothness assumptions \ref{as:MA1} - \ref{as:MA4} and the transversality condition \ref{as:trans mp} on
{$U\subset \R$, a bounded open interval}. Let $\left\{ \mu_\lam \right\}_{\lam \in \ov{U}}$ be a collection of finite Borel measures on $\Sig$ satisfying \ref{as:measure}. Fix $\lambda_0 \in U$, $\beta > 0$, $\gamma>0$, $\eps>0$ and $q>1$ such that $1+2\gamma + \eps<q<1+\min\{\delta, \theta\}$. Then, there exists a (sufficiently small) open interval $J \subset U$ containing $\lam_0$,
 such that for every smooth function $\rho$ on $\R$ with
{$0\le \rho\le 1$ and} $\supp(\rho) \subset J$ there exist constants $\widetilde{C}_1>0,\ \widetilde{C}_2>0$ such that
	$$
	\int_{J}\|\nu_\lambda\|_{2,\gamma}^2\rho(\lambda)\,d\lambda \leq \widetilde{C}_1\Ek_{q(1+a_0\beta)}(\mu_{\lambda_0}, d_{\lam_0}) + \widetilde{C}_2,
	$$
where $a_0=\frac{8+4\delta}{1+\min\{\delta, \theta\}}$.
\end{thm}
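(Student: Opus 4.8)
The plan is to pass to the Fourier side and reduce the assertion to a uniform bound on a family of parameter-integrated oscillatory integrals, which transversality will supply; this is the limited-regularity scheme of Peres--Schlag \cite[Section~4]{PS00}, carried out with the extra input of condition~\ref{as:measure} to absorb the $\lam$-dependence of $\mu_\lam$, as in \cite{BSSS}. First I would invoke the Fourier-analytic description of the Sobolev norm (Section~\ref{R79}), $\|\nu_\lam\|_{2,\gamma}^2\asymp\int_{\R}(1+|\xi|)^{2\gamma}|\widehat{\nu_\lam}(\xi)|^2\,d\xi$ with $\widehat{\nu_\lam}(\xi)=\int_{\Sig}e^{-i\xi\Pi^\lam(\om)}\,d\mu_\lam(\om)$, decompose dyadically $|\xi|\sim 2^k$, and expand $|\widehat{\nu_\lam}(\xi)|^2$ as a double integral over $\Sig\times\Sig$, so that the quantity to be estimated becomes
\[ \sum_{k\ge 0}2^{2\gamma k}\int_{2^k\le|\xi|<2^{k+1}} \int_J\rho(\lam)\iint_{\Sig\times\Sig}e^{-i\xi(\Pi^\lam(\om)-\Pi^\lam(\tau))}\,d\mu_\lam(\om)\,d\mu_\lam(\tau)\,d\lam\,d\xi, \]
the low-frequency part $|\xi|\lesssim 1$ going into the additive constant $\widetilde{C}_2$ (it is $\le\|\rho\|_\infty|J|\,\mu_\lam(\Sig)^2$, finite).

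Next I would split $\Sig\times\Sig$ according to the length $n=|\om\wedge\tau|$ of the longest common prefix into blocks $[ua]\times[ub]$, $|u|=n$, $a\ne b$, on which $d_{\lam_0}(\om,\tau)\asymp|f^{\lam_0}_u(X)|$ lies between $e^{-n(\chi+\eps)}$ and $e^{-n(\chi-\eps)}$ by the Bounded Distortion Property (Lemma~\ref{R60}) and \ref{as:MA4}, where $\chi$ bounds the Lyapunov exponents uniformly. Shrinking $J$ about $\lam_0$ so that $c|J|^\theta$ is small, condition~\ref{as:measure} gives $e^{-\eps n}\mu_{\lam_0}([v])\le\mu_\lam([v])\le e^{\eps n}\mu_{\lam_0}([v])$ for all $|v|=n$ and $\lam\in J$, so on each block $\mu_\lam\otimes\mu_\lam$ may be replaced by $\mu_{\lam_0}\otimes\mu_{\lam_0}$ at the cost of a factor $e^{O(\eps n)}$ --- the same device as in the proof of Proposition~\ref{prop:dim cor cont}. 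This reduces matters to a pointwise estimate, for each $n$ and each $(\om,\tau)$ with $|\om\wedge\tau|=n$, of the parametric oscillatory integral $I_k(\om,\tau)=\int_J\rho(\lam)e^{-i\xi(\Pi^\lam(\om)-\Pi^\lam(\tau))}\,d\lam$ with $|\xi|\sim 2^k$.

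The heart of the proof --- and the step I expect to be the main obstacle --- is the oscillatory-integral bound. Writing $\phi(\lam)=\Pi^\lam(\om)-\Pi^\lam(\tau)=f^\lam_u(\Pi^\lam(\sigma^n\om))-f^\lam_u(\Pi^\lam(\sigma^n\tau))$, bounded distortion gives $\phi(\lam)\asymp|f^\lam_u(X)|\,\psi(\lam)$ with $\psi(\lam)=\Pi^\lam(\sigma^n\om)-\Pi^\lam(\sigma^n\tau)$ and $(\sigma^n\om)_1\ne(\sigma^n\tau)_1$, so $\psi$ obeys the transversality condition~\ref{as:trans mp}, equivalently transversality of degree $\beta$ (Lemma~\ref{prop_beta trans}, Remark~\ref{rem:beta tran}); thus $I_k(\om,\tau)$ is an oscillatory integral of effective frequency $\sim 2^k e^{-n\chi}$. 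Since the family is only $C^{1+\delta}$ in $x$ and in $\lam$ and $\mu_\lam$ varies only $\theta$-H\"older in $\lam$, a van der Corput / fractional integration-by-parts estimate, combined with the degree-$\beta$ transversality to turn lower bounds on $|\psi'|$ on the region $\{|\psi|<\eta\}$ into negative powers of $d_{\lam_0}(\om,\tau)$, should yield
\[ |I_k(\om,\tau)|\ \lesssim\ \min\bigl\{\,|J|,\ (2^k e^{-n\chi})^{-u}\,d_{\lam_0}(\om,\tau)^{-O(\beta)}\,\bigr\}\quad\text{for every }u<q, \]
uniformly in $(\om,\tau)$; the constraint $q<1+\min\{\delta,\theta\}$ is exactly what the limited regularity allows, and $a_0=\frac{8+4\delta}{1+\min\{\delta,\theta\}}$ is the constant recording the accumulated exponent losses. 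Producing this uniform bound --- running the Peres--Schlag limited-regularity argument with transversality available only locally on $\{|\psi|<\eta\}$, over all pairs and all frequency scales, and tracking exponents so as to land exactly on $q(1+a_0\beta)$ --- is the technically demanding part, essentially \cite[Section~4]{PS00} re-executed as in \cite{BSSS}.

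Finally I would reassemble: inserting the above bound into the first display (after the measure replacement), integrating over $|\xi|\sim 2^k$, and summing the dyadic series --- the decay beating the weight $2^{2\gamma k}$ because $2\gamma+\eps<q$, with the $\eps$-slack absorbing the $e^{O(\eps n)}$ errors once one couples $n\sim k/\chi$, which is also why $J$ must be taken small. Both regimes $n\lesssim k/\chi$ and $n\gtrsim k/\chi$ contribute, after summation, $\asymp d_{\lam_0}(\om,\tau)^{-q(1+a_0\beta)}$ per block; using $\sum_{|u|=n}\sum_{a\ne b}\mu_{\lam_0}([ua])\mu_{\lam_0}([ub])\le\sum_{|u|=n}\mu_{\lam_0}([u])^2$ and summing over $n$,
\[ \sum_n \sum_{|u|=n}\mu_{\lam_0}([u])^2\,\sup_{|\om\wedge\tau|=n}d_{\lam_0}(\om,\tau)^{-q(1+a_0\beta)}\ \asymp\ \iint_{\Sig\times\Sig}d_{\lam_0}(\om,\tau)^{-q(1+a_0\beta)}\,d\mu_{\lam_0}(\om)\,d\mu_{\lam_0}(\tau)=\Ek_{q(1+a_0\beta)}(\mu_{\lam_0},d_{\lam_0}), \]
which furnishes the $\widetilde{C}_1$-term; the finitely many small-$n$ blocks and the low-frequency part give $\widetilde{C}_2$ (here $0\le\rho\le1$, $\supp\rho\subset J$, and finiteness of $\mu_\lam$ are used). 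Choosing $J$ small enough that all smallness requirements (on $c|J|^\theta$ and on convergence of the dyadic sum) hold simultaneously completes the argument, and explains why $J$, hence $\widetilde{C}_1,\widetilde{C}_2$, depends on $\lam_0,\beta,\gamma,\eps,q$.
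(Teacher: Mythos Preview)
Your overall framework—dyadic frequency decomposition, reduction to a uniform parameter-integrated oscillatory estimate controlled by degree-$\beta$ transversality, and reassembly into $\Ek_{q(1+a_0\beta)}$—is the right Peres--Schlag scheme. But there is a genuine gap at the measure-replacement step, and it is exactly the point where the paper's argument departs from the classical one.

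You write that on each block $[ua]\times[ub]$, condition~\ref{as:measure} lets you replace $\mu_\lam\otimes\mu_\lam$ by $\mu_{\lam_0}\otimes\mu_{\lam_0}$ ``at the cost of a factor $e^{O(\eps n)}$'', citing the device from Proposition~\ref{prop:dim cor cont}. That device works there because the integrand is a nonnegative indicator. Here the integrand $e^{-i\xi(\Pi^\lam(\om)-\Pi^\lam(\tau))}$ is oscillatory, and~\ref{as:measure} only compares \emph{cylinder} measures, not a Radon--Nikodym derivative; since the integrand is not constant on cylinders of any finite level, no multiplicative replacement is available. Worse, even granting it, you would have eliminated $\mu_\lam$ before the oscillatory step, so your $I_k(\om,\tau)=\int_J\rho(\lam)e^{-i\xi\phi(\lam)}\,d\lam$ no longer sees $\theta$ at all—yet you still claim the constraint $q<1+\min\{\delta,\theta\}$. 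There is no mechanism in your outline producing the $\theta$-dependence.

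The paper's fix is the discretization/adjustment-kernel step you are missing. One truncates $\om_i\mapsto\om_i|_{\tc j}1^\infty$ at a level $\tc j$ tied to the frequency scale (using that the Littlewood--Paley function $\psi$ is Lipschitz, \eqref{eq:psi_lipschitz}); this makes the integrand cylinder-constant, so the passage from $\mu_\lam$ to $\mu_{\lam_0}$ becomes an \emph{exact} identity via
\[
e_j(\om_1,\om_2,\lam)=\frac{\mu_\lam([\om_1|_{\tc j}])\,\mu_\lam([\om_2|_{\tc j}])}{\mu_{\lam_0}([\om_1|_{\tc j}])\,\mu_{\lam_0}([\om_2|_{\tc j}])},
\]
after which one un-truncates. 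The point is that $e_j$ is a $\lam$-dependent factor carried \emph{inside} the oscillatory integral (Proposition~\ref{prop:trans int}). Its $\theta$-H\"older regularity in $\lam$, supplied by~\ref{as:measure}, enters the estimate of $|F_3(u)-F_3(0)|$ after the change of variables near a zero of $\Phi_\lam$ (see~\eqref{eq:F_3 hoelder}); this produces the $|u|^\theta$ loss giving $q<1+\theta$, while $|H'(u)-H'(0)|$ gives the $|u|^\delta$ loss and $q<1+\delta$. That is where $\min\{\delta,\theta\}$ comes from.

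A secondary point: the paper uses the real-valued Littlewood--Paley $\psi$ with $\int\psi=0$ and exploits this by subtracting $F(0)$ in~\eqref{wehave}, rather than a van der Corput integration by parts. With only $C^{1+\delta}$ regularity of the phase in $\lam$, this mean-zero trick is what makes the limited-regularity estimate go through, splitting cleanly into $B_1$ (H\"older estimate on $F$ near the zero) and $B_2$ (rapid decay of $\psi$ away from it).
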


The use of a smoothing kernel $\rho$, replacing a characteristic function, is standard in harmonic analysis; there will be a few more such in the proof. On the other hand,
the parameter $\beta$ in the statement of the theorem may look mysterious; in fact, is comes from ``transversality of degree $\beta$'' introduced by Peres and Schlag \cite{PS00} and
defined in \eqref{tran beta} below.
In \cite{BSSS} it is shown that this condition follows from the ``usual'' transversality under our smoothness assumptions. {In the derivation
of Theorem~\ref{thm:main_cor_dim} from  Theorem~\ref{thm:sobolev integral bound} it will be essential that $\beta>0$ can be taken
arbitrarily small. }

\begin{lem}[Prop.\,6.1 from \cite{BSSS}]\label{prop_beta trans}
Let $\{f_j^\lam\}_{j \in \Ak}$ be a parametrized IFS satisfying smoothness assumptions \ref{as:MA1} - \ref{as:MA4} and the transversality condition \ref{as:trans mp} on
{ $U\subset \R$, a bounded open interval}. For every $\lam_0 \in U$ and $\beta>0$ there exists $c_\beta>0$ and an open neighbourhood $J$ of $\lam_0$ such that
\be \label{tran beta}
\left|\Pi^\lam(u) - \Pi^\lam(v)\right| < c_\beta\cdot d_{\lam_0}(u,v)^{1+\beta} \implies \left|\textstyle{\frac{d}{d\lam}}(\Pi^\lam(u) - \Pi^\lam(v))\right| \ge c_\beta\cdot d_{\lam_0}(u,v)^{1+\beta}.
\ee
holds for all $u, v\in \Sig$ and $\lam \in J$.
\end{lem}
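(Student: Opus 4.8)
The plan is to derive the degree‑$\beta$ transversality \eqref{tran beta} from the basic transversality \eqref{R40} (the $d=1$ form of \ref{as:trans mp}) by a ``telescoping'' argument: given $u,v\in\Sig$ we factor out their longest common prefix, reduce to a pair of points whose first symbols differ --- where \eqref{R40} is available --- and then transport the resulting derivative lower bound back through the composition that was factored out. Throughout we may assume $\diam(X)\le 1$, and we work on a suitably small neighbourhood $J$ of $\lam_0$ to be shrunk finitely many times.

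Set $\omega:=u\wedge v$, $k:=|\omega|$, so $u=\omega u^{*}$, $v=\omega v^{*}$ with $u^{*}_1\ne v^{*}_1$; write $a:=\Pi^\lam(\sigma^{k}u)$, $b:=\Pi^\lam(\sigma^{k}v)$. By self‑conformality $\Pi^\lam(u)-\Pi^\lam(v)=f^\lam_\omega(a)-f^\lam_\omega(b)$, and $d_{\lam_0}(u,v)=|f^{\lam_0}_\omega(X)|=:|X^{\lam_0}_\omega|$. First I would fix, using the Bounded Distortion Property (Lemma \ref{R60}) together with continuity of $\lam\mapsto\Fk^\lam$, a constant $c_1>0$ with $c_1|X^{\lam_0}_\omega|\le|(f^\lam_\omega)'(x)|\le c_1^{-1}|X^{\lam_0}_\omega|$ for all $x\in X$, $\omega\in\Ak^{*}$, $\lam\in J$. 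I would also record the parametric bounded‑distortion estimates furnished by \ref{as:MA1}--\ref{as:MA3} (the parametric analogue of Lemma \ref{R60}, cf.\ \cite[Lemma 4.2]{BSSS} and \cite[Lemma 14.2.4]{ourbook}): they bound $\|\tfrac{d}{d\lam}\Pi^\lam\|_\infty$ by a constant, the $x$‑Lipschitz constant of $x\mapsto\partial_\lam f^\lam_\omega(x)$ by a constant times $|\omega|\,|X^{\lam_0}_\omega|$, and the $\delta$‑Hölder constant in $x$ of $(f^\lam_\omega)'$ by a constant times $|X^{\lam_0}_\omega|$, all uniformly for $\lam\in J$.

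Now suppose $|\Pi^\lam(u)-\Pi^\lam(v)|<c_\beta|X^{\lam_0}_\omega|^{1+\beta}$ for some $\lam\in J$. The mean value theorem and the lower bound on $|(f^\lam_\omega)'|$ give $|a-b|\le c_1^{-1}|X^{\lam_0}_\omega|^{-1}|\Pi^\lam(u)-\Pi^\lam(v)|<c_1^{-1}c_\beta|X^{\lam_0}_\omega|^{\beta}\le c_1^{-1}c_\beta$; hence, once $c_\beta\le\tfrac12 c_1\eta$, we have $|a-b|<\eta$, and since $\sigma^{k}u,\sigma^{k}v$ have distinct first symbols, \eqref{R40} yields $|\tfrac{d}{d\lam}(a-b)|\ge\eta$. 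Differentiating $\Pi^\lam(u)-\Pi^\lam(v)=f^\lam_\omega(a)-f^\lam_\omega(b)$ in $\lam$ and collecting terms, $\tfrac{d}{d\lam}(\Pi^\lam(u)-\Pi^\lam(v))=(f^\lam_\omega)'(a)\,\tfrac{d}{d\lam}(a-b)+\big((f^\lam_\omega)'(a)-(f^\lam_\omega)'(b)\big)\tfrac{d}{d\lam}b+\big((\partial_\lam f^\lam_\omega)(a)-(\partial_\lam f^\lam_\omega)(b)\big)$. The first term has modulus at least $c_1\eta|X^{\lam_0}_\omega|$; the second is $\le C|X^{\lam_0}_\omega|\,|a-b|^{\delta}\|\tfrac{d}{d\lam}\Pi^\lam\|_\infty\le C'c_\beta^{\delta}|X^{\lam_0}_\omega|$ (using $|X^{\lam_0}_\omega|\le1$); the third is $\le C|\omega|\,|X^{\lam_0}_\omega|\cdot|a-b|\le C'c_\beta\,(|\omega|\,|X^{\lam_0}_\omega|^{\beta})\,|X^{\lam_0}_\omega|$. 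The decisive point --- and the reason $\beta$ may be taken arbitrarily small --- is that $|\omega|\,|X^{\lam_0}_\omega|^{\beta}\le|\omega|\gamma_2^{\beta|\omega|}\diam(X)^{\beta}\le M_\beta$ for a finite constant $M_\beta$ depending only on $\beta,\gamma_2,X$. Choosing $c_\beta>0$ so small that $C'(c_\beta^{\delta}+M_\beta c_\beta)\le\tfrac12 c_1\eta$ and $c_\beta\le\tfrac12 c_1\eta$, we obtain $|\tfrac{d}{d\lam}(\Pi^\lam(u)-\Pi^\lam(v))|\ge\tfrac12 c_1\eta|X^{\lam_0}_\omega|\ge c_\beta|X^{\lam_0}_\omega|\ge c_\beta|X^{\lam_0}_\omega|^{1+\beta}=c_\beta\,d_{\lam_0}(u,v)^{1+\beta}$, which is \eqref{tran beta}. (The degenerate case $u_1\ne v_1$ is $k=0$, $f^\lam_\emptyset=\id$, and the claim is then immediate from \eqref{R40}.)

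The step I expect to be the main obstacle is making precise the uniform parametric bounded‑distortion estimates invoked above, in particular the bound on the $x$‑Lipschitz constant of $x\mapsto\partial_\lam f^\lam_\omega(x)$ by a constant multiple of $|\omega|\,|X^{\lam_0}_\omega|$: this is where the smoothness hypotheses \ref{as:MA1}--\ref{as:MA3} (Hölder control of the mixed second partials of $(x,\lam)\mapsto f^\lam_j(x)$) are genuinely used, via the chain‑rule expansion of $\partial_\lam f^\lam_\omega$ together with the $C^{2}$‑type distortion bound $|(f^\lam_\tau)''|\le C|(f^\lam_\tau)'|$ summed over the $|\omega|$ factors. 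Once that estimate is available, the polynomial loss $|\omega|$ is absorbed by the exponential gain $|X^{\lam_0}_\omega|^{\beta}$, and the rest is the elementary smallness bookkeeping above; this is essentially the argument of \cite[Proposition 6.1]{BSSS}, which we follow.
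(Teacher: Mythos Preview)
Your proof is correct and follows essentially the same approach as \cite[Proposition 6.1]{BSSS}, to which the paper defers (the paper itself gives no proof). The key steps --- factoring out the common prefix $\omega=u\wedge v$, applying the basic transversality \eqref{R40} to the tails $\sigma^{|\omega|}u,\sigma^{|\omega|}v$, and controlling the two error terms in the $\lam$-derivative via parametric bounded distortion --- are exactly what is done in \cite{BSSS}; your identification of the Lipschitz bound on $x\mapsto\partial_\lam f^\lam_\omega(x)$ by $C|\omega|\,|X^{\lam_0}_\omega|$ as the main technical input, and of the absorption $|\omega|\,|X^{\lam_0}_\omega|^{\beta}\le M_\beta$ as the reason $\beta>0$ is needed, is precisely the point.
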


The proof of the lemma is not difficult, but technical, so we leave it to the reader; full details are given in \cite[Prop.\,6.1]{BSSS}.

{\begin{remark} \label{rem:beta tran}
(i) An IFS satisfying \eqref{tran beta} is said to satisfy the transversality condition of degree $\beta$ on $J$.
 Although this is not necessary for the proof, for completeness we discuss how
our definition is related to the definition in \cite{PS00}. Incidentally, in \cite{PS00} the interval $J$ is called an interval of transversality of ``{\em of order $\beta$}'', but  we prefer
``{\em of degree $\beta$}'',
as in \cite[Definition 18.10]{Mattila Fourier}. Peres and Schlag first define $\Phi_\lam(u,v):= \frac{\Pi^\lam(u) - \Pi^\lam(v)}{d_{\lam_0}(u,v)}$ (this is not a typo, the denominator does not
depend on $\lam$). Then \cite[Definition 2.7]{PS00} says that an interval $J$ is an interval of transversality of order $\beta\in [0,1)$ if there exists $c_\beta>0$ such that for all
$\lam\in J$ and $u,v\in \Sig$,
$$
\left|\Phi_\lam(u,v)\right| \le c_\beta d_{\lam_0}(u,v)^\beta\implies \Bigl|\frac{d}{d\lam}\Phi_\lam(u,v)\Bigr| \ge c_\beta d_{\lam_0}(u,v)^\beta.
$$
This is, of course, equivalent to \eqref{tran beta}.

(ii) Note that transversality condition of degree $\beta$ implies our transversality condition \eqref{R40} for any $\beta\ge 0$, since in \eqref{R40} we require that $u_1 \ne v_1$, so that
$d_{\lam_0}(u,v)$ is bounded from below. The influence of $\beta$ matters only when the distance $d_{\lam_0}(u,v)$ may get arbitrarily small.
The most basic example of Bernoulli convolutions shows that \eqref{R40} does not imply transversality of
order $\beta=0$ in any neighborhood of $\lam_0$. In general, the length of the interval $J$ tends to zero as $\beta\to 0$.
\end{remark}
}

%\begin{sloppypar}
\begin{proof}[Derivation of Theorem~\ref{thm:main_cor_dim} assuming Theorem~\ref{thm:sobolev integral bound}]
It is enough to prove that for an arbitrary $t>0$ the set
$$
A = \bigl\{\lam\in \ov{U}: \dim_S(\nu_\lam) < \min \left\{\dim_{cor}(\mu_{\lam_0}, d_{\lam_0}), 1 + \min\{\delta, \theta \} \right\}-t\bigr\}
$$
has Lebesgue measure zero. Assuming the opposite, let $\lam_0$ be a density point of $A$. If $\dim_{cor}(\mu_{\lam_0},d_{\lam_0})\le 1$, we immediately get a contradiction by
Theorem~\ref{thm:main_hausdorff}(ii), in view of the fact that \ref{as:measure} is stronger than \ref{as:measure_cont} and the Sobolev dimension equals the correlation dimension when
the latter is less than one.
Thus we can assume that $\dim_{cor}(\mu_{\lambda_0}, d_{\lam_0}) >1$.

 Let $\eps>0$ be small enough to have
\[\gamma := \frac{\min \left\{\dim_{cor}(\mu_{\lam_0}, d_{\lam_0}), 1 + \min\{\delta, \theta \} \right\} - 4\eps - 1}{2} > 0.\]
Let $q = 1 + 2\gamma + 2\eps$. Then
\[ 1 + 2\gamma + \eps < q \leq \min \left\{\dim_{cor}(\mu_{\lam_0}, d_{\lam_0}), 1 + \min\{\delta, \theta \} \right\} - 2\eps.  \]
Let $\beta>0$ be small enough to have
\[ q(1 + a_0\beta) \leq \min \left\{\dim_{cor}(\mu_{\lam_0}, d_{\lam_0}), 1 + \min\{\delta, \theta \} \right\} - \eps, \]
where $a_0$ is as in Theorem \ref{thm:sobolev integral bound}. By Theorem \ref{thm:sobolev integral bound}, there exists a neighbourhood $J$ of $\lambda_0$ in $U$, interval $I$ containing $\lam_0$ and compactly supported in $J$ and smooth function $\rho$ with $0 \leq \rho \leq 1,\ \supp(\rho) \subset J$ and $\rho \equiv 1$ on $I$, such that
\[ \int_{I} \|\nu_\lam\|^2_{2,\gamma}\,d\lam \leq \int_{J}\|\nu_\lambda\|_{2,\gamma}^2\,\rho(\lambda)\,d\lambda\le \widetilde{C}_1\Ek_{q(1+a_0\beta)}(\mu_{\lambda_0}, d_{\lam_0}) + \tilde{C}_2 < \infty \]
as $q(1 + a_0\beta) \leq \dim_{cor}(\mu_{\lam_0}, d_{\lam_0}) - \eps$. Therefore, $\|\nu_\lam\|^2_{2,\gamma} < \infty$ for Lebesgue almost every $\lam \in {I}$, hence
\[\dim_S\nu_\lam \geq 1 + 2\gamma \geq \min \left\{\dim_{cor}(\mu_{\lam_0}, d_{\lam_0}), 1 + \min\{\delta, \theta \} \right\} - 4\eps\]
holds almost surely on ${I}$. As $\eps$ can be taken arbitrary small and the function $\lam \mapsto \dim_{cor}(\mu_\lam, d_\lam)$ is continuous by Proposition~\ref{prop:dim cor cont}(1a), we get a contradiction.
\end{proof}
%\end{sloppypar}

The proof of Theorem~\ref{thm:sobolev integral bound} is rather long and technical, so we only sketch the key steps.
{ The notation
$
A\lesssim B
$
will mean that there  exist positive constants $C_1'$ and $C_2'$ such that $A \le C'_1 B + C'_2$. These constants will usually depend on the fixed parameters. Furthermore,
$A \asymp B$ will mean that for some $C_3'>1$ holds ${C'}_3^{-1} A \le B \le C_3'B$.
}

The first step is to ``decompose the frequency space dyadically,'' done with the
help of a Littlewood-Paley decomposition. The next result is the 1-dimensional case of \cite[Lemma 4.1]{PS00}, see also \cite[Lemma 18.6]{Mattila Fourier}.

\begin{lem} \label{lem:LP}
There  exists a Schwarz function $\psi\in \Sk(\R)$ such that
\begin{enumerate}[{\rm (i)}]
\item $\what\psi\ge 0$ and $\spt (\what\psi) \subset \{\xi:\ 1 \le |\xi|\le 4\}$;
\item $\sum_{j\in \Z} \psi(2^{-j}\xi) = 1$ for $\xi\ne 0$;
\item given any $\nu\in \Mk(\R)$ and any $\gam>0$, the following decomposition holds:
$$
\|\nu\|_{2,\gam}^2 \asymp \sum_{j\in \Z} 2^{2j\gam} \int (\psi_{2^{-j}}*\nu)(x)\,d\nu(x),
$$
where $\psi_{2^{-j}}(x) = 2^j \psi(2^j x)$.
\end{enumerate}
\end{lem}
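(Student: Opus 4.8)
The plan is to carry out the classical Littlewood--Paley construction. I would first build the function $\psi$ through its Fourier transform, arranging $\what\psi$ to be a smooth dyadic ``partition of unity'' bump supported in the prescribed annulus; then (i) and (ii) hold essentially by construction, and (iii) reduces to a Plancherel computation combined with the observation that $2^{2j\gam}$ and $|\xi|^{2\gam}$ are comparable on the support of $\what\psi(2^{-j}\,\cdot\,)$.

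Concretely, fix an even, nonnegative $\varphi\in C_c^\infty(\R)$ with $\spt(\varphi)\subset\{1\le|\xi|\le4\}$ and $\varphi(\xi)>0$ for $\tfrac32\le|\xi|\le 3$. The latter choice guarantees that the dyadic dilates $\{2^{j}\xi:\tfrac32\le|\xi|\le3\}$, $j\in\Z$, cover $\R\setminus\{0\}$. Set $\Phi(\xi):=\sum_{j\in\Z}\varphi(2^{-j}\xi)$; for each $\xi\ne0$ only boundedly many terms are nonzero and at least one is positive, so $\Phi$ is well defined, smooth on $\R\setminus\{0\}$, strictly positive there, and visibly satisfies $\Phi(2\xi)=\Phi(\xi)$. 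Define $\what\psi(\xi):=\varphi(\xi)/\Phi(\xi)$ for $\xi\ne0$ and $\what\psi(0):=0$; since $\varphi$ vanishes near $0$ and $\Phi$ is smooth and positive on $\spt(\varphi)$, this is a nonnegative function in $C_c^\infty(\R)$ with $\spt(\what\psi)\subset\{1\le|\xi|\le4\}$, giving (i). Let $\psi:=(\what\psi)^{\vee}$ be the inverse Fourier transform, a real, even Schwartz function whose Fourier transform is $\what\psi$. Using $\Phi(2^{-j}\xi)=\Phi(\xi)$ one gets $\sum_{j\in\Z}\what\psi(2^{-j}\xi)=\Phi(\xi)^{-1}\sum_{j\in\Z}\varphi(2^{-j}\xi)=1$ for $\xi\ne0$, which is (ii) (read on the Fourier side, as required for consistency with (iii)).

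For (iii), fix $\nu\in\Mk(\R)$ and $\gam>0$. Since $\psi_{2^{-j}}\in\Sk(\R)$ and $\nu$ is finite, $g:=\psi_{2^{-j}}*\nu$ lies in $L^1\cap L^\infty\subset L^2$, and Parseval's identity for a function against a finite measure gives (up to a fixed normalizing constant, which I suppress, consistent with the ``$\asymp$'' in the statement)
\[
\int (\psi_{2^{-j}}*\nu)(x)\,d\nu(x)=\int \what\psi(2^{-j}\xi)\,|\what\nu(\xi)|^{2}\,d\xi ,
\]
using $\what{\psi_{2^{-j}}}(\xi)=\what\psi(2^{-j}\xi)$ and that $\nu,\psi$ are real. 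All summands being nonnegative, Tonelli yields
\[
\sum_{j\in\Z}2^{2j\gam}\int (\psi_{2^{-j}}*\nu)\,d\nu=\int\Bigl(\sum_{j\in\Z}2^{2j\gam}\what\psi(2^{-j}\xi)\Bigr)|\what\nu(\xi)|^{2}\,d\xi .
\]
On $\spt\bigl(\what\psi(2^{-j}\,\cdot\,)\bigr)=\{2^{j}\le|\xi|\le 2^{j+2}\}$ one has $4^{-2\gam}|\xi|^{2\gam}\le 2^{2j\gam}\le|\xi|^{2\gam}$, so with (ii) we obtain $4^{-2\gam}|\xi|^{2\gam}\le \sum_{j}2^{2j\gam}\what\psi(2^{-j}\xi)\le|\xi|^{2\gam}$ for $\xi\ne0$; as $\{\xi=0\}$ is Lebesgue-null this gives $\sum_{j}2^{2j\gam}\int(\psi_{2^{-j}}*\nu)\,d\nu\asymp\int|\xi|^{2\gam}|\what\nu(\xi)|^{2}\,d\xi$. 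Finally I would compare $\int|\xi|^{2\gam}|\what\nu|^{2}\,d\xi$ with the weight $(1+|\xi|^{2})^{\gam}$ in the definition of $\|\cdot\|_{2,\gam}$ from Section~\ref{R79}: the two agree up to constants on $|\xi|\ge1$, while on $|\xi|\le1$ both the missing low-frequency part of $\|\nu\|_{2,\gam}^{2}$ and the tail $\sum_{j\le-2}2^{2j\gam}\int(\psi_{2^{-j}}*\nu)\,d\nu$ are dominated by $C\,\nu(\R)^{2}$ (use $|\what\nu|\le\nu(\R)$ and $\sum_{j\le-2}2^{2j\gam}|\{2^{j}\le|\xi|\le2^{j+2}\}|<\infty$), which also shows the low-frequency part of the series always converges. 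This proves the stated equivalence.

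I do not expect a genuine obstacle: the statement is the one-dimensional case of \cite[Lemma 4.1]{PS00} (cf.\ \cite[Lemma 18.6]{Mattila Fourier}), and the argument is standard. The only points requiring a little care are (a) choosing the positivity set of $\varphi$ so that its dyadic dilates cover $\R\setminus\{0\}$ while keeping $\spt(\what\psi)\subset\{1\le|\xi|\le4\}$ — handled by taking $\{\tfrac32\le|\xi|\le3\}$ above; (b) justifying the Parseval identity for the finite measure $\nu$ together with the automatic convergence of the low-frequency part of the series; and (c) reconciling the homogeneous weight $|\xi|^{2\gam}$ produced by the decomposition with the precise definition of $\|\cdot\|_{2,\gam}$, which as indicated only affects the harmless range $|\xi|\le1$.
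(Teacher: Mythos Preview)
Your proof is correct and follows the paper's approach: construct $\what\psi$ as a smooth dyadic partition-of-unity bump (the paper uses the telescoping form $\what\psi(\xi)=\eta(\xi/2)-\eta(\xi)$ with $\eta$ an even bump equal to $1$ on $(-1,1)$ and supported in $(-2,2)$, rather than your normalization $\varphi/\Phi$, but either works), and then obtain (iii) from Parseval exactly as you do. One small remark: your final step comparing $|\xi|^{2\gam}$ with $(1+|\xi|^2)^\gam$ is unnecessary here, since the paper's $\|\nu\|_{2,\gam}^2$ in Section~\ref{R79} is already defined with the homogeneous weight $|\xi|^{2\gam}$.
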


For the proof of the lemma
take an even function $\eta \in \Sk(\R)$, non-increasing on $\R^+$, such that $0\le \eta\le 1$, it is equal to $1$ on $(-1,1)$ and is supported in $(-2,2)$. Then there exists a
function $\psi\in \Sk(\R)$ such that
$$
\what\psi(\xi) = \eta(\xi/2)-\eta(\xi),\ \ \xi\in \R.
$$
The properties (i), (ii) are easy to check, and (iii) follows from Parseval's formula in the form
$$
\int \ov f\,d\nu = \int \ov{\what f}\, \what \nu\,d\xi\ \ \ \mbox{for all}\ \ \nu\in \Mk(\R),\ f\in \Sk(\R),
$$
see \cite[(3.27)]{Mattila Fourier}, which implies
\be \label{eq:Parseval}
\int (\psi_{2^{-j}}*\nu)(x)\,d\nu(x) = \int \what\psi(2^{-j}\xi)|\,\what\nu(\xi)|^2\,d\xi \ge 0.
\ee
 %See the proof of \cite[Lemma 18.6]{Mattila Fourier} for details.

Schwarz functions decay faster than any power, thus for any $q>0$ there is $C_q>0$ such that
\be\label{eq:psi decay}
|\psi(\xi)| \le C_q(1 + |\xi|)^{-q}.
\ee
We will also use that
\be \label{meanzero}
\int_\R \psi(\xi)\,d\xi = \what\psi(0) = 0.
\ee
In fact, all higher moments of $\psi$ also vanish, but this will not be needed for our purposes.  As $\psi$ has bounded derivative on $\R$, there exists $L>0$ such that
\begin{equation}\label{eq:psi_lipschitz}
|\psi(x) - \psi(y)|\leq L |x - y| \text{ for all } x,y \in \R.
\end{equation}

\medskip

\subsection{Discretization and ``adjustment kernel''.}
In view of Lemma~\ref{lem:LP},
\be\label{eq:int to sum} \int_{\R}\|\nu_{\lambda}\|^2_{2,\gamma}\rho(\lambda)d\lambda \asymp \int_{\R}
\sum \limits_{j=-\infty}^\infty 2^{2j\gamma} \int_{\R} (\psi_{2^{-j}} * \nu_\lambda)(x)d\nu_\lambda(x)\rho(\lambda)d\lambda,
\ee
%The interval $J$, containing $\supp(\rho)$, is chosen sufficiently small; in particular, so that \eqref{tran beta} holds.
In order to prove Theorem \ref{thm:sobolev integral bound}, it is enough to consider in \eqref{eq:int to sum} the sum over $j\geq 0$, as $|(\psi_{2^{-j}} * \nu_\lambda)(x)|\le
2^j \|\psi\|_\infty$, hence the sum over $j < 0$ converges to a bounded function. %We now calculate for $\lambda \in B(\lambda_0, \xi),\ j \geq 0$, for a small enough $\xi$, and
%$n \in \N$ (we will set later $n = n(j) = \tc j$ for suitable $\tc$):
By definition, for $j\ge 0$ we have
\begin{eqnarray*}
\int_{\R} (\psi_{2^{-j}} * \nu_\lambda)(x)\,d\nu_\lambda(x) & = & 2^j \int_{\R} \int_{\R} \psi(2^j(x-y))\, d\nu_\lambda(y)\,d\nu_\lambda(x)  \\
& = &
2^j \int\limits_{\Sig} \int\limits_{\Sig} \psi\bigl(2^j(\Pi^\lam(\omega_1) - \Pi^\lam(\omega_2))\bigr) \,d\mu_\lambda(\omega_1)\,d\mu_\lambda(\omega_2)
\end{eqnarray*}
 Let $\kappa = -\log_2\gam_2$; we recall that $\gam_2$ is the upper bound on the IFS contraction rates, see \ref{as:MA4}.
``Truncating'' at some level $n=\tc j$, for a suitable $\tc$, that is, replacing $\om_1$ and $\om_2$ by $\omega_1|_n 1^\infty$ and $\omega_2|_n 1^\infty$ respectively,
and estimating the error, using \eqref{eq:psi_lipschitz} and \ref{as:MA4}, yields that the last expression is
\[ \leq  2^j \sum \limits_{i \in \Ak^n} \sum \limits_{k \in \Ak^n} \psi\bigl(2^j(\Pi^\lam( i 1^\infty) - \Pi^\lam(k 1^\infty))\bigr)\,\mu_{\lambda}([i])\,\mu_{\lambda}([k]) +
L2^{2j+1-\kappa \tc j}=(*), \]
%where $\kappa = -\log_2\gam_1$.
The parameter $\tc\ge 1$ will be chosen  to guarantee that $\tc\ge 4/\kappa$. Another key parameter to choose is the size of the
interval $J$ around $\lam_0$, which appears in the statement of Theorem~\ref{thm:sobolev integral bound}.
%in particular, so that \eqref{tran beta} holds.
%In this sketch we do not keep a careful track of all the parameters, since this would defeat the purpose; however, we cannot totally ignore them either. These parameters
%will depend on  our data from the statement of theorem: $\gam$, $\eps$, as well as on $\kappa$, and on the data from property \ref{as:measure}, namely, $c$ and $\theta$.}
 Let $Q = \log_2 e$ and choose $\xi > 0 $ small enough to have $2(4+Qc)\xi < \eps$ and
\begin{equation}\label{eq:eta}
0<\frac{4+2\gamma}{\kappa - Q\xi} < \frac{\eps}{2(4+Qc)\xi}.
\end{equation}
Choose an open interval $J$ containing $\lam_0$ so small that $2c|J|^\theta\leq\xi$ (with $c,\theta$ as in \ref{as:measure}) and \eqref{tran beta} hold.
Then choose $\tc\geq 1$ such that
	\begin{equation}\label{eq:forn} \frac{4+2\gamma}{\kappa - Q\xi} \leq \tc \leq \frac{\eps}{2Q(2+c)\xi}\end{equation}
(it exists due to \eqref{eq:eta}).	

Now comes the crucial point, which makes our situation different from that of \cite{PS00}: we introduce a kernel $e_j$, which controls parameter dependence of $\mu_\lam$ at level
$n=\tc j$. Namely, we define a map $e_j\colon\Sig\times\Sig\times J\mapsto\R$ by
	\begin{equation}\label{eq:ej def}
	e_j(\omega_1,\omega_2,\lambda):= \begin{cases}\frac{\mu_\lambda([\omega_1|_n])\mu_\lambda([\omega_2|_n])}{\mu_{\lambda_0}([\omega_1|_n])\mu_{\lambda_0}([\omega_2|_n])}, &\text{ if } \mu_{\lambda_0}([\omega_1|_n])\mu_{\lambda_0}([\omega_2|_n]) \neq 0,\\
	1, & \text{ otherwise}.
	\end{cases}
	\end{equation}
By the property \ref{as:measure},
\begin{equation}\label{eq:errorest}
	e_j(\omega_1,\omega_2,\lambda)\leq e^{2c|\lambda-\lambda_0|^\theta n}\le 2^{Q\xi \tc j}\ \ \text{ for all $\omega_1,\omega_2$ and $\lambda\in J$.}
	\end{equation}
	Also by \ref{as:measure}, if $i \in \Sig^*$ is a fixed finite word, then  $\mu_{\lam_0}([i]) = 0$ if and only if $\mu_{\lam}([i]) = 0$ for all $\lam \in \ov{U}$; in other words: $\supp(\mu_{\lam_0}) = \supp(\mu_\lam)$. Denote $\wt\Ak^n:= \{i\in\Ak^n:\ \mu_{\lam_0}([i])\ne 0\}$.  We have, therefore, (note that now the integral is with respect to $\mu_{\lambda_0}$),
\[
\begin{split}
(*)& = 2^j \sum \limits_{i \in \wt\Ak^n} \sum \limits_{k \in \wt\Ak^n} %\mathds{1}_{(0,\infty)}\left( \mu_{\lam_0}\left([i]\right)\mu_{\lam_0}\left([k]\right)\right)
\psi\bigl(2^j(\Pi^\lam(i 1^\infty) - \Pi^\lam(k 1^\infty))\bigr)\,\frac{\mu_\lambda([i])\mu_\lambda([k])}{\mu_{\lambda_0}([i])\mu_{\lambda_0}([k])}\,\mu_{\lambda_0}([i])\,\mu_{\lambda_0}([k]) + L2^{2j+1-\kappa\tc j} \\
&
= 2^{j} \int\limits_{\Sig} \int\limits_{\Sig} \psi\bigl(2^j(\Pi^\lam(\omega_1|_n 1^\infty) - \Pi^\lam(\omega_2|_n 1^\infty))\bigr)\,e_j(\omega_1,\omega_2,\lambda)\, d\mu_{\lambda_0}(\omega_1)\,d\mu_{\lambda_0}(\omega_2) + L2^{2j+1-\kappa\tc j}.
\end{split}
\]
Truncating again and estimating the error, similarly to the above, but now integrating with respect to $\mu_{\lam_0}$, finally yields:
\begin{equation}\label{eq:discretization_bound}
\begin{split}
\int_{\R} (\psi_{2^{-j}} &* \nu_\lambda)(x) \, d\nu_\lambda(x) \leq \\
&\quad \leq 2^{j} \int\limits_{\Sig} \int\limits_{\Sig} \psi\bigl(2^j(\Pi^\lam(\omega_1) - \Pi^\lam(\omega_2))\bigr)\,e_j(\omega_1,\omega_2,\lambda)\, d\mu_{\lambda_0}(\omega_1)\,d\mu_{\lambda_0}(\omega_2) + 4L 2^{(2 + Q\tc\xi-\tc\kappa)j},
\end{split}
\end{equation}
{where we leave the precise ``accounting'' in the error estimate to the reader (or see \cite[Section 7]{BSSS}). Note that the last additive term is not greater than
$4L\cdot 2^{-2j}$ by \eqref{eq:forn}.}
%Here we skipped precise error estimation, which involves a careful choice of $\wtil c$ and the interval $J$, depending on $\eps$ and $\gam_2$.
Now, substituting
\eqref{eq:discretization_bound} into \eqref{eq:int to sum} we obtain,  recalling that the sum over $j<0$ in \eqref{eq:int to sum} converges:
\[\begin{split}
\int_{J}\|&\nu_{\lambda}\|^2_{2,\gamma}\rho(\lambda)\,d\lambda\lesssim \int \limits_{\R} \sum \limits_{j=0}^\infty 2^{2j\gamma} \int\limits_{\R} (\psi_{2^{-j}} * \nu_\lambda)(x)\,d\nu_\lambda(x)\,\rho(\lambda)\,d\lambda\\
& \qquad \qquad \qquad = \sum \limits_{j=0}^\infty 2^{2j\gamma} \int \limits_{\R} \int\limits_{\R} (\psi_{2^{-j}} * \nu_\lambda)(x)\,d\nu_\lambda(x)\,\rho(\lambda)\,d\lambda\\
&\leq \sum \limits_{j=0}^\infty 2^{2j\gamma} \int \limits_{\R} \Bigl( 2^{j} \int\limits_{\Sig} \int\limits_{\Sig} \psi\bigl(2^j(\Pi^\lam(\omega_1) - \Pi^\lam (\omega_2))\bigr)\,e_j(\omega_1,\omega_2,\lambda)\, d\mu_{\lambda_0}(\omega_1)\,d\mu_{\lambda_0}(\omega_2)\\
&\qquad \qquad \qquad + 4L\cdot 2^{-(2\gam+2)j} \Bigr)\rho(\lambda)\,d\lambda \\
&\lesssim  \sum \limits_{j=0}^\infty 2^{j(2\gamma+1)} \int\limits_{\Sig} \int\limits_{\Sig} \left|\int_\R\psi\bigl(2^j(\Pi^\lam(\omega_1) - \Pi^\lambda (\omega_2))\bigr)\,e_j(\omega_1,\omega_2,\lambda)\,\rho(\lambda)\,d\lambda\right|\, d\mu_{\lambda_0}(\omega_1)\, d\mu_{\lambda_0}(\omega_2).
\end{split}\]
We have to be careful, since $\psi$ is not a positive function!
Exchanging summation and integration in the  2nd displayed line above is legitimate, since the inner integral is non-negative by
\eqref{eq:Parseval}. After that, exchanging the order of integration is allowed by Fubini's theorem, as $\rho$ is compactly supported, so the function is integrable.
Strictly speaking, we do not need
 the absolute value
sign outside of the inner integral in the last line above, since we know that the left-hand side is positive.

To finish the proof of Theorem \ref{thm:sobolev integral bound}, it is enough to show the following proposition (with the same notation as in Theorem \ref{thm:sobolev integral bound}).

\begin{prop}\label{prop:trans int}
	There exists {$ C_3 > 0$} such that for any distinct $\om_1,\om_2\in \Sig$, any $j\in \N$  we have %and any positive $q$ with $1+2\gamma+6\xi c\leq 1+2\gamma+\eps<q< 1 + \min\{\delta,\theta\}$,
	\begin{equation}\label{claim1}
	%\begin{split}
	\left|\int_\R\psi\bigl(2^j(\Pi^\lam(\omega_1) - \Pi^\lam(\omega_2))\bigr)\,e_j(\omega_1,\omega_2,\lambda)\,\rho(\lambda)\,d\lambda\right| \leq  %C_{q, \rho,\beta}
	C_3\cdot \tc j2^{Q(2+c)\xi\tc j} \left( 1 + 2^j d(\om_1,\om_2)^{1+a_0\beta}\right)^{-q},\\
	%& \qquad\leq  C_{q, \rho,\beta}\tc j2^{Q(2+c)\xi\tc j} \left( 1 + 2^j d(\om_1,\om_2)^{1+a_0\beta}\right)^{-q}
	%\end{split}
	\end{equation}
	where %$ C_{q, \rho,\beta}$
	$C_3$ depends only on $q, \rho$, and $\beta$, with $a_0 = \frac{8 + 4\delta}{1 + \min\{\delta, \theta\}}$ and $d(\om_1,\om_2) = d_{\lam_0}(\om_1,\om_2)$, the metric defined in \eqref{R78}.
\end{prop}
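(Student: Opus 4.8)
The plan is to follow the limited-regularity argument of Peres and Schlag \cite{PS00} (as carried out in \cite[\S 7]{BSSS}), the one genuinely new ingredient being that the non-smooth kernel $e_j$ has to be transported through the estimate. Fix distinct $\om_1,\om_2\in\Sig$ and $j\ge 1$, and write
\[ g(\lambda):=\Pi^\lam(\om_1)-\Pi^\lam(\om_2),\qquad d_0:=d(\om_1,\om_2)=d_{\lam_0}(\om_1,\om_2)\le \diam(X)\le 1, \]
so that $g$ is $C^{1+\delta}$ on $\ov J$ by \ref{as:MA1}--\ref{as:MA3}, while $|e_j(\om_1,\om_2,\lam)|\le 2^{Q\xi\tc j}$ for $\lam\in J$ by \eqref{eq:errorest}, $0\le\rho\le1$ and $\supp(\rho)\subset J$. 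First I would dispose of the range $2^jd_0^{1+a_0\beta}\le 1$: there $(1+2^jd_0^{1+a_0\beta})^{-q}\ge 2^{-q}$, whereas the integral is crudely at most $\|\psi\|_\infty|J|\,2^{Q\xi\tc j}\le\|\psi\|_\infty|J|\,\tc j\,2^{Q(2+c)\xi\tc j}$, so \eqref{claim1} holds once $C_3$ is large enough. Hence from now on $2^jd_0^{1+a_0\beta}>1$, so $1+2^jd_0^{1+a_0\beta}\asymp 2^jd_0^{1+a_0\beta}$ and it suffices to bound the integral by $C_3\,\tc j\,2^{Q(2+c)\xi\tc j}\,(2^jd_0^{1+a_0\beta})^{-q}$.

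Next, since $J$ satisfies \eqref{tran beta}, set $r:=c_\beta d_0^{1+\beta}$; then Lemma~\ref{prop_beta trans} gives $|g'(\lambda)|\ge r$ for every $\lam\in J$ with $|g(\lambda)|<r$. I would split $J$ into the ``far'' part $\{|g|\ge r\}$ and the ``transversality'' part $\{|g|<r\}$. On the far part $2^j|g(\lambda)|\ge 2^jr=c_\beta 2^jd_0^{1+\beta}\ge c_\beta 2^jd_0^{1+a_0\beta}$ (using $a_0\ge 1$, $d_0\le 1$), so by the Schwartz decay \eqref{eq:psi decay}
\[ |\psi(2^jg(\lambda))|\le C_q(1+2^j|g(\lambda)|)^{-q}\le C_q(1+c_\beta 2^jd_0^{1+a_0\beta})^{-q}\le C_q(c_\beta/2)^{-q}(1+2^jd_0^{1+a_0\beta})^{-q}; \]
together with $|e_j|\le2^{Q\xi\tc j}$ and $\supp(\rho)\subset J$ the far part contributes at most $C_q(c_\beta/2)^{-q}|J|\,2^{Q\xi\tc j}(1+2^jd_0^{1+a_0\beta})^{-q}$, which is of the required form since $C_3$ may depend on $q$ and $\beta$.

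The transversality part is the heart of the argument, and here I would run the Peres--Schlag limited-regularity estimate. One localizes $\{|g|<r\}$ to intervals on which $g$ is strictly monotone with $|g'|\ge r$ (possible since $g'$ does not vanish where $|g|<r$) and passes on each of them to the variable $u=2^jg(\lambda)$; a crude pointwise bound on $|\psi|$ is too lossy, so one instead exploits the cancellation $\int_\R\psi=0$ (\eqref{meanzero}) together with the first-order Taylor expansion of $g$ at the zeros of $g$, the remainder being controlled by the $\delta$-H\"older continuity of $g'$ from \ref{as:MA1}--\ref{as:MA3}. This is what produces the decay exponent $q$, admissible up to $1+\delta$ and hence up to $1+\min\{\delta,\theta\}$. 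The kernel $e_j$ is not smooth, but it is bounded uniformly on $J$ by $2^{Q\xi\tc j}$ via \eqref{eq:errorest}; carrying this factor through the estimate, and doing the bookkeeping of exponents, yields the prefactor $2^{Q(2+c)\xi\tc j}$, the polynomial factor $\tc j$ (the number of dyadic scales of $|g|$ that matter), and the degradation of $d_0^{1+\beta}$ to $d_0^{1+a_0\beta}$ with $a_0=\frac{8+4\delta}{1+\min\{\delta,\theta\}}$. The choices \eqref{eq:eta}--\eqref{eq:forn} of $\xi$, $\tc$ and of the interval $J$ are exactly those needed to keep all of these error terms summable against $2^{j(2\gamma+1)}$ in the subsequent application of the proposition.

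I expect the main obstacle to be precisely this last step: faithfully adapting the \cite{PS00} limited-regularity computation while transporting the non-smooth kernel $e_j$, and checking that $\beta$ enters only as an arbitrarily small perturbation, so that $q(1+a_0\beta)$ can be made arbitrarily close to $q$ --- this is what makes the derivation of Theorem~\ref{thm:main_cor_dim} from Theorem~\ref{thm:sobolev integral bound} go through. The case split, the far estimate, and the reductions above are routine by comparison.
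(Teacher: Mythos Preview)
Your overall architecture --- dispose of small $j$, use Schwartz decay on the ``far'' set, and near the zeros of $g$ exploit $\int\psi=0$ together with a change of variable and a H\"older estimate on the remaining factor --- matches the paper. But there is a genuine gap in the transversality part.

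You write that ``the kernel $e_j$ is not smooth, but it is bounded uniformly on $J$ by $2^{Q\xi\tc j}$; carrying this factor through the estimate \ldots''. Boundedness alone is not enough. After the change of variable near a zero $\ov\lam$ of $g$, the integral becomes $\int F(u)\psi(2^jru)\,du$ with $F(u)=\rho(\ov\lam+H(u))\chi(r^{-2\beta}H(u))\,e_j(\ov\lam+H(u))\,H'(u)$; to use $\int\psi=0$ you subtract $F(0)$ and must bound $|F(u)-F(0)|$. The factor $e_j(\ov\lam+H(u))$ enters this difference, and if $e_j$ were merely bounded you would only get $|F_3(u)-F_3(0)|\le 2\cdot 2^{Q\xi\tc j}$, which gives no decay in $u$ and kills the argument. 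What is actually needed is the H\"older regularity of $e_j$ in $\lambda$, and this is exactly where the full strength of \ref{as:measure} is used: it yields
\[
|e_j(\ov\lam+H(u))-e_j(\ov\lam)|\ \lesssim\ \tc j\,2^{Q(2+c)\xi\tc j}\,|H(u)|^{\theta}\ \lesssim\ \tc j\,2^{Q(2+c)\xi\tc j}\,c_\beta^{-\theta}r^{-\theta\beta}|u|^{\theta}.
\]
This is the source of the factor $\tc j$ (not a count of dyadic scales), of the extra $c$ in the exponent $Q(2+c)\xi\tc$, and of the $\theta$ in $\min\{\delta,\theta\}$; without it there is no mechanism by which $\theta$ could appear in $q<1+\min\{\delta,\theta\}$ or in $a_0$. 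A secondary point: the paper uses smooth cutoffs $\phi$, $\chi$ rather than the sharp split $\{|g|\ge r\}\cup\{|g|<r\}$, precisely so that $F$ is a product of factors whose increments can be estimated separately; with a sharp cutoff the boundary contribution would have to be handled differently.
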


Indeed, if \eqref{claim1} holds, then, recalling the definition of energy \eqref{eq:energy}, we obtain
\[
\begin{split}
\int_{J}\|&\nu_{\lambda}\|^2_{2,\gamma}\,\rho(\lambda)\,d\lambda \\
& \lesssim
C_3\cdot \tc \sum_{j=0}^\infty j2^{j[2\gamma+ 1 + Q(2+c)\xi\tc-q]}\Ek_{q(1+a_0\beta)}(\mu_{\lambda_0}, d_{\lam_0})\\
& \leq  %C_{q, \rho,\beta}
C_3\cdot \tc\sum_{j=0}^\infty j2^{-\frac{\eps}{2}j}\Ek_{q(1+a_0\beta)}(\mu_{\lambda_0}, d_{\lam_0})<\infty,
\end{split}\]
and Theorem \ref{thm:sobolev integral bound} is proved. Here we used that $2\gamma+ Q(2+c)\xi\tc \le \eps/2$ by  \eqref{eq:forn} and $1 + 2\gam + \eps<q$ by the assumption of the theorem.

{\begin{remark}
The last proposition is \cite[Prop.\,7.2]{BSSS}; however, in \cite{BSSS}
we omitted the absolute value signs around the integral. The actual proof was for the absolute value. Strictly speaking,
taking the absolute value is unnecessary, since in the end we are estimating a positive quantity from above.
\end{remark}
}

\subsection{Proof sketch of Proposition \ref{prop:trans int}}

The proof is similar to that of \cite[Lemma 4.6]{PS00} in the case of limited regularity; however, some technical issues were treated in \cite{BSSS}
differently and in more detail, especially since \cite{PS00} leaves much to the reader. {(In fact, our natural projection is $1,\delta$-regular in the sense of \cite[Section 4.2]{PS00},
so we have $L=1$ in the Peres-Schlag notation. Note that equation \cite[(4.32)]{PS00} applies only when $L\ge 2$ and has a little typo; the case $L=1$ is special.)}

Fix distinct $\om_1,\om_2\in \Sig$ and denote $r = d(\om_1,\om_2)$. To simplify notation, let $e_j(\lambda):=e_j(\omega_1,\omega_2,\lambda)$.
Denote $\ov I = \supp(\rho)\subset J$. Since $J$ is open, there exists $K=K(\rho)\ge 1$ such that the $(2K^{-1})$-neighborhood of $\ov I$ is contained in $J$.

We can assume without loss of generality that $j$ is sufficiently large to satisfy $2^j r^{1+a_0\beta} > 1$ for a fixed $a_0$ (note that $r\le 1$).
Indeed, the integral in \eqref{claim1} is bounded above by $|J|\cdot \|\psi\|_\infty\cdot 2^{Q \xi \tc j}$, { in view of \eqref{eq:errorest}}, hence if $2^j r^{1+a_0\beta} \le 1$, then the inequality \eqref{claim1} holds with
$%C_{q,\rho,\beta}
C_3= |J|\cdot \|\psi\|_\infty \cdot 2^q$.

Let
$$
\phi\in C^\infty(\R),\ \ \ {0\le \phi\le 1},\ \ \phi\equiv 1\ \mbox{on}\ \ [-1/2,1/2],\ \ \supp(\phi)\subset (-1,1),
$$
and denote
$$\Phi_\lam = \Phi_\lam(\om_1,\om_2) := \frac{\Pi^\lam(\om_1) - \Pi^\lam(\om_2)}{d(\om_1,\om_2)}{= \frac{\Pi^\lam(\om_1) - \Pi^\lam(\om_2)}{r}.}$$
The idea, roughly speaking, is to separate the contribution of the zeros of $\Phi_\lam$, which are simple by transversality. Outside of a neighborhood of these zeros, we get an estimate using the rapid decay of $\psi$ at infinity, and near the zeros we linearize and use the fact that $\psi$ has zero mean. %The details are quite technical, however.
We have
\begin{eqnarray*}
	\int_\R \rho(\lam)\, \psi\!\left(2^j [\Pi^\lam(\om_1) - \Pi^\lam(\om_2)]\right)e_j(\lambda) d\lam  & = & \int \rho(\lam) \,\psi\!\left(2^j r \Phi_\lam \right)e_j(\lambda)\, \phi(K c_\beta^{-1} r^{-\beta} \Phi_\lam)\,d\lam \nonumber
	\\
	& + & \int \rho(\lam) \,\psi\!\left(2^j r \Phi_\lam\right)e_j(\lambda) \left[1-\phi(K c_\beta^{-1} r^{-\beta} \Phi_\lam)\right] d\lam  \label{integ1} \\[1.1ex]
	& =: & A_1 + A_2, \nonumber
\end{eqnarray*}
where $c_\beta$ is the constant from \eqref{tran beta}.
The integrand in $A_2$ is constant zero when $|K c_\beta^{-1} r^{-\beta} \Phi_\lam| \le \half$, hence we have $|\Phi_\lam|> \half K^{-1}C_\beta r^\beta$ in $A_2$.
The inequalities \eqref{eq:psi decay} and  \eqref{eq:errorest} yield
$$
|A_2| \le %C_{q,\beta}
C_q \int |\rho(\lam)| |e_j(\lambda)| \bigl(1 + 2^j r \cdot {\textstyle{\half}} K^{-1} c_\beta r^\beta)^{-q}\, d\lam \le %C_{q,\rho,\beta}
{\const\cdot} 2^{Q\xi \tc j} \bigl(1 + 2^j r^{1+\beta}\bigr)^{-q},
$$
for some constant depending on $q, \rho$ and $\beta$, and we obtained an upper bound dominated by the right-hand side of \eqref{claim1}. Thus it remains to estimate $A_1$.

Next comes a lemma where transversality is used.
 It is a variant of \cite[Lemma 4.3]{PS00} and is similar to \cite[Lemma 18.12]{Mattila Fourier}. Let $c_\beta$ be the constant from Proposition \ref{prop_beta trans}.

\begin{lem}\label{lem-trans}
	Under the assumptions and notation above, let
	$$
	\Jk:= \Bigl\{\lam\in J: \ |\Phi_\lam| < K^{-1} c_\beta r^{\beta} \Bigr\},
	$$
	which is a union of open disjoint intervals. Let $I_1,\ldots, I_{N_\beta}$ be the intervals of $\Jk$ intersecting $\ov I = \supp(\rho)$, enumerated in the order of $\R$. Then each $I_k$ contains a unique zero $\ov\lam_k$ of $\Phi_\lam$ and
	\be \label{inter0}
	[\ov\lam_k - d_\beta r^{2\beta}, \ov\lam_k + d_\beta r^{2\beta}]\subset I_k,
	\ee
	for some constant $d_\beta>0$.
	For every interval $I_k$ holds
	\be \label{inter1}
	2d_\beta\cdot r^{2\beta}\le |I_k| \le 2K^{-1},
	\ee
	hence
	\be \label{up1}
	N_\beta \le 2 + \textstyle{\half}d_\beta^{-1}|J| \cdot r^{-2\beta}.
	\ee
	Moreover,
	\be \label{low2}
	|\Phi_\lam| \le \textstyle{\half} K^{-1} c_\beta r^\beta\ \ \ \mbox{for all}\ \ \ \lam\in [\ov\lam_k - \textstyle{\half} d_\beta r^{2\beta}, \ov\lam_k + \textstyle{\half} d_\beta r^{2\beta}].
	\ee
	\end{lem}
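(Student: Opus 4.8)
The plan is to reduce everything to one‑variable calculus on $\R$, fed by two estimates for the $C^{1+\delta}$ function $\lam\mapsto\Phi_\lam$ (recall $r=d(\om_1,\om_2)=d_{\lam_0}(\om_1,\om_2)$ and $\Phi_\lam=(\Pi^\lam(\om_1)-\Pi^\lam(\om_2))/r$). The first is the \emph{lower} bound coming from transversality of degree $\beta$ (Lemma~\ref{prop_beta trans}): on $\Jk$ one has $|\Phi_\lam|<K^{-1}c_\beta r^\beta\le c_\beta r^\beta$, hence $|\Phi_\lam'|\ge c_\beta r^\beta>0$ there. The second is a matching \emph{upper} bound: after shrinking $J$ (depending on $\beta$), $\sup_{\lam\in J}|\Phi_\lam'|\le C_\beta r^{-\beta}$. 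This upper bound is the only nontrivial input; I would obtain it by expanding $f^\lam_{\om_1\wedge\om_2}$ by the chain rule and applying the bounded distortion property to both the $x$‑ and the $\lam$‑dependence, which gives $|\tfrac{d}{d\lam}(\Pi^\lam(\om_1)-\Pi^\lam(\om_2))|\lesssim|\om_1\wedge\om_2|\cdot d_{\lam_0}(\om_1,\om_2)$; the factor $|\om_1\wedge\om_2|\lesssim\log(1/r)$ and the $\lam$‑variation of the distortion constants over $J$ (which is $r^{-o(1)}$ once $|J|$ is small) get absorbed into $r^{-\beta}$. This is exactly the bookkeeping carried out in \cite[Proposition~6.1 and Section~7]{BSSS}, following the limited‑regularity case of \cite[Lemma~4.3]{PS00}, and I would cite it rather than redo it.

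Granting these, I would first prove the ``unique zero'' statement and the right half of \eqref{inter1}. On a component $I_k$ of $\Jk$ the function $\Phi_\lam$ is $C^1$ with $|\Phi_\lam'|\ge c_\beta r^\beta>0$; being continuous and non‑vanishing on the interval $I_k$, $\Phi_\lam'$ has constant sign, so $\Phi_\lam$ is strictly monotone on $I_k$. Since $|\Phi_\lam|<K^{-1}c_\beta r^\beta$ on $I_k$, its oscillation is $\le 2K^{-1}c_\beta r^\beta$, and the mean value theorem together with $|\Phi_\lam'|\ge c_\beta r^\beta$ gives $|I_k|\le 2K^{-1}$. As $I_k$ meets $\ov I=\supp(\rho)$, which is compactly contained in $J$, the choice of $K$ forces $\ov{I_k}\subset J$; hence each endpoint of $I_k$ lies in $J$, where by maximality of $I_k$ and continuity one has $|\Phi_\lam|=K^{-1}c_\beta r^\beta$. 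A strictly monotone function whose absolute value equals $K^{-1}c_\beta r^\beta$ at both endpoints must take the values $\mp K^{-1}c_\beta r^\beta$ there, so by the intermediate value theorem $\Phi_\lam$ has a zero $\ov\lam_k\in I_k$, unique by strict monotonicity.

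For the remaining estimates I would set $d_\beta:=\min\{\tfrac12 K^{-1}c_\beta C_\beta^{-1},\,K^{-1}\}$. By the choice of $K$, $\dist(\ov\lam_k,\partial J)\ge K^{-1}\ge d_\beta r^{2\beta}$, so $[\ov\lam_k-d_\beta r^{2\beta},\ov\lam_k+d_\beta r^{2\beta}]\subset J$, and on this interval, using $\Phi_{\ov\lam_k}=0$ and the upper bound,
\[ |\Phi_\lam|=|\Phi_\lam-\Phi_{\ov\lam_k}|\le C_\beta r^{-\beta}|\lam-\ov\lam_k|\le C_\beta r^{-\beta}d_\beta r^{2\beta}\le\tfrac12 K^{-1}c_\beta r^\beta<K^{-1}c_\beta r^\beta. \]
Hence this interval is contained in the component of $\{|\Phi_\lam|<K^{-1}c_\beta r^\beta\}\cap J$ through $\ov\lam_k$, i.e.\ in $I_k$; this is \eqref{inter0} and gives the left half of \eqref{inter1}. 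The same computation with $\tfrac12 d_\beta r^{2\beta}$ in place of $d_\beta r^{2\beta}$ yields \eqref{low2}. Finally, $I_1,\dots,I_{N_\beta}$ are pairwise disjoint subintervals of $J$ of length $\ge 2d_\beta r^{2\beta}$, so $2d_\beta r^{2\beta}N_\beta\le|J|$, which gives \eqref{up1}. The hard part is the upper bound $\sup_{\lam\in J}|\Phi_\lam'|\le C_\beta r^{-\beta}$: obtaining the exponent $1-\beta$ (rather than some fixed $1-\beta_0$ governed by $\gamma_1,\gamma_2$) is precisely what forces $J$ to shrink as $\beta\to0$ and requires the careful distortion estimates referred to above; everything after that is elementary real analysis.
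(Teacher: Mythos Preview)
Your proposal is correct and follows essentially the same route as the paper: the lower bound $|\Phi_\lam'|\ge c_\beta r^\beta$ from $\beta$-transversality gives monotonicity, uniqueness of the zero, and $|I_k|\le 2K^{-1}$, while the matching upper bound $|\Phi_\lam'|\le C_\beta r^{-\beta}$ (this is exactly the paper's inequality \eqref{doom}, proved in \cite[Appendix~C]{BSSS}, with $d_\beta = K^{-1}C_{\beta,1}^{-1}c_\beta$) yields \eqref{inter0}, \eqref{low2}, and the left half of \eqref{inter1}. The only under-justified step is your claim $\dist(\ov\lam_k,\partial J)\ge K^{-1}$; this follows immediately from $|\ov\lam_k-\lam_*|\le|\Phi_{\lam_*}|/(c_\beta r^\beta)<K^{-1}$ for any $\lam_*\in I_k\cap\ov I$ (which is precisely the bound the paper records) together with the choice of $K$.
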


\begin{proof}[Partial proof of  Lemma \ref{lem-trans}] This is a ``classical'' transversality argument.
Clearly,  $\lam\mapsto \Phi_\lam$ is continuous, so the intervals $I_k$ are well-defined. Since $K\ge 1$, on each of the intervals we have $|\frac{d}{d\lam}\Phi_\lam| \ge c_\beta r^{\beta}$ by the transversality condition \eqref{tran beta} of degree $\beta$.
	Thus $\Phi_\lam$ is strictly monotonic on each of the intervals. Let $\lam\in I_k {\cap} I$, where $\ov  I = \supp(\rho)$. Then $|\Phi_\lam| < K^{-1} c_\beta r^\beta$, and using the lower bound on the derivative we obtain that there exists unique
	$\ov \lam_k\in I_k$, such that $\Phi_{\ov \lam_k} = 0$, and it satisfies $|\lam - \ov \lam_k| \le K^{-1}$. For the rest of the proof, see \cite[Lemma 7.3]{BSSS}.
	The inequality \eqref{up1} follows from the lower bound in \eqref{inter1}.
	The proof of the
	claims  \eqref{inter0}, \eqref{low2} and the lower bound in \eqref{inter1} use the inequality {
	\be \label{doom}
	\left|\frac{d}{d\lam}\Phi_\lam\right|  \le C_{\beta,1} r^{-\beta} \iff \left| \frac{d}{d\lam}(\Pi^\lam(\om_1) - \Pi^\lam(\om_2))\right| \le C_{\beta,1} d_{\lam_0}(\om_1,\om_2)^{1-\beta},
	\ee
	which is somewhat technical, proven in
	\cite[Appendix C]{BSSS} (one can take $d_\beta = K^{-1} C_{\beta,1}^{-1}\cdot c_\beta$).  This is the place where it is important that $\beta>0$, since for $\beta=0$
	we cannot expect \eqref{doom} to hold in the entire neighborhood. }
	\end{proof}

In order to separate the contribution of the zeros of $\Phi_\lam$ we again use a smoothing bump function and let
 $\chi \in C^\infty(\R)$ be such that $\supp(\chi) \subset (-\half d_\beta, \half d_\beta)$, {$0\le \chi\le 1$}, and $\chi\equiv 1$ on $[-\frac{1}{4} d_\beta, \frac{1}{4} d_\beta]$.
By  Lemma~\ref{lem-trans} we can write
\begin{eqnarray*}
	A_1 & = & \int \rho(\lam) \,\psi\!\left(2^j r \Phi_\lam \right)e_j(\lambda) \phi(Kc_\beta^{-1} r^{-\beta} \Phi_\lam)\,d\lam \\
	& = & \sum_{k=1}^{N_\beta} \int \rho(\lam) \,\chi\bigl(r^{-2\beta} (\lam- \ov\lam_k)\bigr) \,\psi(2^j r\Phi_\lam)e_j(\lambda) \,\phi(Kc_\beta^{-1} r^{-\beta} \Phi_\lam)\,d\lam \\
	& + & \int \rho(\lam) \left[ 1- \sum_{k=1}^{N_\beta} \chi\bigl(r^{-2\beta} (\lam- \ov\lam_k)\bigr)\right]e_j(\lambda) \psi(2^j r\Phi_\lam) \, \phi(Kc_\beta^{-1} r^{-\beta} \Phi_\lam)\,d\lam\\
	& = & \sum_{k=1}^{N_\beta} A^{(k)}_1 + B.
\end{eqnarray*}
{The integral $B$ is estimated similarly to $A_2$ above. Using Lemma~\ref{lem-trans} and transversality, one can check that
 $|\Phi_\lam| \ge \frac{1}{4} d_\beta c_\beta r^{3\beta}$ on the support of the integrand in $B$.
It follows that on this support,
\be \label{add1}
|\psi(2^j r\Phi_\lam)| \le C_q \bigl( 1 + {(d_\beta c_\beta/4)}\cdot 2^j r^{1 + 3\beta}\bigr)^{-q},
\ee
by the rapid decay of $\psi$, and using \eqref{eq:errorest} we obtain $$|B| \le \const\cdot 2^{Q\xi \wt c j}\bigl( 1 + 2^j r^{1 + 3\beta}\bigr)^{-q}$$
for some constant depending on $q$ and $\beta$.}

It remains to estimate the integrals $A^{(k)}_1$.
Without loss of generality, we can assume $k=1$ and let $\ov\lam= \ov\lam_1$. In view of the bound \eqref{up1} on the number of intervals,
the desired inequality will follow from this.
{First one can check that, by construction, $\phi\equiv 1$ on the support of $\chi\bigl(r^{-2\beta} (\lam- \ov\lam)\bigr)$, and hence the $\phi$-term in $A_1^{(1)}$ can be
ignored, that is,}
$$
A_1^{(1)} = \int \rho(\lam) \,\chi\bigl(r^{-2\beta} (\lam- \ov\lam)\bigr)e_j(\lambda) \,\psi(2^j r\Phi_\lam) \,d\lam.
$$

It is convenient to make a change of variable, so we define a function $H$ via
\be \label{def-H}
\Phi_\lam = u \iff \lam = \ov \lam +H(u),\ \ \mbox{provided}\ \ \chi\bigl(r^{-2\beta} (\lam- \ov\lam)\bigr)\ne 0.
\ee
Note that $\chi\bigl(r^{-2\beta} (\lam- \ov\lam)\bigr)\ne 0$ implies $|\lam - \ov\lam| < \frac{1}{2} d_\beta r^{2\beta}$, so $\lam\in I_1$ by \eqref{inter0}, and by transversality,
\be\label{trans on chi}
\Bigl|\frac{d}{d\lam} \Phi_\lam\Bigr| \ge c_\beta r^\beta\ \text{ if }\ \chi\bigl(r^{-2\beta} (\lam- \ov\lam)\bigr)\ne 0.
\ee
Therefore, $H$ is well defined. We have
\begin{eqnarray*}
	A_1^{(1)} & = & \int \rho\bigl(\ov\lam + H(u)\bigr)\,\chi\bigl(r^{-2\beta} H(u) \bigr)e_j(\ov\lam+H(u)) \,\psi(2^j ru) \,H'(u)\,du \\
	& = & \int F(u)\,\psi(2^j ru) \,du,
\end{eqnarray*}
where
\be \label{def-F}
F(u) = \rho\bigl(\ov\lam + H(u)\bigr)\,\chi\bigl(r^{-2\beta} H(u) \bigr)e_j(\ov\lam+H(u))  \,H'(u).
\ee
Observe that $H'(u) = [\frac{d}{d\lam} \Phi_\lam]^{-1}$, hence \eqref{trans on chi} gives $|H'(u)|\le c_\beta^{-1} r^{-\beta}$ on the domain of $F$. Since $\rho$ and $\chi$ are bounded by 1, the inequality \eqref{eq:errorest} implies
\be \label{Fbound}
\|F\|_\infty \le {c^{-1}_\beta} \cdot r^{-\beta}2^{Q\xi \tc j}.
\ee

Recall that $\Phi_{\ov\lam} = 0$, so that $H(0)=0$.
Since $\int_\R \psi(\xi)\,d\xi = 0$ by \eqref{meanzero}, we can subtract $F(0)$ from $F(u)$ under the integral sign; we then split the integral as follows:
\begin{eqnarray}
A_1^{(1)} & = & \int [F(u) - F(0)]\, \psi(2^j ru) \,du \nonumber \\
& = & \int_{|u| < (2^j r)^{-1+\eps'}} [F(u) - F(0)]\, \psi(2^j ru) \,du + \int_{|u| \ge (2^j r)^{-1+\eps'} }[F(u) - F(0)]\, \psi(2^j ru) \,du \label{wehave} \\[1.2ex]
& =: & B_1 + B_2, \nonumber
\end{eqnarray}
where $\eps' \in (0, \frac{1}{2})$ is a small fixed number. Recall that our goal is to show
$$
|A^{(1)}_1| \le \const \cdot \tc j2^{Q(2+c)\xi\tc j}  \cdot \bigl(1 + 2^j r^{1 + a_0\beta}\bigr)^{-q},
$$
for some constants $a_0\ge 1$ and $\const$ depending only on $q$, $\rho$, and $\beta$. We can assume that $2^j r^{1 + a_0\beta} \ge 1$, otherwise, the estimate is trivial by increasing the constant. To estimate $B_2$,  note that  for any $M>0$ we have by the rapid decay of $\psi$:
$$
|\psi(2^j ru)| \le C_M\bigl( 1 + 2^j r|u|\bigr)^{-M},
$$
hence, by \eqref{Fbound},
\begin{eqnarray*}
	|B_2|  & \le & C_{\beta,M} \cdot r^{-\beta} \cdot 2^{Q\xi \tc j}(2^j r)^{-1} \int_{|x|\ge (2^j r)^{\eps'}} (1 + |x|)^{-M}\,dx  \\[1.1ex]
	& \le &  C_{\beta,M}' \cdot  r^{-\beta} \cdot 2^{Q\xi \tc j}(2^j r)^{-1} (2^j r)^{-\eps'(M-1)} \\[1.1ex]
	& \le & C_{\beta,M}''\cdot 2^{Q\xi \tc j} \cdot (2^j r^{1+2\beta})^{-q},
\end{eqnarray*}
for {$M$ such that $1 + \eps'(M-1)>q$.} Here we used that $2^j r \ge 2^j r^{1 + 2\beta} \geq 1$.

\smallskip

In order to estimate $B_1$, we show that the function $F$ from \eqref{def-F} is $\delta$-H\"older by our assumptions; we also need to estimate the constant in the H\"older bound.
We can write
$$
F(u) = \rho\bigl(\ov\lam + H(u)\bigr)\,\chi\bigl(r^{-2\beta} H(u) \bigr)e_j(\ov\lam+H(u))  \,H'(u) =: F_1(u) F_2(u)F_3(u) H'(u),
$$
and then
\begin{multline*}
F(u) - F(0) = \bigl(F_1(u) - F_1(0)\bigr) F_2(u)F_3(u) H'(u) + F_1(0) \bigl(F_2(u) - F_2(0)\bigr)F_3(u) H'(u) \\
+ F_1(0)F_2(0) \bigl(F_3(u) - F_3(0)\bigr) H'(u) + F_1(0) F_2(0)F_3(0) \bigl(H'(u) - H'(0)\bigr).
\end{multline*}
We have
$$
|F_1(u) - F_1(0)| = |\rho\bigl(\ov\lam + H(u)\bigr) - \rho\bigl(\ov\lam + H(0 )\bigr)| \le \|\rho'\|_\infty \cdot |H(u) - H(0)|,
$$
and then
%Since $\rho$ and $\chi$ are smooth, it is enough to consider $H(u)$, $r^{-2\beta}H(u)$, and $H'(u)$ separately. We have
\be \label{eta1}
|H(u) - H(0)| = |H(u)| = |\lam - \ov\lam| \le c^{-1}_\beta r^{-\beta} |\Phi_\lam-\Phi_{\ov\lam}| = c^{-1}_\beta r^{-\beta} |\Phi_\lam| = c^{-1}_\beta r^{-\beta} |u|,
\ee
by transversality, which applies since $\supp(F)\subset I_1$.
Similarly,
\be \label{eta2}
|F_2(u) - F_2(0)| \le \|\chi'\|_\infty \cdot r^{-2\beta} |H(u) - H(0)| \le C^{-1}_\beta \|\chi'\|_\infty \cdot r^{-3\beta} |u|.
\ee

For $F_3$ it is enough to assume that $\mu_{\lam_0}([\omega_1|_{\tc j}])\mu_{\lam_0}([\omega_2|_{\tc j}]) \neq 0$ (hence the same is true for $\mu_{\ov\lam}$ by \ref{as:measure}), as otherwise $e_j \equiv 1$ and then \eqref{eq:F_3 hoelder}, which is the goal of the calculation below, holds trivially.
{After some calculations, where we use the full strength of \ref{as:measure}), we obtain
\begin{equation}\label{eq:F_3 hoelder}
|F_3(u)-F_3(0)|\leq 2 {c_3} j2^{{c_4} j}c_\beta^{-\theta}r^{-\theta\beta}|u|^{\theta},\ \ \ \mbox{with}\ \ c_3 = Qc\tc\ \ \mbox{and}\ \ c_4 =Q(2+c)\tc \xi.
\end{equation}
For the details the reader is referred to \cite{BSSS}.
}

Finally, we need to estimate the term $|H'(u) - H'(0)|$. We have $H'(u) = [\frac{d}{d\lam} \Phi_\lam]^{-1}$, {and then using $\beta$-transversality \eqref{tran beta} and
\eqref{eta1}, but also a technical inequality from \cite[Proposition 4.5]{BSSS}, we obtain
$$
|H'(u) - H'(0)| \le \wt c_\beta r^{-\beta(3 + 2\delta)} |u|^\delta,
$$
see \cite{BSSS} for details.}

{Below, writing ``$\const$'' means constants depending on $q,\rho$, and $\beta$, which may be different from line to line.}
Using all of the above and $\|H'\|_\infty \le c_\beta^{-1}\cdot r^{-\beta}$ yields
$$
|F(u) - F(0)| \le \const\cdot {c_3} j2^{{c_4} j}\cdot \left( |u|^\delta r^{-\beta(3+2\delta)} + |u| r^{-4\beta}+|u|^{\theta}r^{-\beta(1+\theta)}\right),
$$
hence by \eqref{wehave} {and recalling that $(2^j r)\ge 1$ and $r\le 1$}, we obtain
\begin{eqnarray*}
	|B_1| & \le & \const\cdot {c_3} j2^{{c_4} j} \int_{|u| < (2^j r)^{-1+\eps'}} \left( |u|^\delta r^{-\beta(3+2\delta)} + |u| r^{-4\beta}+|u|^{\theta}r^{-\beta(1+\theta)}\right)\,du \\[1.2ex]
	& \le & \const\cdot {c_3} j2^{{c_4} j}\left( r^{-\beta(3+2\delta)} (2^j r)^{-(1-\eps')(1+\delta)} + (2^j r)^{-2(1-\eps')} r^{-4\beta}+(2^j r)^{-(1-\eps')(1+\theta)}r^{-\beta(1+\theta)}\right)\\
       & \leq & \const \cdot {c_3}  j2^{{c_4} j}r^{-\beta(4+2\delta)}(2^jr)^{-(1-\eps')(1+\min\{\delta, \theta\})},
\end{eqnarray*}
as $\min\{\delta, \theta\} \leq 1$. Therefore,
%The first term in the parentheses dominates, since $(2^j r)\ge 1$ and $r\le 1$, and we obtain
$$
|B_1| \le \const\cdot {c_3} j2^{{c_4} j}\left(2^j r^{1+a_0\beta}\right)^{-(1-\eps')(1+\min\{\delta, \theta\})},
$$
for  $a_0= \frac{8+4\delta}{1+\min\{\delta, \theta\}} \geq \frac{4+2\delta}{(1-\eps')(1+\min\{\delta, \theta\})}$.

Since $\eps'>0$ can be chosen arbitrarily small, we obtain
\[|B_1| \leq \const \cdot {c_3} j2^{{c_4} j}\left(1+2^j r^{1+a_0\beta}\right)^{-q} \text{ for any } q< 1+\min\{\delta,\theta\},\]
since as already mentioned, we can assume $2^j r^{1+a_0\beta} \ge 1$ without loss of generality.
{This concludes the proof of Proposition~\ref{prop:trans int} and of Theorem \ref{thm:sobolev integral bound}. }\qed

{

\subsection{The multiparameter case}
Let us now explain how one can extend the proof of Theorem \ref{thm:main_cor_dim} from the $1$-parameter case to the multiparameter one. The main difficulty is extending Proposition \ref{prop:trans int}, which is based on a rather delicate analysis. The crucial ingredient needed for reducing it to the one-dimensional case technique is the following lemma. It will allow us to slice $d$-dimensional balls in the parameter space with one-dimensional intervals, to which the techniques from the previous sections can be applied.

\begin{lem}\label{lem: mp transversality sliced}
	Let $\{f_j^\lam\}_{j \in \Ak}$ be a parametrized IFS satisfying smoothness assumptions \ref{as:MA1} - \ref{as:MA4} and the transversality condition \ref{as:trans mp} on $U$. Then for every $\lam_0 \in U$ there exists an open ball $B(\lam_0, \eps_0) \subset U$ with the following property: for every pair $\om,\tau \in \Sig$ with $\om_1 \neq \tau_1$ there exists a unit vector $e \in \R^d$ such that for every $p \in B(0, \eps_0) \cap \mathrm{span}(e)^\perp$ the one-parameter IFS $\{f_j^{\lam_0 + p + te} : t \in J_p, j \in \Ak\}$, where $J_p = \{ t \in \R : p + te \in B(0, \eps_0) \}$, satisfies
	\[\left|\Pi^{\lam_0 + p + te}(\om) - \Pi^{\lam_0 + p + te}(\tau)\right| < \eta/2 \implies \left|\textstyle{\frac{d}{dt}}(\Pi^{\lam_0 + p + te}(\om) - \Pi^{\lam_0 + p + te}(\tau))\right| \ge \eta/2 \]
	on $J_p$.
\end{lem}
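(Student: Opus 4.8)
The statement is essentially a quantitative, uniform-in-directions version of the observation that the multiparameter transversality condition \ref{as:trans mp} is equivalent, after picking a good direction $e$, to a one-parameter transversality of the same strength $\eta$. The plan is to exploit the hypothesis \ref{as:trans mp}: for every pair $\om,\tau\in\Sig$ with $\om_1\ne\tau_1$, whenever $|\Pi^\lam(\om)-\Pi^\lam(\tau)|<\eta$ holds at a parameter $\lam$, we have $|\nabla_\lam(\Pi^\lam(\om)-\Pi^\lam(\tau))|\ge\eta$. The idea is to choose for each such pair the direction $e=e(\om,\tau)$ to be (close to) the gradient direction, so that the directional derivative $\frac{d}{dt}$ along $e$ captures essentially all of the gradient's length. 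The technical issue is that the gradient direction varies with $\lam$, so a single $e$ need not be aligned with $\nabla$ at every point of the slice; this is where a continuity/compactness argument, shrinking the ball $B(\lam_0,\eps_0)$, enters.

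\emph{Step 1: reduce to a local statement.} Fix $\lam_0\in U$. By the smoothness assumptions \ref{as:MA1}--\ref{as:MA4} and the Bounded Distortion Property (Lemma \ref{R60}), the maps $\lam\mapsto\Pi^\lam(\om)$ are $C^{1+\delta}$ in $\lam$, \emph{uniformly} over $\om\in\Sig$: indeed $\Pi^\lam(\om)=\lim_n f^\lam_{\om|_n}(x_0)$ converges uniformly with an exponential rate and each $f^\lam_{\om|_n}$ depends $C^{1+\delta}$-smoothly on $\lam$, uniformly in the word. Hence the family of functions $G_{\om,\tau}(\lam):=\Pi^\lam(\om)-\Pi^\lam(\tau)$, indexed by pairs $(\om,\tau)$, is uniformly bounded in $C^{1+\delta}(\ov U)$. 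In particular the moduli of continuity of $\lam\mapsto\nabla G_{\om,\tau}(\lam)$ are uniform: there is a modulus $\omega(\cdot)$ (a single $\delta$-Hölder bound, say $|\nabla G_{\om,\tau}(\lam)-\nabla G_{\om,\tau}(\lam')|\le M|\lam-\lam'|^\delta$) independent of $(\om,\tau)$.

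\emph{Step 2: choose $\eps_0$ and the direction $e$.} Pick $\eps_0>0$ small enough that $M(2\eps_0)^\delta<\eta/4$ and also $B(\lam_0,\eps_0)\subset U$. Now fix a pair $\om,\tau$ with $\om_1\ne\tau_1$. If $|G_{\om,\tau}(\lam)|\ge\eta/2$ for every $\lam\in B(\lam_0,\eps_0)$, then the conclusion of the lemma is vacuously true for \emph{any} unit vector $e$, so choose $e$ arbitrarily. Otherwise there exists $\lam_*\in B(\lam_0,\eps_0)$ with $|G_{\om,\tau}(\lam_*)|<\eta/2<\eta$; by \ref{as:trans mp}, $|\nabla G_{\om,\tau}(\lam_*)|\ge\eta$. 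Set $e:=e(\om,\tau):=\nabla G_{\om,\tau}(\lam_*)/|\nabla G_{\om,\tau}(\lam_*)|$, a unit vector. By the uniform $\delta$-Hölder bound from Step 1, for any $\lam\in B(\lam_0,\eps_0)$ we have $|\nabla G_{\om,\tau}(\lam)-\nabla G_{\om,\tau}(\lam_*)|\le M(2\eps_0)^\delta<\eta/4$, hence $\langle\nabla G_{\om,\tau}(\lam),e\rangle\ge|\nabla G_{\om,\tau}(\lam_*)|-\eta/4\ge\eta-\eta/4=3\eta/4$.

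\emph{Step 3: read off the one-parameter transversality on each slice.} Fix $p\in B(0,\eps_0)\cap\mathrm{span}(e)^\perp$ and let $t\in J_p$, so that $\lam:=\lam_0+p+te\in B(\lam_0,\eps_0)$. Then $\frac{d}{dt}\big(\Pi^{\lam_0+p+te}(\om)-\Pi^{\lam_0+p+te}(\tau)\big)=\langle\nabla G_{\om,\tau}(\lam),e\rangle$. In the nonvacuous case we have shown this equals at least $3\eta/4\ge\eta/2$ for \emph{every} $t\in J_p$ (regardless of whether $|G_{\om,\tau}(\lam)|<\eta/2$ or not); in the vacuous case the premise $|\Pi^{\lam}(\om)-\Pi^{\lam}(\tau)|<\eta/2$ is never met on $B(\lam_0,\eps_0)$ for this pair, so the implication holds trivially. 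Either way, the displayed implication of the lemma holds on $J_p$. This completes the argument.

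\emph{Main obstacle.} The one genuinely delicate point is the uniformity in Step 1: one must be sure that the $C^{1+\delta}$-regularity of $\lam\mapsto\Pi^\lam(\om)$, together with the associated Hölder constant for $\lam\mapsto\nabla_\lam\Pi^\lam(\om)$, is uniform over the (infinitely many) words $\om\in\Sig$, so that a \emph{single} radius $\eps_0$ works for all pairs simultaneously. This is exactly what the parametric Bounded Distortion Property (in the form of \cite[Lemma 14.2.4]{ourbook}) and assumptions \ref{as:MA1}--\ref{as:MA3} are designed to give — the series defining $\nabla_\lam\Pi^\lam(\om)$ and its $\delta$-Hölder increments converge uniformly and geometrically in $n=|\om|_n|$ — but spelling out the uniform bound is where the technical work lies. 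Everything after that is the standard "gradient direction" trick for converting multiparameter transversality into one-parameter transversality along a well-chosen slice.
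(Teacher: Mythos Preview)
Your proof is correct and follows essentially the same approach as the paper's: reduce to the nontrivial case, pick $e$ to be the normalized gradient of $G_{\om,\tau}$ at a point where transversality applies, and use uniform continuity of $\nabla G_{\om,\tau}$ over $(\om,\tau)$ to ensure a single $\eps_0$ works. The only cosmetic differences are that the paper anchors $e$ at the center $\lam_0$ (after deducing $|\nabla G_{\om,\tau}(\lam_0)|\ge\eta$ from \ref{as:trans mp} via \eqref{eq:lam approx}) rather than at your auxiliary point $\lam_*$, and it invokes \ref{as:trans mp} a second time at the point $\lam_0+p+te$ together with an angle-stability condition, whereas you use \ref{as:trans mp} only once and rely on the H\"older estimate $|\nabla G(\lam)-\nabla G(\lam_*)|<\eta/4$ directly; your route in fact yields the slightly stronger conclusion that the directional derivative is $\ge 3\eta/4$ on the whole ball, not just where $|G|<\eta/2$.
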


\begin{proof}
	Fix $\lam_0 \in U$ and let $\eps_0 > 0$ be small enough to ensure $B(\lam_0, \eps_0) \subset U$,
	\be\label{eq:lam approx} \left|\Pi^{\lam_0}(\om) - \Pi^{\lam}(\om)\right| < \eta/4,\ \left|\nabla \left( \Pi^{\lam_0}(\om) - \Pi^{\lam_0}(\tau)\right) - \nabla \left( \Pi^{\lam}(\om) - \Pi^{\lam}(\tau)\right)\right| < \eta/2,
	\ee
	and
	\begin{align}\label{eq:lam approx angle}
		\begin{split}
			\bigg| \bigg\langle \frac{\nabla(\Pi^{\lam}(\om) - \Pi^{\lam}(\tau))}{\left| \nabla(\Pi^{\lam}(\om) - \Pi^{\lam}(\tau))\right|}&,\frac{\nabla\left(\Pi^{\lam_0}(\om) - \Pi^{\lam_0}(\tau)\right)}{\left|\nabla\left(\Pi^{\lam_0}(\om) - \Pi^{\lam_0}(\tau)\right)\right|}  \bigg\rangle \bigg| \geq \frac{1}{2} \\
			&\text{if } \left| \nabla(\Pi^{\lam}(\om) - \Pi^{\lam}(\tau))\right| \geq \frac{\eta}{2} \text{ and } \left|\nabla\left(\Pi^{\lam_0}(\om) - \Pi^{\lam_0}(\tau)\right)\right| \geq \frac{\eta}{2}
		\end{split}
	\end{align}
	for $\lam \in B(\lam_0, \eps_0)$ and all $\om,\tau \in \Sig$. Fix $\om,\tau \in \Sig$ with $\om_1 
	\neq \tau_1$. We can assume that $\left|\Pi^{\lam}(\om) - \Pi^{\lam}(\tau)\right| < \eta/2$ for some $\lam \in B(\lam_0, \eps_0)$ (as otherwise the assertion of the lemma holds trivially with any choice of $e$). Then \eqref{eq:lam approx} implies $ \left|\Pi^{\lam_0}(\om) - \Pi^{\lam_0}(\tau)\right| < \eta$, hence by \ref{as:trans mp} we have
	$\left|\nabla\left(\Pi^{\lam_0}(\om) - \Pi^{\lam_0}(\tau)\right)\right| \geq \eta$.
	We define $e = \frac{\nabla\left(\Pi^{\lam_0}(\om) - \Pi^{\lam_0}(\tau)\right)}{\left|\nabla\left(\Pi^{\lam_0}(\om) - \Pi^{\lam_0}(\tau)\right)\right|}$. Assume now that
	\[ \left|\Pi^{\lam_0 + p + te}(\om) - \Pi^{\lam_0 + p + te}(\tau)\right| < \eta/2
	\]
	holds for some $p \in B(0, \eps_0) \cap \mathrm{span}(e)^\perp$ and $t \in J_p$. Then by \ref{as:trans mp},
	\be\label{eq:lam trans mp} \left|\nabla\left(\Pi^{\lam_0 + p + te}(\om) - \Pi^{\lam_0 + p + te}(\tau)\right)\right| \geq \eta, \ee
	and hence by \eqref{eq:lam approx} we also have $\left|\nabla\left(\Pi^{\lam_0}(\om) - \Pi^{\lam_0}(\tau)\right)\right| \geq \eta/2$. Therefore, \eqref{eq:lam trans mp} and \eqref{eq:lam approx angle} give
	\begin{align*}
		\begin{split}
			\left|\textstyle{\frac{d}{dt}}(\Pi^{\lam_0 + p + te}(\om) - \Pi^{\lam_0 + p + te}(\tau))\right| &= \left| \left\langle \nabla(\Pi^{\lam_0 + p + te}(\om) - \Pi^{\lam_0 + p + te}(\tau)), e \right\rangle \right| \\
			& = \left| \left\langle \nabla(\Pi^{\lam_0 + p + te}(\om) - \Pi^{\lam_0 + p + te}(\tau)),\frac{\nabla\left(\Pi^{\lam_0}(\om) - \Pi^{\lam_0}(\tau)\right)}{\left|\nabla\left(\Pi^{\lam_0}(\om) - \Pi^{\lam_0}(\tau)\right)\right|}  \right\rangle \right|\\
			& \geq  \frac{1}{2}\left|\nabla\left(\Pi^{\lam_0 + p + te}(\om) - \Pi^{\lam_0 + p + te}(\tau)\right)\right| \\
			& = \eta / 2.
			%& \geq \left|\nabla\left(\Pi^{\lam_0 + p + te}(\om) - \Pi^{\lam_0 + p + te}(v)\right)\right| - \left| \left\langle \nabla(\Pi^{\lam_0 + p + te}(\om) - \Pi^{\lam_0 + p + te}(v)),\frac{\nabla\left(\Pi^{\lam_0}(\om) - \Pi^{\lam_0}(v)\right)}{\left|\nabla\left(\Pi^{\lam_0}(\om) - \Pi^{\lam_0}(v)\right)\right|}  \right\rangle \right
		\end{split}
	\end{align*}
\end{proof}

Using the above lemma, one can prove multiparameter versions of each of the main steps of the proof of Theorem \ref{thm:main_cor_dim} presented in the previous section. As the proofs are essentially the same as in the $1$-dimensional case (one only has to check that the constants can be controlled uniformly with respect to $\om, \tau$ and $p$), we only present sketches commenting on the appropriate changes to be made with respect to the full proofs in \cite{BSSS}, leaving details to the reader. The first step is extending Lemma \ref{prop_beta trans} on transversality of degree $\beta$.

\begin{prop}\label{prop_beta trans mp}
	Let $\{f_j^\lam\}_{j \in \Ak}$ be a parametrized IFS satisfying smoothness assumptions \ref{as:MA1} - \ref{as:MA4} and the transversality condition \ref{as:trans mp} on $U$. For every $\lam_0 \in U$ and $\beta>0$ there exists $c_\beta>0$ and an open neighbourhood $J = B(\lam_0, \eps_0)$ of $\lam_0$ with the following property: for every $\om, \tau\in \Sig$ there exists a unit vector $e \in \R^d$ such that for every $p \in B(0, \eps_0) \cap \mathrm{span}(e)^\perp$ for all $t \in J_p = \{t \in \R : p + te \in B(0, \eps_0) \}$ the following holds:
	\begin{multline*}
	\left|\Pi^{\lam_0 + p + te}(\om) - \Pi^{\lam_0 + p + te}(\tau)\right| < c_\beta\cdot d_{\lam_0}(\om,\tau)^{1+\beta}\\
	 \implies \left|\textstyle{\frac{d}{dt}}(\Pi^{\lam_0 + p + te}(\om) - \Pi^{\lam_0 + p + te}(\tau))\right| \ge c_\beta\cdot d_{\lam_0}(\om,\tau)^{1+\beta}.
	 \end{multline*}
\end{prop}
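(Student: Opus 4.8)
The plan is to reduce to the one-parameter situation via the slicing provided by Lemma~\ref{lem: mp transversality sliced}, and then to run the argument of Lemma~\ref{prop_beta trans} (i.e.\ of \cite[Prop.\,6.1]{BSSS}) on each one-dimensional slice; the only new point compared with \cite{BSSS} is that every constant has to be controlled uniformly in the pair $(\om,\tau)$, in the slicing base-point $p$, and in the direction $e$.

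First I would fix $\lam_0\in U$ and $\beta>0$ and take $\eps_0>0$ as provided by Lemma~\ref{lem: mp transversality sliced}, to be shrunk below (depending on $\beta$). Given $\om,\tau\in\Sig$, let $w=\om\wedge\tau$ be the longest common prefix; if $\om=\tau$ the asserted implication is vacuous, so write $\om=w\om'$, $\tau=w\tau'$ with $\om'_1\neq\tau'_1$ and apply Lemma~\ref{lem: mp transversality sliced} to the pair $(\om',\tau')$ to get a unit vector $e=e(\om',\tau')$ such that, for every $p\in B(0,\eps_0)\cap\mathrm{span}(e)^{\perp}$, the one-parameter family $\{f_j^{\lam_0+p+te}:t\in J_p\}$ satisfies the one-parameter transversality condition \eqref{R40} with $\eta/2$ in place of $\eta$ \emph{for the pair $(\om',\tau')$}; I use the same $e$ for the pair $(\om,\tau)$. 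Next I would record that each sliced family is a continuous family of $C^{1+\delta}$-smooth hyperbolic IFS's satisfying the one-parameter analogues of \ref{as:MA1}--\ref{as:MA4}, and that its contraction bounds, bounded-distortion constants (Lemma~\ref{R60}), $C^{1+\delta}$-regularity constants, the bounds on $\frac{d}{dt}f_j^{\lam_0+p+te}(x)$ and on the $\delta$-H\"older seminorm of $\frac{d^2}{dt\,dx}f_j^{\lam_0+p+te}(x)$ are all dominated by the multiparameter data \ref{as:MA1}--\ref{as:MA4}, \emph{uniformly in $p$ and $e$}: the $t$-derivative is the directional derivative $\langle\nabla_\lam(\,\cdot\,),e\rangle$ with $|e|=1$, hence bounded by the corresponding gradient, which is uniformly bounded on $\ov U$. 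Finally, after shrinking $\eps_0$ (here is where the dependence on $\beta$ enters) I would ensure that the \emph{parametric} distortion loss obeys $|(f_w^{\lam_0+p+te})'(x)|\,|(f_w^{\lam_0})'(x)|^{-1}\le d_{\lam_0}(\om,\tau)^{-\beta/4}$ (and the reciprocal bound) for all admissible $w,x,p,t$, using Lemma~\ref{R60}(b) together with $|w|\asymp\log(1/d_{\lam_0}(\om,\tau))/\kappa$.

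With this set-up I would then repeat, in the variable $t$, the chain-rule computation of \cite[Prop.\,6.1]{BSSS}. Writing $\Pi^\lam(\om)-\Pi^\lam(\tau)=f_w^\lam(\Pi^\lam(\om'))-f_w^\lam(\Pi^\lam(\tau'))$ with $\lam=\lam_0+p+te$, one uses the mean value theorem, bounded distortion, the relation $d_{\lam_0}(\om,\tau)\asymp|(f_w^{\lam_0})'|\cdot d_{\lam_0}(\om',\tau')$ and the fact that $d_{\lam_0}(\om',\tau')=\diam(X)$ is bounded below, to turn the hypothesis $|\Pi^\lam(\om)-\Pi^\lam(\tau)|<c_\beta\,d_{\lam_0}(\om,\tau)^{1+\beta}$ into $|\Pi^\lam(\om')-\Pi^\lam(\tau')|<\eta/2$, provided $c_\beta$ is small enough; the sliced transversality for $(\om',\tau')$ then yields $|\frac{d}{dt}(\Pi^\lam(\om')-\Pi^\lam(\tau'))|\ge\eta/2$; and differentiating $f_w^\lam(\Pi^\lam(\om'))-f_w^\lam(\Pi^\lam(\tau'))$ in $t$ one splits off the leading term $(f_w^\lam)'(\Pi^\lam(\om'))\cdot\frac{d}{dt}(\Pi^\lam(\om')-\Pi^\lam(\tau'))$, of size $\asymp d_{\lam_0}(\om,\tau)^{1\pm\beta/4}$, and estimates the two error terms --- the one built from $(f_w^\lam)'(\Pi^\lam(\om'))-(f_w^\lam)'(\Pi^\lam(\tau'))$ via distortion, and the one built from $\frac{d}{dt}f_w^\lam$ via \ref{as:MA3}, whose $x$-H\"older constant grows at most like $|w|\cdot|(f_w^\lam)'|$. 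Since $|w|\asymp\log(1/d_{\lam_0}(\om,\tau))$ and the surviving positive powers of $|\Pi^\lam(\om')-\Pi^\lam(\tau')|$ contribute a factor $\lesssim c_\beta^{\delta}\,d_{\lam_0}(\om,\tau)^{\delta\beta/2}$, the choice of $c_\beta$ and $\eps_0$ (depending on $\beta$) absorbs the logarithmic loss and leaves $|\frac{d}{dt}(\Pi^\lam(\om)-\Pi^\lam(\tau))|\ge c_\beta\,d_{\lam_0}(\om,\tau)^{1+\beta}$; the complementary regime, where $w$ is short and $d_{\lam_0}(\om,\tau)$ is bounded below, follows directly from the sliced transversality of the previous step.

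The step I expect to be the main obstacle is the uniformity bookkeeping: one must verify that $\eps_0$, the bounded-distortion constants, and the linear-in-$|w|$ growth rate of $\frac{d}{dt}\log(f_w^{\lam_0+p+te})'$ do not degenerate as $e$ ranges over the unit sphere in $\R^d$ or as $p$ ranges over $B(0,\eps_0)\cap\mathrm{span}(e)^{\perp}$ --- this is precisely what makes a single neighbourhood $J=B(\lam_0,\eps_0)$, independent of the pair $(\om,\tau)$, work. Everything else is a faithful transcription of the one-parameter argument of \cite[Prop.\,6.1]{BSSS}; as there, it is essential that $\beta>0$ (so that $d_{\lam_0}(\om,\tau)^{\delta\beta/2}$ beats $\log(1/d_{\lam_0}(\om,\tau))$), and correspondingly $\eps_0\to0$ as $\beta\to0$.
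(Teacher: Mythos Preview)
Your proposal is correct and follows essentially the same approach as the paper: apply Lemma~\ref{lem: mp transversality sliced} to the shifted pair $(\sigma^n\om,\sigma^n\tau)$ with $n=|\om\wedge\tau|$ to select the direction $e$, observe that the sliced one-parameter families inherit \ref{as:MA1}--\ref{as:MA4} with constants uniform in $p,e,\om,\tau$ (since $|e|=1$), and then rerun the argument of \cite[Prop.\,6.1]{BSSS} on each slice. Your identification of the uniformity bookkeeping as the main point is exactly what the paper highlights as well.
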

\begin{proof}[Sketch of the proof] As we have referred to \cite{BSSS} for the proof of Lemma \ref{prop_beta trans} (\cite[Proposition 6.1]{BSSS}), we shall explain changes one has to perform in the proof of \cite[Proposition 6.1]{BSSS} in order to obtain the above statement.	The idea is to repeat the proof of \cite[Proposition 6.1]{BSSS} on each interval $J_p$, with the application of Lemma \ref{lem: mp transversality sliced} instead of \eqref{R40}. More precisely, in the point of the proof in \cite{BSSS} where \eqref{R40} is applied to the pair $\sigma^n \om, \sigma^n \tau$ with $n = |\om \wedge \tau|$, we apply Lemma \ref{lem: mp transversality sliced} instead, with the choice of $e$ as corresponding to the pair $\sigma^n \om, \sigma^n \tau$. We also use the observation that  \ref{as:MA1} -- \ref{as:MA4} imply that the one-parameter IFS $\{ f^{\lam_0 + p + te}_j : t \in J_p\}_{j \in \Ak}$ satisfies assumptions  \ref{as:MA1} -- \ref{as:MA4} with the one-dimensional parameter $t$ and constants independent of $p$ and $\om,\tau$ (we use here the fact that $e$ is a unit vector). Consequently, all the regularity lemmas of \cite[Section 4]{BSSS} hold for each such one-parameter IFS, uniformly in $p,\om,\tau$, and the proof of \cite[Proposition 6.1]{BSSS} can be applied to each of them. Therefore, even though $e$ depends on $\om$ and $\tau$, all the final constants in the proposition are independent of $\om,\tau$ and $p$.
\end{proof}

Now we are ready to explain how to modify the proof of Theorem \ref{thm:main_cor_dim} in order to obtain its multiparameter version. It suffices to prove the following multiparameter version of Proposition \ref{thm:sobolev integral bound}, which then can be used in the same manner as before to prove Theorem \ref{thm:main_cor_dim}.

\begin{prop}\label{thm:sobolev integral bound mp}
	Let $\{f_j^\lam\}_{j \in \Ak}$ be a parametrized IFS satisfying smoothness assumptions\ref{as:MA1} -- \ref{as:MA4} and the transversality condition \ref{as:trans mp} on $U \subset \R^d$. Let $\left\{ \mu_\lam \right\}_{\lam \in \ov{U}}$ be a collection of finite Borel measures on $\Sig$ satisfying \ref{as:measure}. Fix $\lambda_0 \in U$, $\beta > 0$, $\gamma>0$, $\eps>0$ and $q>1$ such that $1+2\gamma + \eps<q<1+\min\{\delta, \theta\}$. Then, there exists a (small enough) open ball $J = B(\lam_0, \eps_0) \subset U$ such that for every smooth function $\rho$ on $\R^d$ with
	{$0\le \rho\le 1$ and} $\supp(\rho) \subset J$ there exist constants $\widetilde{C}_1>0,\ \widetilde{C}_2>0$ such that
	$$
	\int_{J}\|\nu_\lambda\|_{2,\gamma}^2\rho(\lambda)\,d\lambda \leq \widetilde{C}_1\Ek_{q(1+a_0\beta)}(\mu_{\lambda_0}, d_{\lam_0}) + \widetilde{C}_2,
	$$
	where $a_0=\frac{8+4\delta}{1+\min\{\delta, \theta\}}$.
\end{prop}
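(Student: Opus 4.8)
The plan is to deduce Proposition~\ref{thm:sobolev integral bound mp} from the one‑parameter Theorem~\ref{thm:sobolev integral bound} (whose proof was sketched above) by ``slicing'' the ball $B(\lam_0,\eps_0)$ with one‑dimensional segments in a direction adapted to each pair $(\om_1,\om_2)\in\Sig\times\Sig$, and running the one‑parameter argument on each slice. First I note that the reduction of Theorem~\ref{thm:sobolev integral bound} to Proposition~\ref{prop:trans int} is insensitive to the dimension of the parameter: the Littlewood--Paley decomposition (Lemma~\ref{lem:LP}) concerns only the measures $\nu_\lam$ on $\R$; the truncation/discretization steps and the definition of the adjustment kernel $e_j$ use only the $\lam$‑pointwise estimates coming from \ref{as:MA4} and \ref{as:measure}; and the interchanges of sum and integral are justified, as before, by non‑negativity of $\int\widehat\psi(2^{-j}\xi)|\widehat{\nu_\lam}(\xi)|^2\,d\xi$ and by compactness of $\supp\rho$. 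Choosing $\xi$ as in the one‑parameter proof, then $\eps_0$ so small that $B(\lam_0,\eps_0)\subset U$, that $2c(2\eps_0)^\theta\le\xi$, and that the degree‑$\beta$ transversality of Proposition~\ref{prop_beta trans mp} holds on $B(\lam_0,\eps_0)$, and then $\tc$ via \eqref{eq:forn}, it therefore suffices to prove the $d$‑dimensional analogue of \eqref{claim1}: there is $C_3>0$, depending only on $q,\rho,\beta$, with
\[
\left|\int_{B(\lam_0,\eps_0)}\psi\bigl(2^j(\Pi^\lam(\om_1)-\Pi^\lam(\om_2))\bigr)e_j(\om_1,\om_2,\lam)\rho(\lam)\,d\lam\right| \le C_3\cdot\tc j\,2^{Q(2+c)\xi\tc j}\bigl(1+2^j d_{\lam_0}(\om_1,\om_2)^{1+a_0\beta}\bigr)^{-q}
\]
for all distinct $\om_1,\om_2\in\Sig$ and all $j\in\N$.

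To establish this, I fix distinct $\om_1,\om_2$ and let $e=e(\om_1,\om_2)$ be the unit vector furnished by Proposition~\ref{prop_beta trans mp}. Writing $\lam_p(t)=\lam_0+p+te$ and $J_p=\{t:\lam_p(t)\in B(\lam_0,\eps_0)\}$, Fubini's theorem gives
\[
\int_{B(\lam_0,\eps_0)}\psi\bigl(2^j(\Pi^\lam(\om_1)-\Pi^\lam(\om_2))\bigr)e_j\,\rho\,d\lam = \int\limits_{B(0,\eps_0)\cap\,\mathrm{span}(e)^\perp}\ \int\limits_{J_p}\psi\bigl(2^j(\Pi^{\lam_p(t)}(\om_1)-\Pi^{\lam_p(t)}(\om_2))\bigr)e_j(\om_1,\om_2,\lam_p(t))\rho(\lam_p(t))\,dt\,dp.
\]
For each fixed $p$ the family $\{f_j^{\lam_p(t)}\}_{t\in J_p,\,j\in\Ak}$ is a one‑parameter $C^{1+\delta}$ hyperbolic IFS satisfying \ref{as:MA1}--\ref{as:MA4} with constants independent of $p$ and of $e$ (since $\tfrac{d}{dt}f_j^{\lam_p(t)}(x)=\langle\nabla_\lam f_j^\lam(x),e\rangle$ and $|e|=1$, all relevant norms and H\"older seminorms are controlled by those of the $d$‑parameter family), it satisfies degree‑$\beta$ transversality on $J_p$ with respect to $d_{\lam_0}$ with the $p$‑ and $e$‑independent constant $c_\beta$ of Proposition~\ref{prop_beta trans mp}, the measures $\{\mu_{\lam_p(t)}\}$ satisfy \ref{as:measure}, and $t\mapsto\rho(\lam_p(t))$ is smooth, $[0,1]$‑valued, supported in $J_p$, with derivative bounded by $\|\nabla\rho\|_\infty$. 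Hence the proof of Proposition~\ref{prop:trans int} applies to the inner integral verbatim, with the reference point of the kernel taken to be the global centre $\lam_0$; although $\lam_0$ need not lie on the line $\{\lam_p(t):t\in J_p\}$, one has $|\lam_p(t)-\lam_0|<\eps_0$ throughout, and that is all that is used (the kernel bound $e_j\le 2^{Q\xi\tc j}$, the H\"older estimate \eqref{eq:F_3 hoelder} for $F_3$, and the transversality argument of Lemma~\ref{lem-trans} only require $\supp\rho\subset B(\lam_0,\eps_0)$ and degree‑$\beta$ transversality with respect to $d_{\lam_0}$ on the parameter interval, all uniform in $p$). This bounds the inner integral by $C_3\cdot\tc j\,2^{Q(2+c)\xi\tc j}(1+2^j d_{\lam_0}(\om_1,\om_2)^{1+a_0\beta})^{-q}$ uniformly in $p,e,\om_1,\om_2$; integrating over $p\in B(0,\eps_0)\cap\mathrm{span}(e)^\perp$, whose $(d-1)$‑dimensional Lebesgue measure is a constant $V_0>0$, yields the displayed $d$‑dimensional analogue of \eqref{claim1} (with $C_3$ replaced by $V_0C_3$).

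Substituting this bound into the dimension‑independent reduction, then summing over $j\ge 0$ and integrating over $(\om_1,\om_2)$ against $\mu_{\lam_0}\otimes\mu_{\lam_0}$ exactly as in the one‑parameter proof — using $2\gamma+Q(2+c)\xi\tc\le\eps/2$ by \eqref{eq:forn} and $1+2\gamma+\eps<q$ — gives $\int_J\|\nu_\lam\|_{2,\gamma}^2\rho(\lam)\,d\lam\lesssim\tc\sum_{j\ge 0}j\,2^{-\frac{\eps}{2}j}\,\Ek_{q(1+a_0\beta)}(\mu_{\lam_0},d_{\lam_0})<\infty$, which is the assertion of Proposition~\ref{thm:sobolev integral bound mp}. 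I expect the main obstacle to be not the slicing mechanism itself but the bookkeeping needed to guarantee that every constant appearing in the delicate proof of Proposition~\ref{prop:trans int} survives the slicing uniformly — uniformly in the base point $p$ and, crucially, in the direction $e=e(\om_1,\om_2)$, which depends on the symbolic pair. This uniformity rests on three points that must be checked: (i) the sliced one‑parameter IFSs inherit \ref{as:MA1}--\ref{as:MA4} with uniform constants because $e$ is a unit vector; (ii) degree‑$\beta$ transversality holds on each $J_p$ with a $p$‑ and $e$‑independent constant, which is exactly Proposition~\ref{prop_beta trans mp} (itself obtained by slicing in the proof of Lemma~\ref{prop_beta trans} and invoking Lemma~\ref{lem: mp transversality sliced}); and (iii) the metric $d_{\lam_0}$ and the comparison estimates of \ref{as:measure} are manifestly $p$‑independent. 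Granting these, passing from one to $d$ parameters costs only the bounded factor $V_0$, and the rest of the argument is word for word the one‑dimensional one.
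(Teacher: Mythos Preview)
Your proposal is correct and follows essentially the same approach as the paper: reduce to a $d$-dimensional analogue of Proposition~\ref{prop:trans int} by noting that the Littlewood--Paley/discretization steps are dimension-independent, then for each pair $(\om_1,\om_2)$ slice $B(\lam_0,\eps_0)$ along the direction $e$ from Proposition~\ref{prop_beta trans mp}, apply the one-parameter argument on each $J_p$ with constants uniform in $p$ and $e$, and integrate over the $(d-1)$-dimensional cross-section. The paper makes exactly the same points, including the key observation that although $e$ depends on $\om_1,\om_2$, the resulting bound does not because the constants in Proposition~\ref{prop_beta trans mp} are uniform.
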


To prove it, note first that we can follow the proof of Proposition \ref{thm:sobolev integral bound} exactly as before up to Proposition \ref{prop:trans int}, since only conditions \ref{as:MA1} -- \ref{as:MA4} and \ref{as:measure} are used in that part (in particular: the transversality condition is not invoked). We can define the functions $e_j$ on $\Sig \times \Sig \times U$ by the same formula \eqref{eq:ej def}. Inspecting the part of the proof following Proposition \ref{prop:trans int}, we see that the proof of Proposition \ref{thm:sobolev integral bound mp} will be concluded once the following extension of Proposition \ref{prop:trans int} is established.

\begin{prop}\label{prop:trans int mp}
	There exists {$ C_7 > 0$} such that for any distinct $\om_1,\om_2\in \Sig$, any $j\in \N$  we have %and any positive $q$ with $1+2\gamma+6\xi c\leq 1+2\gamma+\eps<q< 1 + \min\{\delta,\theta\}$,
	\begin{equation}\label{claim1 mp}
		%\begin{split}
		\left| \int\limits_{\R^d}\psi\bigl(2^j(\Pi^\lam(\omega_1) - \Pi^\lam(\omega_2))\bigr)\,e_j(\omega_1,\omega_2,\lambda)\,\rho(\lambda)\,d\lambda \right| \leq  %C_{q, \rho,\beta}
		C_7\cdot \tc j2^{Q(2+c)\xi\tc j} \left( 1 + 2^j d(\om_1,\om_2)^{1+a_0\beta}\right)^{-q},\\
		%& \qquad\leq  C_{q, \rho,\beta}\tc j2^{Q(2+c)\xi\tc j} \left( 1 + 2^j d(\om_1,\om_2)^{1+a_0\beta}\right)^{-q}
		%\end{split}
	\end{equation}
	where %$ C_{q, \rho,\beta}$ 
	$C_7$ depends only on $q, \rho$, and $\beta$, and $a_0 = \frac{8 + 4\delta}{1 + \min\{\delta, \theta\}}$, and $d(\om_1,\om_2) = d_{\lam_0}(\om_1,\om_2)$ is the metric defined in \eqref{R78}.
\end{prop}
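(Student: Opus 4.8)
The plan is to reduce Proposition~\ref{prop:trans int mp} to the one-parameter estimate of Proposition~\ref{prop:trans int} by slicing the $d$-dimensional parameter ball $J = B(\lam_0, \eps_0)$ along one-dimensional lines in the direction furnished by Proposition~\ref{prop_beta trans mp}. First I would fix distinct $\om_1,\om_2 \in \Sig$, write $r = d_{\lam_0}(\om_1,\om_2)$, and, exactly as in the one-dimensional case, reduce to the regime $2^j r^{1+a_0\beta}\ge 1$, the complementary case being trivial by \eqref{eq:errorest} together with the boundedness of $\psi$. Applying Proposition~\ref{prop_beta trans mp} with the given $\beta$ produces the ball $J = B(\lam_0,\eps_0)$, a constant $c_\beta$, and a unit vector $e = e(\om_1,\om_2)\in\R^d$ such that for every $p\in B(0,\eps_0)\cap\mathrm{span}(e)^\perp$ the one-parameter family $\{f_j^{\lam_0+p+te}\}_{j\in\Ak}$, $t\in J_p = \{t\in\R : p+te\in B(0,\eps_0)\}$, satisfies the degree-$\beta$ transversality \eqref{tran beta} with respect to $d_{\lam_0}$, uniformly in $p$.

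Next I would apply Fubini's theorem to write $\lam = \lam_0 + p + te$ with $p\in B(0,\eps_0)\cap\mathrm{span}(e)^\perp$ and $t\in J_p$, obtaining
\[ \int\limits_{\R^d}\psi\bigl(2^j(\Pi^\lam(\om_1)-\Pi^\lam(\om_2))\bigr)\,e_j(\om_1,\om_2,\lam)\,\rho(\lam)\,d\lam = \int\limits_{B(0,\eps_0)\cap\mathrm{span}(e)^\perp}\Bigl(\,\int\limits_{J_p} G_p(t)\,dt\Bigr)\,dp, \]
where $G_p(t) = \psi\bigl(2^j(\Pi^{\lam_0+p+te}(\om_1)-\Pi^{\lam_0+p+te}(\om_2))\bigr)\,e_j(\om_1,\om_2,\lam_0+p+te)\,\rho(\lam_0+p+te)$. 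For each fixed $p$ the inner integral is precisely of the type handled in the proof of Proposition~\ref{prop:trans int}: the function $t\mapsto\rho(\lam_0+p+te)$ is smooth, takes values in $[0,1]$ and is compactly supported in $J_p$; the kernel $e_j$ still satisfies \eqref{eq:errorest} and the H\"older bound \eqref{eq:F_3 hoelder}, because \ref{as:measure} is a global condition unaffected by restriction to a line; and degree-$\beta$ transversality holds along each slice by the previous step. Hence the one-parameter argument applies verbatim and yields
\[ \Bigl|\,\int\limits_{J_p} G_p(t)\,dt\Bigr| \le C_3\cdot\tc j\,2^{Q(2+c)\xi\tc j}\bigl(1 + 2^j r^{1+a_0\beta}\bigr)^{-q}. \]
Integrating over $p$ and using that $\mathcal{L}^{d-1}\bigl(B(0,\eps_0)\cap\mathrm{span}(e)^\perp\bigr)$ is a constant depending only on $d$ and $\eps_0$ (and not on $e$) would give \eqref{claim1 mp} with $C_7$ equal to this constant times $C_3$.

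The main obstacle is guaranteeing that $C_3$ (and $c_\beta$) may be chosen independently of $p$ and of the pair $(\om_1,\om_2)$. This requires the observation that \ref{as:MA1} -- \ref{as:MA4} for $\{f_j^\lam\}_{\lam\in\ov U}$ imply that every sliced one-parameter family $\{f_j^{\lam_0+p+te}\}_{j\in\Ak}$ satisfies \ref{as:MA1} -- \ref{as:MA4} in the single variable $t$ with constants independent of $p$ and $e$ (one uses here that $e$ is a unit vector, so the relevant directional derivatives are controlled by the full gradients in $\lam$). Consequently all the auxiliary estimates entering the proof of Proposition~\ref{prop:trans int} --- the Bounded Distortion Property, the bound $\|H'\|_\infty\le c_\beta^{-1}r^{-\beta}$, and the H\"older estimates for the factors $F_1,F_2,F_3$ and for $H'$ --- hold uniformly in $p$ and $(\om_1,\om_2)$; likewise $c_\beta$ from Proposition~\ref{prop_beta trans mp} is uniform by construction. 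Once this uniformity is certified, the remainder is the routine Fubini computation indicated above, and the remaining technical details can be carried out exactly as in the one-dimensional case in \cite{BSSS}.
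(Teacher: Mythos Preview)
Your proposal is correct and follows essentially the same approach as the paper: fix the unit vector $e=e(\om_1,\om_2)$ from Proposition~\ref{prop_beta trans mp}, apply Fubini to slice the $d$-dimensional integral into one-parameter integrals over the lines $t\mapsto\lam_0+p+te$, invoke the one-dimensional argument of Proposition~\ref{prop:trans int} on each slice with constants uniform in $p$ and $(\om_1,\om_2)$, and then integrate over $p$. The paper's sketch is slightly terser but makes the same points, including the key uniformity observation you highlight.
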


\begin{proof}[Sketch of the proof] Fix distinct $\om_1, \om_2 \in \Sig$, and let $e$ be the corresponding unit vector from Proposition \ref{prop_beta trans mp}. We can decompose the integral as follows (recall that $J=B(\lam_0,\eps_0)$ and $\supp(\rho) \subset J$):
	\begin{align*}
		\begin{split} \int\limits_{\R^d}&\psi\bigl(2^j(\Pi^\lam(\omega_1) - \Pi^\lam(\omega_2))\bigr)\,e_j(\omega_1,\omega_2,\lambda)\,\rho(\lambda)\,d\lambda \\
			&= \int \limits_{B(0, \eps_0) \cap \mathrm{span}(e)^\perp} \int \limits_{J_p} \psi\bigl(2^j(\Pi^{\lam_0 + p + te}(\omega_1) - \Pi^{\lam_0 + p + te}(\omega_2))\bigr)\,e_j(\omega_1,\omega_2,\lambda_0 + p + te)\,\rho(\lam_0 + p + te)dt dp.
		\end{split}
	\end{align*}
	Note that \ref{as:measure} implies that the family of measures $t \mapsto \mu_{\lam_0 + p + te}$ satisfies \ref{as:measure} on the interval $J_p$, with constants independent of $p$. This fact, together with Proposition \ref{prop_beta trans mp}, allows us to repeat the proof of Proposition \ref{prop:trans int} on each interval $J_p$ and obtain a corresponding upper bound for it. The crucial observation is that we obtain the upper bound \eqref{claim1 mp} for the inner integral above for every fixed $p \in B(0, \eps_0) \cap \mathrm{span}(e)^\perp$, with constants independent of $p$. Integrating with respect to $p$ finishes the proof (note that even though the direction $e$ in which we "slice" the ball $B(\lam_0, \eps_0)$ depends on $\om_1, \om_2$, the final upper bound on the integral in \ref{claim1 mp} does not, as the constants in Proposition \ref{prop_beta trans mp} do not depend on $\om, \tau$).
\end{proof}

}

{
	\section{Families of Gibbs measures have property \ref{as:measure} -- on the proof of Theorem~\ref{thm:main_gibbs}}	
	
	Finally, we will show how Theorem~\ref{thm:main_gibbs} follows from Theorem~\ref{thm:main_hausdorff} and Theorem~\ref{thm:main_cor_dim}. More precisely, we will give a sketch of the proof how Gibbs measures with parameter dependent potential satisfy \ref{as:measure}.
	
	Let $U\subset\R^d$ be an open and bounded set, and let $\phi^\lambda\colon\Sigma\to\R$ be a family of H\"older-continuous potentials with the following properties:
	\begin{enumerate}[(H1)]
		\item\label{it:pot1} there exists $0<\alpha<1$ and $b>0$ such that
		$$\sup_{\lambda\in\overline{U}}\sup_{\omega,\tau:|\omega\wedge\tau|=k}|\phi^\lambda(\omega)-\phi^\lambda(\tau)|\leq b\alpha^k;$$
		\item\label{it:pot2} there exists $c>0$ and $0<\theta<1$ such that
		$$
		\sup_{\omega\in\Sigma}|\phi^\lambda(\omega)-\phi^{\lambda'}(\omega)|\leq c|\lambda-\lambda'|^\theta\text{ for every }\lambda,\lambda'\in\overline{U}.
		$$
	\end{enumerate}
	
	Clearly, for every $\lambda\in\overline{U}$ there exists a unique Gibbs measure $\mu_\lambda$ satisfying \eqref{R66}. It is an easy exercise to show that the assumptions \ref{it:pot1}--\ref{it:pot2} imply \ref{as:measure_cont}, and so the dimension part of Theorem~\ref{thm:main_gibbs} easily follows by Theorem~\ref{thm:main_hausdorff}.
	
	It is reasonably more challenging to show that the family of measures $\mu_\lambda$ also satisfies \ref{as:measure} for some $0<\theta'<\theta$ and $c'>0$; however, it still uses standard methods from operator theory. But this is not the main difficulty here. In general, the correlation dimension $\dim_{cor}(\mu_\lambda,d_\lambda)$ is significantly smaller than the ratio $\frac{h_{\mu_\lambda}}{\chi_{\mu_\lambda}(\mathcal{F}^\lambda)}$. Hence, to show the absolute continuity part of Theorem~\ref{thm:main_gibbs} by applying Theorem~\ref{thm:main_cor_dim}, one needs to restrict $\mu_\lambda$ to an appropriate subset of $\Sigma$ of large measure. However, with such a restriction we might loose the property \ref{as:measure}. The main proposition of this section says that this can be done in a careful way, so that continuity properties are not harmed.
	
	\begin{prop}\label{prop:maingibbs}
		Let $\{f_j^\lam\}_{j \in \Ak}$ be a parametrized IFS satisfying smoothness assumptions \ref{as:MA1} - \ref{as:MA4}. Let $\{\mu_\lambda\}_{\lambda\in\overline{U}}$ be a family of shift-invariant Gibbs measures with potentials $\phi^\lambda\colon\Sigma\to\R$ satisfying \ref{it:pot1} and \ref{it:pot2}. Then for every $\lambda_0\in U$, $\varepsilon>0$, $\delta>0$ and $\theta'\in(0,\theta)$ there exist $\xi=\xi(\lambda_0,\varepsilon,\delta)>0$ and $c>0$ and a subset $A\subseteq\Sigma$ such that for every $\lambda\in B_\xi(\lambda_0)$:
		\begin{enumerate}[{\rm (1)}]
			\item\label{it:prop1} $\mu_\lambda(A)>1-\delta$;
			\item\label{it:prop2} $\dim_{cor}(\mu_\lambda|_A,d_\lambda)\geq\frac{h_{\mu_\lambda}}{\chi_{\mu_\lambda}(\mathcal{F}^\lambda)}-\varepsilon$;
			\item\label{it:prop3} and for every $\omega\in\Sigma^*$,
			$$
			e^{-c|\lambda-\lambda_0|^{\theta'}|\omega|}\mu_{\lambda}|_A([\omega])\leq\mu_{\lambda_0}|_A([\omega])\leq e^{c|\lambda-\lambda_0|^{\theta'}|\omega|}\mu_{\lambda}|_A([\omega]).
			$$
		\end{enumerate}
	\end{prop}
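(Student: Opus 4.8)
The plan is to prove the proposition in three stages corresponding to its three claims, after a preliminary reduction, and then to note that Theorem~\ref{thm:main_gibbs} follows. As a \emph{preliminary step} one checks that the whole family $\{\mu_\lambda\}_{\lambda\in\ov{U}}$ already satisfies \ref{as:measure}: by \eqref{R66} we have $\mu_\lambda([\omega])\asymp\exp\bigl(-|\omega|P(\phi^\lambda)+S_{|\omega|}\phi^\lambda(\omega)\bigr)$ with constants that, by \ref{it:pot1}, depend only on $b,\alpha$ and $\sup_\lambda\|\phi^\lambda\|_\infty$ and are therefore uniform in $\lambda$ (this is the ``operator theory'' input). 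Assumption \ref{it:pot2} gives $|S_n\phi^\lambda(\omega)-S_n\phi^{\lambda'}(\omega)|\le c\,n|\lambda-\lambda'|^\theta$, while the Lipschitz dependence of the topological pressure on the potential in the supremum norm gives $|P(\phi^\lambda)-P(\phi^{\lambda'})|\le c|\lambda-\lambda'|^\theta$; dividing the two Gibbs estimates yields \ref{as:measure} (and the same bounds make $\lambda\mapsto h_{\mu_\lambda}$ and $\lambda\mapsto\chi_{\mu_\lambda}(\Fk^\lambda)$ continuous). Since \ref{as:measure} implies \ref{as:measure_cont}, the dimension part of Theorem~\ref{thm:main_gibbs} already follows from Theorem~\ref{thm:main_hausdorff}.

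Now fix $\lambda_0\in U$, $\varepsilon>0$, $\delta>0$ (which we may take as small as we wish) and $\theta'\in(0,\theta)$, and put $h_0=h_{\mu_{\lambda_0}}$, $\chi_0=\chi_{\mu_{\lambda_0}}(\Fk^{\lambda_0})$. For a large parameter $D$ we take $A=A_D$ to be the ``uniform Egorov set''
\[ A_D:=\Bigl\{\omega\in\Sigma:\ \forall n\ge1,\ \ \mu_{\lambda_0}([\omega|_n])\le D e^{-n(h_0-\varepsilon/2)}\ \text{ and }\ |f^{\lambda_0}_{\omega|_n}(X)|\ge D^{-1}e^{-n(\chi_0+\varepsilon/2)}\Bigr\}. \]
For claim~\ref{it:prop1} I would use a large--deviation estimate for Gibbs measures: by \eqref{R66}, \eqref{R11}, \eqref{R53}, the Chain Rule and the Bounded Distortion Property, $-\tfrac1n\log\mu_\lambda([\omega|_n])$ and $-\tfrac1n\log|f^\lambda_{\omega|_n}(X)|$ are, up to $O(1/n)$, Birkhoff averages of uniformly H\"older functions, hence concentrate around $h_{\mu_\lambda}$, resp.\ $\chi_{\mu_\lambda}(\Fk^\lambda)$, at an exponential rate uniform over $\lambda\in\ov U$ (this uses \ref{it:pot1} and \ref{as:MA1}--\ref{as:MA4}). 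Thus the analogous ``$\mu_\lambda$--Egorov'' set with parameter $\varepsilon/4$ has $\mu_\lambda$--measure $>1-\delta$ once $D$ is large, uniformly in $\lambda$, and by \ref{as:measure}, the Bounded Distortion Property and the continuity of $\lambda\mapsto h_{\mu_\lambda},\chi_{\mu_\lambda}(\Fk^\lambda)$ it is contained in $A_D$ for all $\lambda\in B_\xi(\lambda_0)$ with $\xi$ small, giving \ref{it:prop1}. For claim~\ref{it:prop2} I would split the energy integral over longest common prefixes,
\[ \Ek_\alpha(\mu_\lambda|_A,d_\lambda)\le\sum_{n\ge0}\ \sum_{\substack{u\in\Ak^n\\ [u]\cap A\ne\emptyset}}|f^\lambda_u(X)|^{-\alpha}\,\mu_\lambda([u])^2, \]
and use that $[u]\cap A\ne\emptyset$ forces $\mu_{\lambda_0}([u])\le De^{-|u|(h_0-\varepsilon/2)}$ and $|f^{\lambda_0}_u(X)|\ge D^{-1}e^{-|u|(\chi_0+\varepsilon/2)}$; together with \ref{as:measure} and the Bounded Distortion Property this bounds the sum by a geometric series convergent for $\alpha$ below $h_0/\chi_0-O(\varepsilon+\xi^\theta)$. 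Since $\dim_{cor}(\mu_\lambda|_A,d_\lambda)=\sup\{\alpha:\Ek_\alpha(\mu_\lambda|_A,d_\lambda)<\infty\}$, taking $\xi$ small and using continuity of $\lambda\mapsto h_{\mu_\lambda}/\chi_{\mu_\lambda}(\Fk^\lambda)$ yields \ref{it:prop2} after relabelling $\varepsilon$.

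The main obstacle is claim~\ref{it:prop3}: the restricted measures must again satisfy a condition of type \ref{as:measure} (with base point $\lambda_0$ and the slightly worse exponent $\theta'$), even though $A$ is cut out by infinitely many cylinder conditions, so the naive bound $\mu_\lambda([\omega]\cap A^{(N)})\le e^{c|\lambda-\lambda_0|^\theta N}\mu_{\lambda_0}([\omega]\cap A^{(N)})$, obtained by applying \ref{as:measure} cylinder by cylinder to the clopen truncation $A^{(N)}$ of $A$ at level $N$, blows up as $N\to\infty$. The plan here is: (i) if $[\omega]\cap A=\emptyset$ both sides of \ref{it:prop3} vanish, so assume $[\omega]\cap A\ne\emptyset$ and set $n=|\omega|$; (ii) exploit that Gibbs measures are quasi-Bernoulli with uniform constants, $\mu_\lambda([\omega v])\asymp\mu_\lambda([\omega])\mu_\lambda([v])$ (immediate from \eqref{R66} and \ref{it:pot1}), together with the quasi--multiplicativity $|f^\lambda_{\omega v}(X)|\asymp|f^\lambda_\omega(X)|\,|f^\lambda_v(X)|$ of cylinder diameters (Bounded Distortion Property) and the large--deviation tail of claim~\ref{it:prop1}, to show $\mu_{\lambda_0}([\omega]\cap A)\asymp\mu_{\lambda_0}([\omega])$ and $\mu_\lambda([\omega]\cap A)\asymp\mu_\lambda([\omega])$ with uniform constants --- which already yields \ref{it:prop3} up to a bounded multiplicative constant; (iii) to remove that constant, balance two competing error sources: comparing cylinder by cylinder via \ref{as:measure} up to level $N$ costs a factor $1+O\bigl(N|\lambda-\lambda_0|^\theta\bigr)$, whereas discarding levels beyond $N$ costs an exponentially small tail $O\bigl(e^{-(N-n)J(\varepsilon)}\bigr)\mu_\lambda([\omega])$; optimising $N\approx n+J(\varepsilon)^{-1}\log\bigl(1/|\lambda-\lambda_0|^\theta\bigr)$ leaves a total error of order $|\lambda-\lambda_0|^\theta\max\{n,\log(1/|\lambda-\lambda_0|)\}$, which, combined with \ref{as:measure} for the full measures $\mu_\lambda,\mu_{\lambda_0}$, is absorbed into $e^{\pm c'|\lambda-\lambda_0|^{\theta'}n}$ precisely because $\theta'<\theta$ (so the logarithmic correction $|\lambda-\lambda_0|^\theta\log(1/|\lambda-\lambda_0|)$ is $o(|\lambda-\lambda_0|^{\theta'})$). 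The delicate points of step (iii) are keeping all constants uniform in $\omega$ and handling the small--$|\omega|$ regime, where the bounded constant from step (ii) has to be swallowed; everything else is a routine combination of the operator--theoretic facts about Gibbs measures with the large--deviation and quasi--Bernoulli estimates above.

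Finally, Theorem~\ref{thm:main_gibbs} follows: its dimension part from the preliminary step and Theorem~\ref{thm:main_hausdorff}, and its absolute--continuity part by applying Theorem~\ref{thm:main_cor_dim} (equivalently, re--running its proof near a density point $\lambda_0$) to the measures $\mu_\lambda|_A$ supplied by the proposition --- for which $\dim_{cor}(\mu_\lambda|_A,d_\lambda)>1$ on $\{\lambda: h_{\mu_\lambda}/\chi_{\mu_\lambda}(\Fk^\lambda)>1\}$ once $\varepsilon$ is small by \ref{it:prop2} --- together with $\dim_{cor}(\nu_\lambda)\ge\dim_{cor}\bigl((\Pi^\lambda)_*(\mu_\lambda|_A)\bigr)$ and letting $\varepsilon\to0$.
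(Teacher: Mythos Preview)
Your overall strategy for claims \ref{it:prop1} and \ref{it:prop2} is essentially the paper's, but your construction of the set $A$ and your argument for claim \ref{it:prop3} diverge substantially from the paper, and step (ii) of your plan for \ref{it:prop3} contains a real gap.

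The paper does \emph{not} use the ``all-prefixes'' Egorov set $A_D$. Instead it builds $A$ with a \emph{block product structure}: one sets $n_k=k$, $m_k=n_1+\cdots+n_k$, defines $\Omega_n\subset\Ak^n$ as the set of words on which the Birkhoff sum of $\phi^{\lambda_0}$ is $\varepsilon$-close to its mean, and takes $A_K=\Omega_{m_K}\times\Omega_{n_{K+1}}\times\Omega_{n_{K+2}}\times\cdots$. The point is that for $|\omega|=m_L$ aligned with a block boundary and $\omega$ in the good set, $[\omega]\cap A_K=[\omega]\cap\sigma^{-m_L}(\Omega_{n_{L+1}}\times\Omega_{n_{L+2}}\times\cdots)$, so the ``future'' conditions are \emph{the same set for every such $\omega$}. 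One then writes $\mu_\lambda([\omega]\cap A_K)=\mu_\lambda([\omega])\bigl(1-\sum_{p>L}b_p(\lambda)\bigr)$, where $b_p(\lambda)$ is controlled by the large-deviation rate $e^{-n_ps/2}$ uniformly in $\omega$ via quasi-Bernoulli, and compares the ratios $(1-\sum b_p(\lambda))/(1-\sum b_p(\lambda_0))$ using a Mean Value Theorem argument. The increasing block lengths $n_k=k$ ensure the tail $\sum_{p>L}(m_p+m_L)e^{-n_ps/2}$ converges and is $O(m_L)$, which is exactly what is needed to absorb the constant into $e^{c'|\lambda-\lambda_0|^{\theta'}|\omega|}$.

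Your step (ii) claims $\mu_{\lambda_0}([\omega]\cap A_D)\asymp\mu_{\lambda_0}([\omega])$ with constants uniform in $\omega$ satisfying $[\omega]\cap A_D\ne\emptyset$. This is precisely what the product structure is designed to sidestep, and your justification (``exploit quasi-Bernoulli plus large-deviation tail'') does not establish it. The difficulty is that the future Egorov conditions for $\omega'\in[\omega]$ depend on how close the Birkhoff sum $S_{|\omega|}\phi^{\lambda_0}$ along $\omega$ is to the threshold; translating via quasi-Bernoulli, the suffix $\tau=\sigma^{|\omega|}\omega'$ must satisfy $S_m\phi^{\lambda_0}(\tau)-m\int\phi^{\lambda_0}\le m\varepsilon/2 + c_\omega + O(1)$ where $c_\omega\ge 0$ can be zero. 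For $\omega$ at the boundary ($c_\omega=0$) the $O(1)$ coming from the quasi-Bernoulli constants can make the condition at small $m$ vacuously restrictive (for instance, force the first suffix symbol), and iterating this one does not see a uniform positive lower bound without a further argument. Even if a uniform bound can be rescued by modifying $A_D$ (e.g.\ imposing the Egorov condition only for $n\ge N_0$), your step (iii) still needs, for small $|\omega|$, the Hölder-type continuity $\mu_\lambda([\omega]\cap A_D)\to\mu_{\lambda_0}([\omega]\cap A_D)$ with rate $|\lambda-\lambda_0|^{\theta'}$; since $A_D$ is not a cylinder set this does not follow from \ref{as:measure} directly, and you do not indicate how to get it. The paper's block construction handles both issues simultaneously: the future set is $\omega$-independent, and the telescoping ratio argument gives the Hölder rate for every $|\omega|\ge m_K$ (the small-$|\omega|$ regime being absorbed into the choice of $K$ and the uniform bounds at the single level $m_K$).
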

	
	First, we sketch the proof of Theorem~\ref{thm:main_gibbs}, {assuming the proposition}. For every $\lambda_0\in U$, $\varepsilon>0$ and $\delta>0$, one can apply Theorem~\ref{thm:main_hausdorff} and Corollary~\ref{cor:ac} for the measure $\mu_\lambda|_A$ on $U=B_\xi(\lambda_0)$, where the set $A$ is given by Proposition~\ref{prop:maingibbs}, and so, for almost every $\lambda\in B_\xi(\lambda_0)$,
	$$\frac{h_{\mu_\lambda}}{\chi_{\mu_\lambda}(\mathcal{F}^\lambda)}\geq\Dh\mu_\lambda\geq\dim_{cor}(\mu_\lambda|_A,d_\lambda)\geq\frac{h_{\mu_\lambda}}{\chi_{\mu_\lambda}(\mathcal{F}^\lambda)}-\varepsilon$$ and
	$$ \mu_\lambda|_A\ll\mathcal{L}\ \ \text{ for Lebesgue a.e.}\ \ \lambda\in B_\xi(\lambda_0)\cap\left\{\lambda: h_{\mu_\lambda}>\chi_{\mu_\lambda}(\mathcal{F}^\lambda)(1+\varepsilon)\right\}.$$
	Since $\lambda_0\in U$, $\varepsilon>0$ and $\delta>0$ were arbitrary, one can finish the proof of Theorem~\ref{thm:main_gibbs} by a standard density theorem argument.
	
	\smallskip
	
	Now, let us turn to the discussion of the proof of Proposition~\ref{prop:maingibbs}. First, we need to get more involved into \cite[Chapter~1]{Bow} for a more sophisticated version of Theorem~\ref{R67}. Let $L_\lambda$ be the Perron operator on the Banach space of continuous real-valued maps on $\Sigma$, defined by
	$$
	(L_\lambda h)(\omega)=\sum_{i\in\mathcal{A}}e^{\phi^\lambda(i\omega)}h(i\omega).
	$$
	Let 
	$$\Lambda=\left\{f\colon\Sigma\to\R_+:f(\omega)\leq \exp\bigl(\textstyle{\sum_{k=|\omega\wedge\tau|+1}^\infty 2b\alpha^{k}}\bigr)f(\tau)\text{ for every }\omega,\tau\in\Sigma\right\}.$$
	 Then for every $\lambda\in\overline{U}$ there exist a unique function $h_\lambda\in\Lambda$ and a unique probability measure $\nu_\lambda$ on $\Sigma$ such that
	$$
	L_\lambda h_\lambda=e^{P(\phi^\lambda)}h_\lambda,\ L_\lambda^*\nu_\lambda=e^{P(\phi^\lambda)}\nu_\lambda,\text{ and }\int h_\lambda(\omega)d\nu_\lambda(\omega)=1,
	$$
	where $P(\phi^\lambda)$ is the pressure defined in \eqref{R68}. Furthermore, there exist $0<\beta<1$ and $A>0$ such that for every $g\colon\Sigma\to\R$ with $\mathrm{var}_r(g)=0$ for some $r\geq0$ and for every $\lambda\in\overline{U}$,
	\begin{equation}\label{eq:unifconv}
		\Bigl\|e^{-(n+r)P(\phi^\lambda)}L_\lambda^{n+r}g-\int gd\nu_\lambda\cdot h_\lambda\Bigr\|\leq A\beta^n\int gd\nu_\lambda.
	\end{equation}
	These claims follow by \cite[Chapter~1]{Bow} with the uniform  bound $\alpha$ in \ref{it:pot1}.
	
	It clearly follows from the definition of the pressure and \ref{it:pot2} that for every $\lambda,\lambda'\in U$,
	\begin{equation}\label{eq:presscont}
		|P(\phi^\lambda)-P(\phi^{\lambda'})|\leq c|\lambda-\lambda'|^\theta.
	\end{equation}
	
	\begin{lem}\label{lem:1}
		For every $0<\theta'<\theta$ there exists $c_{\theta'}>0$ such that for every $\lambda,\lambda'\in U$
		$$
		\frac{h_\lambda(\omega)}{h_{\lambda'}(\omega)}\leq e^{c_{\theta'}|\lambda-\lambda'|^{\theta'}} \ \text{ for every $\omega\in\Sigma$ and }\ \frac{\nu_{\lambda}([\omega])}{\nu_{\lambda'}([\omega])}\leq e^{c_{\theta'}|\lambda-\lambda'|^{\theta'}|\omega|}\ \text{ for every }\ \omega\in\Sigma^*.
		$$
	\end{lem}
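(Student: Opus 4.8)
The plan is to use the uniform exponential convergence of the iterated Perron operator, \eqref{eq:unifconv}, to express $h_\lambda$ and $\nu_\lambda([\omega])$ --- up to a multiplicative error of the form $1+O(\beta^n)$ --- in terms of $e^{-nP(\phi^\lambda)}L_\lambda^n\mathbf{1}$ and related finite sums of exponentials of Birkhoff sums $S_{n}\phi^\lambda$. The Hölder dependence \ref{it:pot2} of $\phi^\lambda$ on $\lambda$, together with the pressure estimate \eqref{eq:presscont}, then controls how these expressions change under $\lambda\mapsto\lambda'$: over $n$ iterations one accumulates a multiplicative distortion of at most $e^{O(n\,t^\theta)}$, where $t:=|\lambda-\lambda'|$. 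The whole argument rests on balancing the shrinking error $O(\beta^n)$ against this growing distortion by choosing the truncation level $n=n(t)$ as a function of $t$, namely $n(t)\asymp\log(1/t^{\theta'})$, so that simultaneously $\beta^{n(t)}\lesssim t^{\theta'}$ and $n(t)\,t^\theta\lesssim t^{\theta'}$. The second relation uses crucially that $\theta'<\theta$ strictly, since then $t^{\theta-\theta'}\log(1/t)\to0$ as $t\to0^+$. For $t$ bounded away from $0$ (recall $U$ is bounded) all the asserted inequalities are trivial after enlarging $c_{\theta'}$ --- using the uniform version of the Gibbs property \eqref{R66} for the second estimate --- so from now on we may assume $t$ small.

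I first record three elementary facts. (a) Membership $h_\lambda\in\Lambda$ forces the uniform two-sided bound $e^{-C}\le h_\lambda(\omega)\le e^{C}$ for all $\lambda$ and $\omega$, with $C=\tfrac{2b\alpha}{1-\alpha}$ (apply the defining inequality of $\Lambda$ to a pair of sequences differing in the first symbol, together with $\int h_\lambda\,d\nu_\lambda=1$ and the fact that $\nu_\lambda$ is a probability measure); this is used throughout to convert the additive errors in \eqref{eq:unifconv} into multiplicative ones. (b) Assumption \ref{it:pot2} gives, for every finite word $v$, every $\tau\in\Sigma$ and every $k$, the bound $|S_k\phi^\lambda(v\tau)-S_k\phi^{\lambda'}(v\tau)|\le k\,c\,t^\theta$, hence the pointwise comparison $e^{-kct^\theta}L_{\lambda'}^k g\le L_\lambda^k g\le e^{kct^\theta}L_{\lambda'}^k g$ for $g=\mathbf{1}$ or $g=\mathbf{1}_{[\omega]}$ (compare termwise in the expansion $L_\lambda^k g(\tau)=\sum_v e^{S_k\phi^\lambda(v\tau)}g(v\tau)$). (c) On the compact range of $t$ one has $t^\theta\le c_{**}\,t^{\theta'}$ and $t^\theta\log(1/t)\le c_{*}\,t^{\theta'}$ for suitable constants.

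For the eigenfunction estimate, apply \eqref{eq:unifconv} with $g\equiv\mathbf{1}$ (so $\int g\,d\nu_\lambda=1$): $\|e^{-nP(\phi^\lambda)}L_\lambda^n\mathbf{1}-h_\lambda\|\le A\beta^n$. Combining (b) with \eqref{eq:presscont} gives $e^{-nP(\phi^\lambda)}L_\lambda^n\mathbf{1}(\omega)\le e^{2nct^\theta}\,e^{-nP(\phi^{\lambda'})}L_{\lambda'}^n\mathbf{1}(\omega)$, whence
\[
h_\lambda(\omega)\le e^{2nct^\theta}\bigl(h_{\lambda'}(\omega)+A\beta^n\bigr)+A\beta^n .
\]
Choosing $n=n(t):=\lceil\log(A/t^{\theta'})/\log(1/\beta)\rceil$ yields $A\beta^n\le t^{\theta'}$ and $n\,t^\theta=O(t^{\theta'})$ by (c), and since $h_{\lambda'}(\omega)\ge e^{-C}$ absorbs the two additive terms, this gives $h_\lambda(\omega)/h_{\lambda'}(\omega)\le1+O(t^{\theta'})\le e^{c_{\theta'}t^{\theta'}}$. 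For the eigenmeasure estimate, fix $\omega\in\mathcal{A}^m$ and a reference point $\tau\in\Sigma$. Applying \eqref{eq:unifconv} once with $g=\mathbf{1}_{[\omega]}$ (and $r=m$) and once with $g\equiv\mathbf{1}$ (and $n+m$ iterations), and using $h_\lambda(\tau)\ge e^{-C}$ to make the errors multiplicative, one obtains
\[
\nu_\lambda([\omega])=\frac{L_\lambda^{n+m}\mathbf{1}_{[\omega]}(\tau)}{L_\lambda^{n+m}\mathbf{1}(\tau)}\bigl(1+O(\beta^n)\bigr),
\]
where the ratio on the right equals that of $\sum_{v\in\mathcal{A}^{n+m},\,v|_m=\omega}e^{S_{n+m}\phi^\lambda(v\tau)}$ to $\sum_{v\in\mathcal{A}^{n+m}}e^{S_{n+m}\phi^\lambda(v\tau)}$ --- note that the pressure factors have cancelled. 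By \ref{it:pot2} the numerator and denominator each change by a factor in $[e^{-(n+m)ct^\theta},e^{(n+m)ct^\theta}]$ under $\lambda\mapsto\lambda'$, so their ratio changes by a factor in $[e^{-2(n+m)ct^\theta},e^{2(n+m)ct^\theta}]$, giving $\nu_\lambda([\omega])/\nu_{\lambda'}([\omega])\le e^{2(n+m)ct^\theta}\bigl(1+O(\beta^n)\bigr)$. Taking the same $n=n(t)$ as before (independent of $m$), the factors $e^{2nct^\theta}$ and $1+O(\beta^n)$ are each $\le e^{O(t^{\theta'})}\le e^{O(t^{\theta'})m}$ since $m\ge1$, while $e^{2mct^\theta}\le e^{2c\,c_{**}\,m\,t^{\theta'}}$ by (c); collecting the exponents and enlarging $c_{\theta'}$ (replacing it by the larger of the two constants obtained) yields $\nu_\lambda([\omega])/\nu_{\lambda'}([\omega])\le e^{c_{\theta'}t^{\theta'}m}$.

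The main obstacle is precisely this balancing act: a fixed truncation level does not work, because estimating $\nu_\lambda([\omega])$ only through the uniform comparison constants (equivalently, directly through \eqref{R66}) produces a multiplicative constant strictly larger than $1$ that does not decay as $t\to0$, whereas the statement demands a bound tending to $1$ at rate $t^{\theta'}$. It is the limiting argument --- taking $n=n(t)$ and using the lower bound $h_\lambda\ge e^{-C}$ to turn the $O(\beta^{n(t)})$ tails into $O(t^{\theta'})$ --- that extracts the sharp ratios $1+O(t^{\theta'})$ and $e^{O(m t^{\theta'})}$, and this is only compatible with the distortion $e^{O(n t^\theta)}$ because $\theta'<\theta$. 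The remaining work (pinning down the constants $C$, $c_*$, $c_{**}$, and the precise ``accounting'' in the $O(\beta^n)$ error terms, which follows the development in \cite[Chapter~1]{Bow}) is routine bookkeeping.
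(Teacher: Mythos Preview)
Your proposal is correct and follows essentially the same approach the paper sketches: approximate $h_\lambda$ by $e^{-nP(\phi^\lambda)}L_\lambda^n\mathbf{1}$ and $\nu_\lambda([\omega])$ by (a ratio involving) $L_\lambda^{n+|\omega|}\mathbf{1}_{[\omega]}$ via the uniform convergence \eqref{eq:unifconv}, then compare these across parameters using \ref{it:pot2} and \eqref{eq:presscont}, balancing the truncation level $n=n(t)$ against $t=|\lambda-\lambda'|$. You have made explicit the key step the paper leaves to \cite[Section~8.1]{BSSS}, namely the choice $n(t)\asymp\log(1/t)$ which simultaneously forces $\beta^{n(t)}=O(t^{\theta'})$ and $n(t)\,t^\theta=O(t^{\theta'})$ (the latter using $\theta'<\theta$), together with the uniform bounds $e^{-C}\le h_\lambda\le e^{C}$ from $h_\lambda\in\Lambda$ that convert additive errors into multiplicative ones.
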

	
	The first part of the lemma follows by \eqref{eq:unifconv}, that is, the eigenfunction $h_\lambda$ can be uniformly approximated by $L_\lambda^n1$. Similarly, the second part follows {from the observation} that $\nu_\lambda([\omega])$ can be uniformly approximated by $L_\lambda^{n+|\omega|}\ind_{[\omega]}$. For details, see \cite[Section~8.1]{BSSS}. 
	
	Now, the Gibbs measure $\mu_\lambda$ (defined in Theorem~\ref{R67}) is $\mu_\lambda(A)=\int_Ah_\lambda d\nu_\lambda$. A simple consequence of Lemma~\ref{lem:1} is that for every $0<\theta'<\theta$ there exists $c_{\theta'}>0$ such that for every $\omega\in\Sigma^*$,
	\begin{equation}\label{eq:contmu}
		\frac{\mu_{\lambda}([\omega])}{\mu_{\lambda'}([\omega])}\leq e^{c_{\theta'}|\lambda-\lambda'|^{\theta'}|\omega|}.
	\end{equation}
	
	Now, we wish to show that there exists a common Egorov set $A$ in a sufficiently small neighbourhood of every $\lambda_0\in U$ to verify Proposition~\ref{prop:maingibbs}. So, let $\lambda_0\in U$ and $\varepsilon>0$ be arbitrary but fixed. By the large deviation principle, see \cite[Theorem~6]{Young90}, there exist $C>0$ and $s>0$ such that
	\begin{equation}
		\mu_{\lambda_0}\left(\left\{\omega:\left|\frac1n S_n\phi_{\lambda_0}(\omega)-\int\phi_{\lambda_0} d\mu_{\lambda_0}\right|>\varepsilon/4\right\}\right)\leq Ce^{-ns}.
	\end{equation}
	Observe that $\lambda\mapsto\int\phi^\lambda d\mu_\lambda$ is $\theta''$-H\"older with some $0<\theta''<\theta'$ and with some constant $c_{\theta''}>0$. Let $\xi>0$ be such that $\alpha e^{c_{\theta'}\xi^{\theta'}}<1$, $c\xi^\theta+c_{\theta''}\xi^{\theta''}<\varepsilon/4$ and $c_{\theta'}\xi^{\theta'}<s/2$. Our first claim is that for every $\lambda\in B_\xi(\lambda_0)$:
	\begin{multline}\label{eq:foregorov1}
		\mu_\lambda\left(\left\{\omega:\left|\frac1n S_n\phi_\lambda(\omega)-\int\phi_\lambda d\mu_\lambda\right|>\varepsilon\right\}\right)\\
		\leq C'e^{c_{\theta'}|\lambda-\lambda_0|^{\theta'}n}\mu_{\lambda_0}\left(\left\{\omega:\left|\frac1n S_n\phi_{\lambda_0}(\omega)-\int\phi_{\lambda_0} d\mu_{\lambda_0}\right|>\varepsilon/4\right\}\right)\leq C''e^{-sn/2}
	\end{multline}
	for every $n\geq1$. Indeed, let $\tau\in\Sigma$ be arbitrary but fixed. Then choosing $k\geq1$ such that $b\alpha^k<\varepsilon/4$, we have
	\[\begin{split}
		&\mu_\lambda\left(\left\{\omega:\left|\frac1n S_n\phi_\lambda(\omega)-\int\phi_\lambda d\mu_\lambda\right|>\varepsilon\right\}\right)\\
		&\qquad\leq\mu_\lambda\left(\left\{\omega:\left|\frac1n S_n\phi_{\lambda_0}(\omega)-\int\phi_{\lambda_0}d\mu_{\lambda_0}\right|>3\varepsilon/4\right\}\right)\text{ (by \ref{it:pot2} and $c\xi^\theta+c_{\theta''}\xi^{\theta''}<\varepsilon/4$)}\\
		&\qquad\leq
		\sum_{|\omega|=n+k}\mu_{\lambda}([\omega])\ind\left\{\left|\frac1n S_n\phi_{\lambda_0}(\omega\tau)-\int\phi_{\lambda_0}d\mu_{\lambda_0}\right|>2\varepsilon/4\right\}\text{ (by \ref{it:pot1} and $b\alpha^k<\varepsilon/4$)}\\
		&\qquad\leq e^{c_{\theta'}|\lambda-\lambda_0|^{\theta'}(n+k)}\sum_{|\omega|=n+k}\mu_{\lambda_0}([\omega])\ind\left\{\left|\frac1n S_n\phi_{\lambda_0}(\omega\tau)-\int\phi_{\lambda_0}d\mu_{\lambda_0}\right|>2\varepsilon/4\right\}\text{ (by \eqref{eq:contmu})}\\
		&\qquad\leq e^{c_{\theta'}\xi^{\theta'}k}e^{c_{\theta'}|\lambda-\lambda_0|^{\theta'}n}\mu_{\lambda_0}\left(\left\{\omega:\left|\frac1n S_n\phi_{\lambda_0}(\omega)-\int\phi_{\lambda_0}d\mu_{\lambda_0}\right|>\varepsilon/4\right\}\right)\text{ (by \ref{it:pot1} and $b\alpha^k<\varepsilon/4$)}.
	\end{split}\]
	
	There are actually two potentials which play a role in the dimension: $\phi^\lambda$ and $\varphi_\lambda(\omega)=-\log|(f_{\omega_1}^\lambda)'(\Pi_\lambda(\sigma\omega))|$,
	in view of \eqref{R54} and $\chi_{\mu_\lambda}(\mathcal{F}_\lambda)=\int \varphi_\lambda d\mu_\lambda$. Observe that we may assume that $\varphi_\lambda$ also satisfies \ref{it:pot1} and \ref{it:pot2}, and so both $\lambda\mapsto h_{\mu_\lambda}$ and $\lambda\mapsto\chi_{\mu_\lambda}(\mathcal{F}_\lambda)$ are $\theta''$-H\"older with $0<\theta''<\theta$, with some constant $c_{\theta''}>0$. Moreover, \eqref{eq:foregorov1} also holds for $\varphi^\lambda$.
	
	Now, we construct the common Egorov set as follows: Let
	$$
	\Omega_n^c:=\left\{\omega\in\mathcal{A}^n:\text{ there exists }\tau\in[\omega]\text{ such that }\left|\frac1n S_n\phi_{\lambda_0}(\tau)-\int\phi_{\lambda_0} d\mu_{\lambda_0}\right|>4\varepsilon\right\}.
	$$
	If $\omega\in\Omega_n^c$ then for every $\tau\in[\omega]$,
	$$\left|\frac1n S_n\phi_\lambda(\tau)-\int\phi_\lambda d\mu_\lambda\right|>\varepsilon\text{ (by the choice $\lambda\in B_\xi(\lambda_0)$)},
	$$
	and so $\mu_{\lambda}(\Omega_n^c)\leq Ce^{-ns/2}$ for every $\lambda\in B_\xi(\lambda_0)$ and $n\geq1$.
	Let $n_k:=k$ and $m_k=n_1+\ldots+n_k$. Finally, for every $K\geq1$ let
	$$
	A_K=\Omega_{m_K}\times\Omega_{n_{K+1}}\times\Omega_{n_{K+2}}\times\cdots\subset\Sigma.
	$$
	Clearly, by \eqref{eq:foregorov1} and the shift invariance of $\mu_\lambda$ we have
	$$
	\mu_\lambda(A_K^c)\leq\mu_{\lambda}(\Omega_{m_K}^c)+\sum_{k=1}^\infty\mu_{\lambda}(\Omega_{m_K}\times\Omega_{n_{K+1}}\times\cdots\times\Omega_{n_{K+k}}^c)\leq Ce^{-m_Ks/2}+\sum_{k=1}^\infty Ce^{-n_{K+k}s/2}\to0,
	$$	
	as $K\to\infty$, uniformly in $\lambda\in B_\xi(\lambda_0)$, which shows \ref{it:prop1} in Proposition~\ref{prop:maingibbs}. On the other hand, for every $\omega\in A_K$ and for every $n\geq m_K$ and every $\lambda\in B_\xi(\lambda_0)$,
	\begin{equation}\label{eq:egorovprop}
		\left|\frac{1}{n}S_n\phi_\lambda(\omega)-\int\phi^{\lambda} d\mu_{\lambda}\right|\leq 6\varepsilon.
	\end{equation}
	%	Indeed,
	%	\[\begin{split}
		%	\frac{1}{n}S_n\phi_\lambda(\omega)&\leq\frac{1}{n}S_n\phi_{\lambda_0}(\omega)+c|\lambda-\lambda_0|^{\theta}\\
		% &=\frac{S_{m_K}\phi_{\lambda_0}(\omega)+\sum_{p=1}^kS_{n_{K+p}}\phi_{\lambda_0}(\sigma^{m_K+\cdots+n_{K+p-1}}\omega)+S_{\ell}\phi_{\lambda_0}(\sigma^{m_K+\cdots+n_{K+k}}\omega)}{m_K+n_{K+1}+\cdots+n_{K+k}+\ell}+c|\lambda-\lambda_0|^{\theta}\\
		%	&\leq\frac{n_{K+k+1} M}{m_K+\sum_{p=1}^kn_{K+p}}+4\varepsilon+\int\phi^{\lambda_0} d\mu_{\lambda_0}+c|\lambda-\lambda_0|^\theta\\
		%	&\leq 5\varepsilon+\int\phi^{\lambda} d\mu_{\lambda}+c_{\theta'}|\lambda-\lambda_0|^{\theta'}+c|\lambda-\lambda_0|^\theta\leq 6\varepsilon+\int\phi^{\lambda} d\mu_{\lambda}.
		%	\end{split}\]
	%	The other inequality is similar.
	Hence, by \eqref{R54}, \eqref{R66} and \eqref{eq:egorovprop}
	\[\begin{split}
		\mathcal{E}_\gamma(\mu_\lambda|_{A_K},d_\lambda)&=\sum_{n=0}^\infty\sum_{|\omega|=n}\sum_{i\neq j\in\mathcal{A}}|f_{\omega}^\lambda(X)|^{-\gamma}\mu_{\lambda}|_{A_K}([\omega i])\mu_{\lambda}|_{A_K}([\omega j])\\
		&\leq \sum_{n=0}^\infty\sum_{|\omega|=n}e^{-n(h_{\mu_\lambda}-6\varepsilon-\gamma(\chi_{\mu_\lambda}+6\varepsilon))}\mu_{\lambda}|_{A_K}([\omega])<\infty
	\end{split}\]
	if $\frac{h_{\mu_\lambda}-6\varepsilon}{\chi_{\mu_\lambda}+6\varepsilon}>\gamma$. This completes the proof of \ref{it:prop2} in Proposition~\ref{prop:maingibbs}.
	
	Finally, let $\omega\in\mathcal{A}^{m_L}$ for some $L\geq K$. We may assume without loss of generality that $\omega\in\Omega_{m_K}\times\Omega_{n_{K+1}}\times\cdots\times\Omega_{n_{L}}$. Then
	$$
	\mu_{\lambda}([\omega]\cap A_K)=\mu_{\lambda}([\omega])-\sum_{p=L+1}^\infty\sum_{\tau\in\Omega_{m_{K}}\times\cdots\times\Omega_{n_{p}}^c}\mu_{\lambda}([\omega\tau]).
	$$
	For short, let $b_{p}(\lambda):=\frac{1}{\mu_{\lambda}([\omega])}\sum_{\tau\in\Omega_{m_{K}}\times\cdots\times\Omega_{n_{p}}^c}\mu_{\lambda}([\omega\tau])$. By the quasi-Bernoulli property of the Gibbs measures and $\mu_{\lambda_0}(\Omega_n^c)\leq Ce^{-ns/2}$, we have that $b_p(\lambda_0)\leq e^{-n_{p}s/2}$. Hence,
	\[\begin{split}
		\frac{\mu_{\lambda}([\omega]\cap A_K)}{\mu_{\lambda_0}([\omega]\cap A_K)}&=\frac{\mu_{\lambda}([\omega])}{\mu_{\lambda'}([\omega])}\cdot\frac{1-\sum_{p=L+1}^\infty b_{p}(\lambda)}{1-\sum_{p=L+1}^\infty b_{p}(\lambda_0)}\\
		&\leq e^{c_{\theta'}|\lambda-\lambda_0|^{\theta'}m_L}\frac{1-\sum_{p=L+1}^\infty e^{-c_{\theta'}|\lambda-\lambda_0|^{\theta'}(m_p+m_L)}b_{p}(\lambda_0)}{1-\sum_{p=L+1}^\infty b_{p}(\lambda_0)}\text{ (by \eqref{eq:contmu})}\\
		&\leq e^{c_{\theta'}|\lambda-\lambda_0|^{\theta'}m_L}\frac{1-\sum_{p=L+1}^\infty e^{-c_{\theta'}|\lambda-\lambda_0|^{\theta'}(m_p+m_L)}e^{-n_ps/2}}{1-\sum_{p=L+1}^\infty e^{-n_ps/2}}\\
		&\leq e^{c_{\theta'}|\lambda-\lambda_0|^{\theta'}m_L}e^{\frac{1-\sum_{p=L+1}^\infty (m_p+m_L)e^{-n_ps/2}}{1-\sum_{p=L+1}^\infty e^{-n_ps/2}}c_{\theta'}|\lambda-\lambda_0|^{\theta'}}\text{ (by the Mean Value Theorem)}\\
		&\leq e^{c_{\theta'}|\lambda-\lambda_0|^{\theta'}m_L}e^{m_L\frac{1}{1-\sum_{p=1}^\infty e^{-n_ps/2}}c_{\theta'}|\lambda-\lambda_0|^{\theta'}}=:e^{c'|\lambda-\lambda_0|^{\theta'}m_L}.
	\end{split}\]
	For general $\omega\in\Sigma^*$ with $|\omega|\geq m_K$, taking $L\geq K$ such that $m_L<|\omega|\leq m_{L+1}$, we get
	\[\begin{split}
		\mu_\lambda([\omega]\cap A_K)&=\sum_{|\tau|=m_{L+1}-|\omega|}\mu_{\lambda}([\omega\tau]\cap A_K)\\
		&\leq e^{c'|\lambda-\lambda_0|^{\theta'}m_{L+1}}\sum_{|\tau|=m_{L+1}-|\omega|}\mu_{\lambda_0}([\omega\tau]\cap A_K)\\
		&\leq e^{c'|\lambda-\lambda_0|^{\theta'}\frac{m_{L+1}}{m_L}|\omega|}\mu_{\lambda_0}([\omega]\cap A_K),
	\end{split}\]
	which completes the proof of \ref{it:prop3} in Proposition~\ref{prop:maingibbs}. \qed
}

\appendix

\section{Various notions of dimension}
In this paper we focus on the absolute continuity and Hausdorff dimension of families of invariant measures. To do so we frequently use two other dimensions of a measure: the correlation and Sobolev dimensions.
\begin{definition}\label{R77}
Let $(X,\rho )$ be a complete metric space. For a Borel set $A \subset X$ we write $\mathcal{M}(A)$ for the
collection of Borel measures $\mu $ satisfying
\begin{enumerate}
	\item The support  $\mathrm{spt}(\mu)\subset A$,
	\item $\mathrm{spt}(\mu)$ is compact, and
	\item $0<\mu(A)<\infty $.
\end{enumerate}
Moreover,
we denote the set of Borel probability measures on $A$ by $\mathcal{P}(A)$.	
\end{definition}
\subsection{The local and Hausdorff dimensions of a measure}
Let $\mu \in \mathcal{M}(X)$, where $(X,\rho )$ is a complete metric space. The Hausdorff dimension of the measure $\mu $ is
\begin{equation}
\label{R83}
\Dh\mu:=\inf\left\{\dim_{\rm H} A:\mu( X \setminus A)=0\right\},\text{ and }
\underline{\dim}_H\mu:=\inf\left\{\dim_{\rm H} A:\mu(A)>0\right\}.
\end{equation}
This implies that $\dim_{\rm H}  \mu \leq \dim_{\rm H}  A$.
One way to give effective bounds for $\Dh\mu$ is to estimate the local dimensions of $\mu $.
The lower  local dimension of $\mu$ at  $x\in A$ is:
    \begin{equation}\label{O63}
 \underline{\dim}(\mu,x):=  \liminf\limits_{n\to\infty} \frac{\log \mu(B(x,r))}{\log r}\,.
    \end{equation}
	The upper local dimension $\overline{\dim}(\mu,x)$ of $\mu $  at $x$,  is defined in an analogous way.
    We say that  the measure $\mu$ is exact dimensional  if the limit
    $\lim\limits_{r\downarrow 0} \frac{\log \mu(B(x,r))}{\log r}$ exists and is constant $\mu$-almost surely. This constant is denoted by $\dim\mu$.
	A well known characterization of $\Dh\mu$ is as follows:
	\begin{equation}
	\label{R84}
	\Dh\mu=\underset{x\sim \mu}{\mathrm{ess sup}}\ 
    \underline{\dim}(\mu,x)
     \ \  \text{ and }\ \ 
	\underline\dim_{\rm H}\mu= \underset{x\sim \mu}{\mathrm{ess inf}}\ 
    \underline{\dim}(\mu,x).
	\end{equation}
Another effective way to give lower bound on $\Dh\mu$ is to estimate the so-called correlation dimension of $\mu $.

\subsection{Correlation and Sobolev dimensions of a measure}\label{R79}
Let $(X,\rho )$ be a complete metric space, let $\mu $ be a Borel measure on $X$,  and $\alpha >0$.
Define the \texttt{$\alpha$-energy} as
\begin{equation}\label{eq:energy} \Ek_{\alpha}(\mu, d) = \iint \rho (x,y)^{-\alpha}d\mu(x)d\mu(y). \end{equation}
Define the \texttt{correlation dimension} of $\mu$ with respect to the metric $\rho $ as
\[ \dim_{cor}(\mu, \rho ) = \sup \{ \alpha > 0 : \Ek_{\alpha}(\mu, d) < \infty \}. \]
If $\mu $ is a Borel measure on $\mathbb{R}^d$ {with the Euclidean metric}, then the correlation dimension and the $\alpha $-energy  of $\mu $ are denoted by $\dim_{cor}(\mu)$ and $\mathcal{E}_\alpha (\mu )$. In this case we have
\begin{equation}
\label{R82}
\dim_{cor}(\mu)\leq \underline{\dim}_{\rm H}\mu  \leq \dim_{\rm H}  \mu.
\end{equation}
Moreover, for a Borel set $A\subset \mathbb{R}^d$ we have
\begin{equation}
\label{R80}
\dim_{\rm H}  A=
\sup\left\{s: \exists \ \mu \in\mathcal{M}(A),\
\mathcal{E}_s(\mu )<\infty
\right\}.
\end{equation}
From now on we assume that $\nu $ is a finite Borel measure on $\mathbb{R}$.
To define the Sobolev dimension of $\nu$,  first we recall that the Fourier transform of $\nu$ is defined by
$\widehat{\nu}(\xi)=\int e^{i\xi x}d\nu(x).$
The \texttt{homogenous Sobolev norm} of a finite Borel measure $\nu$, for $\gamma \in \R$, is
\[ \| \nu \|^2_{2,\gamma} = \int_{\R}|\widehat{\nu}(\xi)|^2|\xi|^{2\gamma}d\xi. \]
If $\| \nu \|^2_{2,\gamma}<\infty$ then we say that
\texttt{$\nu $ has a fractional derivative of order $\gam$ in $L^2$}.
The  \texttt{Sobolev dimension} is defined as follows:
\begin{equation}
\label{R27}
\dim_S\nu := \sup \left\{ \alpha \in \R : \int_\R |\widehat{\nu}(\xi)|^2(1+|\xi|)^{\alpha - 1}d\xi < \infty \right\}.
\end{equation}
It is well known (see \cite[Section 5.2]{Mattila Fourier})
 that if $0 \leq \dim_S\nu \leq \infty$, for $\alpha > 0$, then we have
\begin{equation}
\label{R28}
\int_\R |\widehat{\nu}(\xi)|^2(1+|\xi|)^{\alpha - 1}d\xi < \infty\  { \iff}\  \int_\R |\widehat{\nu}(\xi)|^2|\xi|^{\alpha - 1}d\xi = \| \nu \|_{2,\frac{\alpha - 1}{2}}^2 < \infty.
\end{equation}
The \texttt{Sobolev energy} of the measure $\nu $ of degree $\alpha $  is
	\begin{equation}
	\label{R76}
	\mathcal{I}_\alpha :=\int_\R |\widehat{\nu}(\xi)|^2|\xi|^{\alpha - 1}d\xi.
	\end{equation}
	Then by \eqref{R27} and \eqref{R28} we have $\dim_S\nu = \sup
	\left\{
s:\ \mathcal{I}_s(\nu )<\infty
	\right\}$,
	see \cite[p.74]{Mattila Fourier}.
	The connection between the Sobolev energy $\mathcal{I}_s(\nu )$  and the $\alpha $-energy defined in \eqref{eq:energy}  is as follows:  If $s\in(0,1)$ then there exists a constant $\gamma =\gamma (s)>0$  such that
	for every finite Borel measure $\nu $ on $\mathbb{R}$ we have
	(see \cite[Theorem 3.10]{Mattila Fourier})
	\begin{equation}
	\label{R73}
	\mathcal{E}_s(\nu )=\gamma \cdot \mathcal{I}_s(\nu ).
	\end{equation}
	This identity does not extend  to $s=1$ (see
	\cite[p.74]{Mattila Fourier} ).

\begin{lemma}\label{R81} Let $\nu $ be a finite Borel measure on $\mathbb{R}^1$.
\begin{enumerate}
	\item If $0<\dim_{\rm S}\nu <1  $ then $\dim_{\rm S}\nu =\dim_{cor}(\nu)  $.
	\item  If $\dim_{\rm S}\nu=\sigma  >1  $ then
\begin{enumerate}
	\item
	 $\nu$ is absolutely continuous with Radon-Nikodym derivative in $L^2(\R)$,
\item $\nu$ has fractional derivatives
of order $\frac{\sigma -1}{2}>0$
in $L^2$, see \cite[Theorem 5.4]{Mattila Fourier}.
\end{enumerate}
\end{enumerate}
\end{lemma}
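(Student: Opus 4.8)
The plan is to obtain both parts by unwinding the definitions of $\dim_{cor}$ and $\dim_S$ and combining them with two facts quoted in the appendix: the identity $\mathcal{E}_s(\nu)=\gamma(s)\,\mathcal{I}_s(\nu)$ for $s\in(0,1)$ (formula \eqref{R73}), and Plancherel's theorem (which underlies the equivalence \eqref{R28} and \cite[Theorem 5.4]{Mattila Fourier}). Throughout I use that $\nu$ finite implies $\widehat{\nu}$ is bounded, together with the elementary observation that, splitting the energy integrals into the regions $|x-y|<1$ (resp.\ $|\xi|<1$) and their complements, one has $\mathcal{E}_s(\nu)<\infty$ for every $0<s<\dim_{cor}(\nu)$ and $\mathcal{I}_s(\nu)<\infty$ for every $0<s<\dim_S\nu$ (the contributions from the regions $|x-y|\ge 1$, $|\xi|\ge 1$ being harmless when $s>0$); in particular the exponent sets $\{s>0:\mathcal{E}_s(\nu)<\infty\}$, $\{s>0:\mathcal{I}_s(\nu)<\infty\}$ are downward closed, so their suprema really are $\dim_{cor}(\nu)$ and $\dim_S\nu$.

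\textbf{Part (1).} Assume $0<\dim_S\nu<1$. If $0<s<\dim_S\nu$, then $s<1$ and $\mathcal{I}_s(\nu)<\infty$, so by \eqref{R73} also $\mathcal{E}_s(\nu)<\infty$, hence $s\le\dim_{cor}(\nu)$; letting $s\uparrow\dim_S\nu$ gives $\dim_S\nu\le\dim_{cor}(\nu)$. Conversely, if $\dim_{cor}(\nu)>\dim_S\nu$, choose $s$ with $\dim_S\nu<s<\min\{\dim_{cor}(\nu),1\}$ (possible since $\dim_S\nu<1$); then $\mathcal{E}_s(\nu)<\infty$ and $s<1$, so \eqref{R73} gives $\mathcal{I}_s(\nu)<\infty$, forcing $s\le\dim_S\nu$ --- a contradiction. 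Hence $\dim_{cor}(\nu)=\dim_S\nu$ (this also re-derives $\dim_{cor}(\nu)<1$).

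\textbf{Part (2).} Assume $\dim_S\nu=\sigma>1$. Since $1<\sigma$, by \eqref{R27} there is $\alpha\in(1,\sigma)$ with $\int_{\R}|\widehat{\nu}(\xi)|^2(1+|\xi|)^{\alpha-1}\,d\xi<\infty$, and as $1=(1+|\xi|)^0\le(1+|\xi|)^{\alpha-1}$ this yields $\int_{\R}|\widehat{\nu}(\xi)|^2\,d\xi<\infty$, i.e.\ $\widehat{\nu}\in L^2(\R)$. For (a): by Plancherel there is $g\in L^2(\R)$ with $\widehat{g}=\widehat{\nu}$; since $\nu$ (a finite measure) and $g\,dx$ are tempered distributions with the same Fourier transform and the Fourier transform is injective on tempered distributions, $\nu=g\,dx$, so $\nu\ll\Leb^1$ with density $g\in L^2$, and $g\ge 0$ a.e.\ because $\nu\ge 0$. (Equivalently: convolving $\nu$ with an approximate identity $\phi_\eps$, one has $\widehat{\nu\ast\phi_\eps}=\widehat{\nu}\,\widehat{\phi_\eps}\to\widehat{\nu}$ in $L^2$, so $\nu\ast\phi_\eps\to g$ in $L^2$, while $\nu\ast\phi_\eps\to\nu$ weakly; hence $\nu=g\,dx$. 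This is the content of \cite[Theorem 5.4]{Mattila Fourier}.) For (b): given any $\gamma<\tfrac{\sigma-1}{2}$, set $\alpha=2\gamma+1\in(1,\sigma)$; then by \eqref{R28} and \eqref{R76}, $\|\nu\|^2_{2,\gamma}=\int_{\R}|\widehat{\nu}(\xi)|^2|\xi|^{2\gamma}\,d\xi=\mathcal{I}_\alpha(\nu)<\infty$, so $\nu$ has a fractional derivative in $L^2$ of every order $\gamma<\tfrac{\sigma-1}{2}$ --- which is what ``order $\tfrac{\sigma-1}{2}$'' abbreviates.

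\textbf{Main obstacle.} Almost everything above is bookkeeping with the energies $\mathcal{E}_s$, $\mathcal{I}_s$ and the quoted identity \eqref{R73}. The one step that is not a formal manipulation is the implication $\widehat{\nu}\in L^2(\R)\Rightarrow\nu\ll\Leb^1$ with an $L^2$ density in (2a): here one must genuinely use that a finite measure is a tempered distribution and invoke Plancherel together with injectivity of the Fourier transform (or the approximate-identity argument). I would write this step out in full and merely cite \eqref{R73}, \eqref{R27}, \eqref{R28}, \eqref{R82} for the rest.
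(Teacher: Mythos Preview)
The paper does not supply its own proof of this lemma: it is stated in the appendix as a known fact, with part (2b) referenced to \cite[Theorem 5.4]{Mattila Fourier} and part (1) implicitly relying on the quoted identity \eqref{R73}. Your argument is correct and is precisely the intended unpacking of those references: using \eqref{R73} to identify $\dim_S$ with $\dim_{cor}$ in the range $(0,1)$, and using Plancherel plus injectivity of the Fourier transform on tempered distributions for (2a). Your reading of (2b) as ``fractional derivatives in $L^2$ of every order $\gamma<\tfrac{\sigma-1}{2}$'' is the correct one, since $\sigma=\dim_S\nu$ is defined as a supremum and the borderline value need not be attained.
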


\section{The precise statements of our assumptions}\label{R20}

\begin{enumerate}[start=1,label={(MA\arabic*)}]
	\item\label{as:C2 mp} the maps $f_j^\lam$ are $C^{2+\delta}$-smooth on $X$ with $M_1 = \sup \limits_{\lam \in U} \sup \limits_{j \in \Ak} \left\{ \left\| \frac{d^2}{dx^2}f^\lam_j \right\|_{\infty} \right\}  < \infty $ and there exist constants $C_1, C_2 > 0$ such that
	\[ \left| \frac{d^2}{dx^2}f^\lam_j(x) - \frac{d^2}{dx^2}f^\lam_j(y) \right| \leq C_1|x-y|^\delta \text{ and } \left| \frac{d^2}{dx^2}f^{\lam}_j(x) - \frac{d^2}{dx^2}f^{\lam'}_j(x) \right| \leq C_2|\lam-\lam'|^\delta  \]
	hold for all $x,y \in X,\ j \in \Ak,\ \lam, \lam' \in U$.
	
	\bigskip
	
	\item\label{as:lam hoelder mp} the maps $\lam \mapsto f^\lam_j(x)$ are $C^{1+\delta}$-smooth on $U$ and there exists a constant $C_3 > 0$ such that
	\[\left|\frac{\partial}{\partial \lam_i} f_j^{\lam}(x) - \frac{\partial}{\partial \lam_i} f_j^{\lam'}(x)\right| \leq C_3 | \lam - \lam'|^\delta\] holds for all $x \in X,\ j \in \Ak,\ \lam, \lam' \in U,\ 1 \leq i \leq d$.
	
	\bigskip
	
	\item\label{as:dxdlam mp} the second partial derivatives $\frac{\partial^2}{\partial x \partial\lam_i}f^\lam_j(x), \frac{\partial^2}{\partial\lam_i \partial x}f^\lam_j(x)$ exist and are continuous on $U \times X$ (hence equal) with $M_2 = \sup \limits_{1 \leq i \leq d} \ \sup \limits_{j \in \Ak}\ \sup \limits_{\lambda \in U} \left\| \frac{\partial^2}{\partial \lam_i \partial x} f^\lam_j(x)\right\|_{\infty} < \infty$, and there exist constants $C_4, C_5>0$ such that
	\[ \left| \frac{\partial^2}{\partial x \partial \lam_i}f^{\lam}_j(x) - \frac{\partial^2}{\partial x \partial\lam_i}f^{\lam}_j(y) \right| \leq C_4 |x-y|^\delta \text{ and } \left| \frac{\partial^2}{\partial x \partial \lam_i}f^{\lam}_j(x) - \frac{\partial^2}{\partial x\partial\lam_i}f^{\lam'}_j(x) \right| \leq C_5 |\lam - \lam'|^\delta \]
	hold for all $x,y \in X,\ j \in \Ak,\ \lam, \lam' \in U$ and $1 \leq i \leq d$.
\end{enumerate}

\end{document}